\author{Iandoli Felice}
\title{Birkhoff Normal Forms}
\numberwithin{equation}{section}
\newtheorem{hyp}{Hypothesis}[section]
\newtheorem{theorem}{Theorem}
\newtheorem{theo}{Theorem}[section]
\newtheorem{coro}[theorem]{Corollary}
\newtheorem{de}{Definition}[section]
\newtheorem{lemma}{Lemma}[section]
\newtheorem{prop}{Proposition}[section]
\newtheorem{rmk}{Remark}[section]
\newtheorem{const}[theorem]{Constraint}
\newcommand{\zerarcounters}{\setcounter{equation}{0}}
\newcommand{\R}{\mathbb{R}}
\newcommand{\N}{\mathbb{N}}
\newcommand{\C}{\mathbb{C}}
\newcommand{\norm}[2]{\left \lVert {#1} \right \rVert_{{#2}}}
\newcommand{\asso}[1]{\left \lvert {#1} \right \rvert}
\newcommand{\set}[1]{\left\{{#1}\right\}}
\newcommand{\tonde}[1]{\left({#1}\right)}
\newcommand{\ZZZ}{\mathds{Z}}
\newcommand{\CCC}{\mathds{C}}
\newcommand{\NNN}{\mathds{N}}
\newcommand{\RRR}{\mathds{R}}
\newcommand{\TTT}{\mathbb{T}}
\newcommand{\uno}{\mathds{1}}
\newcommand{\calA}{{\calX}}
\newcommand{\BB}{{\mathcal B}}
\newcommand{\calF}{{\mathcal F}}
\newcommand{\calG}{{\mathcal G}}
\newcommand{\calH}{{\mathcal H}}
\newcommand{\calL}{{\mathcal L}}
\newcommand{\MM}{{\mathcal M}}
\newcommand{\RR}{{\mathcal R}}
\newcommand{\calX}{{\mathcal X}}
\newcommand{\calY}{{\mathcal Y}}
\newcommand{\gotL}{{\mathfrak L}}
\newcommand{\weyl}{{\rm Op}^{W}}
\newcommand{\bony}{{\rm Op}^{\mathcal{B}}}
\newcommand{\bonyw}{{\rm Op}^{\mathcal{B}W}}
\newcommand{\ol}{\overline}
\newcommand{\e}{\varepsilon}
\newcommand{\al}{\alpha}
\newcommand{\be}{\beta}
\newcommand{\x}{\xi}
\newcommand{\g}{\gamma}
\newcommand{\h}{\eta}
\newcommand{\la}{\lambda}
\newcommand{\s}{\sigma}
\newcommand{\del}{\partial}
\newcommand{{\resonance}}{relevant self-energy cluster }
\newcommand{\ii}{{\rm i}}
\newcommand{\hcic}{{\bf{H}}}
\begin{document}

\title{\bf Local well-posedness for quasi-linear NLS with large Cauchy data on the circle}

\author{\bf 
R. Feola$^{**}$, F. Iandoli$^\dag $
\\
\small
${}^{**}$ SISSA, Trieste, rfeola@sissa.it; 
\\
\small
${}^\dag$ SISSA, Trieste,  fiandoli@sissa.it\footnote{
This research was supported by PRIN 2015 ``Variational methods, with applications to problems in mathematical physics and geometry''.      
}}



\date{}

\maketitle

\begin{abstract}
We prove local in time well-posedness for a large class of quasilinear Hamiltonian, or parity preserving, Schr\"odinger equations
on the circle. After a paralinearization of the equation, we perform several paradifferential changes of coordinates in order to transform the system 
into a paradifferential one with symbols which, at the positive order, are constant and purely imaginary.  
  This allows to obtain a priori energy estimates on the Sobolev norms of the solutions.  
\end{abstract}


\tableofcontents

\zerarcounters

\section{Introduction}

\subsection{Main results}

In this paper we study the initial value problem (IVP)
\begin{equation}\label{NLS}
\left\{\begin{aligned}
&\ii \partial_t u+\del_{xx} u+P*u+f(u,u_x,u_{xx})=0,
\quad u=u(t,x), \quad x\in \TTT,\\
&u(0,x)=u_0(x)
\end{aligned}\right.
\end{equation}
where $\TTT:=\RRR/2\pi\ZZZ$, the nonlinearity $f$ is in $C^{\infty}(\CCC^3;\CCC)$ in the \emph{real sense} (i.e. $f(z_1,z_2,z_3)$ is $C^{\infty}$ as  function of ${\rm Re}(z_i)$ and ${\rm Im}(z_i)$ for $i=1,2,3$) vanishing at order $2$ at the origin, the potential $P(x)=\sum_{j\in\ZZZ} \hat{p}(j) \frac{e^{\ii jx}}{{\sqrt{2\pi}}}$ is a function in $C^1(\TTT;\CCC)$  with real Fourier coefficients $\hat{p}(j)\in\RRR$ for any $j\in\ZZZ$
and $P*u$ denotes the convolution between $P$ and $u=\sum_{j\in \ZZZ}\hat{u}(j)\frac{e^{\ii jx}}{\sqrt{2\pi}}$
\begin{equation}\label{convpotential}
P*u(x):=\int_{\TTT}P(x-y)u(y)dy=\sum_{j\in\ZZZ} \hat{p}(j)\hat{u}(j) e^{\ii j x}.
\end{equation}
Our aim is to prove the local existence, uniqueness and regularity of the classical solution of \eqref{NLS}
on  Sobolev spaces
\begin{equation}\label{SpaziodiSobolev}
H^{s}:=H^{s}(\TTT;\CCC):\left\{u(x)=\sum_{k\in\ZZZ}\hat{u}(k)\frac{e^{\ii kx}}{\sqrt{2\pi}}\; : \; \|u\|^{2}_{H^s}:=\sum_{j\in \ZZZ}\langle j\rangle^{2 s}|\hat{u}(j)|^{2}<\infty
\right\},
\end{equation}
where $\langle j\rangle:=\sqrt{1+|j|^{2}}$ for $j\in \ZZZ$, for $s$ large enough.

Similar problems have been studied in the case $x\in\RRR^n$, $n\geq 1$. For $x\in\RRR$, in the paper \cite{Pop1}, it was considered  the fully nonlinear Schr\"odinger type equation
$\ii \partial_t u= F(t,x,u,u_x,u_{xx})$;
it has been shown that the IVP associated to this equation is locally in time well posed in $H^{\infty}(\RRR;\CCC)$ (where  $H^{\infty}(\RRR;\CCC)$ denotes the intersection of all Sobolev spaces $H^{s}(\RRR;\CCC)$, $s\in \RRR$) if the function $F$ satisfies some suitable ellipticity hypotheses. 

Concerning the $n$-dimensional case the IVP for quasi-linear Schr\"odinger equations has been studied in  \cite{KPV1} in the Sobolev spaces $H^{s}(\RRR^n;\CCC)$ with $s$ sufficiently large. 
Here the key ingredient used to prove energy estimates  is a Doi's type lemma which involves pseudo-differential calculus for symbols defined on the Euclidean space $\RRR^n$.

Coming back to the case $x\in\TTT$ we mention \cite{BHM}. In this paper it is shown that  if $s$ is big enough and if the size of the initial datum $u_0$ is sufficiently small, then \eqref{NLS} is well posed  in the Sobolev space $H^{s}(\TTT)$ if $P=0$ and $f$ is  \emph{Hamiltonian} (in the sense of Hypothesis \ref{hyp1}). The proof is based on a Nash-Moser-H\"ormander implicit function theorem and the required energy estimates are obtained by means of a procedure of reduction to constant coefficients of the equation (as done in \cite{F}, \cite{FP}).


We remark that, even for the short time behavior of the solutions, there are deep differences between the problem \eqref{NLS} with periodic boundary conditions ($x\in\TTT$) and \eqref{NLS} with $x\in\RRR$. Indeed Christ proved in \cite{Cris} that the following family of problems
\begin{equation}\label{christ-eq}
 \left\{ \begin{aligned}
 & \partial_t u+\ii u_{xx}+u^{p-1}u_x=0\\
 & u(0,x)=u_0(x)
 \end{aligned}\right.
 \end{equation}
 is ill-posed  in all Sobolev spaces $H^s(\TTT)$ for any integer $p\geq 2$ and it is well-posed in $H^s(\RRR)$  for $p\geq 3$ and $s$ sufficiently large. The ill-posedness of \eqref{christ-eq} is very strong, in \cite{Cris} it has been shown that its solutions have  the following norm inflation phenomenon: for any $\e>0$ there exists a solution $u$ of \eqref{christ-eq} and a time $t_{\e}\in(0,\e)$ such that
\begin{equation*}
\norm{u_0}{H^s}\leq\e \quad {\rm{and}} \quad \norm{u(t_{\e})}{H^s}>\e^{-1}.
\end{equation*}
The examples exhibited in \cite{Cris} suggest that some assumptions on the nonlinearity $f$ in \eqref{NLS} are needed. In this paper we prove local well-posedness for   \eqref{NLS} in two cases. The first one is the \emph{Hamiltonian} case.
We assume that equation \eqref{NLS} can be written
in the complex  Hamiltonian form
\begin{equation}\label{mega3}
\partial_t u=\ii \nabla_{\bar{u}}\calH(u), 
\end{equation}
with Hamiltonian function
\begin{equation}\label{HAMILTONIANA}
 \calH(u)=\int_{\TTT}-|u_{x}|^{2} +(P*u) \bar{u}+
F(u,u_{x}) dx,
\end{equation}
for some real valued function $F\in\C^{\infty}(\CCC^2;\R)$ and where $\nabla_{\bar{u}}:=(\nabla_{{\rm Re}(u)}+\ii \nabla_{{\rm Im}(u)})/2$  and $\nabla$ denotes the $L^{2}(\TTT;\RRR)$
gradient. 
Note that the assumption $\hat{p}(j)\in\RRR$ implies that the Hamiltonian $\int_{\TTT}(P*u) \bar{u}dx$ is real valued. We denote by
$\del_{z_i}:=(\del_{{\rm Re}({z}_i)}-\ii \del_{{\rm Im}(z_i)})/2$ and 
$\del_{\bar{z}_i}:=(\del_{{\rm Re}({z}_i)}+\ii \del_{{\rm Im}({z}_i)})/2$ for $i=1,2$ the Wirtinger derivatives.
We assume the following.
\begin{hyp}[{\bf Hamiltonian structure}]\label{hyp1}
We assume that the nonlinearity $f$ in equation \eqref{NLS} has the form
\begin{equation}\label{NLS5}
\begin{aligned}
f(z_1,z_2,z_{3})&=(\del_{\bar{z}_1}F)(z_1,z_2)-\Big((\del_{z_1\bar{z}_2}F)(z_1,z_2)z_2+\\
&(\del_{\bar{z}_1\bar{z}_2}F)(z_1,z_2)\bar{z}_2+
(\del_{z_2\bar{z}_2}F)(z_1,z_2)z_3+(\del_{\bar{z}_2\bar{z}_2}F)(z_1,z_2)\bar{z}_3\Big),
\end{aligned}
\end{equation}
where $F$ is a real valued $C^{\infty}$ function  (in the real sense) defined on  $\CCC^2$ vanishing at $0$ at order $3$. 
\end{hyp}
Under the hypothesis above equation \eqref{NLS} is \emph{quasi-linear} in the sense that
the non linearity depends linearly on the variable $z_3$.
We remark that Hyp. \ref{hyp1} implies that the nonlinearity $f$ in \eqref{NLS} has the Hamiltonian form
\begin{equation*}
f(u,u_x,u_{xx})=(\del_{\bar{z}_1}F)(u,u_{x})-
\frac{d}{dx}[(\del_{\bar{z}_{2}}F)(u,u_{x})].
\end{equation*}

The second case is the \emph{parity preserving} case.
\begin{hyp}[{\bf Parity preserving structure}]\label{hyp2}
Consider the equation \eqref{NLS}. Assume that $f$   is a $C^{\infty}$ function in the real sense defined on  $\CCC^{3}$ and that it  vanishes   at order $2$ at the origin. Assume  $P$ has real Fourier coefficients. Assume moreover that $f$ and $P$ satisfy the following
\begin{enumerate}
\item  $f(z_1,z_2,z_3)=f(z_1,-z_2,z_3)$;
\item $(\del_{z_3}f)(z_1,z_2,z_3)\in \RRR$;
\item $P(x)=\sum_{j\in\ZZZ}\hat{p}(j) e^{\ii j x}$ is such that $\hat{p}(j)=\hat{p}(-j)\in\RRR$ (this means that $P(x)=P(-x)$).
\end{enumerate}
\end{hyp}

Note that item 1 in Hyp. \ref{hyp2} implies that if $u(x)$ is even in $x$ then $f(u,u_x,u_{xx})$ is even in $x$; item 3 implies that if $u(x)$ is even in $x$ so is $P*u$. Therefore the space of functions even in $x$ is invariant for \eqref{NLS}.
We assume item 2 to avoid parabolic terms in the non linearity, so that \eqref{NLS} is a \emph{Schr\"odinger-type} equation;
note that in this case the equation may be \emph{fully-nonlinear}, i.e. the dependence on the variable $z_{3}$ is not necessary linear.

In order to treat initial data with big size we shall assume also the following \emph{ellipticity condition}.
\begin{hyp}[{\bf Global ellipticity}]\label{hyp3}
We assume that there exist  constants $\mathtt{c_1},\,\mathtt{c_2}>0$ such that the following holds.
If $f$ in \eqref{NLS} satisfies Hypothesis \ref{hyp1} (i.e. has the form \eqref{NLS5}) then
\begin{equation}\label{constraint}
\begin{aligned}
&1-\del_{z_2}\partial_{\bar{z}_2}F(z_1,z_2)\geq
\mathtt{c_1},\\
&\big((1-\del_{z_2}\partial_{\bar{z}_2}F)^{2}-|\del_{\bar{z}_2}\partial_{\bar{z}_2}F|^{2}\big)(z_1,z_2)\geq
\mathtt{c_2}\\
\end{aligned}
\end{equation}
for any $(z_1,z_2)$ in $\mathbb{C}^2$.
If $f$ in \eqref{NLS} satisfies Hypothesis \ref{hyp2} then 
\begin{equation}\label{constraint2}
\begin{aligned}
&1+\del_{z_3}f(z_1,z_2,z_3)\geq \mathtt{c_1},\\
&\big((1+\del_{z_3}f)^{2}-|\del_{\bar{z}_{3}}f|^{2}\big)(z_1,z_2,z_3)\geq\mathtt{c_2}
\end{aligned}
\end{equation}
for any $(z_1,z_2,z_3)$ in $\mathbb{C}^3$.
\end{hyp}

The main result of the paper is the following.
\begin{theo}[{\bf Local existence}]\label{teototale}
Consider equation \eqref{NLS}, assume 
 Hypothesis \ref{hyp1} (respectively Hypothesis \ref{hyp2})
 and Hypothesis \ref{hyp3}. Then
there exists $s_0>0$ such that 
for any  $s\geq s_0 $ and
for any $u_0$ in
 $H^{s}(\TTT;\CCC)$ 
 (respectively any $u_0$ even in $x$ in the case of Hyp. \ref{hyp2}) 
 there exists $T>0$, depending only on $\|u_0\|_{H^{s}}$, such that  the equation \eqref{NLS} 
 with initial datum $u_0$ has a unique classical solution $u(t,x)$ (resp. $u(t,x)$ even in $x$) such that
$$
u(t,x) \in C^{0}\Big([0,T); H^{s}(\TTT)\Big)\bigcap C^{1}\Big([0,T); H^{s-2}(\TTT)\Big).
$$
Moreover there is a constant $C>0$ depending on $\norm{u_0}{H^{s_0}}$ and on $\norm{P}{C^1}$ such that
$$
\sup_{t\in [0,T)}\|u(t,\cdot)\|_{H^{s}}\leq C\|u_0\|_{H^{s}}.
$$
\end{theo}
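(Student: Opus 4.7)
The plan is the one announced in the abstract: paralinearize \eqref{NLS}, perform paradifferential changes of variable that normalize the matrix symbol at positive orders to constant and purely imaginary form, derive a priori $H^s$ energy estimates from the resulting normal form, and finally construct solutions by a regularization-and-limit argument relying on these uniform bounds. The first concrete step is to rewrite \eqref{NLS} as a paradifferential system for the vector unknown $U=(u,\bar u)^{T}$: by Bony's paralinearization, $f(u,u_x,u_{xx})$ is recast as $\bonyw(A(U;x,\xi))U$ plus a smoothing remainder, with $A$ a $2\times 2$ matrix symbol whose principal part is of order $2$. Under Hypothesis \ref{hyp1} this symbol is Hermitian, and under Hypothesis \ref{hyp2} it is real with the appropriate $x$-parity; in both cases the global ellipticity of Hypothesis \ref{hyp3} yields uniform lower bounds on the eigenvalues of the principal symbol, with constants $\mathtt{c_1},\mathtt{c_2}$ independent of the size of $U$.

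Next comes a chain of paradifferential conjugations $U=\Phi(U)V$: (i) a block-diagonalization at order $2$ turning the principal symbol into $i\lambda_2(U;x,\xi)$ times the identity; (ii) a further change of variable that flattens $\lambda_2$ to the constant symbol $\xi^2$, which is exactly what the quantitative bounds in \eqref{constraint}--\eqref{constraint2} permit; (iii) a correction at order $1$ that removes the remaining variable-coefficient transport term, leaving only a symbol of order $0$ with possibly non-zero real part. At each step the generator of the conjugation must be chosen in the Lie algebra of transformations preserving either the Hamiltonian structure \eqref{mega3} or the parity involution of Hypothesis \ref{hyp2}, so that the normalized symbols stay real and each $\Phi(U)$ is a tame, invertible map on $H^s$ whose operator norms depend only on $\|U\|_{H^{s_0}}$. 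Solving the associated cohomological equations at each order is straightforward thanks to the explicit constant-coefficient nature of the target symbol; the nontrivial point is that the generators must respect the structural constraints.

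After these reductions $V$ satisfies an equation whose positive-order part is the constant-coefficient skew-adjoint multiplier $i\partial_{xx}$, while the remainder is a paradifferential operator bounded on $H^s$ by $C(\|U\|_{H^{s_0}})$. A standard energy identity then yields a Gronwall-type bound $\|V(t)\|_{H^s}\leq \|V(0)\|_{H^s}\exp(tC(\|U\|_{H^{s_0}}))$, which the tameness of $\Phi$ translates back into the announced estimate on $\|u(t)\|_{H^s}$, valid on a time interval $[0,T)$ with $T$ depending only on $\|u_0\|_{H^{s_0}}$. Existence is then obtained via a parabolic regularization (for instance adding $-\epsilon\partial_{xx}^2$) or a symmetric Galerkin truncation preserving the algebraic structure; the a priori bound is applied uniformly in the regularization parameter and one passes to the limit by compactness. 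Uniqueness and Lipschitz dependence follow by running the same paradifferential machinery on the difference of two solutions, yielding estimates in $H^{s-2}$, and continuity in time up to $H^s$ is recovered by a Bona--Smith argument. The main obstacle I expect is steps (ii)--(iii) of the conjugation scheme: since the data are \emph{large}, the reductions cannot rely on a smallness assumption, and the Hamiltonian or parity structure must be preserved through every transformation so that the leading symbol remains real; all the ``size'' of $u$ must be absorbed into constants via Hypothesis \ref{hyp3} rather than into a small parameter.
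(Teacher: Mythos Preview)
Your overall strategy---paralinearize, conjugate the paradifferential matrix to a diagonal constant-coefficient self-adjoint form at positive orders, derive $H^s$ energy estimates, then construct solutions---is exactly the paper's. The conjugation chain you describe (diagonalize at order $2$, reduce the second-order coefficient to a constant via a paracomposition, then kill the variable part of the first-order coefficient) matches Lemmata \ref{step1}--\ref{step4} step for step, including the crucial point that Hypothesis \ref{ipoipo4} (the ellipticity from Hyp.~\ref{hyp3}) replaces smallness and that the Hamiltonian or parity structure must be propagated through each transformation so the resulting order-one constant is purely imaginary (or zero). One small inaccuracy: the second-order symbol is not flattened to $\xi^2$ but to $m_2(U)\xi^2$ with $m_2$ a time-dependent constant in $x$; this does not affect the energy argument.

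The genuine difference is in how you build the solution. The paper does \emph{not} use parabolic regularization or Galerkin truncation; it runs a quasi-linear iterative scheme (equations \eqref{rondine0}--\eqref{rondinen}): at step $n$ one freezes the coefficients at $U_{n-1}$ and solves the resulting \emph{linear} paradifferential problem for $U_n$ via Proposition \ref{energia}, which is precisely where the normal-form machinery is invoked. Boundedness of $(U_n)$ in $C^0_t H^s$ and contraction in $C^0_t H^{s-2}$ (Lemma \ref{esistenzaAN}) then give a limit in $L^\infty_t H^s$, upgraded to $C^0_t H^s$ by a Friedrichs-mollifier/Bona--Smith argument, exactly as you anticipate. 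The advantage of the paper's scheme is that the structural hypotheses (self-adjointness or parity) are trivially inherited by each linear problem, so Theorem \ref{descent} applies cleanly at every step; by contrast, a parabolic regularization $-\varepsilon\partial_x^4$ introduces a term of order $4$ that is not covered by the order-$2$ normal form and is not Hamiltonian, so one would have to argue separately that the uniform estimates survive. Your Galerkin alternative is closer in spirit and would also work, but the iterative scheme avoids having to check that the projection commutes well enough with the paradifferential changes of variable.
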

We make some comments about Hypotheses \ref{hyp1},  \ref{hyp2} and  \ref{hyp3}. We remark that the class of Hamiltonian equations satisfying Hyp.  \ref{hyp1} is different from the parity preserving one satisfying Hyp.  \ref{hyp2}. For instance the equation
\begin{equation}\label{manuela}
\partial_t u=\ii\Big[(1+|u|^2)u_{xx}+u_x^2\bar{u}+(u-\bar{u})u_x\Big]
\end{equation}
 has the form \eqref{mega3} with Hamiltonian function 
\begin{equation*}
\mathcal{H}=\int_{\TTT}-|u_x|^2(1+|u|^2)+|u|^2(u_x+\bar{u}_x)dx,
\end{equation*}
but does not have the parity preserving structure (in the sense of Hyp. \ref{hyp2}). On the other hand the equation
\begin{equation}\label{manuela1}
\partial_tu=\ii(1+|u|^2)u_{xx}
\end{equation}
has the parity preserving structure but is not Hamiltonian with respect to the symplectic form $(u,v)\mapsto {\rm Re}\int_{\TTT} \ii u\bar{v}d x$. To check this fact one can reason as done in the appendix of \cite{ZGY}.
Both the examples \eqref{manuela} and \eqref{manuela1} satisfy the ellipticity Hypothesis \ref{hyp3}.  Furthermore there are examples of equations that satisfy Hyp. \ref{hyp1} or Hyp. \ref{hyp2} but do not satisfy Hyp. \ref{hyp3}, for instance 
\begin{equation}\label{manuela2}
\partial_tu=\ii(1-|u|^2)u_{xx}.
\end{equation}
The equation \eqref{manuela2} has the parity preserving structure and it has the form \eqref{NLS} with $P\equiv 0$ and $f(u,u_x,u_{xx})=-|u|^2u_{xx}$, therefore such an $f$ violates \eqref{constraint2} for $|u|\geq 1.$ Nevertheless we are  able to 
prove local existence for equations with
this kind of non-linearity if the size of the initial datum is sufficiently small; indeed, since $f$ in \eqref{NLS} is a $C^{\infty}$ function vanishing at the origin,  conditions \eqref{constraint2} in the case of Hyp. \ref{hyp2}  and \eqref{constraint} in the case of Hyp. \ref{hyp1}  are always locally fulfilled for $|u|$  small enough.
More precisely we have the following theorem.
\begin{theo}[{\bf Local existence for small data}]\label{teototale1}
Consider equation \eqref{NLS} and assume only  
Hypothesis \ref{hyp1} (respectively Hypothesis \ref{hyp2}).
Then there exists $s_0>0$ such that for any $s\geq s_0$
there exists $r_0>0$ such that, for any $0\leq r\leq r_0$,
the thesis of Theorem \ref{teototale} holds for any initial datum $u_0$ in the ball of radius $r$ of $H^{s}(\TTT;\CCC)$ centered at the origin.
\end{theo}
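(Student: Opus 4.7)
The plan is to reduce Theorem~\ref{teototale1} to Theorem~\ref{teototale} by truncating the nonlinearity outside a small ball, where ellipticity holds automatically, and then using Sobolev embedding to show that for sufficiently small initial data the solution of the truncated problem coincides with the solution of the original one. The key observation, already made in the text preceding the statement, is that since $F$ (resp.\ $f$) is $C^\infty$ and vanishes at the origin at order $3$ (resp.\ $2$), the quantities appearing in \eqref{constraint} (resp.\ \eqref{constraint2}) are $O(\rho)$ on the ball $\{|z|\leq\rho\}$, so Hyp.~\ref{hyp3} holds there for $\rho$ small enough.

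In the Hamiltonian case, first I would fix $\chi\in C^\infty_c(\RRR;[0,1])$ with $\chi\equiv 1$ on $[0,1]$ and $\chi\equiv 0$ on $[2,\infty)$, and for $\rho>0$ to be chosen small set
\begin{equation*}
\tilde F(z_1,z_2):=\chi\bigl((|z_1|^2+|z_2|^2)/\rho^2\bigr)\,F(z_1,z_2),
\end{equation*}
defining $\tilde f$ from $\tilde F$ via formula~\eqref{NLS5}. Since $\chi$ is real and equals $1$ near the origin, $\tilde F$ is real, of class $C^\infty$, and vanishes at order three at $0$, so $\tilde f$ automatically satisfies Hyp.~\ref{hyp1}. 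A direct computation using the vanishing of $F$ at order $3$ together with the product rule shows that every second-order Wirtinger derivative of $\tilde F$ is uniformly $O(\rho)$ on $\CCC^2$; hence for $\rho$ sufficiently small \eqref{constraint} holds globally and $\tilde F$ verifies Hyp.~\ref{hyp3}.

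For the parity preserving case the truncation is more delicate because of the algebraic constraint $\partial_{z_3}\tilde f\in\RRR$ in item~2 of Hyp.~\ref{hyp2}: a naive radial cutoff $\chi(|z|^2/\rho^2)\,f$ preserves item~1 (by evenness in $z_2$) but breaks item~2, since $\partial_{z_3}\chi(|z|^2/\rho^2)$ is not real in general. The fix is to truncate only in the semilinear variables, say $\eta(|z_1|^2+|z_2|^2)\,f$, which already preserves items 1 and 2, and then to compensate for the lack of ellipticity in the $z_3$-direction by an additive correction built out of parity-symmetric real combinations of the arguments, exploiting the reality of expressions such as $\partial_{z_3}(\chi(|z_3|^2/\rho^2)z_3)=\chi+\chi'|z_3|^2/\rho^2$. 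With an appropriate choice the resulting $\tilde f$ lies in the class of Hyp.~\ref{hyp2} and, for $\rho$ small, also satisfies \eqref{constraint2}.

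With the truncated equation in hand, Theorem~\ref{teototale} yields, for every $s\geq s_0$ (taking $s_0$ large enough so that $H^{s_0}(\TTT)\hookrightarrow C^2(\TTT)$, e.g.\ $s_0>5/2$) and every $u_0\in H^s(\TTT;\CCC)$ (even in $x$ in the parity case), a unique solution $\tilde u\in C^0([0,T);H^s)\cap C^1([0,T);H^{s-2})$ with $\sup_t\|\tilde u(t)\|_{H^s}\leq C\|u_0\|_{H^s}$. Setting $r_0:=\rho/(2\,C\,C_{\mathrm{Sob}})$ with $C_{\mathrm{Sob}}$ the Sobolev embedding constant, for $\|u_0\|_{H^s}\leq r\leq r_0$ one has $\|(\tilde u,\tilde u_x,\tilde u_{xx})(t,\cdot)\|_{L^\infty}\leq C_{\mathrm{Sob}} C r<\rho$, so $(\tilde u,\tilde u_x,\tilde u_{xx})(t,x)$ remains in the ball $\{|z|\leq\rho\}$ where $\tilde f=f$ (resp.\ $\tilde F=F$). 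Hence $\tilde u$ is a classical solution of the original \eqref{NLS} satisfying the claimed bounds. The main obstacle is the construction in the parity case: one must simultaneously preserve the $z_2$-parity, the reality condition of item~2, and global ellipticity, which a simple multiplicative cutoff cannot do; the Hamiltonian case is clean because truncating the Hamiltonian density $F$ directly preserves all these structural constraints automatically.
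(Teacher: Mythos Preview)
Your approach is genuinely different from the paper's, and in the parity-preserving case it contains a real gap.

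The paper's proof is a one-line observation: in the entire machinery leading to Theorem~\ref{teototale}, Hypothesis~\ref{hyp3} enters only through Lemma~\ref{simboli-ellittici}, whose role is to guarantee the ellipticity condition~\eqref{benigni} (Hypothesis~\ref{ipoipo4}) for the paralinearized symbol $A(U;x,\xi)$. But the coefficients $a_2(U;x)$, $b_2(U;x)$ appearing in~\eqref{benigni} are given explicitly by~\eqref{quantiselfi} or~\eqref{quantiselfi2} as derivatives of $F$ (resp.\ $f$) evaluated at $(u,u_x)$ (resp.\ $(u,u_x,u_{xx})$), and they vanish at $U=0$ because $f$ vanishes at order~$2$. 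Hence for $U$ small in $\hcic^{s_0}$ (with $s_0$ large enough for Sobolev embedding into $C^2$), \eqref{benigni} holds with constants close to $1$. The iterative scheme of Lemma~\ref{esistenzaAN} already propagates a bound $\|U_{n-1}\|_{4,s-2}\leq 2r$ (see~\eqref{666beast}), so the smallness of $r$ feeds directly into Hypothesis~\ref{ipoipo4} at every step, with no modification of the nonlinearity whatsoever. This is why Lemma~\ref{esistenzaAN} is stated under the alternative ``Hyp.~\ref{hyp3} \emph{or} $r$ sufficiently small''.

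Your truncation strategy works cleanly in the Hamiltonian case for the reasons you give, but it is more laborious than necessary. In the parity-preserving case, however, you have not actually constructed $\tilde f$: truncating only in $(z_1,z_2)$ leaves $\partial_{z_3}\tilde f=\eta(|z_1|^2+|z_2|^2)\,\partial_{z_3}f$ unbounded in $z_3$, so~\eqref{constraint2} fails globally; the ``additive correction'' you allude to must simultaneously vanish near the origin (so that $\tilde f=f$ there), satisfy item~2 of Hyp.~\ref{hyp2}, respect the $z_2$-parity, and dominate an \emph{arbitrary} real-valued $\partial_{z_3}f$ for large $z_3$ with $(z_1,z_2)$ small. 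You have not exhibited such a correction, and it is not obvious one exists with the required uniformity. The paper's route sidesteps this difficulty entirely by never leaving the small ball: ellipticity is checked only along the iterates $U_{n}$, which stay small, rather than being enforced globally on a modified nonlinearity.
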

\vspace{1em}

Our method requires a high regularity of the initial datum. 
In the rest of the paper we have not been sharp in quantifying the minimal value of $s_0$ in Theorems \ref{teototale} and
\ref{teototale1}. 
 The reason for which we need regularity is to perform suitable changes of 
 coordinates and 
 having a symbolic calculus at a sufficient order, 
 which requires smoothness of the functions of the phase space.

The  convolution potential $P$ in equation \eqref{NLS}
is motivated by possible
future applications. For instance 
the potential $P$ can be used, as external parameter, in order to  modulate the linear frequencies with  the aim 
of studying 
 the long time stability of the small amplitude solutions of \eqref{NLS}
 by means of Birkhoff Normal Forms techniques.
For semilinear NLS-type equation  this has been done in \cite{BG}. 
As far as we know there are no results regarding quasi-linear NLS-type equations. 
For quasi-linear equations we quote \cite{Delort-2009}, \cite{Delort-Sphere}
for the Klein-Gordon and \cite{maxdelort} for the capillary Water Waves.

\subsection{Functional setting and ideas of  the proof}

\vspace{0.5em}
Here we introduce the phase space of functions  and  we give some ideas of the proof.
It is useful for our purposes to work  on the product space $H^s\times H^s:=H^{s}(\TTT;\CCC)\times H^{s}(\TTT;\CCC)$, in particular we will often use its subspace 
\begin{equation}\label{Hcic}
\begin{aligned}
&{\bf{ H}}^s:={\bf{ H}}^s(\TTT,\CCC^2):=\big(H^{s}\times H^{s}\big)\cap \mathcal{U},\\
&\mathcal{U}:=\{(u^{+},u^{-})\in L^{2}(\TTT;\CCC)\times L^{2}(\TTT;\CCC)\; : \; u^{+}=\ol{u^{-}}\},\\
\end{aligned}
\end{equation}
endowed with the product topology. On ${\bf{ H}}^0$ we define the scalar product
\begin{equation}\label{comsca}
(U,V)_{{\bf{ H}}^0}:=
\int_{\TTT}U\cdot\ol{V}dx.
\end{equation}
We introduce also the following subspaces of $H^s$ and of $\hcic^{s}$ made of even functions in $x\in\TTT$
\begin{equation}\label{spazipari}
\begin{aligned}
H^{s}_{e}&:=\{u\in H^{s}\; : \; u(x)=u(-x)\},\qquad 
{\bf{ H}}_{e}^s&:=(H^{s}_{e}\times H^{s}_{e})\cap \hcic^{0}.
\end{aligned}
\end{equation}

We 
define 
the operators $\lambda[\cdot]$ and $\bar{\lambda}[\cdot]$   by linearity as
\begin{equation}\label{NLS1000}
\begin{aligned}
\lambda [e^{\ii jx}]&:= \lambda_{j} e^{\ii jx}, \qquad \lambda_{j}:=(\ii j)^{2}+\hat{p}(j), \quad \;\; j\in \ZZZ,\\
\bar{\lambda}[e^{\ii jx }]&:= \lambda_{-j}e^{\ii jx},
\end{aligned}
\end{equation}
where $\hat{p}(j)$ are the Fourier coefficients of the potential $P$ in \eqref{convpotential}.
Let us introduce the following matrices
\begin{equation}\label{matrici}
E:=\left(\begin{matrix} 1 &0\\ 0 &-1\end{matrix}\right), \quad J:=\left(\begin{matrix} 0 &1\\ -1 &0\end{matrix}\right), \quad \uno:=\left(\begin{matrix} 1 &0\\ 0 &1\end{matrix}\right),
\end{equation}
and
set
\begin{equation}\label{DEFlambda}
 \Lambda U:=\left(\begin{matrix} \lambda [u] \\ \ol{\lambda}\; [\bar{u}]\end{matrix}\right), \qquad \; \forall \;\; U=(u,\bar{u})\in {\bf H}^{s}.
\end{equation}
We denote by 
$\mathfrak{P}$  the linear operator on $\hcic^{s}$ defined by
\begin{equation}\label{convototale}
\mathfrak{P}[U]:=\left(\begin{matrix} P*u\\
\bar{P}*\bar{u}
\end{matrix}\right), \quad U=(u,\bar{u})\in \hcic^s,
\end{equation}
where $P*u$ is defined in \eqref{convpotential}.
With this formalism we have that the operator $\Lambda$ in \eqref{DEFlambda} and \eqref{NLS1000} can be written as
\begin{equation}\label{DEFlambda2}
\Lambda:=\left(
\begin{matrix}
\del_{xx} & 0 \\0 & \del_{xx}
\end{matrix}
\right)+\mathfrak{P}.
\end{equation}
It is useful  to rewrite the equation \eqref{NLS}
as the equivalent system
\begin{equation}\label{NLSnaif}
\begin{aligned}
&\del_{t}U=\ii E\Lambda U+\mathtt{F}(U), \qquad \mathtt{F}(U):=\left(
\begin{matrix} f(u,u_x,u_{xx})\\
\ol{f(u,u_x,u_{xx})}\end{matrix}
\right),
\end{aligned}
\end{equation}
where $U=(u,\bar{u})$. The first step is to rewrite   \eqref{NLSnaif} as a paradifferential system
by using the paralinearization formula of Bony (see for instance \cite{Metivier}, \cite{Tay-Para}).
In order to do that, we will introduce rigorously classes of symbols in Section \ref{capitolo3}, here we follow the approach used in \cite{maxdelort}. Roughly speaking 
we shall  deal with functions $\TTT\times \RRR\ni (x,\x)\to a(x,\x)$ with limited smoothness in $x$ satisfying, for some $m\in \RRR$, the following estimate
\begin{equation}\label{falsisimboli}
|\del_{\x}^{\be}a(x,\x)|\leq C_{\be}\langle \x\rangle^{m-\beta}, \;\; \forall \; \be\in \NNN,
\end{equation}
where $\langle \x\rangle :=\sqrt{1+|\x|^{2}}$. These functions will have limited smoothness in $x$ because they will depend on $x$ through the dynamical variable $U$ which is in $\hcic^{s}(\TTT)$ for some $s$.
From the symbol $a(x,\x)$ one can define the \emph{paradifferential} operator $\bony(a(x,\x))[\cdot]$,
acting on periodic functions of the form $u(x)=\sum_{j\in\ZZZ}\hat{u}(j)\frac{e^{\ii j x}}{\sqrt{2\pi}}$, 
 in the following way:
\begin{equation}\label{sanbenedetto}
\bony(a(x,\x))[u]:=\frac{1}{2\pi}\sum_{k\in \ZZZ}e^{\ii k x}\left(
\sum_{j\in \ZZZ}\chi\left(\frac{k-j}{\langle j\rangle}\right)\hat{a}(k-j,j)\hat{u}(j)
\right),
\end{equation}
where $\hat{a}(k,j)$ is the $k^{th}$-Fourier coefficient of the $2\pi$-periodic in $x$ function $a(x,\x)$, and where
$\chi(\h)$ is a $C^{\infty}_0$ function  supported in a sufficiently small neighborhood of the origin.
With this formalism 
 \eqref{NLSnaif} is equivalent to the paradifferential system
\begin{equation}\label{falsaparali}
\del_{t}U= \ii E\mathcal{G}(U)[U]+\mathcal{R}(U),
\end{equation}
where 
 $\calG(U)[\cdot]$ is 
\begin{equation}\label{sanfrancesco}
\begin{aligned}
\calG(U)[\cdot]&:=\left(\begin{matrix} \bony((\ii\x)^{2}+a(x,\x))[\cdot] & \bony(b(x,\x))[\cdot]
\\
\bony(\ol{b(x,-\x)})[\cdot] & \bony((\ii\x)^{2}+\ol{a(x,-\x)})[\cdot]
\end{matrix}
\right),\\
a(x,\x)&:=a(U;x,\x)=\del_{u_{xx}}f (\ii\x)^{2}+\del_{u_x}f (\ii\x)+\del_{u}f,\\
b(x,\x)&:=b(U;x,\x)=\del_{\bar{u}_{xx}}f (\ii\x)^{2}+\del_{\bar{u}_x}f (\ii\x)+\del_{\bar{u}}f,\\
\end{aligned}
\end{equation}
and 
where $\mathcal{R}(U)$ is a smoothing operator 
\[
\mathcal{R}(\cdot) : \hcic^{s}\to \hcic^{s+\rho},
\]
for any $s>0$ large enough and $\rho\sim s$. 
Note that the symbols in \eqref{sanfrancesco} are of order $2$, i. e. they satisfy \eqref{falsisimboli} with $m=2$.
One of the most important property of being a paradifferential operator 
is the following: if $U$ is sufficiently regular, namely $U\in \hcic^{s_0}$ with $s_0$ large enough, then $\mathcal{G}(U)[\cdot]$ extends to a bounded linear operator from  
$\hcic^{s}$ to  $\hcic^{s-2}$ for any $s$ in $\RRR$.   
This paralinearization procedure will be discussed in detail in Section \ref{PARANLS}, in particular in Lemma \ref{paralinearizza}
and Proposition \ref{montero}.
Since equation \eqref{NLS} is quasi-linear the proofs of Theorems \ref{teototale}, \ref{teototale1} do not rely on  direct fixed point arguments; these arguments are used to study the local theory for the semi-linear equations (i.e. when the nonlinearity $f$ in \eqref{NLS} depends only on $u$). The local theory for the semi-linear Schr\"odinger type equations is, nowadays, well understood; for a complete overview we refer to \cite{caze}.
Our approach is based on the following  quasi-linear iterative scheme (a similar one is used  
for instance in \cite{ABK}).
We consider the sequence of linear problems
 \begin{equation}\label{sequenz0}
\mathcal{A}_0:=\left\{
\begin{aligned}
&\del_{t}U_0-\ii E\del_{xx}U_0=0 ,\\
&U_0(0)=U^{(0)},
\end{aligned}\right.
\end{equation}
and for $n\geq1$
\begin{equation}\label{sequenzn}
\mathcal{A}_n:=\left\{
\begin{aligned}
&\del_{t}U_n-\ii E\calG(U_{n-1})[U_{n}]-\mathcal{R}(U_{n-1})=0 ,\\
&U_n(0)=U^{(0)},
\end{aligned}\right.
\end{equation}
where  $U^{(0)}(x)=(u_0(x),\ol{u_0}(x))$ with $u_0(x)$ given in \eqref{NLS}.
The goal is to show that there exists $s_0>0$ such that for any $s\geq s_0$ the following facts hold:
\begin{enumerate}
\item the iterative scheme is well-defined, i.e. 
there is $T>0$ such that for any $n\geq0$ there exists a unique solution $U_{n}$ of the problem
$\mathcal{A}_{n}$ which belongs to the space $C^{0}([0,T)]; \hcic^{s})$; 

\item the sequence $\{U_{n}\}_{n\geq0}$ is bounded in 
$C^{0}([0,T)]; \hcic^{s})$;

\item $\{U_n\}_{n\geq0}$ is a Cauchy sequence in $C^{0}([0,T)]; \hcic^{s-2})$.

\end{enumerate}

From these properties the limit function $U$ belongs to the space $L^{\infty}([0,T); \hcic^{s})$.
 In the final part of Section \ref{local} we show that actually  $U$
 is a \emph{classical} solution of \eqref{NLS}, namely $U$ solves \eqref{NLSnaif}
 and it belongs to $C^{0}([0,T);\hcic^{s})$.

Therefore the key point is to obtain energy estimates for the linear problem in $V$
\begin{equation}\label{seqseqseq}
\left\{
\begin{aligned}
&\del_{t}V-\ii E\calG(U)[V]-\mathcal{R}(U)=0 ,\\
&V(0)=U^{(0)},
\end{aligned}\right.
\end{equation}
where  $\mathcal{G}$ is  given in \eqref{sanfrancesco}
and $U=U(t,x)$ is
a fixed function defined for $t\in[0,T], T>0$, regular enough and $\mathcal{R}(U)$ is regarded as a non homogeneous forcing term.
Note that the regularity in time and space of the coefficients of operators $\mathcal{G}, \mathcal{R}$
depends on the regularity of the function $U$.
Our strategy is to perform a paradifferential
change of coordinates 
$W:=\Phi(U)[V]$ such that the system \eqref{seqseqseq} in the new coordinates reads
\begin{equation}\label{seqseqseq2}
\left\{
\begin{aligned}
&\del_{t}W-\ii E\widetilde{\mathcal{G}}(U)[W]-\widetilde{\mathcal{R}}(U)=0 ,\\
&W(0)=\Phi(U^{(0)})[U^{(0)}],
\end{aligned}\right.
\end{equation}
where the operator  $\widetilde{\mathcal{G}}(U)[\cdot]$  is self-adjoint with constant coefficients
in $x\in \TTT$ and $\widetilde{\mathcal{R}}(U)$ is a bounded term. More precisely we show that 
the operator $\widetilde{\calG}(U)[\cdot]$ has the form
\begin{equation}\label{scopo}
\begin{aligned}
\widetilde{\calG}(U)[\cdot]&:=\left(\begin{matrix} \bony((\ii\x)^{2}+m(U;\x))[\cdot] & 0
\\
0 & \bony((\ii\x)^{2}+m(U;\x))[\cdot]
\end{matrix}
\right),\\
m(U;\x)&:=m_{2}(U)(\ii\x)^{2}+m_{1}(U)(\ii\x)\in \RRR,\\
\end{aligned}
\end{equation}
with $m(U;\x)$ real valued and independent of $x\in \TTT$. Since the symbol $m(U;\xi)$ is real valued the linear operator $\ii E \widetilde{\mathcal{G}}(U)$ generates a well defined flow on $L^2\times L^2$, since it has also constant coefficients in $x$ it generates a flow on $H^{s}\times H^s$ for $s\geq 0$.
This idea of conjugation to constant coefficients up to bounded remainder has been developed in order to study the linearized equation associated to quasi-linear system in the context of Nash-Moser iterative scheme. For instance we quote the papers 
\cite{BBM}, \cite{BBM1}
on the KdV equation, \cite{FP}, \cite{FP2} on the NLS equation and \cite{IPT}, \cite{BM1}, \cite{BBHM}, \cite{alaz-baldi-periodic} on the water waves equation, in which such techniques are used in studying the existence of periodic and quasi-periodic solutions.
 Here, dealing with the paralinearized equation \eqref{falsaparali}, we adapt the changes of coordinates, for instance performed in \cite{FP}, to the paradifferential context following the strategy introduced in \cite{maxdelort} for the water waves equation.

 \vspace{1em}
{\bf Comments on Hypotheses \ref{hyp1}, \ref{hyp2} and \ref{hyp3}.}

\vspace{0.5em}
Consider the following linear system
\begin{equation}\label{IVPlineare}
\begin{aligned}
\del_{t}V-\ii E \mathfrak{L}(x)\del_{xx} V=0 ,\\
\end{aligned}
\end{equation}
where $\gotL(x)$ is the non constant coefficient matrix
\begin{equation}\label{isabella}
\gotL(x):=\left(
\begin{matrix}
1+a_{2}(x) & b_{2}(x) \\
\ol{b_{2}(x)} & 1+a_{2}(x)
\end{matrix}
\right), \quad a_{2} \in C^{\infty}(\TTT;\RRR), \;\; b_{2}\in C^{\infty}(\TTT;\CCC),
\end{equation}
Here we explain how to diagonalize and conjugate to constant coefficients the system \eqref{IVPlineare} at the highest order, we also discuss the role of the Hypotheses \ref{hyp1}, \ref{hyp2} and \ref{hyp3}.
The analogous analysis for  the paradifferential system \eqref{seqseqseq} is performed in Section \ref{descent1}.

{\emph{First step: diagonalization at the highest order.}}
We want to transform \eqref{IVPlineare} into the system
\begin{equation}\label{IVPlineare2}
\begin{aligned}
&\del_{t}V_{1}=\ii E \left(A^{(1)}_2(x)\del_{xx} V_1+A_{1}^{(1)}(x)\del_{x}V_1+A_0^{(1)}(x)V_1\right) ,\\
\end{aligned}
\end{equation}
where $A_{1}^{(1)}(x),A_0^{(1)}(x)$ are $2\times2$ matrices of functions, 
and $A_{2}^{(1)}(x)$ is the diagonal matrix of functions
\[
A_{2}^{(1)}(x)=\left(
\begin{matrix}
1+a^{(1)}_{2}(x) & 0 \\
0 & 1+a_{2}^{(1)}(x)
\end{matrix}
\right),
\]
for some real valued function $a_{2}^{(1)}(x)\in C^{\infty}(\TTT;\RRR)$. See Section \ref{secondord} for the paradifferential linear system \eqref{seqseqseq}.
 The matrix $E\gotL(x)$ can be diagonalized 
 through a regular transformation if the determinant of $E\gotL(x)$ is strictly positive, i.e. there exists $c>0$ such that
 \begin{equation}\label{condlineare}
 {\rm det}\Big(E\gotL(x)\Big)=(1+a_{2}(x))^{2}-b_{2}(x)^{2}\geq c,
 \end{equation}
for any $x\in \TTT$.
Note that the eigenvalues of  $E\gotL(x)$ are $\lambda_{1,2}(x)=\pm \sqrt{{\rm det}E\gotL(x)}$. Let  $\Phi_1(x)$ be the matrix of functions such that
\[
\Phi_1(x)\big(E\gotL(x)\big)\Phi_1^{-1}(x)=EA_{2}^{(1)}(x),
\]
where $(1+a_{2}^{(1)}(x))$ is the positive eigenvalue of $E\gotL(x)$.
One obtains the system \eqref{IVPlineare2} by setting
 $V_1:=\Phi_1(x)V$.

Note that condition \eqref{condlineare}  is the transposition  at the linear level of the second inequality in  \eqref{constraint} or \eqref{constraint2}.
Note also  that if $\|a_{2}\|_{L^{\infty}},\|b_{2}\|_{L^{\infty}}\leq r$ then condition \eqref{condlineare}
is automatically fulfilled for $r$ small enough.

{\emph{Second step: reduction to constant coefficients at the highest order.}} In order to understand the role of the first bound in  conditions \eqref{constraint} and \eqref{constraint2}
we perform a further step in which we reduce the system \eqref{IVPlineare2}
to 
\begin{equation}\label{IVPlineare3}
\begin{aligned}
&\del_{t}V_{2}=\ii E \left(A^{(2)}_2\del_{xx} V_2+A_{1}^{(2)}(x)\del_{x}V_2+A_{0}^{(2)}(x)V_2\right) ,\\
\end{aligned}
\end{equation}
where $A_{1}^{(2)}(x),A_{0}^{(2)}(x)$ are $2\times2$ matrices of functions, 
and 
\[
A_{2}^{(2)}=\left(
\begin{matrix}
m_2 & 0 \\
0 & m_2
\end{matrix}
\right),
\]
for some constant $m_2\in \RRR$, $m_2>0$.
See Section \ref{ridu2} for the reduction of the paradifferential linear system \eqref{seqseqseq}.
In order to do this
we use the torus diffeomorphism $x\to x+\be(x)$ for some periodic function $\beta(x)$
with inverse given by $y\to y+\gamma(y)$ with $\gamma(y)$ periodic in $y$. We define the following  linear operator   $(Au)(x)=u(x+\be(x))$, such operator is invertible with inverse given by
 $(A^{-1}v)(y)=v(y+\gamma(y))$.
This change of coordinates
transforms \eqref{IVPlineare2} into \eqref{IVPlineare3} where 
 \begin{equation}
A_{2}^{(2)}(x)= \left(
\begin{matrix}
A[(1+a_{2}^{(1)}(y))(1+\gamma_{y}(y))^{2}] & 0 \\
0 & A[(1+a_{2}^{(1)}(y))(1+\gamma_{y}(y))^{2}]
\end{matrix}
\right).
 \end{equation}
 Then the highest order coefficient does not depend on $y\in\TTT$
 if 
 \[
 (1+a_{2}^{(1)}(y))(1+\gamma_{y})^{2}=m_2,
 \]
 with $m_2\in \RRR$ independent of $y$. This equation can be solved by setting
 \begin{equation}\label{monastero}
 \begin{aligned}
 m_{2}&:=\left[2\pi \left(\int_{\TTT}\frac{1}{\sqrt{1+a_{2}^{(1)}(x)}}d x\right)^{-1}\right]^{2},\\
 \gamma(y)&:=\del_{y}^{-1}\left(\sqrt{\frac{m_2}{1+a_{2}^{(1)}(y)}}-1\right),
 \end{aligned}
 \end{equation}
 where $\del_{y}^{-1}$ is the Fourier multiplier with symbol $1/(\ii\x)$, hence it is defined only on zero mean functions. This justifies  the choice of $m_2$. 
 Note that $m_2$, $\gamma$ in \eqref{monastero} are well-defined if
 $(1+a_{2}^{(1)}(x))$ is real and strictly positive for any $x\in \TTT$.
 This is the first condition in \eqref{constraint} and \eqref{constraint2}.

{\emph{Third step: reduction at lower orders.}} 
One can show
that it is always possible to conjugate system \eqref{IVPlineare3} to a system of the form
\begin{equation}\label{esempio}
\del_{t}V_3=\ii E \left(A^{(3)}_2\del_{xx} V_2+A_{1}^{(3)}(x)\del_{x}V_2+A_{0}^{(3)}(x)V_2\right),
\end{equation}
where $A_{2}^{(3)}\equiv A_{2}^{(2)}$ and
\[
A_{1}^{(3)}:=\left(\begin{matrix}m_{1} & 0\\
0 & \ol{m_1}
\end{matrix}
\right),
\]
with $m_{1}\in \CCC$
and $A_{0}^{(3)}(x)$ is a matrix of  functions up to bounded operators.
See Sections \ref{diago1}, \ref{ridu1} for the analogous reduction for paradifferential linear system \eqref{seqseqseq}.

It turns out that no extra hypotheses are needed to perform this third step. 
We obtained that 
the unbounded term in the r.h.s. of  \eqref{esempio} is pseudo-differential constant coefficients operator 
with symbol 
$m(\x):=m_2(\ii\x)^{2}+m_1(\ii\x)$. 
This is not enough to get energy estimates because the operator $A_{2}^{(3)}\del_{xx}+A_{1}^{(3)}\del_{x}$
is not self-adjoint since the symbol $m(\x)$ is not \emph{a-priori} real valued.

This example gives the idea that the global ellipticity hypothesis Hyp. \ref{hyp3} (or the smallness of the initial datum), are needed to conjugate the highest order term of $\mathcal{G}$ in \eqref{seqseqseq} to a diagonal and constant coefficient operator. 
 Of course  there are no \emph{a-priori} reasons to conclude that $\widetilde{\calG}$ is self-adjoint.
 This operator  is self-adjoint if and only if its symbol $m(U;\x)$ in \eqref{scopo} is real valued for any $\x\in \RRR$. 
 The Hamiltonian hypothesis \ref{hyp1} implies that $m_1(U)$ in \eqref{scopo} is purely imaginary,
 while the parity preserving assumption \ref{hyp2} guarantees
 that $m_1(U)\equiv0$. 
  Indeed 
 it is shown  Lemma \ref{struttura-ham-para} 
 that
if $f$ is Hamiltonian (i.e. satisfies 
Hypothesis \ref{hyp1}) then the operator $\cal{G}(U)[\cdot]$ is formally self-adjoint
w.r.t. the scalar product of $L^{2}\times L^{2}$. 
In our reduction procedure we use transformations which preserve this structure.
On the other hand in the case that
$f$ is parity preserving (i.e. satisfies Hyp. \ref{hyp2})
then, in Lemma  \ref{struttura-rev-para} it is shown that 
the operator $\calG(U)[\cdot]$ maps even functions in even functions if $U$ is even in $x\in \TTT$.
In this case we apply only transformations which preserve the parity of the functions.
An operator of the form $\widetilde{\mathcal{G}}$ as in \eqref{scopo}
preserves the subspace of even function  only if $m_1(U)=0$.
 \section*{Acknowledgements}
We warmly thanks Prof. Massimiliano Berti for the interesting discussions and for the very useful advices.

\section{Linear operators}
We define some special classes of linear operators on spaces of functions.

\begin{de}\label{barrato}
Let $A : H^{s}\to H^{s'}$, for some $s,s'\in \RRR$, be a linear operator.
We define the  operator $\ol{A}$ 
as 
\begin{equation}\label{barrato2}
\ol{A}[h]:= \ol{A[\bar{h}]},  \qquad h\in H^{s}.
\end{equation}
\end{de}
\begin{de}[{\bf Reality preserving}]\label{realpre}
Let $A, B : H^{s}\to H^{s'}$, for some $s,s'\in \RRR$, be  linear operators.
We say that a matrix of 
 linear operators $\mathfrak{F}$ is \emph{reality} preserving if has the form
 \begin{equation}\label{barrato4}
\mathfrak{F}:=\left(\begin{matrix} A & B \\ \ol{B} & \ol{A}\end{matrix}\right), 
\end{equation}
for $A$ and $B$ linear operators. 
\end{de}

\begin{rmk}\label{operatoripreserving1}
Given $s,s'\in \RRR$,
one can easily check that 
a \emph{reality} preserving linear operator $\mathfrak{F} $ of the form \eqref{barrato4}
is such that
\begin{equation}\label{mappare}
\mathfrak{F} : \hcic^{s}\to \hcic^{s'}.
\end{equation}
\end{rmk}

Given an operator $\mathfrak{F}$  of the form \eqref{barrato4} 
we denote by $\mathfrak{F^*}$ its adjoint with respect to the scalar product $\eqref{comsca}$
\begin{equation*}
(\mathfrak{F}U,V)_{{\bf{ H}}^0}=(U,\mathfrak{F}^{*}V)_{{\bf{ H}}^0}, \quad \forall\,\, U,\, V\in {\bf{ H}}^s.
\end{equation*}
One can check that 
\begin{equation}\label{natale}
\mathfrak{F}^*:=\left(\begin{matrix} A^* & \ol{B}^* \\ {B}^* & \ol{A}^*\end{matrix}\right),
\end{equation}
where $A^*$ and $B^*$ are respectively the adjoints of the operators $A$ and $B$ with respect to
the complex scalar product on $L^{2}(\TTT;\CCC)$
\[
(u,v)_{L^{2}}:=\int_{\TTT}u\cdot \bar{v}dx, \quad u,v\in L^{2}(\TTT;\CCC).
\]
\begin{de}[{\bf Self-adjointness}]\label{selfi}
Let $\mathfrak{F}$   be a reality preserving linear operator
of the form \eqref{barrato4}.
We say that  $\mathfrak{F}$ 
is \emph{self-adjoint} if $A,A^*,B,B^* : H^{s}\to H^{s'}$, for some $s,s'\in \RRR$ and
\begin{equation}\label{calu}
A^{*}=A,\;\;
\;\; \ol{B}=B^{*}.
\end{equation}
\end{de}

We have the following definition.

\begin{de}[{\bf Parity preserving}]\label{revmap}
Let $A : H^{s}\to H^{s'}$, for some $s,s'\in \RRR$ be a linear operator. 
We say that  $A$ 
is \emph{parity} preserving if
\begin{equation}\label{revmap2000}
A : H^{s}_{e}\to H^{s'}_{e},
\end{equation}
i.e. maps even functions in even functions of $x\in \TTT$.
 Let $\mathfrak{F} :\hcic^{s} \to \hcic^{s'} $ be a reality preserving operator of the form \eqref{barrato4}.
We say that $\mathfrak{F}$ is parity preserving if the operators
$A,B$ are parity preserving operators. 
%
\end{de}

\begin{rmk}\label{operatoripreserving2}
Given $s,s'\in \RRR$,
and let $\mathfrak{F} :\hcic^{s} \to \hcic^{s'} $ be a reality and parity preserving operator of the form \eqref{barrato4}.
One can check that
 that
\begin{equation}\label{revmap2}
\mathfrak{F} : {\bf H}_{e}^{s}\to {\bf H}_{e}^{s'}.
\end{equation}
\end{rmk}
%
%
%
%
%

 We note that $\Lambda$ in \eqref{DEFlambda} has the following properties: 
 \begin{itemize}
 \item   the operator $\Lambda$ is reality preserving (according to Def. \ref{realpre}).

 \item
  the operator $\Lambda$  is self-adjoint 
  according to Definition \ref{selfi}
since the coefficients $\hat{p}(j)$ for $j\in \ZZZ$
 are real;
 
 \item under the parity preserving assumption Hyp. \ref{hyp2} the operator $\Lambda$
 is  parity preserving according to Definition \ref{revmap},
 since $\hat{p}(j)=\hat{p}(-j)$ for $j\in \ZZZ$.
 
 \end{itemize}

\paragraph{Hamiltonian and parity preserving vector fields.}
Let $\mathfrak{F}$  be a reality preserving, self-adjoint (or parity preserving respectively) operator as in \eqref{barrato4}
and consider the linear system
\begin{equation}\label{linearsystem}
\partial_t U=\ii E\mathfrak{F}U, 
\end{equation}
on $\hcic^{s}$ where $E$ is given in \eqref{matrici}.
We want to analyze   how the properties of the system \eqref{linearsystem}
change under the conjugation through maps 
$$
\Phi : \hcic^{s}\to \hcic^{s},
$$
which are reality preserving.
We have the following lemma. 
\begin{lemma}\label{lemmalemma2}
Let $\calA : \hcic^{s}\to \hcic^{s-m}$, for some $m\in \RRR$ and $s>0$ be a reality preserving, self-adjoint operator according to Definitions
\ref{realpre}, \ref{selfi} 
and assume that its flow 
\begin{equation}\label{system15}
\del_{\tau}\Phi^{\tau}=\ii E\calA \Phi^{\tau}, \qquad \Phi^{0}=\uno,
\end{equation}
satisfies the following. The map $\Phi^{\tau}$ is a continuous function in $\tau\in[0,1]$ with values in the space of bounded linear operators from $\hcic^s$ to $\hcic^s$  and $\del_{\tau}\Phi^{\tau}$ is continuous as well in $\tau\in[0,1]$ with values in the space of bounded linear operators from $\hcic^s$ to $\hcic^{s-m}$.

Then the map $\Phi^{\tau}$ satisfies the  condition 
\begin{equation}\label{symsym10}
(\Phi^{\tau})^{*}(-\ii E )\Phi^{\tau}=- \ii E.
\end{equation}

\end{lemma}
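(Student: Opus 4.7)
The plan is to introduce the auxiliary quantity
$$
Q(\tau):=(\Phi^{\tau})^{*}(-\ii E)\Phi^{\tau}
$$
and prove that it is constant in $\tau\in[0,1]$ by differentiation. The continuity assumptions on $\Phi^{\tau}$ and $\partial_{\tau}\Phi^{\tau}$ stated in the lemma ensure, by taking adjoints, analogous continuity for $(\Phi^{\tau})^{*}$ and its $\tau$-derivative, so that the Leibniz rule applied to $Q(\tau)$ makes sense as an identity of bounded operators between $\hcic^{s}$ and some space of lower regularity. Since $Q(0)=-\ii E$ by the initial condition $\Phi^{0}=\uno$, it suffices to show $\partial_{\tau}Q(\tau)=0$.

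The key step is taking the adjoint of the ODE \eqref{system15}. Two simple algebraic facts drive the computation: first, $E$ in \eqref{matrici} is real and diagonal, so $E^{*}=E$ with respect to the scalar product \eqref{comsca}, and moreover $E^{2}=\uno$; second, $\calA^{*}=\calA$ by the self-adjointness hypothesis, together with the general adjoint formula \eqref{natale} applied to the reality preserving block structure \eqref{barrato4}. Combining these one gets
$$
\partial_{\tau}(\Phi^{\tau})^{*}=(\Phi^{\tau})^{*}(\ii E\calA)^{*}=(\Phi^{\tau})^{*}\calA^{*}E^{*}(-\ii)=-\ii(\Phi^{\tau})^{*}\calA E.
$$

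Plugging this into the Leibniz rule and using $E^{2}=\uno$ produces the cancellation
$$
\partial_{\tau}Q(\tau)=-\ii(\Phi^{\tau})^{*}\calA E\cdot(-\ii E)\Phi^{\tau}+(\Phi^{\tau})^{*}(-\ii E)(\ii E\calA)\Phi^{\tau}=-(\Phi^{\tau})^{*}\calA\Phi^{\tau}+(\Phi^{\tau})^{*}\calA\Phi^{\tau}=0,
$$
which combined with $Q(0)=-\ii E$ gives \eqref{symsym10}. I do not anticipate any real obstacle: the content is essentially the standard fact that a flow generated by a skew-symmetric generator (here $\ii E\calA$ with respect to the symplectic form $-\ii E$) preserves that form, and the mild issue of justifying the differentiation is already taken care of by the continuity hypotheses; the only point deserving care is tracking the $\ii$ and $E$ factors to see that the self-adjointness of $\calA$ really produces the required cancellation in $\calA-\calA^{*}$.
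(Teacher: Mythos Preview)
Your proof is correct and follows essentially the same approach as the paper: differentiate $(\Phi^{\tau})^{*}(-\ii E)\Phi^{\tau}$ in $\tau$, use the adjoint equation $\partial_{\tau}(\Phi^{\tau})^{*}=(\Phi^{\tau})^{*}\calA(-\ii E)$ obtained from self-adjointness of $\calA$ and $E^{*}=E$, observe the derivative vanishes, and conclude from the initial condition. Your write-up is in fact more explicit than the paper's in tracking the cancellation via $E^{2}=\uno$.
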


\begin{proof}

First we note that the adjoint operator $(\Phi^{\tau})^{*}$  satisfies the equation
$\del_{\tau}(\Phi^{\tau})^{*}=(\Phi^{\tau})^{*}\calX(-\ii E)$. Therefore one can note that 
$$
\del_{\tau}\Big[ (\Phi^{\tau})^{*}(-\ii E )\Phi^{\tau}
\Big]=0,
$$
which implies $(\Phi^{\tau})^{*}(-\ii E )\Phi^{\tau}=(\Phi^{0})^{*}(-\ii E )\Phi^{0}=-\ii E$.
\end{proof}

\begin{lemma}\label{lemmalemma}
Consider a reality preserving, self-adjoint  linear operator $\mathfrak{F}$ (i.e. which satisfies \eqref{barrato4} and  \eqref{calu})
and a reality preserving  map $\Phi$. 
Assume that 
$\Phi$ satisfies condition \eqref{symsym10}
and
consider  the system
\begin{equation}\label{system}
\partial_t W=\ii E \mathfrak{F} W, \qquad W\in {\bf H}^{s}.
\end{equation}
By setting $V=\Phi W$ one has that the system \eqref{system} reads
\begin{equation}\label{system22}
\partial_t V=\ii E \calY V,
\end{equation}
\begin{equation}\label{system2}
\calY:=-\ii E\Phi(\ii E)\mathfrak{F}\Phi^{-1}-\ii E(\del_{t}\Phi)\Phi^{-1},
\end{equation}
and $\calY$ is self-adjoint, i.e. it satisfies conditions \eqref{calu}.
\end{lemma}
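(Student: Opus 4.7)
The starting point is the straightforward change of variable. Setting $V = \Phi W$ and differentiating in time, I would compute
\[
\partial_t V = (\partial_t \Phi) W + \Phi \partial_t W = (\partial_t \Phi)\Phi^{-1} V + \Phi(\ii E \mathfrak{F})\Phi^{-1} V,
\]
then factor out $\ii E$ on the left using $(\ii E)^{-1} = -\ii E$ (since $E^2 = \uno$). This immediately yields \eqref{system22}--\eqref{system2}. The reality-preserving nature of $\calY$ is then inherited from that of $\Phi$, $\Phi^{-1}$, $\mathfrak{F}$ and the matrix $\ii E$ (which is of the form \eqref{barrato4} with $A = \ii\,\mathrm{id}$, $B=0$), since reality-preserving operators form an algebra closed under inversion.

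The heart of the proof is the self-adjointness of $\calY$, which I would split into the two summands in \eqref{system2}. First I would extract from \eqref{symsym10} the equivalent form
\[
\Phi^{-1} = E\,\Phi^{*}\,E, \qquad (\Phi^{-1})^{*} = E\,\Phi\,E,
\]
obtained by left/right multiplication by $E$ and the identity $(-\ii E)^{-1} = \ii E$. For the first summand $\calY_1 := -\ii E\,\Phi\,(\ii E)\,\mathfrak{F}\,\Phi^{-1} = E\Phi E\,\mathfrak{F}\,\Phi^{-1}$, taking the adjoint and using $\mathfrak{F}^{*}=\mathfrak{F}$ together with the two displayed identities would give
\[
\calY_1^{*} = (\Phi^{-1})^{*}\mathfrak{F}(E\Phi E)^{*} = E\Phi E\,\mathfrak{F}\,E\Phi^{*}E = E\Phi E\,\mathfrak{F}\,\Phi^{-1} = \calY_1,
\]
after simplifying the factors $E\cdot E = \uno$.

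For the second summand $\calY_2 := -\ii E\,(\partial_t\Phi)\,\Phi^{-1}$ the key input is the time-derivative of \eqref{symsym10},
\[
(\partial_t\Phi)^{*}(-\ii E)\Phi + \Phi^{*}(-\ii E)(\partial_t\Phi) = 0,
\]
from which one can solve for $(\partial_t\Phi)^{*}(\ii E) = \Phi^{*}(-\ii E)(\partial_t\Phi)\Phi^{-1}$. Computing
\[
\calY_2^{*} = (\Phi^{-1})^{*}(\partial_t\Phi)^{*}(\ii E) = E\Phi E\,\Phi^{*}(-\ii E)(\partial_t\Phi)\Phi^{-1}
\]
and noting that $E\Phi E \cdot \Phi^{*} = E\Phi \cdot \Phi^{-1}E = \uno$ by the identity for $\Phi^{-1}$, one collapses the expression to $\calY_2^{*} = -\ii E(\partial_t\Phi)\Phi^{-1} = \calY_2$. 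Summing the two pieces yields $\calY^{*} = \calY$, and combined with the reality-preserving property this is exactly condition \eqref{calu}.

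The only place one has to be careful is the bookkeeping of signs and of the constant factors $\pm\ii E$; there is no real analytic difficulty, since smoothness of $\Phi^{\pm 1}$ and $\partial_t\Phi$ in the appropriate operator norms has already been encoded in the hypotheses transferred from Lemma \ref{lemmalemma2}. I expect the sign-tracking in the differentiated symplectic-type identity to be the only mildly delicate point.
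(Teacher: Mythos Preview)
Your proposal is correct and follows essentially the same strategy as the paper: split $\calY$ into its two summands, use the symplectic-type identity \eqref{symsym10} for the first, and differentiate \eqref{symsym10} in time for the second. The only cosmetic difference is that for the first summand the paper observes directly that \eqref{symsym10} gives $(-\ii E)\Phi=(\Phi^{*})^{-1}(-\ii E)$, so that $-\ii E\,\Phi\,(\ii E)\,\mathfrak{F}\,\Phi^{-1}=(\Phi^{*})^{-1}\mathfrak{F}\,\Phi^{-1}$, which is manifestly self-adjoint once $(\Phi^{-1})^{*}=(\Phi^{*})^{-1}$ is checked; your version unpacks the same identity via $\Phi^{-1}=E\Phi^{*}E$ and verifies $\calY_1^{*}=\calY_1$ by direct substitution. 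Both routes are equivalent algebraic reformulations of the same condition, and your explicit remark that $\calY$ is reality preserving (needed for \eqref{calu} to make sense) is a point the paper leaves implicit.
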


\begin{proof}
One applies the changes of coordinates and one gets the form in \eqref{system2}.
We prove that separately each term of $\calY$ is self-adjoint.
Note that by \eqref{symsym10} one has that $(-\ii E )\Phi=(\Phi^{*})^{-1}(-\ii E)$, hence
$ -\ii E\Phi(\ii E)\mathfrak{F}\Phi^{-1}=(\Phi^{*})^{-1} \mathfrak{F} \Phi^{-1} $. Then
\begin{equation}\label{system12}
\Big( (\Phi^{*})^{-1} \mathfrak{F} \Phi^{-1} \Big)^{*}=(\Phi^{-1})^{*} \mathfrak{F} [(\Phi^{*})^{-1}]^{*},
\end{equation}
since $\mathfrak{F}$ is self-adjoint.
Moreover we have that $(\Phi^{-1})^{*}=(\Phi^{*})^{-1}$. Indeed again by \eqref{symsym10} one has that
$$
\Phi^{-1}=(\ii E)\Phi^{*}(-\ii E), \qquad  (\Phi^{-1})^{*}=(\ii E) \Phi (-\ii E), \quad \Phi^{*}=(-\ii E)\Phi^{-1}(\ii E)
$$
Hence one has
\begin{equation}\label{system13}
(\Phi^{-1})^{*}\Phi^{*}=(\ii E) \Phi (-\ii E)(-\ii E)\Phi^{-1}(\ii E)=-(\ii E)(\ii E)=\uno.
\end{equation}
Then by \eqref{system12} we conclude that $(-\ii E)\Phi \ii E \Phi^{-1}$ is self-adjoint. Let us study the second term of \eqref{system2}.
First note that 
\begin{equation}\label{palomba}
\del_{t}[\Phi^{*}]=-(\Phi^{*})(-\ii E)(\del_{t}\Phi)\Phi^{-1}(\ii E), \qquad (\del_{t}\Phi)^{*}=\Phi^{*}(\ii E)(\del_{t}(\Phi^{*}))^{*}\Phi^{-1}(\ii E)
\end{equation}
then
\begin{equation}\label{system14}
\Big(
(-\ii E)(\del_{t}\Phi)(\Phi^{-1})
\Big)^{*}=(\Phi^{-1})^{*}(\del_{t}\Phi)^{*}(\ii E)=(-\ii E)(\del_{t}(\Phi^{*}))^{*}\Phi^{-1}.
\end{equation}
By \eqref{palomba} we have $\del_{t}(\Phi^{*})=(\del_{t}\Phi)^{*}$, hence we get the result.
\end{proof}

\begin{lemma}\label{revmap100}
Consider a reality  and parity preserving linear operator $\mathfrak{F}$ (i.e. \eqref{barrato4} and \eqref{revmap2} hold)
and a map $\Phi$ as in \eqref{barrato4} which is parity preserving (see Def. \ref{revmap}).
Consider  the system
\begin{equation}\label{system100}
\partial_t W=\ii E \mathfrak{F} W, \qquad W\in {\bf H}^{s}.
\end{equation}
By setting $V=\Phi W$ one has that the system \eqref{system} reads
\begin{equation}\label{system22100}
\partial_t V=\ii E \calY V,
\end{equation}
\begin{equation}\label{system2100}
\calY:=-\ii E\Phi(\ii E)\mathfrak{F}\Phi^{-1}-\ii E(\del_{t}\Phi)\Phi^{-1},
\end{equation}
and $\calY$ is reality preserving and parity preserving, i.e. satisfies condition \eqref{barrato4} and \eqref{revmap2}.
\end{lemma}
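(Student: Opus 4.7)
The derivation of the formula for $\calY$ is purely algebraic and I would perform it exactly as in the proof of Lemma \ref{lemmalemma}: differentiating $V=\Phi W$ in time, substituting $\partial_t W=\ii E \mathfrak{F}W$ and $W=\Phi^{-1}V$, and left-multiplying by $-\ii E$ (using $(\ii E)(-\ii E)=\uno$) yields exactly \eqref{system2100}. This step uses no structural property of $\Phi$ or $\mathfrak{F}$.

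The core of the argument is to show that each of the two summands of \eqref{system2100} is a composition of reality and parity preserving operators; stability of these classes under sum will then give the claim. I would first observe that $\pm\ii E$ is reality preserving, since it has the diagonal form of \eqref{barrato4} with $A=\pm\ii$, $B=0$, and $\overline{A}=\mp\ii$ correctly fills the opposite diagonal slot, and it is obviously parity preserving since it acts as a scalar multiplication commuting with $x\mapsto -x$. A short block-matrix computation, using $\overline{A_1A_2+B_1\overline{B_2}}=\overline{A_1}\,\overline{A_2}+\overline{B_1}B_2$, then gives that the product of two reality preserving operators is reality preserving; closure under composition for parity preserving operators is immediate from Def.~\ref{revmap}.

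It remains to check that $\Phi^{-1}$ and $\partial_t\Phi$ inherit both structures from $\Phi$. For the inverse, solving $\Phi\Phi^{-1}=\uno$ block by block and using uniqueness of the inverse shows that $\Phi^{-1}$ has the form \eqref{barrato4}, hence is reality preserving; for parity one exploits that in the applications $\Phi$ preserves both the even and the odd subspaces, so that $\Phi^{-1}$ maps $H^s_e$ into itself. For the time derivative, real-linear difference quotients of reality/parity preserving operators remain in the same subclass, and so does their limit. Assembling these ingredients, both summands of \eqref{system2100} are compositions of simultaneously reality and parity preserving operators, so $\calY$ itself has both properties.

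The delicate step I foresee is the justification that $\Phi^{-1}$ is parity preserving: whereas the reality preserving block structure is automatically stable under inversion, the pure condition of mapping $H^s_e$ into $H^s_e$ is not, and one needs in addition that $\Phi$ map odd functions to odd functions. This will be automatic in the applications, where $\Phi$ arises as the time-$1$ flow of a parity preserving paradifferential system enjoying the symmetry $x\mapsto -x$, but it is the one structural hypothesis that must be borne in mind whenever the lemma is invoked.
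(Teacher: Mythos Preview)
Your approach is correct and is precisely what the paper has in mind; its own proof is the single sentence ``It follows straightforward by the Definitions \ref{revmap} and \ref{realpre}'', and you have simply spelled out the routine verifications behind that sentence.

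One remark on your final ``delicate step'': the concern about $\Phi^{-1}$ is not actually a gap, and you do not need the extra hypothesis that $\Phi$ preserve odd functions. Decompose $\hcic^{s}=\hcic^{s}_{e}\oplus \hcic^{s}_{o}$ and write $\Phi$ as a $2\times 2$ block operator with respect to this splitting. The condition that $\Phi$ map even functions to even functions says exactly that the lower-left block vanishes, i.e.\ $\Phi$ is block upper-triangular. An invertible block upper-triangular operator has invertible diagonal blocks and a block upper-triangular inverse; hence $\Phi^{-1}$ also has zero lower-left block and therefore maps $\hcic^{s}_{e}$ into itself. So parity preservation of $\Phi^{-1}$ is automatic once $\Phi$ is invertible and parity preserving in the sense of Def.~\ref{revmap}, and no appeal to the special flow structure of the applications is needed.
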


\begin{proof}
It follows straightforward by the Definitions
\ref{revmap} and \ref{realpre}.
\end{proof}

\zerarcounters
\section{Paradifferential calculus}\label{capitolo3}
\subsection{Classes of symbols}
We introduce some notation. If $K\in\N$, $I$ is an interval of $\R$ containing the origin, $s\in\R^{+}$ we denote by $C^K_{*\R}(I,{\bf{H}}^{s}(\TTT,\CCC^2))$, sometimes  by $C^K_{*\R}(I,{\bf{H}}^{s})$, the space of continuous functions $U$ of $t\in I$ with values in  ${\bf{H}}^{s}(\TTT,\CCC^2)$, which are $K$-times differentiable and such that the $k-$th derivative is continuous with values in ${\bf{H}}^{s-2k}(\TTT,\CCC^2)$ for any $0\leq k\leq K$. We endow the space  $C^K_{*\R}(I,{\bf{H}}^{s})$ with the norm
\begin{equation}\label{spazionorm}
\sup_{t\in I}\norm{U(t,\cdot)}{K,s}, \quad \mbox {where} \quad \norm{U(t,\cdot)}{K,s}:=\sum_{k=0}^{K}\norm{\partial_t^k U(t,\cdot)}{{\bf{H}}^{s-2k}}.
\end{equation}
Moreover if $r\in\R^{+}$ we set
\begin{equation}\label{pallottola}
B_{s}^K(I,r):=\set{U\in C^K_{*\R}(I,{\bf{H}}^{s}):\, \sup_{t\in I}\norm{U(t,\cdot)}{K,s}<r}.
\end{equation}

\begin{de}[\bf{Symbols}]\label{nonomorest}
Let $m\in\R$, $K'\leq K$ in $\N$, $r>0$. We denote by $\Gamma^m_{K,K'}[r]$ the space of functions $(U;t,x,\xi)\mapsto a(U;t,x,\xi)$, defined for $U\in B_{\s_0}^K(I,r)$, for some large enough $\s_0$, with complex values such that for any $0\leq k\leq K-K'$, any $\s\geq \s_0$, there are $C>0$, $0<r(\s)<r$ and for any $U\in B_{\s_0}^K(I,r(\s))\cap C^{k+K'}_{*\R}(I,{\bf{H}}^{\s})$ and any $\alpha, \beta \in\N$, with $\alpha\leq \s-\s_0$
\begin{equation}\label{simbo}
\asso{\partial_t^k\partial_x^{\alpha}\partial_{\xi}^{\beta}a(U;t,x,\xi)}\leq C \norm{U}{k+K',\s}\langle\xi\rangle^{m-\beta},
\end{equation}
for some constant $C=C(\s, \norm{U}{k+K',\s_0})$ depending only on $\s$ and $\norm{U}{k+K',\s_0}$.
\end{de}

\begin{rmk}\label{notazionetempo}
In the rest of the paper the time $t$ will be treated as a parameter. 
In order to simplify the notation we shall write $a(U;x,\x)$ instead of $a(U;t,x,\x)$.
On the other hand we will emphasize the $x$-dependence of a symbol $a$.
We shall denote by $a(U;\x)$ only those symbols which are independent of the variable $x\in \TTT$. 
\end{rmk}

\begin{rmk}\label{differenzaclassidisimbo}
If one compares the latter definition 
of class of symbols with the one given in Section $2$ in \cite{maxdelort}
one note that they have been more precise on the expression of the constant $C$ in the r.h.s. of \eqref{simbo}. First of all we do not need such precision since we only want to study local theory.
Secondly their classes are modeled in order to work in a small neighborhood  of the origin.
\end{rmk}

\begin{lemma}\label{unasbarretta}
Let $a\in \Gamma^{m}_{K,K'}[r]$ and $U\in B_{\s_0}^{K}(I,r)$ for some $\s_0$.
One has that
\begin{equation}\label{Normaunabarra2}
\sup_{\x\in\RRR}\langle\x\rangle^{-m}\|a(U;\cdot,\x)\|_{K-K',s}\leq C \norm{U}{K,s+\s_0+1}.
\end{equation}
for $s\geq 0$.
\end{lemma}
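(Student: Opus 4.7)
The plan is to unfold the left-hand side according to definition \eqref{spazionorm} applied to the scalar function $x \mapsto a(U;x,\xi)$, obtaining
$$\|a(U;\cdot,\xi)\|_{K-K',s} \;=\; \sum_{k=0}^{K-K'}\|\partial_t^k a(U;\cdot,\xi)\|_{H^{s-2k}},$$
and then to bound each summand uniformly in $\xi\in\RRR$ by $C\langle\xi\rangle^{m}\|U\|_{K,s+\sigma_0+1}$ using the pointwise symbolic estimate \eqref{simbo}. The only non-trivial points are first the passage from $L^2$-based Sobolev norms to $L^\infty$ bounds on $x$-derivatives, and second a careful choice of the auxiliary index $\sigma$ in \eqref{simbo} so that the loss is exactly one derivative, producing the $+1$ appearing on the right.

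The first reduction is to replace $\|\cdot\|_{H^{s-2k}}$ by an integer Sobolev norm: setting $N_k:=\lceil s-2k\rceil$, we have $N_k\leq s-2k+1\leq s+1$ and the trivial continuous inclusion $H^{N_k}\hookrightarrow H^{s-2k}$ gives $\|f\|_{H^{s-2k}}\leq \|f\|_{H^{N_k}}$. Since $\TTT$ is a compact $1$-dimensional manifold, the integer Sobolev norm is in turn controlled by the sup-norm of derivatives,
$$\|f\|_{H^{N_k}}\;\leq\; C\sum_{\alpha=0}^{N_k}\|\partial_x^\alpha f\|_{L^\infty(\TTT)},$$
via the trivial embedding $L^\infty(\TTT)\hookrightarrow L^2(\TTT)$ applied to each derivative.

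Now apply \eqref{simbo} with $\beta=0$ and the choice $\sigma:=s+\sigma_0+1$, so that the restriction $\alpha\leq\sigma-\sigma_0=s+1$ is satisfied for all $\alpha\leq N_k$. This yields, for every $0\leq k\leq K-K'$ and every $\alpha\leq N_k$,
$$|\partial_t^k\partial_x^\alpha a(U;t,x,\xi)|\;\leq\; C\bigl(\s_0,\|U\|_{k+K',\sigma_0}\bigr)\,\|U\|_{k+K',\,s+\sigma_0+1}\,\langle\xi\rangle^{m},$$
where the prefactor $C$ is bounded thanks to the hypothesis $U\in B^K_{\sigma_0}(I,r)$ (possibly after shrinking $r$ to $r(\sigma)$ as permitted by Definition \ref{nonomorest}). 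Combining the two previous displays gives
$$\|\partial_t^k a(U;\cdot,\xi)\|_{H^{s-2k}}\;\leq\; C\,\|U\|_{k+K',\,s+\sigma_0+1}\,\langle\xi\rangle^{m}.$$

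To conclude, observe that $k+K'\leq K$ for $k\leq K-K'$, hence $\|U\|_{k+K',s+\sigma_0+1}\leq \|U\|_{K,s+\sigma_0+1}$. Summing over $k=0,\dots,K-K'$, multiplying by $\langle\xi\rangle^{-m}$, and taking the supremum over $\xi\in\RRR$ gives \eqref{Normaunabarra2}. The argument is essentially a routine unpacking of definitions; the only point requiring a bit of care is the integer rounding $s\rightsquigarrow \lceil s\rceil\leq s+1$, which is exactly what produces the extra unit in the Sobolev index $s+\sigma_0+1$ on the right-hand side.
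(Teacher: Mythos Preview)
Your proof is correct and follows essentially the same approach as the paper: bound the Sobolev norm on $\TTT$ by $L^\infty$ norms of $x$-derivatives and then invoke the pointwise symbol estimate \eqref{simbo}. The only organizational difference is that the paper first proves the estimate for $s\in\NNN$ (with the sharper index $s+\sigma_0$) and then appeals to log-convexity of the Sobolev norm to interpolate for general $s\in\RRR_+$, whereas you round up to $N_k=\lceil s-2k\rceil$ at the outset and pick $\sigma=s+\sigma_0+1$ directly; both routes yield the same $+1$ loss and are equivalent in substance.
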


\begin{proof}
Assume that $s \in \NNN$. We have
\begin{equation}\label{Normaunabarra}
\begin{aligned}
\|a(U;x,\x)\|_{K-K',s}&\leq C_1 \sum_{k=0}^{K-K'}\sum_{j=0}^{s-2k} \|\del_{t}^{k}\del_{x}^{j}a(U; \cdot,\x)\|_{L^{\infty}}\\
&\leq C_2 \langle \x\rangle^{m}\sum_{k=0}^{K-K'}\|U\|_{k+K',s+\s_0},
\end{aligned}
\end{equation}
with $C_{1},C_{2}>0$ depend only on $s,K$ and $\norm{U}{k+K',\s_0}$, 
and where we used formula \eqref{simbo} with $\s=s+\s_0$. Equation \eqref{Normaunabarra} implies
\eqref{Normaunabarra2} for $s\in \NNN$. The general case $s\in \RRR_{+}$, follows by using the log-convexity of the Sobolev norm
by writing $s=[s]\tau+(1-\tau)(1+[s])$ where $[s]$ is the integer part of $s$ and $\tau\in[0,1]$.
\end{proof}

We define  the following special subspace  of $\Gamma^0_{K,K'}[r]$.
\begin{de}[\bf{Functions}]\label{apeape}
Let  $K'\leq K$ in $\N$, $r>0$. We denote by $\calF_{K,K'}[r]$  the subspace of $\Gamma^0_{K,K'}[r]$ made of those symbols which are independent of $\xi$.
\end{de}

\subsection{Quantization of symbols}\label{opmulti2}

Given a smooth symbol $(x,\x) \to a(x,\x)$,
we define, for any $\s\in [0,1]$,  the quantization of the symbol $a$ as the operator 
acting on functions $u$ as 
\begin{equation}\label{operatore}
{\rm Op}_{\s}(a(x,\x))u=\frac{1}{2\pi}\int_{\RRR\times\RRR}e^{\ii(x-y)\x}a(\s x+(1-\s)y,\x)u(y)dy d\x.
\end{equation}
This definition is meaningful in particular if $u\in C^{\infty}(\TTT)$ (identifying $u$ to a $2\pi$-periodic function). By decomposing 
$u$ in Fourier series  as $u=\sum_{j\in\ZZZ}\hat{u}(j)\frac{e^{\ii jx}}{\sqrt{2\pi}}$, we may calculate the oscillatory integral in \eqref{operatore} obtaining
\begin{equation}\label{bambola}
{\rm Op}_{\s}(a)u:=\frac{1}{2\pi}\sum_{k\in \ZZZ}\left(\sum_{j\in\ZZZ}\hat{a}\big(k-j,(1-\s)k+\s j\big)\hat{u}(j)\right)e^{\ii k x}, \quad \forall\;\; \s\in[0,1],
\end{equation}
where $\hat{a}(k,\xi)$ is the $k^{th}-$Fourier coefficient of the $2\pi-$periodic function $x\mapsto a(x,\xi)$.
In the paper we shall use two particular quantizations:

\paragraph{Standard quantization.}
We define the standard quantization by specifying formula \eqref{bambola} for $\s=1$:
\begin{equation}\label{bambola2}
{\rm Op}(a)u:={\rm Op}_{1}(a)u=\frac{1}{2\pi}\sum_{k\in \ZZZ}\left(\sum_{j\in\ZZZ}\hat{a}\big(k-j, j\big)\hat{u}(j)\right)e^{\ii k x};
\end{equation}

\paragraph{Weyl quantization.}
We define the Weyl quantization by specifying formula \eqref{bambola} for $\s=\frac{1}{2}$:
\begin{equation}\label{bambola202}
{\rm Op}^{W}(a)u:={\rm Op}_{\frac{1}{2}}(a)u=\frac{1}{2\pi}\sum_{k\in \ZZZ}\left(\sum_{j\in\ZZZ}\hat{a}\big(k-j, \frac{k+j}{2}\big)\hat{u}(j)\right)e^{\ii k x}.
\end{equation}

Moreover one can transform the symbols between different quantization by using the formulas
\begin{equation}\label{bambola5}
{\rm Op}(a)={\rm Op}^{W}(b), \qquad {\rm where} \quad \hat{b}(j,\x)=\hat{a}(j,\x-\frac{j}{2}).
\end{equation}
In order to define operators starting from the classes of symbols introduced before, we reason as follows. Let $n\in \ZZZ$, we define the projector on $n-$th Fourier mode as
\begin{equation}\label{Fou}
\left(\Pi_{n} u\right)(x):=\hat{u}({n})\frac{e^{\ii n x}}{\sqrt{2\pi}}; \quad u(x)=\sum_{n\in\ZZZ}\hat{u}(n)\frac{e^{\ii jx}}{\sqrt{2\pi}}.
\end{equation}
For $U\in B^K_s(I,r)$ (as in Definition \ref{nonomorest}), 
 a symbol $a$ in $ \Gamma^{m}_{K,K'}[r]$, and $v\in C^{\infty}(\TTT,\CCC)$
we define
\begin{equation}\label{quanti2}
{\rm Op}(a(U;j))[v]:=\sum_{k\in \ZZZ}\left(\sum_{j\in \ZZZ}\Pi_{k-j}a(U;j)\Pi_{j}v \right).
\end{equation}
Equivalently one can define ${\rm Op}^{W}(a)$
 according to
\eqref{bambola202}.

We want to define a \emph{paradifferential} quantization.
First we give the following definition.
\begin{de}[{\bf Admissible cut-off functions}]\label{cutoff1}
We say that a function $\chi\in C^{\infty}(\R\times\R;\R)$ is an admissible cut-off function if it is even with respect to each of its arguments and there exists $\delta>0$ such that
\begin{equation*}
\rm{supp}\, \chi \subset\set{(\xi',\xi)\in\R\times\R; |\xi'|\leq\delta \langle\xi\rangle},\qquad \xi\equiv 1\,\,\, \rm{ for } \,\,\, |\xi'|\leq\frac{\delta}{2} \langle\xi\rangle.
\end{equation*}
We assume moreover that for any derivation indices $\alpha$ and $\beta$
\begin{equation*}
|\partial_{\xi}^{\alpha}\partial_{\xi'}^{\beta}\chi(\xi',\xi)|\leq C_{\alpha,\beta}\langle\xi\rangle^{-\alpha-\beta},\,\,\forall \alpha, \,\beta\in\NNN.
\end{equation*}
\end{de}
An example of function satisfying the condition above, and that will be extensively used in the rest of the paper, is $\chi(\xi',\xi):=\widetilde{\chi}(\xi'/\langle\xi\rangle)$, where $\widetilde{\chi}$ is a function in $C_0^{\infty}(\RRR;\RRR)$  having a small enough support and equal to one in a neighborhood of zero.
For any $a\in C^{\infty}(\TTT)$ we shall use the following notation
\begin{equation}\label{pseudoD}
(\chi(D)a)(x)=\sum_{j\in\ZZZ}\chi(j)\Pi_{j}{a}.
\end{equation}

\begin{prop}[{\bf Regularized  symbols}]
Fix $m\in \RRR$, $p,K,K'\in \NNN$, $K'\leq K$ and $r>0$. 
Consider $a\in  {\Gamma}^{m}_{K,K'}[r]$ and  $\chi$ an admissible cut-off function according to Definition \ref{cutoff1}. Then the function
\begin{equation}\label{nsym2}
a_{\chi}(U;x,\x) := 
\sum_{n\in \ZZZ}\chi\left(n,\x
\right) \Pi_{n}a(U;x,\x)
\end{equation}
belongs to ${\Gamma}^{m}_{K,K'}[r]$.
\end{prop}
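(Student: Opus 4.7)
My plan is to check the symbol estimate \eqref{simbo} directly for $a_\chi$ by writing it as a Fourier series in $x$ and exploiting the two pieces of information at our disposal: the localisation of the cut-off $\chi(n,\xi)$ on $|n|\lesssim \langle\xi\rangle$ with the associated decay of its $\xi$-derivatives, and the rapid decay in $n$ of the Fourier coefficients $\hat a(U;n,\xi)$ coming from the $x$-smoothness contained in the symbolic estimate on $a$.

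\textbf{Step 1: Fourier representation.} Since $\Pi_n a(U;x,\xi)=\hat a(U;n,\xi)\,e^{\ii n x}/\sqrt{2\pi}$, I would rewrite
\begin{equation*}
a_\chi(U;x,\xi)=\frac{1}{\sqrt{2\pi}}\sum_{n\in\ZZZ}\chi(n,\xi)\,\hat a(U;n,\xi)\,e^{\ii n x},
\end{equation*}
and then apply $\partial_t^k\partial_x^\alpha\partial_\xi^\beta$ together with the Leibniz formula in $\xi$, obtaining a finite sum over $\beta_1+\beta_2=\beta$ of terms of the shape
\begin{equation*}
\frac{(\ii n)^\alpha}{\sqrt{2\pi}}\sum_{n\in\ZZZ}\bigl(\partial_\xi^{\beta_1}\chi\bigr)(n,\xi)\,\partial_t^k\partial_\xi^{\beta_2}\hat a(U;n,\xi)\,e^{\ii n x}.
\end{equation*}

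\textbf{Step 2: Two-sided estimates.} On the support of $\chi$ one has $|n|\le \delta\langle\xi\rangle$, and by Definition \ref{cutoff1} $|\partial_\xi^{\beta_1}\chi(n,\xi)|\le C_{\beta_1}\langle\xi\rangle^{-\beta_1}$. For the Fourier coefficients, since $a\in\Gamma^{m}_{K,K'}[r]$ I can integrate by parts $M$ times in $x$ (equivalently apply Parseval), getting, for every $M\in\NNN$ and every $\sigma\ge\sigma_0+M$,
\begin{equation*}
\langle n\rangle^{M}\,\bigl|\partial_t^k\partial_\xi^{\beta_2}\hat a(U;n,\xi)\bigr|\;\le\; C\,\bigl\|\partial_t^k\partial_x^{M}\partial_\xi^{\beta_2}a(U;\cdot,\xi)\bigr\|_{L^2(\TTT)}\;\le\; C'\,\|U\|_{k+K',\sigma}\,\langle\xi\rangle^{m-\beta_2},
\end{equation*}
where the last bound is precisely \eqref{simbo} applied to $a$ with $(k,M,\beta_2)$ in place of $(k,\alpha,\beta)$, valid as soon as $M\le\sigma-\sigma_0$.

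\textbf{Step 3: Summation and conclusion.} To estimate $|\partial_t^k\partial_x^\alpha\partial_\xi^\beta a_\chi|$ I combine the two bounds: for a fixed $\sigma\ge\sigma_0'$ with $\sigma_0':=\sigma_0+2$ and $\alpha\le\sigma-\sigma_0'$, choose $M=\alpha+2\le\sigma-\sigma_0$, so that
\begin{equation*}
|n|^{\alpha}\,\bigl|\partial_t^k\partial_\xi^{\beta_2}\hat a(U;n,\xi)\bigr|\;\le\; C\,\|U\|_{k+K',\sigma}\,\langle\xi\rangle^{m-\beta_2}\,\langle n\rangle^{-2}.
\end{equation*}
Putting everything together and summing the convergent series $\sum_{n\in\ZZZ}\langle n\rangle^{-2}$, each of the $\beta_1+\beta_2=\beta$ contributions is bounded by $C\,\|U\|_{k+K',\sigma}\langle\xi\rangle^{m-\beta_1-\beta_2}=C\,\|U\|_{k+K',\sigma}\langle\xi\rangle^{m-\beta}$, which is exactly the estimate required to conclude $a_\chi\in\Gamma^{m}_{K,K'}[r]$ (after the harmless redefinition $\sigma_0\leadsto\sigma_0+2$).

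The only mildly delicate point, and the one I would pay most attention to, is the bookkeeping of the regularity threshold: the $x$-derivatives we gain via $(\ii n)^\alpha$ must be paid for with additional smoothness of $U$, which shifts the admissible range of $\alpha$ by a fixed amount relative to the original $\sigma_0$ of $a$. Since $\sigma_0$ is only required to be ``large enough'' in Definition \ref{nonomorest}, this shift is absorbed without changing the class, and uniformity in $U\in B^K_{\sigma_0}(I,r(\sigma))$ follows from the fact that the constants produced in Step 2 depend only on $\sigma$ and on $\|U\|_{k+K',\sigma_0}<r$.
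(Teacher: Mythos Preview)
Your argument is correct and is the standard direct verification; the paper itself does not give a proof here but simply refers the reader to the remark after Definition~2.2.2 in \cite{maxdelort}. One minor cosmetic point: in Step~2 the integration-by-parts identity gives $|n|^{M}$ rather than $\langle n\rangle^{M}$, so for $n=0$ (or to pass from $|n|^{M}$ to $\langle n\rangle^{M}$) you also invoke the symbol bound \eqref{simbo} with $\alpha=0$; this is harmless and does not affect the conclusion.
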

For the proof we refer the reader to the remark after Definition $2.2.2$ in \cite{maxdelort}.

We define  the Bony quantization in the following way. 
Consider 
an admissible cut-off function $\chi$  and a symbol $a$ belonging to the class $ {\Gamma}^{m}_{K,K'}[r]$, we set
\begin{equation}\label{boninon}
\textrm{Op}^{\BB}(a(U;x,j))[v]:= \textrm{Op}(a_{\chi}(U;x,j))[v],
\end{equation}
where $a_{\chi}$ is defined in \eqref{nsym2}. Analogously we define the Bony-Weyl quantization
\begin{equation}\label{bweylq}
\bonyw(b(U;x,j))[v]:= {\rm Op}^{W}(b_{\chi}(U;x,j))[v].
\end{equation}
The definition of the operators ${\rm Op}^{\BB}(b)$ and $\bonyw(b)$ is independent of the choice of the cut-off function  $\chi$  modulo smoothing operators that we define now.


\begin{de}[\bf{Smoothing remainders}]\label{nonomoop}
Let $K'\leq K\in\N$,  $\rho\geq0$ and $r>0$. We define the class of remainders $\mathcal{R}^{-\rho}_{K,K'}[r]$ as the space of maps $(V,u)\mapsto R(V)u$ defined on $B^K_{s_0}(I,r)\times C^K_{*\R}(I,H^{s_0}(\TTT,\CCC))$ which are linear in the variable $u$ and such that the following holds true. For any $s\geq s_0$ there exists a constant $C>0$ and $r(s)\in]0,r[$ such that for any $V\in B^K_{s_0}(I,r)\cap C^K_{*\R}(I,H^{s}(\TTT,\CCC^2))$, any $u\in C^K_{*\R}(I,H^{s}(\TTT,\CCC))$, any $0\leq k\leq K-K'$ and any $t\in I$ the following estimate holds true
\begin{equation}\label{porto20}
\norm{\partial_t^k\left(R(V)u\right)(t,\cdot)}{H^{s-2k+\rho}}\leq \sum_{k'+k''=k}C\Big[\norm{u}{k'',s}\norm{V}{k'+K',s_0}+\norm{u}{k'',s_0}\norm{V}{k'+K',s}\Big],
\end{equation}
where $C=C(s,\norm{V}{k'+K',s_0})$ is a constant depending only on $s$  and $\norm{V}{k'+K',s_0}$.
\end{de}
\begin{lemma}\label{equiv}
Consider $\chi_{1}$ and $\chi_2$  admissible cut-off functions.
Fix $m\in\RRR$, $r>0$,  $K'\leq K\in\N$.
Then for $a\in {\Gamma}^{m}_{K,K'}[r]$,
we have ${\rm Op}(a_{\chi_1}-a_{\chi_{2}})\in \RR^{-\rho}_{K,K'}[r]$ for any $\rho\in \NNN$.
\end{lemma}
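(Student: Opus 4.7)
The plan is to set $\phi := \chi_1 - \chi_2$, observe that this function vanishes in a conic neighborhood of $\xi' = 0$, and apply Schur's test in Fourier space. Arbitrary smoothing will come from the spatial regularity of the symbol $a$ encoded in Lemma~\ref{unasbarretta}.

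First I would analyse $\phi$. Both $\chi_i$ equal $1$ on $\{|\xi'| \leq (\delta_i/2)\langle \xi\rangle\}$ and are supported in $\{|\xi'| \leq \delta_i\langle \xi\rangle\}$, so $\phi$ vanishes on $\{|\xi'| \leq \delta'\langle\xi\rangle\}$ for $\delta' := \min(\delta_1,\delta_2)/2 > 0$, is supported in $\{|\xi'|\leq \delta''\langle\xi\rangle\}$ for $\delta'' := \max(\delta_1,\delta_2)$, and inherits the derivative bounds of Definition~\ref{cutoff1}. By \eqref{nsym2} and \eqref{bambola2}, the difference operator $R(V) := {\rm Op}(a_{\chi_1} - a_{\chi_2})$ acts on $v$ via the Fourier matrix entries
\[
T_{kj}(V) = \phi(k-j,j)\,\hat{a}(V;k-j,j).
\]
The geometric key is that on the support of $T_{kj}$ we have $\delta'\langle j\rangle \leq |k-j| \leq \delta''\langle j\rangle$, hence $\langle k-j\rangle \sim \langle j\rangle$ and $\langle k\rangle \leq C\langle k-j\rangle$, which lets me trade any power of $\langle k\rangle$ for an equal power of $\langle k-j\rangle$.

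Next I would prove \eqref{porto20} in the case $k=0$ by Schur's test, using the consequence of Lemma~\ref{unasbarretta}
\[
|\hat a(V;n,\xi)| \leq C\,\langle n\rangle^{-s'}\langle\xi\rangle^m \|V\|_{K,s'+\sigma_0+1},\qquad\forall\,s'\geq 0.
\]
For the term $\|v\|_{H^s}\|V\|_{K,s_0}$, I apply Schur to $A_{kj} := \langle k\rangle^{s+\rho}T_{kj}\langle j\rangle^{-s}$: on the support one has $\langle k\rangle^{s+\rho}\langle j\rangle^{-s} \leq C\langle j\rangle^\rho \leq C\langle k-j\rangle^\rho$, and choosing $s' = \rho+m+2$ yields $|A_{kj}| \leq C\langle k-j\rangle^{-2}\|V\|_{K,\rho+m+\sigma_0+3}$. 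For the term $\|v\|_{H^{s_0}}\|V\|_{K,s}$, I apply Schur to $B_{kj} := \langle k\rangle^{s+\rho}T_{kj}\langle j\rangle^{-s_0}$: using $\langle k\rangle\leq C\langle k-j\rangle$ together with the choice $s' = s-\sigma_0-1$ (so that $\|V\|_{K,s'+\sigma_0+1} = \|V\|_{K,s}$), one obtains $|B_{kj}|\leq C\langle k-j\rangle^{\rho+m+\sigma_0+1-s_0}\|V\|_{K,s}$, which is summable both in $j$ and in $k$ uniformly provided $s_0 \geq \rho+m+\sigma_0+3$. Schur's test then closes both halves of the bilinear estimate.

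Finally, for the time-derivative part of \eqref{porto20}, I would expand $\partial_t^k(R(V)u) = \sum_{k'+k''=k}\binom{k}{k'}R^{(k')}(V)[\partial_t^{k''}u]$, where $R^{(k')}$ is the operator associated to $\partial_t^{k'}(a_{\chi_1}-a_{\chi_2})$. By Definition~\ref{nonomorest}, $\partial_t^{k'} a$ obeys the same class bounds with $\|V\|_{K,\sigma}$ replaced by $\|V\|_{k'+K',\sigma}$, so the Schur argument applies term by term and yields the full tame bilinear estimate, with the target norm $H^{s-2k+\rho}$ obtained by substituting $s \mapsto s-2k$ in the previous step. The hard part is bookkeeping: the thresholds $s'$ in the Fourier-decay bound and $s_0$ in Definition~\ref{nonomoop} must be chosen simultaneously so that both halves of the tame estimate carry Sobolev index exactly $s$ (not $s+\text{const}$) on $V$. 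This is precisely why the statement fixes $\rho \in \NNN$: the resulting $s_0$ is allowed to depend on $\rho$.
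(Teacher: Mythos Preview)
The paper does not give a self-contained proof of this lemma; it simply refers to the remark after Proposition~2.2.4 in Berti--Delort \cite{maxdelort}. So there is no ``paper's own proof'' to compare against, and the relevant question is whether your argument is correct on its own. It is.

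Your key observation is exactly the right one: on the support of $\phi(k-j,j)$ the three quantities $\langle k\rangle$, $\langle j\rangle$, $\langle k-j\rangle$ are all comparable, so any desired power of $\langle k\rangle$ or $\langle j\rangle$ can be absorbed into $\langle k-j\rangle$, and the spatial regularity of $a$ encoded in Lemma~\ref{unasbarretta} then gives arbitrary decay in $\langle k-j\rangle$. The Schur test closes the $\ell^2\to\ell^2$ bound. The Leibniz expansion for the time derivatives is handled correctly, since Definition~\ref{nonomorest} gives the same pointwise bounds for $\partial_t^{k'}a$ with $\|V\|_{k'+K',\sigma}$ on the right.

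Two small remarks. First, the second Schur computation (the one for $B_{kj}$) is redundant: your first bound already yields $\|R(V)v\|_{H^{s+\rho}}\leq C\|V\|_{K,s_0}\|v\|_{H^s}$, and since the right-hand side of \eqref{porto20} is a sum of two nonnegative terms, a bound by either one alone suffices. Second, in the choice $s'=\rho+m+2$ you are implicitly assuming $\rho+m+2\geq 0$ so that Lemma~\ref{unasbarretta} applies; when $m$ is very negative you should take $s'=\max(0,\rho+m+2)$ instead, which only improves the estimate. Neither point affects correctness.
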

For the proof we refer the reader to the remark after the proof of Proposition $2.2.4$ in \cite{maxdelort}.

Now we state a proposition  describing the action of paradifferential operators defined in \eqref{boninon} and in \eqref{bweylq}.
\begin{prop}[{\bf Action of paradifferential operators}]\label{boni2}
Let $r>0$, $m\in\R,$  $K'\leq K\in\N$ and consider a symbol $a\in\Gamma^{m}_{K,K'}[r]$. There exists $s_0>0$ such that for any $U\in B^{K}_{s_0}(I,r)$, the operator $\bonyw(a(U;x,\xi))$ extends, for any $s\in\R$, as a bounded operator from the space $C^{K-K'}_{*\R}(I,H^{s}(\TTT,\CCC))$ to $C^{K-K'}_{*\R}(I,H^{s-m}(\TTT,\CCC))$. Moreover 
there is a constant
$C>0$ depending on $s$ and on the constant in \eqref{simbo} such that
\begin{equation}\label{paraparaest}
\|\bonyw(\del_{t}^{k}a(U;x,\cdot))\|_{\calL(H^{s},H^{s-m})}\leq C \|U\|_{k+K',s_0},
\end{equation}
for $k\leq K-K'$, so that

\begin{equation}\label{paraest}
\norm{\bonyw(a(U;x,\xi))(v)}{K-K',{s-m}}
\leq C \norm{U}{{K,{s_0}}}\norm{v}{K-K',s},
\end{equation}
for any $v\in C^{K-K'}_{*\R}(I,H^{s}(\TTT,\CCC))$.
\end{prop}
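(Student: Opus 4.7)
The plan is to first establish the boundedness estimate \eqref{paraparaest} for $k=0$, and then deduce \eqref{paraest} by applying this to the symbols $\partial_t^k a$. The key observation for the time-derivative step is that differentiating \eqref{simbo} shows that $\partial_t a \in \Gamma^m_{K, K'+1}[r]$, and in general $\partial_t^k a$ satisfies the same symbol estimate \eqref{simbo} but with $\|U\|_{k'+K',\s}$ replaced by $\|U\|_{k+k'+K',\s}$; hence the $k=0$ bound applied to $\partial_t^k a$ gives exactly $C\|U\|_{k+K',s_0}$. Summing over $k$ yields \eqref{paraest}.

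For the core estimate at $k=0$, I would work directly with the Fourier representation. By \eqref{bweylq}, \eqref{nsym2} and \eqref{bambola202}, one has
\begin{equation*}
\widehat{\bonyw(a)v}(k) \;=\; \frac{1}{2\pi}\sum_{j\in\ZZZ}\chi\!\Bigl(k-j,\tfrac{k+j}{2}\Bigr)\,\widehat{a}\!\Bigl(k-j,\tfrac{k+j}{2}\Bigr)\,\widehat{v}(j).
\end{equation*}
The first step is to exploit the admissible cut-off: on the support of $\chi(k-j,(k+j)/2)$ one has $|k-j|\leq \de\,\langle (k+j)/2\rangle$, and choosing $\de$ small (which we may, since by Lemma \ref{equiv} different admissible cut-offs yield the same operator modulo smoothing remainders) this forces $\tfrac{1}{C}\langle k\rangle \leq \langle j\rangle \leq C\langle k\rangle$ and $\langle (k+j)/2\rangle \sim \langle k\rangle \sim \langle j\rangle$.

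The second step is to convert the symbol regularity in $x$ into Fourier decay. Integrating by parts $\rho$ times in $x$ and using Lemma \ref{unasbarretta}, for any $\rho\in\NNN$ and any $\x\in\RRR$,
\begin{equation*}
|\widehat{a}(n,\x)| \;\leq\; \langle n\rangle^{-\rho}\,\|a(U;\cdot,\x)\|_{H^\rho} \;\leq\; C\,\langle n\rangle^{-\rho}\,\langle\x\rangle^{m}\,\|U\|_{K,\rho+\s_0+1}.
\end{equation*}
Combining this with the comparability $\langle (k+j)/2\rangle \sim \langle j\rangle$ on the support,
\begin{equation*}
\langle k\rangle^{s-m}\bigl|\widehat{\bonyw(a)v}(k)\bigr| \;\leq\; C\,\|U\|_{K,\rho+\s_0+1}\sum_{j\in\ZZZ}\langle k-j\rangle^{-\rho}\,\langle j\rangle^{s}|\widehat{v}(j)|.
\end{equation*}

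The final step is to take $\ell^2_k$ norms. Picking $\rho$ large enough so that $\langle\cdot\rangle^{-\rho}\in \ell^{1}(\ZZZ)$, Young's convolution inequality (or equivalently Schur's test with kernel $\langle k-j\rangle^{-\rho}$) gives
\begin{equation*}
\|\bonyw(a)v\|_{H^{s-m}} \;\leq\; C\,\|U\|_{K,\s_0}\,\|v\|_{H^{s}},
\end{equation*}
after redefining $\s_0$ to absorb $\rho+1$. This is exactly \eqref{paraparaest} with $k=0$. The time-dependent extension $v\in C^{K-K'}_{*\RRR}(I,H^s)$ is handled by the Leibniz rule, distributing $\partial_t^k$ on the composition $\bonyw(a(U;x,\x))v$ and applying the $k=0$ estimate to each of the resulting symbols $\partial_t^{k'}a\in\Gamma^m_{K-k',K'}[r]$. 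The principal technical obstacle is step two: the paradifferential cut-off is what turns an a priori non-local operator into one whose Schwartz kernel has effective decay in $k-j$, and one must be careful to choose the admissible cut-off with $\delta$ small enough to guarantee the comparability $\langle k\rangle \sim \langle j\rangle$ uniformly on the support, without which the exchange of weights $\langle k\rangle^{s-m}\langle j\rangle^{m-s}$ would fail.
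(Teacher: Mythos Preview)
Your proof is correct and follows the standard route: support condition of the admissible cut-off gives $\langle k\rangle\sim\langle j\rangle$, regularity of the symbol in $x$ gives polynomial decay of $\hat a(n,\xi)$ in $n$, and Young's inequality closes the estimate. The paper does not give its own proof but refers to Proposition~2.2.4 in \cite{maxdelort}; the argument there is essentially the one you wrote, as is also visible from Remark~\ref{ronaldo10} (which isolates the seminorm $\sup_\xi\langle\xi\rangle^{-m}\|a(U;\cdot,\xi)\|_{K-K',s_0}$) and from the proof of Lemma~\ref{est-prod}, where the same three steps are carried out explicitly for products of functions.
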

For the proof we refer to Proposition 2.2.4 in \cite{maxdelort}.
\begin{rmk}\label{ronaldo10}
Actually the estimates \eqref{paraparaest} and  \eqref{paraest}
follow by 
\[
\norm{\bonyw(a(U;x,\xi))(v)}{K,{s-m}}
\leq C_1
\sup_{\x\in\RRR}\langle\x\rangle^{-m}\|a(U;\cdot,\x)\|_{K-K',s_0}\|v\|_{K-K',s},
\]
where $C_1>0$ is some constant depending only on $s,s_0$ and 
Remark \ref{unasbarretta}.
\end{rmk}

\begin{rmk}\label{ronaldo2}
We remark that   Proposition \ref{boni2} (whose proof is given in \cite{maxdelort})
applies if $a$ satisfies \eqref{simbo} with $|\al |\leq 2$ and $\be=0$.
Moreover, by following the same proof, one can show that

\begin{equation}
\|\weyl(\del_{t}^{k}a_{\chi}(U;x,\cdot))\|_{\calL(H^{s},H^{s-m})}\leq C \|U\|_{k+K',s_0},
\end{equation}
if $\chi(\h,\x)$ is supported for $|\h|\leq \delta \langle\x\rangle$
for $\delta>0$ small.
Note that this is slightly different from the Definition \ref{cutoff1} of admissible cut-off function 
since we are not requiring that $\chi\equiv1$ for $|\eta|\leq \frac{\delta}{2}\langle\xi\rangle$.
\end{rmk}

\begin{rmk}\label{inclusione-nei-resti}
Note that, if $m<0$, and $a\in \Gamma^{m}_{K,K'}[r]$, then estimate \eqref{paraparaest}
implies that the operator $\bonyw(a(U;x,\x))$ belongs to the class of smoothing operators 
$\RR^{m}_{K,K'}[r]$.
\end{rmk}

We  consider paradifferential operators  of the  form:
\begin{equation}\label{prodotto}
 \quad \bonyw(A(U;x,\xi)):=\bonyw\left(\begin{matrix} {a}(U;x,\x) & {b}(U;x,\x)\\
{\ol{b(U;x,-\x)}} & {\ol{a(U;x,-\x)}}
\end{matrix}
\right) :=\left(\begin{matrix} \bonyw({a}(U;x,\x)) & \bonyw({b}(U;x,\x))\\
\bonyw({\ol{b(U;x,-\x)}}) & \bonyw({\ol{a(U;x,-\x))}}
\end{matrix}
\right),
\end{equation}
where $a$ and $b$ are symbols in $\Gamma^{m}_{K,K'}[r]$ and $U$ is a function belonging to $B^{K}_{s_0}(I,r)$ for some $s_0$ large enough. Note that the matrix of operators in \eqref{prodotto} is of the form \eqref{barrato4}. Moreover it is self-adjoint (see \eqref{calu}) if and only if
\begin{equation}\label{quanti801}
a(U;x,\xi)=\ol{a(U;x,\xi)}\,,\quad b(U;x,-\xi)= b(U;x,\xi),
\end{equation}
indeed conditions \eqref{calu} on these operators read
\begin{equation}\label{megaggiunti}
\left(\bonyw(a(U;x,\xi))\right)^{*}=\bonyw\left(\ol{a(U;x,\xi)}\right)\, ,\quad \ol{\bonyw(b(U;x,\xi))}=\bonyw\left(\ol{b(U;x,-\xi)}\right).
\end{equation}
Analogously, given $R_{1}$ and $R_{2}$ in $\RR^{-\rho}_{K,K'}[r]$,
one can define a reality preserving smoothing operator on ${\bf{H}}^s(\TTT,\CCC^2)$ as follows
\begin{equation}\label{vinello}
R(U)[\cdot]:=\left(\begin{matrix} R_{1}(U)[\cdot] & R_{2}(U)[\cdot] \\
\ol{R_{2}}(U)[\cdot] & \ol{R_{1}}(U)[\cdot] 
\end{matrix}
\right).
\end{equation}
We use the following notation for matrix of operators.
\begin{de}[{\bf Matrices}]\label{matrixmatrix} 
We denote by 
$\Gamma^{m}_{K,K'}[r]\otimes\MM_2(\CCC)$ the matrices $A(U;x,\xi)$ of the form \eqref{prodotto}
whose components are symbols in the class
 $\Gamma^{m}_{K,K'}[r]$. 
 In the same way we denote by
 $ \RR^{-\rho}_{K,K'}[r]\otimes\MM_2(\CCC)$ the operators $R(U)$ of the form 
 \eqref{vinello} whose
 components are smoothing operators in the class $\RR^{-\rho}_{K,K'}[r]$. 
 \end{de}

\begin{rmk}\label{compsimb}
An important class of \emph{parity preserving} maps according to Definition \ref{revmap} is the following. Consider a matrix of symbols $C(U;x,\xi)$, with $U$ even in $x$, in $\Gamma^m_{K,K'}[r]\otimes\MM_2(\CCC)$ with $m\in \NNN$, if 
\begin{equation}\label{compmatr}
C(U;x,\xi)=C(U;-x,-\xi)
\end{equation}
then one can check that $\bonyw(C(U;x,\x))$ preserves the subspace of even functions. 

Moreover consider the 
system 
\[
\left\{\begin{aligned}
&\del_{\tau}\Phi^{\tau}(U)[\cdot]=\bonyw(C(U;x;\x))\Phi^{\tau}(U)[\cdot],\\
&\Phi^{0}(U)=\uno.
\end{aligned}\right.
\]
If the flow $\Phi^{\tau}$ is well defined  for $\tau\in [0,1]$, 
then it defines a family of \emph{parity preserving} maps according to Def. \ref{revmap}.
\end{rmk}

%

\subsection{Symbolic calculus}
We define the following differential operator
\begin{equation}\label{cancello}
\s(D_{x},D_{\x},D_{y},D_{\h})=D_{\x}D_{y}-D_{x}D_{\h},
\end{equation} 
where $D_{x}:=\frac{1}{\ii}\del_{x}$ and $D_{\x},D_{y},D_{\h}$ are similarly defined. 
%
%
%
If $a$ is a symbol in $\Gamma^{m}_{K,K'}[r]$ and $b$ in $\Gamma^{m'}_{K,K'}[r]$, if $U\in B^K_{s_0}(I,r)$ with $s_0$ large enough, we define 
\begin{equation}\label{sbam8}
(a\sharp b)_{\rho}(U;x,\xi):=\sum_{\ell=0}^{\rho-1}\frac{1}{ \ell!}\tonde{\frac{i}{2}\s(D_x,D_{\xi},D_{y},D_{\eta})}^{\ell}\left[a(U;x,\xi)b(U;y,\eta)\right]_{|_{x=y; \,y=\eta}},
\end{equation}
modulo symbols in $\Gamma^{m+m'-\rho}_{K,K'}[r]$. 
Assume also that the $x$-Fourier  transforms $\hat{a}(\h,\x)$, $\hat{b}(\h,\x)$ are supported
for $|\eta|\leq \delta \langle\x \rangle$ for small enough $\delta>0$. Then we define
\begin{equation}\label{sbam8infinito}
(a\sharp b)(x,\x):=
\frac{1}{4\pi^{2}}\int_{\RRR^{2}}e^{\ii x (\x^* + \h^*)}\hat{a}(\h^*,\x+\frac{\x^*}{2})\hat{b}(\x^*,\x-\frac{\h^*}{2})
d\x^* d\h^*.
\end{equation}
Thanks to the hypothesis on the support of the $x$-Fourier transform of $a$ and $b$, this integral is well defined as a distribution in $(\xi^*,\eta^*)$ acting on the $C^{\infty}$-function $(\xi^*,\eta^*)\mapsto e^{\ii x(\xi^*+\eta^*)}$.
Lemma 2.3.4 in \cite{maxdelort} guarantees that according to the notation above 
one has 
\begin{equation}\label{ronaldo}
\bonyw(a)\circ\bonyw(b)=\weyl(c), \quad c(x,\x):=(a_{\chi}\sharp b_{\chi})(x,\x),
\end{equation}
where $a_{\chi}$ and $b_{\chi}$ are defined in \eqref{nsym2}.
We state here a Proposition asserting that the symbol $(a\sharp b)_{\rho}$ is the symbol of the composition up to smoothing operators.
\begin{prop}[{\bf Composition of Bony-Weyl operators}]\label{componiamoilmondo}
Let $a$ be a symbol in $\Gamma^{m}_{K,K'}[r]$ and $b$ a symbol in $\Gamma^{m'}_{K,K'}[r]$, if $U\in B^K_{s_0}(I,r)$ with $s_0$ large enough then 
\begin{equation}\label{sharp}
\bonyw(a(U;x,\xi))\circ\bonyw(b(U;x,\xi))-\bonyw((a\sharp b)_{\rho}(U;x,\xi))
\end{equation}
belongs to the class $\mathcal{R}^{-\rho+m+m'}_{K,K'}[r]$.
\end{prop}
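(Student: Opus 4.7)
The plan is to start from the exact identity \eqref{ronaldo}, which reduces the problem to analyzing the symbol $c(x,\xi) = (a_\chi \sharp b_\chi)(x,\xi)$ given by the oscillatory integral \eqref{sbam8infinito}. Since the $x$-Fourier supports of $a_\chi$ and $b_\chi$ are restricted by the admissible cut-off to $|\eta| \leq \delta\langle \xi\rangle$, this integral is well defined and all the usual manipulations from Weyl symbolic calculus on $\RRR$ remain legitimate in the paradifferential setting.

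The first step is a Taylor expansion of the exponential of the symplectic form. Writing
\begin{equation*}
\exp\!\left(\tfrac{i}{2}\sigma(D_x,D_\xi,D_y,D_\eta)\right) = \sum_{\ell=0}^{\rho-1}\frac{1}{\ell!}\left(\tfrac{i}{2}\sigma\right)^\ell + E_\rho,
\end{equation*}
with $E_\rho$ the integral form of the Taylor remainder, and inserting this in the Fourier representation of $a_\chi \sharp b_\chi$ (equivalently, expanding $\hat a_\chi(\eta^*,\xi + \xi^*/2)\,\hat b_\chi(\xi^*,\xi-\eta^*/2)$ in $(\xi^*,\eta^*)$ around the origin up to order $\rho$), one identifies the polynomial part with $(a_\chi \sharp b_\chi)_\rho$ in the sense of \eqref{sbam8}. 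The remaining integral defines a symbol $r_\rho(U;x,\xi)$.

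The second step is to show that $r_\rho \in \Gamma^{m+m'-\rho}_{K,K'}[r]$. Each application of $\sigma$ lowers the order of the product by $1$ because it differentiates once in $\xi$ on one factor and once in $x$ on the other; after $\rho$ such operations the bound \eqref{simbo} for $a$ and $b$ combined with Leibniz yields the estimate for $r_\rho$, with the two factors $\norm{U}{k+K',\sigma_0}$ absorbed in the multiplicative constant. By Remark \ref{inclusione-nei-resti} (together with \eqref{paraparaest}) the Bony--Weyl quantization $\weyl(r_{\rho,\chi})$ is a smoothing operator belonging to $\RR^{m+m'-\rho}_{K,K'}[r] = \RR^{-\rho+m+m'}_{K,K'}[r]$ in the sense of Definition \ref{nonomoop}; the control on $t$-derivatives and on the bilinear split $\norm{u}{\cdot}\norm{V}{\cdot}$ in \eqref{porto20} follows from the fact that $r_\rho$ depends on $U$ through $a(U;\cdot)$ and $b(U;\cdot)$ in a Leibniz-friendly way.

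The last step is to reconcile the two regularizations: by definition $\bonyw((a\sharp b)_\rho) = \weyl(((a\sharp b)_\rho)_\chi)$, whereas the Taylor argument above produces $\weyl((a_\chi\sharp b_\chi)_\rho)$. Since $\chi$ acts only on the $x$-Fourier side and commutes with $\partial_x$, each differentiation in $\xi$ coming from $\sigma$ produces, by Leibniz, either a term where $\chi$ is untouched (contributing to $((a\sharp b)_\rho)_\chi$) or a term where a derivative falls on $\chi$; in the latter case one gains a factor of $\langle\xi\rangle^{-1}$ from Definition \ref{cutoff1}, and after $\rho$ operations such contributions give again symbols of order $\leq m+m'-\rho$, treated as before. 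One may also directly appeal to Lemma \ref{equiv} to absorb any discrepancy coming from replacing one cut-off by another into $\RR^{-\rho+m+m'}_{K,K'}[r]$. The main technical obstacle will be the quantitative estimate on the oscillatory integral defining the Taylor remainder $r_\rho$ and on the error from $\chi$-derivatives; this is the Euclidean Weyl calculus estimate adapted to the periodic/paradifferential setting exactly as in Proposition $2.3.3$ of \cite{maxdelort}, which we invoke to control the constants uniformly in $U\in B^{K}_{s_0}(I,r)$.
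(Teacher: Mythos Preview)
Your outline is correct and follows essentially the same route as the paper, which in fact does not give its own proof but simply refers to Proposition~2.3.2 in \cite{maxdelort}; the Taylor expansion of $a_\chi\sharp b_\chi$, the estimate on the integral remainder, and the comparison between $(a_\chi\sharp b_\chi)_\rho$ and $((a\sharp b)_\rho)_\chi$ are exactly the ingredients of that reference. One minor correction: the technical estimate you invoke at the end is Lemma~2.3.3 (not Proposition~2.3.3) of \cite{maxdelort}, and the composition statement itself is their Proposition~2.3.2.
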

For the proof we refer to Proposition 2.3.2 in \cite{maxdelort}.
In the following we will need to compose smoothing operators and paradifferential ones, the next Proposition asserts that the outcome is another smoothing operator.
\begin{prop}\label{componiamoilmare}
Let $a$ be a symbol in $\Gamma^{m}_{K,K'}[r]$ with $m\geq 0$ and $R$ be a smoothing operator in $\mathcal{R}^{-\rho}_{K,K'}[r]$. If $U$ belongs to $B^{K}_{s_0}[I,r]$ with $s_0$ large enough, then  the composition operators
\begin{equation*}
\bonyw(a(U;x,\xi))\circ R(U)[\cdot]\,, \quad R(U) \circ \bonyw(a(U;x,\xi))[\cdot]
\end{equation*} 
belong to the class $\mathcal{R}^{-\rho+m}_{K,K'}[r]$.
\end{prop}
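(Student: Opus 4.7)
The plan is to derive both compositions from the two workhorses available at this stage: the mapping/tame estimate for Bony--Weyl quantizations given by Proposition \ref{boni2} together with Remark \ref{ronaldo10}, and the defining tame inequality \eqref{porto20} of the smoothing class. No further symbolic manipulation is required; one only needs to check that the outputs obey \eqref{porto20} with $\rho$ replaced by $\rho-m$.

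For the composition $\bonyw(a(U))\circ R(U)$ I would differentiate in time by Leibniz, exploiting the linearity of the Bony--Weyl quantization in the symbol:
\[
\partial_t^k\bigl[\bonyw(a(U))R(U)u\bigr]=\sum_{k_1+k_2=k}\binom{k}{k_1}\bonyw\bigl(\partial_t^{k_1}a(U)\bigr)\,\partial_t^{k_2}\bigl(R(U)u\bigr).
\]
Each term is then bounded in $H^{s-2k+\rho-m}$ by first applying \eqref{paraparaest} to the Bony--Weyl factor, which loses $m$ spatial derivatives and produces a tame constant $C\|U\|_{k_1+K',s_0}$, then applying \eqref{porto20} at time level $k_2$ to control $\partial_t^{k_2}(R(U)u)$ in $H^{s-2k_2+\rho}$. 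Since $s-2k+\rho-m\leq s-2k_2+\rho-m$, Sobolev monotonicity absorbs the discrepancy between $k$ and $k_2$. Reindexing $k_1+k'$ into a single multiindex $\leq k$, the product of two Sobolev norms of $U$ at level $s_0$ is absorbed into the tame constant $C(s,\|U\|_{K,s_0})$, and one reads off precisely the estimate \eqref{porto20} with smoothing order $\rho-m$.

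For the composition $R(U)\circ\bonyw(a(U))$ I would instead apply \eqref{porto20} directly to $v=\bonyw(a(U))u$ at target spatial regularity $s-m$, obtaining
\[
\|\partial_t^k(R(U)\bonyw(a(U))u)\|_{H^{s-2k+\rho-m}}\leq\sum_{k'+k''=k}C\bigl[\|\bonyw(a)u\|_{k'',s-m}\|U\|_{k'+K',s_0}+\|\bonyw(a)u\|_{k'',s_0}\|U\|_{k'+K',s-m}\bigr].
\]
Each Sobolev norm of $\bonyw(a(U))u$ is then controlled via Proposition \ref{boni2}: the first mixed norm by $C\|U\|_{K,s_0}\|u\|_{k'',s}$, and the second by $C\|U\|_{K,s_0}\|u\|_{k'',s_0}$. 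This is exactly where the hypothesis $m\geq 0$ enters, ensuring that the target $s_0-m$ lies below $s_0$ so that no extra regularity of $u$ is required; likewise $\|U\|_{k'+K',s-m}\leq\|U\|_{k'+K',s}$ by monotonicity. After choosing $s_0$ larger than the thresholds imposed by both Prop.~\ref{boni2} and Def.~\ref{nonomoop}, one recovers \eqref{porto20} with order $\rho-m$.

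The main obstacle is mostly combinatorial rather than analytical: the doubly-indexed sum coming from Leibniz in time must recombine with the already doubly-indexed tame sum in \eqref{porto20} into a single tame sum of the canonical form, without ever multiplying two high-Sobolev factors together. The hypothesis $m\geq 0$ plays the complementary role of keeping $u$ on the same spatial scale on both sides of the composition, so that the tame bookkeeping closes and one never has to trade regularity of $u$ for regularity of $U$.
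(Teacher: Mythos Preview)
Your approach is the natural direct argument and is essentially what the reference contains; the paper itself gives no proof and simply defers to Proposition 2.4.2 in \cite{maxdelort}. Your treatment of $\bonyw(a)\circ R$ is clean and closes without difficulty.

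For $R\circ\bonyw(a)$ there is a small slip. After applying \eqref{porto20} at spatial level $s-m$ you obtain the low-norm factor $\|\bonyw(a(U))u\|_{k'',s_0}$, and you claim this is bounded by $C\|U\|_{K,s_0}\|u\|_{k'',s_0}$. But $\bonyw(a)$ \emph{loses} $m$ derivatives, so the correct bound from Proposition~\ref{boni2} is $C\|U\|_{K,s_0}\|u\|_{k'',s_0+m}$. Your remark that ``the target $s_0-m$ lies below $s_0$'' is the estimate in the wrong direction: what you can control by $\|u\|_{k'',s_0}$ is $\|\bonyw(a)u\|_{k'',s_0-m}$, not $\|\bonyw(a)u\|_{k'',s_0}$. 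The fix is exactly what you gesture at in your last sentence: take the threshold for the \emph{composition} to be $\tilde s_0\geq s_0^R+m$ (where $s_0^R$ is the threshold of $R$), so that $\|u\|_{k'',s_0^R+m}\leq\|u\|_{k'',\tilde s_0}$ becomes the legitimate low norm in \eqref{porto20} for the new operator. The genuine place where $m\geq 0$ is indispensable is the bound $\|U\|_{k'+K',s-m}\leq\|U\|_{k'+K',s}$, which you do identify correctly; without it the high-$s$ norm of $U$ would exceed what the tame form permits.
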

For the proof we refer to Proposition 2.4.2 in \cite{maxdelort}. We can compose smoothing operators with smoothing operators as well.
\begin{prop}
Let $R_1$ be a smoothing operator in $\mathcal{R}^{-\rho_1}_{K,K'}[r]$ and $R_2$ in $\mathcal{R}^{-\rho_2}_{K,K'}[r]$. If $U$ belongs to $B^{K}_{s_0}[I,r]$ with $s_0$ large enough, then the operator $R_1(U)\circ R_{2}(U)[\cdot]$ belongs to the class $\mathcal{R}^{-\rho}_{K,K'}[r]$, where $\rho=\min(\rho_1,\rho_2)$.
\end{prop}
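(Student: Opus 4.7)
The plan is to apply Definition \ref{nonomoop} twice, reading $R_1(U)\circ R_2(U)[u]$ as the application of $R_1(U)$ to the intermediate function $v:=R_2(U)u$. Since $\rho_1,\rho_2\geq 0$, the smoothing estimate \eqref{porto20} for $R_2$, combined with the trivial embedding $H^{\tau+\rho_2}\hookrightarrow H^{\tau}$, immediately gives that $v$ lies in the same regularity class as $u$ itself, with
\[
\|v\|_{j,s}\leq C\bigl(\|u\|_{j,s}\|U\|_{j+K',s_0}+\|u\|_{j,s_0}\|U\|_{j+K',s}\bigr),\quad \|v\|_{j,s_0}\leq C\|u\|_{j,s_0}\|U\|_{j+K',s_0},
\]
for all $0\leq j\leq K-K'$, where $C=C(\|U\|_{K,s_0})$. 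This is the preparatory step.

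Next I would apply \eqref{porto20} to $R_1(U)$, taking $v$ as the generic input function, to obtain
\[
\|\partial_t^{k}(R_1(U)v)\|_{H^{s-2k+\rho_1}}\leq \sum_{k'+k''=k}C\bigl[\|v\|_{k'',s}\|U\|_{k'+K',s_0}+\|v\|_{k'',s_0}\|U\|_{k'+K',s}\bigr].
\]
Plugging in the bounds for $\|v\|_{k'',s}$ and $\|v\|_{k'',s_0}$ from the previous step, the cross terms involving $\|U\|_{\ast,s}$ can all be reorganized into a sum of two terms of the form $\|u\|_{k'',s}\|U\|_{k'+K',s_0}$ and $\|u\|_{k'',s_0}\|U\|_{k'+K',s}$, up to constants depending only on $\|U\|_{K,s_0}$, which stay bounded since $U\in B^K_{s_0}(I,r)$. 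The resulting inequality has exactly the structure of \eqref{porto20} with loss $\rho_1$, so $R_1(U)\circ R_2(U)\in\mathcal{R}^{-\rho_1}_{K,K'}[r]$. The claim then follows from the obvious nesting $\mathcal{R}^{-\rho_1}_{K,K'}[r]\subset\mathcal{R}^{-\rho}_{K,K'}[r]$ for any $\rho\leq\rho_1$, applied with $\rho:=\min(\rho_1,\rho_2)$.

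The main obstacle is combinatorial rather than analytic: the Leibniz sum $\sum_{k'+k''=k}$ in \eqref{porto20} gets nested when the estimate for $v$ is substituted into the estimate for $R_1(U)v$, and one has to reorganize the resulting double sum back into a single sum of the prescribed form while keeping track of which $U$-norm ($\|U\|_{\cdot,s_0}$ vs. $\|U\|_{\cdot,s}$) carries the high-regularity loss. Beyond this, one only needs to verify that $s_0$ is large enough for the $\|U\|_{K,s_0}$-dependent constants appearing at each of the two applications of \eqref{porto20} to be absorbed into the constant of the final estimate. No symbolic calculus, paraproduct estimate, or cut-off argument is required; the statement is a purely formal consequence of the defining inequality \eqref{porto20}.
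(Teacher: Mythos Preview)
Your strategy is the natural one, and the paper itself leaves this proposition unproved, so there is nothing further to compare against. However, the intermediate bound you write for $\|v\|_{j,s}$ is too crude to close the argument. By collapsing the Leibniz sum in \eqref{porto20} to the single index pair $(j,j)$ you discard the constraint that the time-derivative indices on $u$ and $U$ sum to at most $j$. After substituting into the $R_1$-estimate and taking $k'=0$, $k''=k$, the cross term produces (up to a harmless factor $\|U\|_{K',s_0}\leq r$) the quantity $\|u\|_{k,s_0}\,\|U\|_{k+K',s}$, in which \emph{both} norms carry the maximal time index. This is \emph{not} dominated by the right-hand side of \eqref{porto20}, which only contains products $\|u\|_{k'',s_0}\|U\|_{k'+K',s}$ with $k'+k''=k$: choose $u$ and $U$ with $\|\partial_t^k u\|_{H^{s_0-2k}}$ and $\|\partial_t^k U\|_{H^{s-2k}}$ large while all lower-order pieces stay bounded, and no fixed constant can work. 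So the reorganization you announce cannot be carried out from that bound.

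The repair is exactly the careful bookkeeping you yourself anticipate: keep the full inner sum,
\[
\|v\|_{k'',s}\leq C\sum_{j'+j''\leq k''}\bigl(\|u\|_{j'',s}\|U\|_{j'+K',s_0}+\|u\|_{j'',s_0}\|U\|_{j'+K',s}\bigr),
\]
substitute into the $R_1$-estimate, and absorb every factor $\|U\|_{\cdot,s_0}$ into the constant (bounded since $U\in B^K_{s_0}(I,r)$). Each surviving term is then $\|u\|_{j'',s}\|U\|_{j'+K',s_0}$ or $\|u\|_{j'',s_0}\|U\|_{j'+K',s}$ with $j'+j''\leq k$; by monotonicity of $\|\cdot\|_{m,\sigma}$ in $m$ one raises $j''$ to $k-j'$ and lands exactly on a term of the target sum. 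With this correction your conclusion $R_1(U)\circ R_2(U)\in\mathcal{R}^{-\rho_1}_{K,K'}[r]\subset\mathcal{R}^{-\min(\rho_1,\rho_2)}_{K,K'}[r]$ goes through.
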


We need also the following.
\begin{lemma}\label{est-prod}
Fix $K,K'\in \NNN$, $K'\leq K$ and $r>0$. Let $\{c_i\}_{i\in \NNN}$
a sequence in $\calF_{K,K'}[r]$ such that for any $i\in\NNN$
\begin{equation}\label{puta}
\asso{\partial_t^k\partial_x^{\alpha}c_i(U;x)}\leq M_i \norm{U}{k+K',s_0},
\end{equation}
for any $0\leq k\leq K-K'$ and $|\alpha|\leq 2$ and for some $s_0>0$ big enough. Then for any $s\geq s_0$ and any $0\leq k\leq K-K'$ there exists a constant $C>0$ (independent of $n$) such that for any $n\in\NNN$
\begin{equation}\label{prodotti1}
\norm{\partial_t^k\left[\bonyw\Big(\prod_{i=1}^nc_i(U;x)\Big)h\right]}{H^{s-2k}}\leq C^{n}\prod_{i=1}^n M_i
\sum_{k_1+k_2=k}\norm{U}{k_1+K',s_0}^{n}\norm{h}{k_2,s},
\end{equation}
for any $h\in C^{K-K'}_{*\RRR}(I,H^{s}(\TTT;\CCC))$.
Moreover there exists $\widetilde{C}$ such that
\begin{equation}\label{prodotti}
\|\bonyw\Big(\prod_{i=1}^n c_i\Big)h\|_{K-K',s}\leq \widetilde{C}^{n} \prod_{i=1}^nM_i\|U\|_{K,s_0}^{n}\|h\|_{K-K',s},
\end{equation}
for any $h\in C^{K-K'}_{*\RRR}(I,H^{s}(\TTT;\CCC))$.
\end{lemma}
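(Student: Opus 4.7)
The plan is to reduce everything to the paradifferential action estimate of Proposition \ref{boni2} applied to an order-zero symbol, and then carefully track the $n$-dependence of the constant by expanding all products via Leibniz and controlling the combinatorics.

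First I would use the Leibniz rule in time to split
\begin{equation*}
\partial_t^k\Big[\bonyw\Big(\prod_{i=1}^n c_i(U;x)\Big)h\Big]=\sum_{k_1+k_2=k}\binom{k}{k_1}\bonyw\Big(\partial_t^{k_1}\prod_{i=1}^n c_i(U;x)\Big)\partial_t^{k_2}h.
\end{equation*}
The symbol $\partial_t^{k_1}\prod_i c_i$ has no $\xi$-dependence, so it lies in (a subspace of) $\Gamma^{0}_{K,K'}[r]$. Applying Proposition \ref{boni2} together with Remarks \ref{ronaldo10} and \ref{ronaldo2} (which explicitly require only $|\alpha|\leq 2$ derivatives on the symbol, matching the hypothesis \eqref{puta}) I obtain
\begin{equation*}
\Big\|\bonyw\Big(\partial_t^{k_1}\prod_{i=1}^n c_i\Big)\partial_t^{k_2}h\Big\|_{H^{s-2k}}\leq C_1\sup_{|\alpha|\leq 2}\|\partial_t^{k_1}\partial_x^{\alpha}\prod_{i=1}^n c_i(U;\cdot)\|_{L^\infty_x}\;\|h\|_{k_2,s},
\end{equation*}
where I used $\|\partial_t^{k_2}h\|_{H^{s-2k}}\leq \|\partial_t^{k_2}h\|_{H^{s-2k_2}}\leq \|h\|_{k_2,s}$ (valid since $k_2\leq k$). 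The constant $C_1$ depends only on $s$.

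Next I would expand the product via Leibniz in both $t$ and $x$:
\begin{equation*}
\partial_t^{k_1}\partial_x^{\alpha}\prod_{i=1}^n c_i=\sum_{\substack{\kappa_1+\cdots+\kappa_n=k_1\\ \alpha_1+\cdots+\alpha_n=\alpha}}\binom{k_1}{\kappa_1,\ldots,\kappa_n}\binom{\alpha}{\alpha_1,\ldots,\alpha_n}\prod_{i=1}^n\partial_t^{\kappa_i}\partial_x^{\alpha_i}c_i.
\end{equation*}
The pointwise hypothesis \eqref{puta}, together with the monotonicity $\|U\|_{\kappa_i+K',s_0}\leq \|U\|_{k_1+K',s_0}$ valid for $\kappa_i\leq k_1$, gives
\begin{equation*}
\Big|\prod_{i=1}^n\partial_t^{\kappa_i}\partial_x^{\alpha_i}c_i\Big|\leq \prod_{i=1}^n M_i\cdot \|U\|_{k_1+K',s_0}^n.
\end{equation*}
The multinomial theorem yields $\sum_{\kappa}\binom{k_1}{\kappa_1,\ldots,\kappa_n}=n^{k_1}$ and $\sum_{\alpha}\binom{\alpha}{\alpha_1,\ldots,\alpha_n}\leq n^{|\alpha|}\leq n^2$, so
\begin{equation*}
\sup_{|\alpha|\leq 2}\|\partial_t^{k_1}\partial_x^{\alpha}\prod_{i=1}^n c_i(U;\cdot)\|_{L^\infty_x}\leq C_2\, n^{k_1+2}\,\prod_{i=1}^n M_i\,\|U\|_{k_1+K',s_0}^n.
\end{equation*}

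The only substantive step is the following combinatorial observation: since $k_1\leq K-K'$ is bounded independently of $n$, the polynomial factor $n^{k_1+2}$ is dominated by an exponential, namely $n^{K-K'+2}\leq C_0^{n}$ for every $n\geq 1$ with $C_0:=e^{K-K'+2}$ (which follows from $\log n\leq n$). Absorbing this into the overall constant together with the finite factor $2^k$ coming from the first Leibniz expansion, I obtain \eqref{prodotti1} with a constant $C^n$ for some $C=C(K,K',s)>1$. The estimate \eqref{prodotti} is then immediate: summing \eqref{prodotti1} over $k=0,\ldots,K-K'$ and using $\|U\|_{k_1+K',s_0}\leq \|U\|_{K,s_0}$ and $\|h\|_{k_2,s}\leq \|h\|_{K-K',s}$, the remaining sum produces at most a factor $(K-K'+1)^2$, which is an $n$-independent constant and can be absorbed into $\widetilde{C}^n$.

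The only real difficulty is the combinatorial bookkeeping. One must verify that each Leibniz expansion of the $n$-fold product contributes at most $n^{p}$ for some exponent $p$ bounded by $K-K'+2$ \emph{independently of} $n$, so that the blow-up remains purely exponential and fits into the $C^n$ allowed by the statement. No further symbolic calculus is needed, precisely because the symbol is $\xi$-independent and the operator is of order zero.
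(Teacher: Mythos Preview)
Your argument is correct and follows essentially the same route as the paper: both proofs expand $\partial_t^{k}\partial_x^{\alpha}\prod_i c_i$ by Leibniz, use only $|\alpha|\leq 2$ spatial derivatives to feed the order-zero paradifferential action bound, and check that the resulting combinatorial factor is at worst $C^n$. The only cosmetic difference is that the paper re-derives the action estimate by hand (Fourier decay $\langle\ell\rangle^{-2}$ from two $x$-derivatives, then Young's inequality), whereas you invoke Proposition~\ref{boni2} and Remarks~\ref{ronaldo10}--\ref{ronaldo2} directly; your explicit bookkeeping of the multinomial factor $n^{k_1+2}\leq (e^{K-K'+2})^n$ makes transparent a step the paper leaves implicit.
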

\begin{proof}
Let $\chi$ an admissible cut-off function and set $b(U;x,\xi):=(\prod_{i=1}^nc_i(U;x))_{\chi}$. By Liebniz rule and interpolation one can prove that
\begin{equation}\label{est-prod1}
|\partial_t^k\partial_x^{\alpha}\partial_{\xi}^{\beta}b(U;x,\xi)|\leq C^{n}\norm{U}{k+K',s_0}^n\prod_{i=1}^n M_i
\end{equation}
for any $0\leq k\leq K-K'$, $\alpha\leq 2 $, any $\xi\in\RRR$ and where the constant $C$ is independent of $n$. Denoting by $\widehat{b}(U;\ell,\xi)=\widehat{b}(\ell,\xi)$ the $\ell^{th}$ Fourier coefficient of the function $b(U;x,\xi)$, from \eqref{est-prod1} with $\alpha=2$ one deduces the following decay estimate 
\begin{equation}\label{est-prod2}
|\partial_t^k\widehat{b}(\ell,\xi)|\leq C^{n} \norm{U}{k+K',s_0}^n\prod_{i=1}^n M_i\langle\ell\rangle^{-2}.
\end{equation}
With this setting one has
\begin{equation*}
\begin{aligned}
\bonyw\Big(\prod_{i=1}^n&c_i(U;x)\Big)h=\weyl(b(U;x,\xi))h\\
&=\frac{1}{2\pi}\sum_{\ell\in\ZZZ}\left(\sum_{n'\in\ZZZ}\widehat{b}\Big(\ell-n',\frac{\ell+n'}{2}\Big)\widehat{h}(n')\right)e^{\ii\ell x},
\end{aligned}\end{equation*}
where the sum is restricted to the set of indices such that $|\ell-n'|\leq\delta\frac{|\ell+n'|}{2}$ with $0<\delta<1$ (which implies that $\ell\sim n'$). Let $0\leq k\leq K-K'$, one has
\begin{equation*}\begin{aligned}
&\norm{\partial_t^k\left[\bonyw\Big(\prod_{i=1}^nc_i(U;x)\Big)h\right]}{H^{s-2k}}^2\\
\leq &C^{n}\sum_{k_1+k_2=k}\sum_{\ell\in\ZZZ}\langle \ell\rangle^{2(s-2k)}\left|\sum_{n'\in\ZZZ}\partial_t^{k_1}\left(\widehat{b}\Big(\ell-n',\frac{\ell+n'}{2}\Big)\right)\partial_t^{k_2}\Big(\widehat{h}(n')\Big)\right|^2 \\
\leq&C^{n}\prod_{i=1}^n M_i^2
\sum_{k_1+k_2=k}\norm{U}{k_1+K',s_0}^{2n}\sum_{\ell\in\ZZZ}\left(\sum_{n'\in\ZZZ}\langle\ell-n'\rangle^{-2}\langle n'\rangle^{s-2k}\left|\partial_t^{k_2}\widehat{h}(n')\right|\right)^2,
\end{aligned}\end{equation*}
where in the last passage we have used \eqref{est-prod2} and that $\ell\sim n'$. By using Young inequality for sequences one can continue the chain of inequalities above and finally obtain 
the \eqref{prodotti1}.
The estimate \eqref{prodotti} follows summing over $0\leq k\leq K-K'$.
\end{proof}

\begin{prop}\label{diff-prod-est}
Fix $K,K'\in \NNN$, $K'\leq K$ and $r>0$.
Let $\{c_i\}_{i\in \NNN}$
a sequence in $\calF_{K,K'}[r]$ satisfying the hypotheses of Lemma \ref{est-prod}.
Then the operator
\begin{equation}\label{QNN}
Q^{(n)}_{c_1,\ldots,c_n}:=\bonyw(c_{1})\circ\cdots\circ\bonyw(c_{n})-\bonyw(c_1\cdots c_n)
\end{equation}
belongs to the class $\RR^{-\rho}_{K,K'}[r]$ for any $\rho\geq 0$. 
More precisely there exists $s_0>0$ such that for any $s\geq s_0$ the following holds. For any $0\leq k\leq K-K'$ and  any $\rho\geq0$   there exists a constant $\mathtt{C}>0$ (depending on $\norm{U}{K,s_0}$, $s,s_0,\rho,k$   and independent of $n$) such that
\begin{equation}\label{qn}
\norm{\partial_t^k\left(Q^{(n)}_{c_1,\ldots,c_n} [h]\right)}{s+\rho-2k}\leq  \mathtt{C}^{n}\mathtt{M}\sum_{k_1+k_2=k}\left(\|U\|_{K'+k_1,s_0}^{n}\|h\|_{k_2,s}+
\|U\|_{K'+k_1,s_0}^{n-1}\|h\|_{k_2,s_0}\|U\|_{K'+k_1,s}\right),
\end{equation}
for any   $n\geq1$, any $h$ in $C^K_{*\R}(I,H^s(\TTT,\CCC))$, any $U\in C^K_{*\R}(I,\hcic^s)\cap B^K_s(I,r)$
and where $\mathtt{M}=M_1\cdots M_n$ (see \eqref{puta}).
\end{prop}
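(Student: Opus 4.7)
The plan is to proceed by induction on $n$, based on the telescoping identity
\begin{equation*}
Q^{(n)}_{c_1,\ldots,c_n} \;=\; \bonyw(c_1) \circ Q^{(n-1)}_{c_2,\ldots,c_n} \;+\; Q^{(2)}_{c_1,\, c_2\cdots c_n},
\end{equation*}
which reduces the general statement to the case $n=2$ combined with the stability of the smoothing class under composition with Bony--Weyl operators of order zero (Proposition \ref{componiamoilmare}). The case $n=1$ is trivial, since $Q^{(1)}\equiv 0$.

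The heart of the argument is the case $n=2$, which is remarkably clean: since the symbols $c_1,c_2 \in \calF_{K,K'}[r]$ are independent of $\xi$, every term with $\ell\geq 1$ in the Moyal-type expansion \eqref{sbam8} vanishes identically, because each power of $\sigma(D_x,D_\xi,D_y,D_\eta) = D_\xi D_y - D_x D_\eta$ involves at least one derivative in $\xi$ or $\eta$. Hence $(c_1\sharp c_2)_\rho = c_1 c_2$ for every $\rho$, and Proposition \ref{componiamoilmondo} applied with $m=m'=0$ yields at once $Q^{(2)}_{c_1,c_2} \in \RR^{-\rho}_{K,K'}[r]$ for every $\rho\geq 0$.

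For the inductive step, the first summand in the telescoping identity lies in $\RR^{-\rho}_{K,K'}[r]$ by the induction hypothesis combined with Proposition \ref{componiamoilmare}, whereas the second summand requires applying the base case to the function $b := \prod_{i=2}^n c_i$. The latter belongs to $\calF_{K,K'}[r]$ thanks to Leibniz's rule and \eqref{puta}; more precisely, for $|\alpha|\leq 2$,
\begin{equation*}
\bigl|\partial_t^k \partial_x^\alpha b(U;x)\bigr| \;\leq\; C^{n-1} \prod_{i=2}^n M_i \; \|U\|_{k+K',s_0}^{\,n-1},
\end{equation*}
which feeds exactly the needed exponential-in-$n$ dependence into the constant of Proposition \ref{componiamoilmondo}. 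Composing with $\bonyw(c_1)$ in the first summand contributes an extra factor $M_1$ via \eqref{paraparaest}, so the two contributions together carry the advertised prefactor $\mathtt{C}^n \mathtt{M}$.

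The delicate point, and the main obstacle, is preserving the tame structure of \eqref{qn}: the split between the summand $\|U\|_{K'+k_1,s_0}^n \|h\|_{k_2,s}$ and the loss term $\|U\|_{K'+k_1,s_0}^{n-1}\|h\|_{k_2,s_0}\|U\|_{K'+k_1,s}$ requires that at most one factor among $c_1,\ldots,c_n,h$ be measured in the high index $s$, with all others at level $s_0$. This is achieved as in Lemma \ref{est-prod}: when Leibniz distributes derivatives across $c_2\cdots c_n$, a single factor absorbs the high-regularity derivatives while the remaining $n-1$ are controlled at level $s_0$, generating the loss term in \eqref{qn}. This tame split is propagated through the induction by the tame composition estimates embedded in Propositions \ref{componiamoilmondo} and \ref{componiamoilmare}, and the bookkeeping of the constant $\mathtt{C}$ so that it absorbs both the per-step composition constant and the geometric product growth is what closes the induction.
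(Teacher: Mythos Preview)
Your approach is correct and essentially the same as the paper's: both proceed by induction on $n$, reduce the base case $n=2$ to the observation that $(c_1\sharp c_2)_\rho = c_1 c_2$ for $\xi$-independent symbols, and handle the inductive step by a telescoping identity. The only difference is cosmetic: the paper decomposes from the right,
\[
Q^{(n)}_{c_1,\ldots,c_n} \;=\; Q^{(n-1)}_{c_1,\ldots,c_{n-1}}\circ\bonyw(c_n) \;+\; Q^{(2)}_{c_1\cdots c_{n-1},\,c_n},
\]
whereas you decompose from the left; the two are mirror images of each other.

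One point worth noting: for the quantitative estimate \eqref{qn} with the precise $\mathtt{C}^n\mathtt{M}$ dependence, invoking Propositions \ref{componiamoilmondo} and \ref{componiamoilmare} as black boxes is not quite enough, since those statements do not make the constants explicit. The paper, accordingly, unwinds the $n=2$ case by hand (writing the remainder as $\weyl(r_1)+\weyl(r_2)$ and bounding each piece via Lemma 2.3.3 of \cite{maxdelort} and Remark \ref{ronaldo2}), and in the inductive step composes the explicit estimate for $Q_{n-1}$ with the bound \eqref{prodotti1} of Lemma \ref{est-prod} for $\bonyw(c_n)$, tracking the double sum over $k_1,k_2$ to close with $\mathtt{C}>(k+1)C\mathtt{K}$. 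Your final paragraph gestures at this bookkeeping; to make the argument complete you would need to carry it out in the same explicit fashion, but the structure you describe is exactly right and the computation goes through symmetrically.
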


\begin{proof}
We proceed by induction.
For $n=1$ is trivial. 
Let us study the case $n=2$.
Since $c_1,c_2$ belong to $\calF_{K,K'}[r]$,   then $c_1 \cdot c_2=(c_1\sharp c_{2})_{\rho}$
for any $\rho>0$.
Then by \eqref{ronaldo} there exists an admissible cut-off function $\chi$ such that
\begin{equation}\label{ronaldo3}
\begin{aligned}
\bonyw(c_1)&\circ\bonyw(c_2)-\bonyw(c_1\cdot c_2)=
\bonyw(c_1)\circ\bonyw(c_2)-\bonyw( (c_1\sharp c_2)_{\rho} )\\
&=\weyl((c_1)_{\chi}\sharp (c_{2})_{\chi})-\weyl((c_1\sharp c_{2})_{\rho,\chi})=
\weyl(r_1)+\weyl(r_2),
\end{aligned}
\end{equation}
where
\begin{equation}
\begin{aligned}
r_{1}(x,\x)&=(c_1)_{\chi}\sharp (c_{2})_{\chi}-((c_1)_{\chi}\sharp (c_{2})_{\chi})_{\rho},\\
r_2(x,\x)&=((c_1)_{\chi}\sharp (c_{2})_{\chi})_{\rho}-(c_1\sharp c_{2})_{\rho,\chi}.
\end{aligned}
\end{equation}
Then, by Lemma $2.3.3$ in \cite{maxdelort} and \eqref{puta},
one has that
$r_1$ satisfies the bound
\begin{equation}\label{ronaldo4}
|\del_{t}^{k}\del_{x}^{\ell}r_1(U;x,\x)|\leq \widetilde{C}M_1 M_2\langle\x\rangle^{-\rho+\ell}\|U\|_{k+K',s_0}^{2}
\end{equation}
for any $|\ell|\leq 2$
and some universal constant $\widetilde{C}>0$ depending only on $s,s_0,\rho$. Therefore Proposition \ref{boni2} and Remark \ref{ronaldo2} 
imply that
\begin{equation}\label{ronaldo5}
\norm{ \weyl(\partial_t^kr_1(U;x,\cdot)  ) }{\calL(H^s, H^{s+\rho-2})}\leq 
\widetilde{C}M_1 M_2\|U\|_{k+K',s_0}^{2},
\end{equation}
for $\widetilde{C}>0$ possibly larger than the one in \eqref{ronaldo4}, but still depending only 
on $k,s, s_0,\rho$.
From the bound \eqref{ronaldo5} one deduces the estimate \eqref{qn}
for some $\mathtt{C}\geq 2\widetilde{C}$.
One can argue in the same way to estimate the term $\weyl(r_2)$
 in \eqref{ronaldo3}.
 
 Assume now that \eqref{qn} holds for $j\leq n-1$ for $n\geq3$.
We have that
\begin{equation}
\begin{aligned}
\bonyw(c_{1})\circ\cdots\circ\bonyw(c_{n})=\big(\bonyw(c_1\cdots c_{n-1})+Q_{n-1}\big)\circ \bonyw(c_n),
\end{aligned}
\end{equation}
where $Q_{n-1}$ satisfies condition \eqref{qn}. 
For the term $\bonyw(c_1\cdots c_{n-1})\circ\bonyw(c_n)$ one has to argue as done in the case $n=2$. 

Consider the term $Q_{n-1}\circ \bonyw(c_n)$ and let $C>0$ be the universal constant given by Lemma \ref{est-prod}.

Using the inductive hypothesis on $Q_{n-1}$ and  estimate \eqref{prodotti1} in Lemma \ref{est-prod} (in the case $n=1$) we have 
\begin{equation*}
\begin{aligned}
&\|\del_{t}^{k}\big(Q_{n-1}\circ\bonyw(c_n) h\big)\|_{s+\rho-2k}
\leq \mathtt{K}\mathtt{C}^{n-1}M_1\cdots M_{n-1}\sum_{k_1+k_2=k}\sum_{j_1+j_2=k_{2}} CM_{n}\|U\|^{n-1}_{K'+k_1,s_0}\|U\|_{K'+j_1,s_0}\|h\|_{j_2,s}\\
&\qquad+
\mathtt{K}\mathtt{C}^{n-1}M_1\cdots M_{n-1}\sum_{k_1+k_2=k}\sum_{j_1+j_2=k_{2}} CM_{n}\|U\|^{n-2}_{K'+k_1,s_0}\|U\|_{K'+k_1,s}\|U\|_{K'+j_1,s_0}\|h\|_{j_2,s_0}\\
&\qquad\leq \mathtt{K}\mathtt{M}\mathtt{C}^{n-1}C \sum_{k_1=0}^{k}\sum_{j_1=0}^{k-k_1}
\|U\|^{n}_{K'+k_1+j_1,s_0}\|h\|_{k-k_1-j_1,s}\\
&\qquad+ \mathtt{K}\mathtt{M}\mathtt{C}^{n-1}C \sum_{k_1=0}^{k}\sum_{j_1=0}^{k-k_1}
\|U\|^{n-1}_{K'+k_1+j_1,s_0}\|U\|_{K'+k_1+j_1,s}\|h\|_{k-k_1-j_1,s_0}\\
&\qquad\leq  \mathtt{K}\mathtt{M}\mathtt{C}^{n-1}C \sum_{m=0}^{k}
(\|U\|^{n}_{K'+m,s_0}\|h\|_{k-m,s}+
\|U\|^{n-1}_{K'+m,s_0}\|U\|_{K'+m,s}\|h\|_{k-m,s_0})(m+1),
\end{aligned}
\end{equation*}
for constant $\mathtt{K}$ depending only on $k$.
This implies  
\eqref{qn} by choosing $\mathtt{C}> (k+1)C\mathtt{K}$.
\end{proof}

\begin{coro}\label{esponanziale} Fix $K,K'\in \NNN$, $K'\leq K$ and $r>0$.
Let $s(U;x)$ and $z(U;x)$ be symbols in the class $\calF_{K,K'}[r]$. Consider the following two matrices 
\begin{equation}
S(U;x):=\left(\begin{matrix}s(U;x) & 0\\  0 & \ol{s(U;x)}\end{matrix}\right),\quad 
Z(U;x):=\left(\begin{matrix}0 & z(U;x)\\  \ol{z(U;x)} & 0\end{matrix}\right)\in \calF_{K,K'}[r]\otimes\MM_{2}(\CCC).
\end{equation}
Then one has the following
\begin{equation*}
\begin{aligned}
&\exp\left\{\bonyw(S(U;x))\right\}-\bonyw(\left\{\exp{S(U;x)}\right\}) \in \RR^{-\rho}_{K,K'}[r]\otimes\MM_2(\CCC), \\
&\exp\left\{\bonyw(Z(U;x))\right\}-\bonyw(\left\{\exp{Z(U;x)}\right\}) \in \RR^{-\rho}_{K,K'}[r]\otimes\MM_2(\CCC),
\end{aligned}
\end{equation*}
for any $\rho\geq 0$.
\end{coro}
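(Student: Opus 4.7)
The idea is to expand each matrix exponential as a power series and identify the $n$-th order term of the difference with an operator of the form $Q^{(n)}_{c_1,\ldots,c_n}$ from Proposition \ref{diff-prod-est}, then sum. Since $\exp$ commutes with the (linear) quantization $\bonyw$ at the level of formal power series, one has
\begin{equation*}
\exp\{\bonyw(S(U;x))\}-\bonyw(\exp\{S(U;x)\})
=\sum_{n\geq 2}\frac{1}{n!}\Bigl(\bonyw(S)^{n}-\bonyw(S^{n})\Bigr),
\end{equation*}
and similarly for $Z$; the terms $n=0,1$ cancel by linearity of $\bonyw$. The right-hand scalar exponential $\exp(S(U;x))$ lies in $\calF_{K,K'}[r]\otimes\MM_{2}(\CCC)$ because $\calF_{K,K'}[r]$ is pointwise closed under products and the matrix series converges absolutely in the $\calF$-seminorms.

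For the diagonal case, since $S=\mathrm{diag}(s,\bar s)$, we have $S^{n}=\mathrm{diag}(s^{n},\bar s^{n})$ and $\bonyw(S)^{n}=\mathrm{diag}(\bonyw(s)^{n},\bonyw(\bar s)^{n})$, so that
\begin{equation*}
\bonyw(S)^{n}-\bonyw(S^{n})=\begin{pmatrix}Q^{(n)}_{s,\ldots,s} & 0\\ 0 & Q^{(n)}_{\bar s,\ldots,\bar s}\end{pmatrix},
\end{equation*}
which is in $\RR^{-\rho}_{K,K'}[r]\otimes\MM_{2}(\CCC)$ by Proposition \ref{diff-prod-est}. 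For the off-diagonal case, the matrix identity $Z^{2}=|z|^{2}\uno$ gives $Z^{2k}=|z|^{2k}\uno$ and $Z^{2k+1}=|z|^{2k}Z$, so each entry of $\bonyw(Z^{n})$ is $\bonyw$ applied to a product of factors from $\{z,\bar z\}$. On the other hand each entry of $\bonyw(Z)^{n}$ is a composition $\bonyw(c_{1})\circ\cdots\circ\bonyw(c_{n})$ with $c_{i}\in\{z,\bar z\}$ alternating. Subtracting, each nonzero entry of $\bonyw(Z)^{n}-\bonyw(Z^{n})$ is exactly one operator $Q^{(n)}_{c_{1},\ldots,c_{n}}$ with $c_{i}\in\{z,\bar z\}$, hence in $\RR^{-\rho}_{K,K'}[r]$.

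It remains to sum the series in the smoothing class. For each $n$, Proposition \ref{diff-prod-est} and in particular the estimate \eqref{qn} yields, for any $\rho\geq 0$, $s\geq s_{0}$ and $0\leq k\leq K-K'$,
\begin{equation*}
\|\partial_{t}^{k}(Q^{(n)}_{c_{1},\ldots,c_{n}}h)\|_{H^{s+\rho-2k}}\leq \mathtt{C}^{n}M^{n}\sum_{k_{1}+k_{2}=k}\Bigl(\|U\|_{K'+k_{1},s_{0}}^{n}\|h\|_{k_{2},s}+\|U\|_{K'+k_{1},s_{0}}^{n-1}\|U\|_{K'+k_{1},s}\|h\|_{k_{2},s_{0}}\Bigr),
\end{equation*}
where $M:=\max(M_{s},M_{\bar s})$ (respectively $\max(M_{z},M_{\bar z})$) is finite and independent of $n$. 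Multiplying by $1/n!$ and summing, the factors $(\mathtt{C}M)^{n}/n!$ give $e^{\mathtt{C}M}$ and $e^{\mathtt{C}M\|U\|_{K'+k_{1},s_{0}}}/\|U\|_{K'+k_{1},s_{0}}$ type bounds uniformly in $U\in B^{K}_{s_{0}}(I,r)$, so the resulting operator satisfies precisely the estimate \eqref{porto20} of Definition \ref{nonomoop} with arbitrary $\rho$.

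\textbf{Main obstacle.} The only real issue is to ensure that the constant $\mathtt{C}$ produced by Proposition \ref{diff-prod-est} is genuinely independent of $n$; this is exactly the content of the inductive estimate \eqref{qn}, which was designed precisely to control products of arbitrary length. Once this uniformity is invoked, the factorial from the exponential series absorbs the geometric growth $\mathtt{C}^{n}M^{n}$ and convergence in $\RR^{-\rho}_{K,K'}[r]\otimes\MM_{2}(\CCC)$ is automatic.
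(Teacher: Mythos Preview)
Your proposal is correct and follows essentially the same approach as the paper: expand both exponentials as power series, reduce componentwise (using the diagonal structure of $S$ and the identity $Z^{2}=|z|^{2}\uno$ for $Z$) to differences of the form $Q^{(n)}_{c_{1},\ldots,c_{n}}$, apply Proposition~\ref{diff-prod-est}, and sum using the $n!$ in the denominator to absorb the $\mathtt{C}^{n}M^{n}$ growth. The paper carries out the final summation more explicitly (obtaining the constant $(\exp(\mathtt{C}\mathtt{N}\|U\|_{K'+k_{1},s_{0}})-1)/\|U\|_{K'+k_{1},s_{0}}$), but the argument is otherwise identical.
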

\begin{proof}
Let us prove the result for the matrix $S(U;x)$.

Since $s(U;x)$ belongs to $\calF_{K,K'}[r]$ then there exists $s_0>0$ such that if $U\in B^K_{s_0}(I,r)$, then there is  a constant $\mathtt{N}>0$ such that
$$\asso{\partial_t^k\partial_x^{\alpha}s(U;x)}\leq \mathtt{N} \norm{U}{k+K',s_0},$$
for any $0\leq k\leq K-K'$ and $|\alpha|\leq 2$.
By definition one has 
\begin{equation*}
\begin{aligned}
\exp&\Big(\bonyw(S(U;x))\Big)=\sum_{n=0}^{\infty}\frac{\big(\bonyw(S(U;x))\big)^n}{n!}\\
&=\sum_{n=0}^{\infty}\frac{1}{n!}\left(\begin{matrix}
& \big(\bonyw(s(U;x))\big)^n & 0\\
& 0 & \big(\bonyw(\ol{s(U;x)})\big)^n
\end{matrix}\right),
\end{aligned}
\end{equation*}
on the other hand 
\begin{equation*}
\begin{aligned}
\bonyw&\Big(\exp\big(S(U;x)\big)\Big)=\sum_{n=0}^\infty\frac{1}{n!}\bonyw\left(\begin{matrix}
&\big[s(U;x)\big]^n & 0\\
& 0 & \big[\ol{s(U;x)}\big]^n
\end{matrix}\right)\\
&=\sum_{n=0}^\infty\frac{1}{n!}\left(\begin{matrix}
&\bonyw\big(\big[s(U;x)\big]^n\big) & 0\\
& 0 &\bonyw \big(\big[\ol{s(U;x)}\big]^n\big)
\end{matrix}\right).
\end{aligned}
\end{equation*}
We  argue componentwise. Let $h$ be a function in  $C^K_{*\R}(I,H^s(\TTT,\CCC))$, then using Proposition
 \ref{diff-prod-est}, one has 
 \begin{equation*}
 \begin{aligned}
& \norm{\sum_{n=0}^{\infty}\frac{1}{n!}\partial_t^k\Big(\big[\bonyw(s(U;x))\big]^n[h]-\bonyw\big(s(U;x)^n\big)[h]\Big)}{s+\rho-2k}\leq\\
& \sum_{n=1}^{\infty}\frac{\mathtt{C}^n\mathtt{N}^n}{n!}\sum_{k_1+k_2=k}
\left(\norm{U}{K'+k_1,s_0}^n\norm{h}{k_2,s}+\norm{U}{K'+k_1,s_0}^{n-1}\norm{h}{k_2,s_0}\norm{U}{K'+k_1,s}\right)
\leq\\ &
\sum_{k_1+k_2=k}\left(\norm{U}{K'+k_1,s_0}\norm{h}{k_2,s}+\norm{U}{K'+k_1,s}\norm{h}{k_2,s_0}\right)\sum_{n=1}^{\infty} \frac{\mathtt{C}^n\mathtt{N}^n}{n!}\norm{U}{K'+k_1,s_0}^{n-1}.
 \end{aligned}\end{equation*}
Therefore we have proved the \eqref{porto20} with constant 
$$C=\sum_{n=1}^{\infty} \frac{\mathtt{C}^n\mathtt{N}^n}{n!}\norm{U}{K'+k_1,s_0}^{n-1}=\frac{\exp(\mathtt{C}\mathtt{N}  \norm{U}{K'+k_1,s_0})-1}{\norm{U}{K'+k_1,s_0}}.$$
For the other non zero component of the matrix the argument is the same.

In order to simplify  the notation,  set $z(U;x)=z$ and $\ol{z(U;x)}=\ol{z}$, therefore for the matrix $Z(U;x)$, by definition, one has 
\begin{equation*}
\bonyw\left(\exp(Z(U;x))\right)=\bonyw\left(\sum_{n=0}^{\infty}\frac{1}{n!}\left(\begin{matrix}
& |z|^{2n} & |z|^{2n+1}z\\
& |z|^{2n+1} \ol{z} & |z|^{2n}
\end{matrix}\right)\right).
\end{equation*}
On the other hand, setting $A^n_{z,\bar{z}}=\big(\bonyw(z)\circ\bonyw(\bar{z})\big)^n$ and $B^n_{z,\bar{z}}=A^n_{z,\bar{z}}\circ\bonyw(z)$, one has
\begin{equation*}
\exp\left(\bonyw(Z(U;x))\right)=\sum_{n=0}^{\infty}\frac{1}{n!}\left(\begin{matrix}
& A^n_{z,\bar{z}} & B^n_{z,\bar{z}}\\
& \ol{B^n_{z,\bar{z}}} & \ol{A^n_{z,\bar{z}}}
\end{matrix}\right).
\end{equation*}
Therefore one can study each component of the matrix $\exp\left(\bonyw(Z(U;x))\right)-\bonyw\left(\exp{Z(U;x)}\right)$ in the same way as done in the case of the matrix $S(U,x)$.
\end{proof}

\section{Paralinearization of the equation}\label{PARANLS}

In this section we give a paradifferential  formulation of the equation \eqref{NLS}. In order to paralinearize the equation \eqref{NLS} we need to ``double" the variables. We consider a system of equations for the variables $(u^+,u^-)$ in $H^{s}\times H^{s}$ which is equivalent to \eqref{NLS} if $u^+=\bar{u}^-$. More precisely we give the following definition.

\begin{de}\label{vectorNLS}
Let $f$ be
the  $C^{\infty}(\CCC^3;\CCC)$ function
in the equation \eqref{NLS}.
 We define the 
``vector'' NLS as
\begin{equation}\label{sistemaNLS}
\begin{aligned}
&\partial_t U=\ii E\left[\Lambda U
+\mathtt{F}(U,U_x,U_{xx})\right],  \quad U\in H^{s}\times H^{s},
\\
&\mathtt{F}(U,U_x,U_{xx}):=
\left(\begin{matrix} f_1(U,{ U}_{x},{ U}_{xx})\\ 
{{f_2({U },{U}_{x},{U}_{xx})}}
\end{matrix}
\right),
\end{aligned}
\end{equation}
where 
\[
\mathtt{F}(Z_1,Z_{2},Z_{3})=\left(\begin{matrix}
f_1(z_{1}^{+},z_{1}^{-},z_{2}^{+},z_{2}^{-},z_{3}^{+},z_{3}^{-})\\
f_2(z_{1}^{+},z_{1}^{-},z_{2}^{+},z_{2}^{-},z_{3}^{+},z_{3}^{-})\end{matrix}\right), \quad Z_{i}=\left(\begin{matrix}
z_{i}^{+} \\
z_{i}^{-}
\end{matrix}\right), \quad i=1,2,3,
\] 
extends $(f,\ol{f})$ in the following sense.
The functions $f_{i}$ for $i=1,2$ are $C^{\infty}$ on $\CCC^{6}$ (in the real sense). Moreover
one has the following:
\begin{equation}\label{sistNLS1}
\begin{aligned}
\left(\begin{matrix}
f_1(z_1,\bar{z}_1,z_2,\bar{z}_2,z_{3},\bar{z}_{3}) \\
f_2(z_1,\bar{z}_1,z_2,\bar{z}_2,z_{3},\bar{z}_{3})
\end{matrix}
\right)=\left(\begin{matrix}
f(z_1,z_2,z_{3}) \\
\ol{f(z_1,z_2,z_{3})}
\end{matrix}
\right),
\end{aligned}
\end{equation}
and
\begin{equation}\label{sistNLS2}
\begin{aligned}
& \del_{z^{+}_{3}}f_{1}=\del_{z^{-}_{3}}f_2, \quad  \del_{z^{+}_{i}}f_1=\ol{\del_{z^{-}_{i}}f_{2}}, \;\; i=1,2,\;\;\;
\del_{z^{-}_{i}}f_1=\ol{\del_{z^{+}_{i}}f_2}, \quad i=1,2,3 \\ 
&\del_{\overline {z^{+}_{i}}}f_{1} =\del_{\overline {z^{+}_{i}}}f_{2}=\del_{\overline {z^{-}_{i}}}f_{1} 
=\del_{\overline {z^{-}_{i}}}f_{2}=0 \;\;
\end{aligned}
\end{equation}
 where $ \del_{\overline {z^{\s}_{j}}}= \del_{{\rm Re}\,z^{\s}_j}+ \ii \del_{{\rm Im}\,z^{\s}_j}, \;\; \s=\pm.$
\end{de}

\begin{rmk}
In the case that $f$ has the form
\[
f(z_1,z_2,z_3)=C z_{1}^{\al_1}{\bar{z}}_{1}^{\be_1}z_{2}^{\al_2}\bar{z}_{2}^{\be_{2}}
\]
for some $C\in\CCC$, $\al_{i},\be_i\in \NNN$ for $i=1,2$, a possible extension is the following:
\[
\begin{aligned}
&f_{1}(z^{+}_1,z^{-}_1,z^{+}_2,z^{-}_2)=C (z^{+}_1)^{
\al_1}(z^{-}_1)^{\be_1}(z^{+}_2)^{\al_2}(z^{-}_2)^{\be_2},\\
& f_{2}(z^{+}_1,z^{-}_1,z^{+}_2,z^{-}_2)=\ol{C} (z^{-}_1)^{
\al_1}(z^{+}_1)^{\be_1}(z^{-}_2)^{\al_2}(z^{+}_2)^{\be_2}.
\end{aligned}
\]
\end{rmk}

\begin{rmk} 
Using \eqref{sistNLS1} one deduces the following relations between  the derivatives  of $f$ and $f_j$ with $j=1,2$:
\begin{equation}\label{derivate}
\begin{aligned}
&\partial_{z_i}f(z_1,z_2,z_3)=(\partial_{z_i^+}f_1)(z_1,\bar{z}_1,z_2,\bar{z}_2,z_3,\bar{z}_3)\\
&\partial_{\bar{z}_i}f(z_1,z_2,z_3)=(\partial_{z_i^-}f_1)(z_1,\bar{z}_1,z_2,\bar{z}_2,z_3,\bar{z}_3)\\
& \ol{\partial_{\bar{z}_i}f(z_1,z_2,z_3)}={(\partial_{z_i^+}f_2)(z_1,\bar{z}_1,z_2,\bar{z}_2,z_3,\bar{z}_3)}\\
& \ol{\partial_{{z}_i}f(z_1,z_2,z_3)}={(\partial_{z_i^-}f_2)(z_1,\bar{z}_1,z_2,\bar{z}_2,z_3,\bar{z}_3)}.
\end{aligned}
\end{equation}
\end{rmk}

In the rest of the paper we shall use the following notation. 
Given a  function
$g(z_{1}^{+},z_{1}^{-},z_{2}^{+},z_{2}^{-},z_{3}^{+},z_{3}^{-})$ defined on $\CCC^{6}$ which is differentiable
in the real sense, we shall write
\begin{equation}\label{notazione}
\begin{aligned}
&(\del_{\del_{x}^{i}u}g)(u,\bar{u},u_{x},\bar{u}_{x},u_{xx},\bar{u}_{xx}):=(\del_{z_{i+1}^{+}}g)(u,\bar{u},u_{x},\bar{u}_{x},u_{xx},\bar{u}_{xx}),\\
&(\del_{\ol{\del_{x}^{i}u}}g)(u,\bar{u},u_{x},\bar{u}_{x},u_{xx},\bar{u}_{xx}):=(\del_{z_{i+1}^{-}}g)(u,\bar{u},u_{x},\bar{u}_{x},u_{xx},\bar{u}_{xx}),\quad i=0,1,2.
\end{aligned}
\end{equation}

By Definition \ref{vectorNLS} one has that
equation \eqref{NLS} is equivalent
to the system \eqref{sistemaNLS} on the subspace $\hcic^{s}$.

We state the Bony paralinearization lemma, which is adapted to our case from Lemma 2.4.5 of \cite{maxdelort}.
\begin{lemma}[\bf{Bony paralinearization of the composition operator}]\label{paralinearizza}
Let $f$ be 
a  complex-valued function of  class $C^{\infty}$ in the real sense 
defined in a ball centered at $0$ of radius $r>0$, in $\CCC^6$, vanishing at $0$ at order $2$. 
There exists  a $1\times 2$ matrix of symbols $q\in\Gamma^2_{K,0}[r]$ 
and a $1\times 2$ matrix of  smoothing operators $Q(U)\in\RR^{-\rho}_{K,0}[r]$, for any $\rho$, such that
\begin{equation}\label{finalmentesiparalinearizza1}
f(U,U_x,U_{xx})=\bonyw(q(U,U_x,U_{xx};x,\x))[U]+ Q(U)U.
\end{equation}
Moreover the symbol $q(U;x,\x)$ has the form
\begin{equation}\label{PPPPPP1}
q(U;x,\x):=d_{2}(U;x)(\ii\x)^{2}+d_1(U;x)(\ii\x)+d_{0}(U;x), 
\end{equation}
where $d_{j}(U;x)$ are $1\times2$ matrices of symbols in $ \calF_{K,0}[r]$, for $j=0,1,2$.
\end{lemma}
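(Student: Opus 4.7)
The statement is a concrete instance of the Bony paralinearization theorem for a smooth function of several variables, adapted to our class of symbols $\calF_{K,0}[r]$ and to the doubled setting $U=(u,\bar u)$. The plan is in three steps.

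\emph{Step 1 (apply Bony's lemma).} I regard $f$ as a $C^\infty$ function in the real sense of the six complex arguments $(z_1^+, z_1^-, z_2^+, z_2^-, z_3^+, z_3^-)$, vanishing at order two at the origin, to be evaluated at $(u,\bar u,u_x,\bar u_x, u_{xx},\bar u_{xx})$. The multi-variable Bony paralinearization formula, which is exactly Lemma~2.4.5 of \cite{maxdelort}, gives
\begin{equation*}
f(U,U_x,U_{xx}) = \sum_{i=0}^{2} \Bigl[\bonyw\bigl(g_i^{+}(U;x)\bigr)\,\partial_x^i u + \bonyw\bigl(g_i^{-}(U;x)\bigr)\,\partial_x^i \bar u\Bigr] + R_0(U)\,U,
\end{equation*}
where, using the notation \eqref{notazione},
\[
g_i^{+}(U;x) := (\partial_{\partial_x^i u} f)(U, U_x, U_{xx}),\qquad g_i^{-}(U;x) := (\partial_{\partial_x^i \bar u} f)(U, U_x, U_{xx}),
\]
and where $R_0(U)\in\mathcal{R}^{-\rho}_{K,0}[r]$ for arbitrary $\rho\geq 0$.

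\emph{Step 2 (absorb the derivatives into the symbol).} For $i=1,2$, I rewrite $\bonyw(g_i^\pm)\,\partial_x^i$ using the symbolic calculus of Proposition~\ref{componiamoilmondo}. Since $(i\xi)^i$ is $x$-independent, the admissible cut-off of Definition \ref{cutoff1} acts trivially and $\partial_x^i=\bonyw((i\xi)^i)$; the Moyal expansion yields
\[
\bonyw(g_i^\pm)\circ \partial_x^i = \bonyw\bigl(g_i^\pm(i\xi)^i + \text{lower-order in }\xi\bigr) + S_i^\pm,
\]
where the lower-order corrections are linear combinations of $\partial_x^j g_i^\pm$ times $(i\xi)^{i-j}$ for $j=1,\dots,i$, and where $S_i^\pm\in\mathcal{R}^{-\rho}_{K,0}[r]$. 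Collecting powers of $i\xi$, setting
\[
d_2(U;x) := \bigl(g_2^+,\, g_2^-\bigr),
\]
together with $d_1(U;x)$ and $d_0(U;x)$ obtained by adding the relevant Moyal corrections to $(g_1^+, g_1^-)$ and $(g_0^+, g_0^-)$, and regrouping the two scalar terms into a single paradifferential operator acting on the column $U=(u,\bar u)^{T}$ through the $1\times 2$ matrix symbol $d_2(i\xi)^2+d_1(i\xi)+d_0$, one obtains \eqref{finalmentesiparalinearizza1}-\eqref{PPPPPP1} with a remainder $Q(U)\in\mathcal{R}^{-\rho}_{K,0}[r]$ absorbing $R_0$ and the $S_i^\pm$; absorbing these requires Proposition \ref{componiamoilmare} to handle the compositions between paradifferential and smoothing pieces.

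\emph{Step 3 (verify the symbol class).} Each coefficient $g_i^\pm(U;x)$ is of the form $G(U,U_x,U_{xx})$ for a smooth function $G$ on $\CCC^6$ that vanishes at the origin (since $f$ vanishes at order two). A classical Moser/tame estimate, combined with the definitions of the norm $\|\cdot\|_{K,s}$ in \eqref{spazionorm}, yields
\[
|\partial_t^k \partial_x^\alpha g_i^{\pm}(U;x)| \leq C\bigl(\|U\|_{k,s_0}\bigr)\, \|U\|_{k,s_0},\qquad 0\leq k\leq K,\ \alpha\leq s-s_0,
\]
for $s_0$ large enough, which is precisely the bound required by Definition \ref{nonomorest} for $m=0$, $K'=0$. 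The Moyal corrections in Step~2 involve at most two spatial derivatives of $g_i^\pm$ and hence preserve this class.

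\emph{Main obstacle.} The substantive content is all concentrated in the cited Bony formula; the only real bookkeeping in my plan is Step 2, where one must track the finitely many Moyal corrections arising when transferring $\partial_x^i$ to the symbol level and make sure each correction slots into a lower-order $d_j$, while all emerging remainders satisfy the tame estimate \eqref{porto20} of Definition~\ref{nonomoop}. Given the symbolic calculus of Section~\ref{capitolo3}, this is a routine check rather than a genuine difficulty.
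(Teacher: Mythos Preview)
Your proposal is correct and follows essentially the same route as the paper. Both arguments start from the Bony paralinearization formula and then must absorb the derivatives $\partial_x^i$ into the Weyl symbol; the paper does this by writing the operators first in the standard (Bony) quantization with symbols $D_{U_i}f\cdot(\ii\xi)^i$, then converting to Weyl via the exact Fourier formula \eqref{bambola5} and adjusting the cut-off through Lemma~\ref{equiv}, whereas you invoke Proposition~\ref{componiamoilmondo} to compose $\bonyw(g_i^\pm)\circ\bonyw((\ii\xi)^i)$ directly. Since $(\ii\xi)^i$ is a polynomial of degree at most two, the Moyal expansion in your Step~2 terminates exactly, so the two computations are genuinely equivalent and the resulting coefficients $d_j$ coincide; your packaging is slightly cleaner in that it hides the explicit Fourier manipulation inside the composition proposition, while the paper's version makes the passage from paraproduct to Bony--Weyl more transparent.
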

\begin{proof}
By the paralinearization formula of Bony, we know that
\begin{equation}\label{finalmentesiparalinearizza2}
f(U,U_{x},U_{xx})=T_{D_U f} U+T_{D_{U_{x}}f} U_x+ T_{D_{U_{xx}}f}U_{xx}+R_0(U)U,
\end{equation}
where $R_0(U)$ satisfies estimates   \eqref{porto20} and where 
\begin{equation*}
\begin{aligned}
&T_{D_U f} U=\frac{1}{2\pi}\int e^{\ii(x-y)\xi} \chi(\langle\xi\rangle^{-1} D)[c_U(U;x,\xi)]U(y)dyd\xi,\\
&T_{D_{U_x} f} U_{x}=\frac{1}{2\pi}\int e^{\ii(x-y)\xi} 
\chi(\langle\xi\rangle^{-1} D)[c_{U_x}(U;x,\xi)]U(y)dyd\xi,\\
&T_{D_{U_{xx}} f}U_{xx}=\frac{1}{2\pi}\int e^{\ii(x-y)\xi} \chi(\langle\xi\rangle^{-1} D)[c_{U_{xx}}(U;x,\xi)]U(y)dyd\xi,
\end{aligned}\end{equation*}
with 
\begin{equation}\label{quaderno}
\begin{aligned}
&c_{U}(U;x,\xi)= D_U f,\\
&c_{U_x}(U;x,\xi)=D_{U_x} f(\ii\xi),\\
&c_{U_{xx}}(U;x,\xi)=D_{U_{xx}} f(\ii\xi)^2,\\
\end{aligned}\end{equation}
for some $\chi\in\ C^{\infty}_0(\R)$ with small enough support and equal to $1$ close to $0$. Using \eqref{bambola5} we define the $x$-periodic function $b_i(U;x,\xi)$, for $i=0,1,2$, 
through its Fourier coefficients 
\begin{equation}\label{diego}
\hat{b}_i(U;n,\xi):= \hat{c}_{U_i}(U;n,\xi-n/2)
\end{equation}
where $U_{i}:=\del_{x}^{i}U$.
In the same way we define the function $d_{i}(U;x,\x)$, for $i=0,1,2$, 
as
\begin{equation}\label{armando}
\hat{d}_i(U;n,\xi):= \chi\tonde{n\langle\xi-n/2\rangle^{-1}}
\hat{c}_{U_i}(U;n,\xi-n/2).
\end{equation}
We have that
 $T_{D_U f} U={\rm{Op}}^{W}(d_0(U,\xi))U$. 
 We observe the following
\begin{equation}\label{mara}
\hat{d}_0(U;n,\xi)=\chi\tonde{n\langle\xi\rangle^{-1}}\widehat{D_Uf}(n)+\tonde{\chi\tonde{n\langle\xi-n/2\rangle^{-1}}-n\langle\xi\rangle^{-1}}\widehat{D_Uf}(n)
\end{equation}
therefore if the support of $\chi$ is small enough, thanks to Lemma \ref{equiv}, we obtained
\begin{equation}\label{ciaone}
T_{D_U f} U=\bonyw(b_0(U;x,\x))U+R_{1}(U)U,
\end{equation}
for some smoothing reminder $R_1(U)$.
Reasoning in the same way we get
\begin{equation}\label{ciaone1}
\begin{aligned}
& T_{D_{U_x} f} U_{x}=\bonyw\big(b_1(U;\xi)\big)U+R_{2}(U)U\\
& T_{D_{U_{xx}} f} U_{xx}=\bonyw\big(b_2(U;\xi))\big)U+R_{3}(U)U.
\end{aligned}
\end{equation}
The theorem is proved defining $Q(U)=\sum_{k=0}^{3}R_{k}(U)$ and $q(U;x,\xi)=b_2(U;\xi)+b_1(U;\xi)+b_0(U;\xi)$. Note that the symbol $q$ satisfies conditions \eqref{PPPPPP1} by \eqref{quaderno} and formula \eqref{bambola5}.
\end{proof}

We have the following Proposition.

\begin{prop}[\bf{Paralinearization of the system}]\label{montero}
There are  
 a matrix $A(U;x,\xi)$ in $\Gamma^2_{K,0}[r]\otimes\MM_{2}(\CCC)$
 and a smoothing operator $R$ in $\RR^{-\rho}_{K,0}[r]\otimes\MM_{2}(\CCC)$, for any $K, r>0$ and $\rho\geq0$
such that the system \eqref{sistemaNLS}
is equivalent to
\begin{equation}\label{6.666para}
\partial_t U:=\ii E\Big[\Lambda U
+\bonyw(A(U;x,\x))[U]
+R(U)[U]\Big],
 \end{equation}
 on the subspace $\mathcal{U}$ (see \eqref{Hcic} and Def. \ref{vectorNLS})
 and where $\Lambda$ is defined in \eqref{DEFlambda} and \eqref{NLS1000}.
 Moreover the operator
 $R(U)[\cdot]$ has the form \eqref{vinello}, 
 the matrix $A$ has the form  \eqref{prodotto}, i.e.
\begin{equation}\label{matriceA}
 A(U;x,\x)
:=\left(
\begin{matrix}
a(U;x,\x) & b(U;x,\x)\\
\ol{b(U;x,-\x)} & \ol{a(U;x,-\x)}
\end{matrix}
\right)
 \in \Gamma^{2}_{K,0}[r]\otimes\MM_{2}(\CCC)
 \end{equation}
with $a$, $b$ in $\Gamma^{2}_{K,0}[r]$. 
In particular we have that
\begin{equation}\label{PPPPA}
 A(U;x,\x)=A_{2}(U;x)(\ii\x)^{2}+A_{1}(U;x)(\ii\x)+A_0(U;x), \quad A_{i}\in \calF_{K,0}[r]\otimes\MM_{2}(\CCC), \;\; i=0,1,2.
\end{equation}
\end{prop}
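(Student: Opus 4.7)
My plan is to paralinearize only the scalar nonlinearity $f$ of \eqref{NLS}, regarded as a $\CCC$-valued $C^{\infty}$ function of $(U,U_x,U_{xx}) \in \CCC^6$, and to generate the second row of the matrix $A$ and of the smoothing remainder $R$ by complex conjugation. Applying Lemma \ref{paralinearizza} to $f$ yields a $1\times 2$ symbol $q = (a,b) \in \Gamma^{2}_{K,0}[r]$ of the form \eqref{PPPPPP1} and a $1\times 2$ smoothing matrix $Q = (R_1,R_2) \in \RR^{-\rho}_{K,0}[r]$ such that
$$f(U,U_x,U_{xx}) = \bonyw\big(q(U;x,\xi)\big)[U] + Q(U)[U].$$

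The second ingredient is the exact conjugation identity for Bony--Weyl operators with scalar symbols,
$$\ol{\bonyw(c(x,\xi))[v]} = \bonyw\big(\ol{c(x,-\xi)}\big)[\bar v],$$
which I would verify by a direct Fourier-side calculation starting from \eqref{bambola202} and \eqref{nsym2}, using that the admissible cut-off $\chi$ of Definition \ref{cutoff1} is real and even in each of its arguments. Applied term-by-term to $\bonyw(q)[U] = \bonyw(a)[u] + \bonyw(b)[\bar u]$ the identity gives
$$\ol{\bonyw(q)[U]} = \bonyw\big(\ol{b(x,-\xi)},\;\ol{a(x,-\xi)}\big)[U], \qquad \ol{Q(U)[U]} = \ol{R_1}[\bar u] + \ol{R_2}[u],$$
with the barred operators as in Definition \ref{barrato}. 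Since $f_2|_{\mathcal{U}} = \bar f$ by \eqref{sistNLS1}, the same pair $(a,b)$ simultaneously paralinearizes the second component of $\mathtt{F}$ on $\mathcal{U}$.

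Setting
$$A(U;x,\xi) := \begin{pmatrix} a(U;x,\xi) & b(U;x,\xi) \\ \ol{b(U;x,-\xi)} & \ol{a(U;x,-\xi)} \end{pmatrix}, \qquad R(U) := \begin{pmatrix} R_1(U) & R_2(U) \\ \ol{R_2(U)} & \ol{R_1(U)} \end{pmatrix},$$
one reads off that $A$ has the form \eqref{matriceA}, that $R$ has the form \eqref{vinello}, and that $\mathtt{F}(U) = \bonyw(A(U;x,\xi))[U] + R(U)[U]$ on $\mathcal{U}$; inserting this in \eqref{sistemaNLS} produces \eqref{6.666para}. The polynomial expansion \eqref{PPPPA} then follows by grouping powers of $(\ii\xi)$: writing the $1\times 2$ coefficients of \eqref{PPPPPP1} as $d_j = (d_j^{(1)}, d_j^{(2)})$ for $j = 0,1,2$, each matrix $A_j \in \calF_{K,0}[r] \otimes \MM_2(\CCC)$ has first row $d_j$ and second row $\bigl((-1)^j\ol{d_j^{(2)}},\,(-1)^j\ol{d_j^{(1)}}\bigr)$, the sign arising because $(\ii(-\xi))^j = (-1)^j(\ii\xi)^j$.

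The argument is mostly structural bookkeeping; the main technical point, and the one I would single out as the principal difficulty, is the conjugation identity. It is crucial that the identity hold exactly and not merely modulo smoothing, for otherwise the $(2,1)$ and $(2,2)$ entries of $A$ would be polluted by non-smoothing discrepancies of order $1$ in the odd powers of $\xi$, and $R$ would fail to be reality preserving. The evenness of $\chi$ in each argument is precisely what rules out such pollution and makes the form \eqref{matriceA} directly available, bypassing any need to paralinearize the off-diagonal extension $f_2$.
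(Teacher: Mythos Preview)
Your proof is correct and follows essentially the same route as the paper's (very terse) argument: apply Lemma~\ref{paralinearizza} to the scalar nonlinearity to obtain the first row $q=(a,b)$, and read off the second row from the reality structure. The paper mentions both $f_1,f_2$ and then simply sets $q=:(a,b)$, leaving implicit exactly the conjugation step you spell out; your explicit verification of $\ol{\bonyw(c(x,\xi))[v]}=\bonyw(\ol{c(x,-\xi)})[\bar v]$ via the evenness of $\chi$ (which the paper records elsewhere as \eqref{megaggiunti}) is the right way to justify that the second row really is $(\ol{b(x,-\xi)},\ol{a(x,-\xi)})$ and that $R$ has the form \eqref{vinello}.
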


\begin{proof}
The functions $f_{1},f_{2}$ in \eqref{sistemaNLS} satisfy the hypotheses of Lemma \ref{paralinearizza} for any $r>0$. 
Hence 
the result follows
by setting $q(U;x,\x)=:(a(U;x,\x),b(U;x,\x))$.
\end{proof}

In the following we study some properties of the system in \eqref{6.666para}.

We first prove some lemmata which 
translate the Hamiltonian Hyp. \ref{hyp1}, parity-preserving Hyp. \ref{hyp2} and global ellipticity Hyp. \ref{hyp3} in the paradifferential  setting.
\begin{lemma}[{\bf Hamiltonian structure}]\label{struttura-ham-para}\
Assume that $f$ in \eqref{NLS} satisfies 
 Hypothesis \ref{hyp1}. 
Consider the matrix
$A(U;x,\x)$ in \eqref{matriceA}
given by Proposition \ref{montero}. Then the term
\[
A_{2}(U;x)(\ii\x)^{2}+A_{1}(U;x)(\ii\x)
\]
in \eqref{PPPPA}
satisfies conditions \eqref{quanti801}. More explicitly one has
\begin{equation}\label{matriceAA}
 A_{2}(U;x)
:=\left(
\begin{matrix}
a_{2}(U;x) & b_2(U;x)\\
\ol{b_2(U;x)} & {a_2(U;x)}
\end{matrix}
\right), \quad
  A_{1}(U;x)
:=\left(
\begin{matrix}
a_{1}(U;x) & 0\\
0 & \ol{a_1(U;x)}
\end{matrix}
\right),
\end{equation}
with $a_{2},a_1,b_2\in \calF_{K,0}[r]$ and $a_{2}\in \RRR$.
\end{lemma}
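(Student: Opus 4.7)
The plan is to extract the top-row Weyl symbols $a_{2},a_{1},b_{2},b_{1}$ of the matrix $A$ produced by Lemma~\ref{paralinearizza} and Proposition~\ref{montero}, and then to verify the reality/parity conditions \eqref{quanti801} at orders $2$ and $1$ by a direct computation from the Hamiltonian form \eqref{NLS5}.

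First, I would trace through the proof of Lemma~\ref{paralinearizza}: the passage from the standard paraproducts of $D_U f,D_{U_x}f,D_{U_{xx}}f$ to the Bony--Weyl quantization (through $\hat b_i(n,\xi)=\hat c_{U_i}(n,\xi-n/2)$) together with the standard-to-Weyl symmetrization yields, at orders $2$ and $1$,
\begin{equation*}
a_{2}=\del_{u_{xx}}f,\quad a_{1}=\del_{u_{x}}f-\del_{x}\bigl(\del_{u_{xx}}f\bigr),\quad b_{2}=\del_{\bar u_{xx}}f,\quad b_{1}=\del_{\bar u_{x}}f-\del_{x}\bigl(\del_{\bar u_{xx}}f\bigr),
\end{equation*}
the subprincipal corrections arising because the order-$2$ pieces contribute at order $1$ under Weyl symmetrization. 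Reality of $a_{2}$ is then immediate, since \eqref{NLS5} gives $a_{2}=-\del_{z_{2}\bar z_{2}}F$ and for real $F$ one has $\overline{\del_{z_{2}\bar z_{2}}F}=\del_{\bar z_{2}z_{2}}F=\del_{z_{2}\bar z_{2}}F$; no constraint on $b_{2}$ is required since $b_{2}(\ii\xi)^{2}$ is automatically even in $\xi$, which accounts for the structure of $A_2$ in \eqref{matriceAA}.

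The core of the argument will be to exploit the \emph{divergence structure} of the Hamiltonian nonlinearity, namely that \eqref{NLS5} reads $f=\del_{\bar z_{1}}F-\tfrac{d}{dx}\bigl(\del_{\bar z_{2}}F\bigr)$. Differentiating in $\bar z_{2}$, the $u_{x},u_{xx}$-independent scalar piece $\del_{\bar z_{2}\bar z_{1}}F-\del_{\bar z_{1}\bar z_{2}}F$ cancels by commutativity of partial derivatives, and the four remaining derivative terms coincide with the chain-rule expansion of $\del_{x}\bigl(-\del_{\bar z_{2}\bar z_{2}}F(u,u_{x})\bigr)=\del_{x}(\del_{\bar u_{xx}}f)$; this gives the identity $\del_{\bar u_{x}}f=\del_{x}(\del_{\bar u_{xx}}f)$ and hence $b_{1}=0$. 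Differentiating instead in $z_{2}$ splits $\del_{u_{x}}f$ into the purely imaginary scalar contribution $\del_{z_{2}\bar z_{1}}F-\del_{z_{1}\bar z_{2}}F=2\ii\,\Im(\del_{z_{2}\bar z_{1}}F)$ plus four derivative terms that, via $\overline{\del_{z_{1}z_{2}\bar z_{2}}F}=\del_{\bar z_{1}z_{2}\bar z_{2}}F$ and $\overline{\del_{z_{2}z_{2}\bar z_{2}}F}=\del_{z_{2}\bar z_{2}\bar z_{2}}F$, pair into complex-conjugate pairs whose real part is exactly $\del_{x}(\del_{u_{xx}}f)$. Therefore $\Re(\del_{u_{x}}f)=\del_{x}(\del_{u_{xx}}f)$, so $a_{1}=\ii\,\Im(\del_{u_{x}}f)\in\ii\RRR$. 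Together with $b_{1}=0$, this is exactly condition \eqref{quanti801} at order $1$ and produces the diagonal form of $A_{1}$ in \eqref{matriceAA} with $\overline{a_{1}}=-a_{1}$.

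The hard part will be bookkeeping on the Weyl subprincipal correction: only with the correct sign do the two Hamiltonian identities $\del_{\bar u_{x}}f=\del_{x}(\del_{\bar u_{xx}}f)$ and $\Re(\del_{u_{x}}f)=\del_{x}(\del_{u_{xx}}f)$ --- both immediate consequences of the flux term $-\tfrac{d}{dx}(\del_{\bar z_{2}}F)$ in $f$ --- absorb the correction $-\del_{x}(\del_{u_{xx}}f)$ coming from the order-$2$ symbol. With the opposite sign these contributions would double rather than cancel, and the off-diagonal $b_{1}$ would fail to vanish, so the whole self-adjoint structure of the lemma rests on performing this conversion consistently.
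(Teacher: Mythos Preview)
Your proposal is correct and follows essentially the same approach as the paper: both extract the Weyl symbols $a_2=\del_{u_{xx}}f=-\del_{u_x\bar u_x}F$, $b_2=\del_{\bar u_{xx}}f=-\del_{\bar u_x\bar u_x}F$, $a_1=\del_{u_x}f-\del_x(\del_{u_{xx}}f)$, $b_1=\del_{\bar u_x}f-\del_x(\del_{\bar u_{xx}}f)$ via the standard-to-Weyl conversion, and then use the Hamiltonian form \eqref{NLS5} to see that the total-derivative piece in $\del_{u_x}f$ and $\del_{\bar u_x}f$ cancels the subprincipal correction, leaving $a_1=-\del_{u\bar u_x}F+\del_{u_x\bar u}F\in\ii\RRR$ and $b_1=0$. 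Your phrasing of the computation as the two identities $\del_{\bar u_x}f=\del_x(\del_{\bar u_{xx}}f)$ and $\Re(\del_{u_x}f)=\del_x(\del_{u_{xx}}f)$ is a clean way to organize exactly the same calculation the paper carries out.
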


\begin{proof}
Recalling the notation introduced in \eqref{notazione} 
we
shall write
\begin{equation}\label{derivate2}
\del_{\del_{x}^{i}u}f:=\del_{z_{i+1}^{+}}f_1, \quad 
\del_{\ol{\del_{x}^{i}u}}f:=\del_{z_{i+1}^{-}}f_1, \quad i=0,1,2,
\end{equation}
when restricted to the real subspace $\mathcal{U}$ (see \eqref{Hcic}).
Using conditions 
\eqref{sistNLS1}, \eqref{sistNLS2} and \eqref{derivate}
one has
 that 
\begin{equation}\label{computer}
\begin{aligned}
&\left(\begin{matrix}
f(u,u_x,u_{xx}) \\
\ol{f(u,u_x,u_{xx}) }
\end{matrix}
\right)=
\left(
\begin{matrix}
f_{1}(U,U_x,U_{xx})\\
{f_2(U,U_x,U_{xx})}
\end{matrix}
\right)\\
&\qquad =\bony\left[\left(  \begin{matrix} \del_{u_{xx}}f & \del_{\bar{u}_{xx}}f \\
\ol{\del_{\bar{u}_{xx}}f} & \ol{\del_{u_{xx}}f}
\end{matrix}\right)(\ii\x)^{2}\right]U+
\bony\left[\left(  \begin{matrix} \del_{u_x}f & \del_{\bar{u}_x}f \\
\ol{\del_{\bar{u}_x}f} & \ol{\del_{u_x}f}
\end{matrix}\right)(\ii\x)\right]U+R(U)[U]
\end{aligned}
\end{equation}
where $R(U)$ belongs to $\RR^{0}_{K,0}[r]$. By Hypothesis \ref{hyp1} we have that

\begin{equation}\label{5}
\begin{aligned}&\del_{u_{xx}}f=
-\del_{u_{x}\bar{u}_{x}}F,\\   
&\del_{\bar{u}_{xx}}f=-\del_{\bar{u}_{x}\bar{u}_{x}}F,\\
&\del_{u_{x}}f=-
\frac{\rm d}{{\rm d}x}\left[\del_{u_{x}\bar{u}_{x}}F\right]-\del_{u\bar{u}_{x}}F
+\del_{u_{x}\bar{u}}F,\\
&\del_{\bar{u}_{x}}f=-
\frac{\rm d}{{\rm d}x}\left[\del_{\bar{u}_{x}\bar{u}_{x}}F\right].
\end{aligned}
\end{equation}
We now pass to the Weyl quantization in the following way. Set
\[
c(x,\x)=\del_{u_{xx}}f(x)(\ii\x)^{2}+\del_{u_{x}}f(x)(\ii\x).
\]
Passing to the Fourier side
we have that
\[
\widehat{c}(j,\x-\frac{j}{2})=\widehat{(\del_{u_{xx}}f)}(j)(\ii\x)^{2}+\Big[\widehat{(\del_{u_{x}}f)}(j)-(\ii j)\widehat{(\del_{u_{xx}}f)}(j)\Big](\ii\x)
+\Big[
\frac{(\ii j)^{2}}{4}\widehat{(\del_{u_{xx}}f)}(j)-\frac{(\ii j)}{2}\widehat{(\del_{u_{x}}f)}(j)
\Big],
\]
therefore by using formula \eqref{bambola5}  we have that
$\bony(c(x,\x))=\bonyw(a(x,\x))$,
where
\[
a(x,\x)=\del_{u_{xx}}f(x)(\ii\x)^{2}+[\del_{u_{x}}f(x)-\frac{{\rm d}}{{\rm d}x}(\del_{u_{xx}}f)](\ii\x)+\frac{1}{4}\frac{{\rm d}^2}{{\rm d}x^2}(\del_{u_{xx}}f)-\frac{1}{2}\frac{{\rm d}}{{\rm d}x}(\del_{u_x}f).
\]

Using the relations in \eqref{5}
we obtain
a matrix $A$ as in \eqref{matriceAA}, and in particular we have
\begin{equation}\label{quantiselfi}
a_{2}(U;x)= -\del_{u_{x}\bar{u}_{x}}F, \quad a_{1}(U;x)=-\del_{u\bar{u}_{x}}F
+\del_{u_{x}\bar{u}}F, \quad b_{2}(U;x)=-\del_{\bar{u}_{x}\bar{u}_{x}}F.
\end{equation}
Since $F$ is real then $a_{2}$ is real, while $a_{1}$ is purely imaginary. This implies conditions \eqref{quanti801}.
\end{proof}

\begin{lemma}[{\bf Parity preserving structure}]\label{struttura-rev-para}
Assume that $f$ in \eqref{NLS} satisfies 
 Hypothesis \ref{hyp2}. 
Consider the matrix
$A(U;x,\x)$ in \eqref{matriceA}
given by Proposition \ref{montero}.
One has that 
$A(U;x,\x)$ has the form \eqref{PPPPA} where
\begin{equation}\label{matriceAAA}
 \begin{aligned}
 &A_{2}(U;x)
:=\left(
\begin{matrix}
a_{2}(U;x) & b_2(U;x)\\
\ol{b_2(U;x)} & {a_2(U;x)}
\end{matrix}
\right), \quad
  A_{1}(U;x)
:=\left(
\begin{matrix}
a_{1}(U;x) & b_{1}(U;x)\\
\ol{b_{1}(U;x)} & \ol{a_1(U;x)}
\end{matrix}
\right),\\
& A_{0}(U;x)
:=\left(
\begin{matrix}
a_{0}(U;x) & b_0(U;x)\\
\ol{b_0(U;x)} & \ol{a_0(U;x)}
\end{matrix}\right),
\end{aligned}
\end{equation}
with $a_{2},b_{2},a_1,b_1, a_0, b_0\in \calF_{K,0}[r]$ such that, for $U$ even in $x$,
the following holds:
\begin{subequations}\label{simbolirev}
\begin{align}
& a_{2}(U;x)=a_{2}(U;-x), \quad  b_{2}(U;x)=b_{2}(U;-x),\\
& a_{1}(U;x)=-a_{1}(U;-x), \quad  b_{1}(U;x)=-b_{1}(U;-x),\\
 & a_{0}(U;x)=a_{0}(U;-x), \quad  b_{0}(U;x)=b_{0}(U;-x),
 \quad U\in {\bf H}^{s}_{e},
\end{align}
\end{subequations}
and 
\begin{equation}\label{simbolirev2}
a_{2}(U;x)\in \RRR.
\end{equation}

The matrix $R(U)$ in \eqref{6.666para} is parity preserving 
according to Definition \ref{revmap}.

\end{lemma}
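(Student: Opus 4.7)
The plan is to read off the explicit form of the symbols $a_{j}, b_{j}$ from the construction in the proof of Lemma \ref{paralinearizza}, translate Hypothesis \ref{hyp2} into parity and reality properties of the relevant partial derivatives of $f$, and finally obtain the parity preservation of $R(U)$ from the reflection symmetry built into the paradifferential calculus. Mimicking the Weyl-conversion step at the end of the proof of Lemma \ref{struttura-ham-para} (but without invoking the Hamiltonian relations among the derivatives of $F$), I expect the entries of the matrix $A(U;x,\xi)$ of \eqref{matriceA} to read, once restricted to the real subspace $\mathcal{U}$,
\begin{equation*}
\begin{aligned}
a_{2}(U;x)&=\del_{u_{xx}}f, \\
a_{1}(U;x)&=\del_{u_{x}}f-\tfrac{d}{dx}(\del_{u_{xx}}f),\\
a_{0}(U;x)&=\del_{u}f-\tfrac{1}{2}\tfrac{d}{dx}(\del_{u_{x}}f)+\tfrac{1}{4}\tfrac{d^{2}}{dx^{2}}(\del_{u_{xx}}f),
\end{aligned}
\end{equation*}
and analogously for $b_{j}$, obtained by replacing $\del_{u},\del_{u_{x}},\del_{u_{xx}}$ by $\del_{\bar u},\del_{\bar u_{x}},\del_{\bar u_{xx}}$, everything evaluated at $(u,u_x,u_{xx})$.

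Next I translate Hypothesis \ref{hyp2} into parity information. Differentiating the symmetry $f(z_1,z_2,z_3)=f(z_1,-z_2,z_3)$ (and its complex conjugate) shows that $\del_{z_1}f,\del_{\bar z_1}f,\del_{z_3}f,\del_{\bar z_3}f$ are even in $(z_2,\bar z_2)$, whereas $\del_{z_2}f,\del_{\bar z_2}f$ are odd. For $u$ even in $x$ the function $u_x$ is odd and $u_{xx}$ is even; substitution therefore yields that $\del_{u_{xx}}f,\del_{\bar u_{xx}}f,\del_{u}f,\del_{\bar u}f$ are even in $x$, while $\del_{u_x}f,\del_{\bar u_x}f$ are odd. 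Since $x$-differentiation swaps parity, plugging these observations into the formulas above produces exactly \eqref{simbolirev}; the reality assertion \eqref{simbolirev2} is then immediate from Hypothesis \ref{hyp2}.2.

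For the parity preservation of $R(U)$, note first that the parities just established imply $A(U;x,\xi)=A(U;-x,-\xi)$ for $U\in{\bf H}^{s}_{e}$: the term $A_{2}(U;x)(\ii\xi)^{2}$ pairs an even-in-$x$ coefficient with an even-in-$\xi$ factor, $A_{0}(U;x)$ is even in $x$ (and $\xi$-independent), and $A_{1}(U;x)(\ii\xi)$ couples an odd-in-$x$ coefficient with the odd factor $\ii\xi$. By Remark \ref{compsimb}, $\bonyw(A(U;x,\xi))$ is then parity preserving. Since also $\mathtt{F}(U,U_x,U_{xx})$ is even when $U$ is even (Hypothesis \ref{hyp2}.1 together with \eqref{sistNLS1}), the identity $R(U)[U]=\mathtt{F}(U,U_x,U_{xx})-\bonyw(A(U;x,\xi))[U]$ in \eqref{6.666para} shows at once that $R(U)[U]$ is even. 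To upgrade to parity preservation of $R(U)$ as a linear operator on arbitrary inputs, I plan to revisit the Bony-type construction of the smoothing remainder $Q(U)$ in the proof of Lemma \ref{paralinearizza}: each summand (standard paraproducts $T_{D_{U_{k}}f}U_{k}$, resonant bilinear remainder, and Weyl-conversion corrections) is assembled from the cutoff $\chi$ of Definition \ref{cutoff1}, which is even in each of its arguments, and hence commutes with the reflection $x\mapsto -x$ whenever the symbol data (i.e.\ $U$) do. The main obstacle will be precisely this last verification: distinguishing parity preservation of $R(U)$ as an operator from the a priori weaker evenness of $R(U)[U]$. The evenness built into Definition \ref{cutoff1} is designed for exactly this purpose, so once each paraproduct/resonant/Weyl-conversion block is traced through, the parity of $R(U)$ should reduce to a routine bookkeeping check.
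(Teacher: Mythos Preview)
Your argument is correct and follows the paper's route almost verbatim: the explicit Weyl-quantized formulas you write for $a_{j},b_{j}$ coincide with the paper's \eqref{quantiselfi2}, and your deduction of the parities \eqref{simbolirev} and of \eqref{simbolirev2} from Hypothesis~\ref{hyp2} is the same as the paper's. The only difference is in the treatment of $R(U)$. The paper dispatches this in one line (``parity preserving by difference''), which, as you correctly point out, literally only shows that $R(U)[U]$ is even when $U$ is even, not that $R(U)[\cdot]$ maps arbitrary even inputs to even outputs. Your plan to trace the reflection symmetry through each block of the Bony construction, using the evenness of the cutoff $\chi$ in Definition~\ref{cutoff1}, is the right way to close that gap and does go through as a routine check. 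It is worth noting, however, that in the paper's actual applications (Proposition~\ref{energia}(ii) with $R_{1}^{(0)}=0$, $R_{2}^{(0)}=R$, and Lemma~\ref{esistenzaAN}) only the weaker conclusion $R(U_{n-1})[U_{n-1}]$ even is ever used, so the paper's terse argument suffices for its purposes even if it falls short of the lemma's literal statement.
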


\begin{proof}
Using the same notation introduced in the proof of Lemma  \ref{struttura-ham-para} (recall \eqref{derivate})
we have that formula \eqref{computer}
holds. 
Under the Hypothesis \ref{hyp2} 
one has that 
the functions $\del_u f, \del_{\ol{u}}f, \del_{u_{xx}}f, \del_{\bar{u}_{xx}}f$
are \emph{even} in $x$ while $\del_{u_{x}}f, \del_{\bar{u}_{x}}f$ are \emph{odd} in $x$.
Passing to the Weyl quantization by formula \eqref{bambola5} we get
\begin{equation}\label{quantiselfi2}
\begin{aligned}
&a_{2}(U;x)= \del_{u_{xx}}f, \\
&a_{1}(U;x)=\del_{u_{x}}f-\del_{x}(\del_{u_{xx}}f),  \\
& a_0(U;x)=\del_{u}f+\frac14 \del_x^2(\del_{u_{xx}}f)-\frac12\del_x(\del_{u_x}f),
\end{aligned}
\begin{aligned}
&b_{2}(U;x)=\del_{\bar{u}_{xx}}f,\\
&b_{1}(U;x)=\del_{\bar{u}_{x}}f-\del_{x}(\del_{\bar{u}_{xx}}f),\\
&b_0(U;x)=\del_{\bar{u}}f+\frac14 \del_x^2(\del_{\bar{u}_{xx}}f)-\frac12\del_x(\del_{\bar{u}_x}f)
\end{aligned}
\end{equation}
which imply conditions \eqref{simbolirev}, while  \eqref{simbolirev2} is implied by item $2$
of Hypothesis \ref{hyp2}. 
The term $R$ is parity preserving by difference.
\end{proof}
\begin{lemma}[\bf Global ellipticity]\label{simboli-ellittici}
Assume that $f$ in \eqref{NLS} satisfies  Hyp. \ref{hyp1} (respectvely Hyp. \ref{hyp2}). If $f$ satisfies also Hyp. \ref{hyp3} then the matrix $A_2(U;x)$ in \eqref{matriceAA} 
(resp.  in  \eqref{matriceAAA}) is such that
\begin{equation}\label{determinante}
\begin{aligned}
& 1+a_{2}(U;x)\geq \mathtt{c_1}\\
& (1+a_{2}(U;x))^{2}-|b_{2}(U;x)|^{2}\geq \mathtt{c_2}>0,
\end{aligned}
\end{equation}
where $\mathtt{c_1}$ and $\mathtt{c_2}$ are the constants given in \eqref{constraint} and \eqref{constraint2}.
\end{lemma}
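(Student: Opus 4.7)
The plan is to simply substitute the explicit expressions for $a_2$ and $b_2$ computed in Lemmata \ref{struttura-ham-para} and \ref{struttura-rev-para} into the inequalities \eqref{determinante}, and then to recognize that the resulting conditions are precisely the ones imposed by Hypothesis \ref{hyp3}. So no genuinely new computation is needed: the lemma is a compatibility check between the formulas for the principal symbol coming from paralinearization and the ellipticity conditions postulated on $f$.

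In the Hamiltonian case, I would read off from \eqref{quantiselfi} that
\[
a_2(U;x)=-\partial_{u_x \bar u_x} F(u,u_x), \qquad b_2(U;x)=-\partial_{\bar u_x \bar u_x} F(u,u_x).
\]
Plugging these into $1+a_2$ gives exactly $1-\partial_{z_2}\partial_{\bar z_2}F(u,u_x)$, which is $\geq \mathtt{c_1}$ by the first line of \eqref{constraint}; and $(1+a_2)^2-|b_2|^2$ becomes $(1-\partial_{z_2}\partial_{\bar z_2}F)^2-|\partial_{\bar z_2}\partial_{\bar z_2}F|^2$ evaluated at $(u,u_x)$, which is $\geq \mathtt{c_2}$ by the second line of \eqref{constraint}. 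Since Hypothesis \ref{hyp3} requires these inequalities to hold on the whole of $\CCC^2$, they hold a fortiori when the arguments are the values of $u(x)$ and $u_x(x)$ at a fixed point $x\in\TTT$.

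In the parity preserving case I would read off from \eqref{quantiselfi2} that
\[
a_2(U;x)=\partial_{u_{xx}}f(u,u_x,u_{xx}), \qquad b_2(U;x)=\partial_{\bar u_{xx}}f(u,u_x,u_{xx}),
\]
so that \eqref{determinante} becomes precisely \eqref{constraint2} evaluated at $(z_1,z_2,z_3)=(u,u_x,u_{xx})$. Reality of $a_2$ in this case is already asserted in \eqref{simbolirev2} (it follows from item 2 of Hypothesis \ref{hyp2}), which is what makes the inequality $1+a_2\geq\mathtt{c_1}$ meaningful as a real inequality. In the Hamiltonian case the reality of $a_2$ is guaranteed by $F$ being real-valued, as already observed in Lemma \ref{struttura-ham-para}. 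In neither case is there a substantive obstacle: the only thing to be careful about is to track that the Weyl-quantization correction terms (the $\partial_x$ terms added to the standard symbols in the passage from the Bony to the Bony--Weyl quantization) only affect the subprincipal coefficients $a_1,b_1,a_0,b_0$ and leave the principal symbol $A_2$ untouched, so that $a_2$ and $b_2$ indeed coincide with $\partial_{u_{xx}}f$ (respectively $-\partial_{u_x\bar u_x}F$) and the hypotheses apply directly.
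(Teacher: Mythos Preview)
Your proof is correct and follows exactly the same approach as the paper: the paper's proof consists of the single sentence that the result follows from \eqref{quantiselfi} in the Hamiltonian case and from \eqref{quantiselfi2} in the parity preserving case, which is precisely the substitution you carry out. Your additional remarks (about reality of $a_2$ and about the Weyl quantization corrections not affecting the principal symbol) are accurate and helpful but not strictly required.
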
 
\begin{proof}
It follows from \eqref{quantiselfi} in the case of Hyp. \ref{hyp1} and from \eqref{quantiselfi2} in the case of Hyp. \ref{hyp2}.
\end{proof}

\begin{lemma}[{\bf Lipschitz estimates}]\label{stimelip-dei-simboli}
Fix $r>0$, $K>0$ and consider the matrices $A$ and $R$ given in Proposition \ref{montero}.  Then there exists $s_0>0$ such that for any $s\geq s_0$ the following holds true. For any $U,V\in C_{*\RRR}^{K}(I;\hcic^{s})\cap B^K_{s_0}(I,r)$ there are constants $C_1>0$ and $C_2>0$, depending on $s$, $\|U\|_{K,s_0}$ and $\|V\|_{K,s_0}$, such that for any $H\in C_{*\RRR}^{K}(I;\hcic^{s})$
one has
\begin{equation}\label{nave77}
\|\bonyw(A(U;x,\x))[H]-\bonyw(A(V;x,\x))[H]\|_{{K},{s-2}}\leq C_{1}
\|H\|_{{K},{s}}\|U-V\|_{{K},{s_0}}
\end{equation}
\begin{equation}\label{nave101}
\|R(U)[U]-R(V)[V]\|_{{K},{s+\rho}}\leq
 C_2 (\| U\|_{{K},{s}}+\| V\|_{{K},{s}})\|U-V\|_{{K},s},
\end{equation}
for any $\rho\geq0$.
\end{lemma}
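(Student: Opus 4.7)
The plan is to prove the two estimates separately; in both cases the idea is to reduce the difference of operators in $U$ and $V$ to a symbol or smoothing operator whose ``size'' is controlled by $\|U-V\|$, and then invoke the action estimates already established in Proposition \ref{boni2} and Definition \ref{nonomoop}.

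For the first bound, by linearity of the Bony--Weyl quantization one has $\bonyw(A(U;x,\xi))[H] - \bonyw(A(V;x,\xi))[H] = \bonyw(A(U;x,\xi) - A(V;x,\xi))[H]$, so it suffices to verify that the symbol $A(U) - A(V)$ still belongs to $\Gamma^{2}_{K,0}[r]$, with a constant proportional to $\|U-V\|_{K,s_0}$. Using the explicit form \eqref{PPPPA}, the difference decomposes as a polynomial in $\xi$ of degree two whose coefficients are $A_j(U;x) - A_j(V;x) \in \calF_{K,0}[r]$. Setting $W_\tau := \tau U + (1-\tau)V \in B^K_{s_0}(I,r)$ for $\tau\in[0,1]$, the fundamental theorem of calculus gives
\begin{equation*}
A_j(U;x) - A_j(V;x) = \int_0^1 (D_U A_j)(W_\tau; x)[U-V] \, d\tau.
\end{equation*}
Since each $A_j$ is obtained by composing a smooth function with $(U, U_x, U_{xx})$, the derivatives $D_U A_j(W_\tau;x)$ are still in $\calF_{K,0}[r]$, and one readily checks the symbolic bound $|\partial_t^k\partial_x^\alpha\partial_\xi^\beta (A(U)-A(V))(x,\xi)| \leq C \|U-V\|_{k,s_0} \langle\xi\rangle^{2-\beta}$ for $s_0$ large enough. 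The first estimate \eqref{nave77} then follows directly from Proposition \ref{boni2} and Remark \ref{ronaldo2} applied to the symbol $A(U)-A(V)$.

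For the second bound, we use the telescoping identity
\begin{equation*}
R(U)[U] - R(V)[V] = R(U)[U-V] + \big(R(U) - R(V)\big)[V].
\end{equation*}
The first summand is estimated by applying \eqref{porto20} with $u = U-V$, which yields, using $\|U\|_{K',s_0}\leq r$ and $\|U-V\|_{K,s_0}\leq\|U-V\|_{K,s}$, a bound by $C\,\|U\|_{K,s}\|U-V\|_{K,s}$ (plus lower order terms absorbed similarly). For the second summand we apply once more the mean value trick: tracking the construction in Lemma \ref{paralinearizza} and Proposition \ref{montero}, $R(U)$ is built from $\chi$-cutoffs of the smooth symbols $D_{U}f$, $D_{U_x}f$, $D_{U_{xx}}f$ evaluated at $U$ (together with a Bony paraproduct remainder), hence $U\mapsto R(U)$ is smooth in the real sense and
\begin{equation*}
R(U) - R(V) = \int_0^1 (D_U R)(W_\tau)[U-V]\, d\tau,
\end{equation*}
with $D_U R(W_\tau)$ again in the class $\RR^{-\rho}_{K,0}[r]$ for any $\rho\geq 0$, acting bilinearly on $(U-V, V)$. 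A second application of \eqref{porto20} with $u = V$ and with the symbol arguments replaced by $W_\tau$ and its dependence on $U-V$ produces the bound $C\|V\|_{K,s}\|U-V\|_{K,s}$ (again absorbing lower order terms), giving \eqref{nave101}.

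The main obstacle is the Lipschitz estimate for the smoothing remainder in the second step: one must verify that differentiating the Bony paralinearization formula in $U$ preserves the smoothing nature of the remainder. This can be handled either by directly tracking the argument of Lemma \ref{paralinearizza} (noticing that the low-high cutoff $\chi$ and the smoothness of $f$ in the real sense are preserved under differentiation in $U$) or, more cleanly, by re-paralinearizing the difference $\mathtt{F}(U,U_x,U_{xx}) - \mathtt{F}(V,V_x,V_{xx})$ as a smooth function of $(U,V,\partial_x U,\partial_x V,\partial_{xx} U,\partial_{xx} V)$ vanishing on the diagonal, and invoking Bony's paralinearization once more on this enlarged system.
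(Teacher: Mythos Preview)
Your proposal is correct and follows essentially the same strategy as the paper. For \eqref{nave77} both you and the paper use the mean-value theorem on the smooth coefficients $A_j$ together with the action estimate of Proposition~\ref{boni2} (via Remark~\ref{ronaldo10}); the paper just writes out the Lagrange increment componentwise and invokes a Moser composition estimate where you write the integral form.

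For \eqref{nave101} the only organizational difference is that the paper does not telescope: it shows directly that the differential $d_U(R(U)U)[\cdot]$ lies in $\RR^{-\rho}_{K,0}[r]\otimes\MM_2(\CCC)$, splitting $R=R_0+R_1+R_2+R_3$ as in Lemma~\ref{paralinearizza}. The terms $R_1,R_2,R_3$ have explicit paradifferential structure and are handled exactly as in your first part; for the Bony remainder $R_0$ the paper differentiates the identity $f_1(U,U_x,U_{xx})=\bony(c(U;x,\xi))U+R_0(U)U$ in $U$ and then re-paralinearizes the resulting expression $G(U,H)=d_Uf_1[H]$, which identifies $d_U(R_0(U)U)[H]$ with a new Bony remainder $R_4(U)[H]$. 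This is precisely your option~(b), applied to the differential rather than the finite difference; the two are equivalent via the mean-value theorem. Your option~(a) (directly tracking the cutoffs) would also work but is messier, which is why the paper chooses the re-paralinearization route.
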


\begin{proof}
We prove bound \eqref{nave77} on each component of the matrix $A$ in \eqref{matriceA} in the case that $f$ satisfies Hyp. \ref{hyp2}. The Hamiltonian case of Hyp. \ref{hyp1} follows by using the same arguments.
From the proof of Lemma \ref{struttura-rev-para} we know that the symbol $a(U;x,\xi)$ of the matrix in \eqref{matriceA} is such that $a(U;x,\x)=a_{2}(U;x)(\ii\x)^{2}+a_{1}(U;x)(\ii\x)+a_{0}(U;x)$
where $a_{i}(U;x)$
for $i=0,1,2$ are given in \eqref{quantiselfi2}.

By Remark \ref{ronaldo10} there exists $s_0>0$ such that for any $s\geq s_0$ one has
\begin{equation}\label{vialli}
\|\bonyw\big((a_{2}(U;x)-a_{2}(V;x))(\ii \x)^{2}\big)h\|_{K,s-2}\leq C
\sup_{\x}\langle\x\rangle^{-2}\|(a_{2}(U;x)-a_{2}(V;x))(\ii \x)^{2}\big)\|_{K,s_0}\|h\|_{K,s}.
\end{equation}
with $C$ depending on $s,s_0$. Let $U,V\in C_{*\RRR}^{K}(I;\hcic^{s})\cap B^K_{s_0+2}(I,r)$,
by Lagrange theorem, recalling the relations in \eqref{derivate}, \eqref{notazione} and \eqref{derivate2}, 
one has that
\begin{equation}\label{peruzzi}
\begin{aligned}
\big(a_{2}(U;x)-a_{2}(V;x)\big)(\ii\x)^{2}&=\big((\del_{u_{xx}}f_1)(U,U_x,U_{xx})-
(\del_{u_{xx}}f_{1})(V,V_x,V_{xx})\big)(\ii\x)^{2}\\
&=(\del_U\del_{u_{xx}}f_1)(W^{(0)},U_x,U_{xx})(U-V)(\ii\x)^{2}+\\
&+
(\del_{U_{x}}\del_{u_{xx}}f_1)(V,W^{(1)},U_{xx})(U_x-V_x)(\ii\x)^{2}+\\
&+(\del_{U_{xx}}\del_{u_{xx}}f_1)(V,V_x,W^{(2)})(U_{xx}-V_{xx})(\ii\x)^{2}
\end{aligned}
\end{equation}
where $W^{(j)}=\del_{x}^{j}V+t_{j}(\del_{x}^{j}U-\del_{x}^{j}V)$,  for some $t_{j}\in [0,1]$ and  $j=0,1,2$.
Hence, for instance, the first summand of \eqref{peruzzi} can be estimated as follows
\begin{equation}
\begin{aligned}
\sup_{\x}\langle\x\rangle^{-2}&\|(\del_U\del_{u_{xx}}f_1)(W^{(0)},U_x,U_{xx})(U-V)(\ii\x)^{2}\|_{K,s_0}\\
&\leq
C_1\|U-V\|_{K,s_0}\sup_{U,V\in B_{s_0+2}^{K}(I,r)}\|(\del_U\del_{u_{xx}}f_1)(W^{(0)},U_x,U_{xx})\|_{K,s_0}\\
&\leq C_2 \|U-V\|_{K,s_0},
\end{aligned}
\end{equation}
where $C_1$ depends on $s_0$ and $C_2$ depends only on $s_0$ and $\|U\|_{K,s_0+2},\|V\|_{K,s_0+2}$ and where we have used a Moser type estimates on composition operators on $H^{s}$ since $f_1$ belongs to $C^{\infty}(\mathbb{C}^6;\mathbb{C})$. We refer the reader to Lemma $A.50$ of \cite{FP} for a complete statement (see also \cite{Ba2}, \cite{Moser-Pisa-66}).  The other terms in the r.h.s. of \eqref{peruzzi} can be treated in the same way. Hence from \eqref{vialli} and the discussion above we have obtained
\begin{equation}
\|\bonyw\big((a_{2}(U;x)-a_{2}(V;x))(\ii \x)^{2}\big)h\|_{K,s-2}\leq C \|U-V\|_{K,s_0+2} \| h\|_{K,s},
\end{equation}
with $C$ depending  on $s$ and $\|U\|_{K,s_0+2},\|V\|_{K,s_0+2}$.
One has to argue exactly as done above for the lower order terms $a_1(U;x)(\ii\xi)$ and $a_0(U;x)$ of $a(U;x,\xi)$.
In the same way one is able to prove the estimate
\begin{equation}
\begin{aligned}
\|\bonyw\big((b(U;x,\xi)-b(V;x,\xi))\big)\bar{h}\|_{K,s-2}\leq C \|U-V\|_{K,s_0+2} \| \bar{h}\|_{K,s}.
\end{aligned}
\end{equation}
Thus the \eqref{nave77} is proved renaming $s_0$ as $s_0+2.$

In order to prove \eqref{nave101} we show that the operator ${\rm d}_{U}(R(U)U)[\cdot]$ 
belongs to the class
$\RR^{-\rho}_{K,K'}[r]\otimes\MM_{2}(\CCC)$ for any $\rho\geq 0$
(where  $d_{U}(R(U)U)[\cdot]$ denotes the differential of $R(U)[U]$ w.r.t. the variable $U$).
We recall that the operator $R$ in \eqref{6.666para} 
is of the form 
\[
R(U)[\cdot]:=\left(\begin{matrix}
Q(U)[\cdot] \\ \ol{Q(U)}[\cdot]
\end{matrix}
\right),
\]
where $Q(U)[\cdot]$ is the $1\times 2$ matrix of smoothing operators in \eqref{finalmentesiparalinearizza1}
with $f$ given in \eqref{NLS}.
We claim that ${\rm d}_{U}(Q(U)U)[\cdot]$ 
is $1\times 2$ matrix of smoothing operators in $\RR^{-\rho}_{K,0}[r]$.
By Lemma \ref{paralinearizza} we know that $Q(U)[\cdot]=R_0(U)+\sum_{j=1}^{3}R_{j}(U)$,
where $R_0$ is $1\times 2$ matrix of smoothing operators
coming from the Bony paralinearization formula (see \eqref{finalmentesiparalinearizza2}),
while $R_{j}$, for $j=1,2,3$,  are the $1\times 2$ matrices of smoothing operators in \eqref{ciaone} and \eqref{ciaone1}.

One can prove the claim for the terms $R_{j}$, $j=1,2,3$, by
arguing as done in the proof of \eqref{nave77}. Indeed we know the explicit paradifferential structure of these remainders.
For instance, by \eqref{diego}, \eqref{armando}, \eqref{mara} and \eqref{ciaone}
we have that
\begin{equation}
R_{1}(U)[\cdot]:={\rm op}\Big( k(x,\x)\Big)[\cdot],
\end{equation}
where $k(x,\x)=\sum_{j\in\ZZZ}\hat{k}(j,\x)e^{\ii jx}$ and
\[
k(j,\x)=\tonde{\chi\tonde{n\langle\xi-n/2\rangle^{-1}}-\chi(n\langle\xi\rangle^{-1})}\widehat{D_Uf}(n)
\]
(see formula \eqref{mara}). The remainders $R_2,R_{3}$ have similar expressions.
We reduced to prove the  claim for the term $R_0$. 
Recalling \eqref{quaderno} we set
\[
c(U;x,\x):=c_{U}(U;x,\x)+c_{U_x}(U;x,\x)+c_{U_{xx}}(U;x,\x).
\]
Using this notation, formula \eqref{finalmentesiparalinearizza2} reads
\begin{equation}\label{finalmentesiparalinearizza4}
f(u,u_x,u_{xx})=f_1(U,U_{x},U_{xx})=\bony(c(U;x,\x))U+R_0(U)U.
\end{equation}
Differentiating \eqref{finalmentesiparalinearizza4} we get
\begin{equation}\label{finalmentesiparalinearizza5}
{\rm d}_U(f_1(U,U_{x},U_{xx}))[H]=\bony(c(U;x,\x))[H]+\bony(\del_{U}c(U;x,\x)\cdot H)[U]+{\rm d}_{U}(R_0(U)[U])[H].
\end{equation}
The l.h.s. of \eqref{finalmentesiparalinearizza5} is  nothing but
\[
\del_{U}f_1(U,U_{x},U_{xx})\cdot H+\del_{U_{x}}f_1(U,U_{x},U_{xx})\cdot H_{x}+\del_{U_{xx}}f_1(U,U_{x},U_{xx})\cdot H_{xx}=:
G(U,H).
\]
By applying  the Bony paralinearization formula to $G(U,H)$ (as a function of the six variables 
$U,U_{x},U_{xx}$, $H,H_{x},H_{xx}$) we get
\begin{equation}\label{finalmentesiparalinearizza6}
\begin{aligned}
G(U,H)&= \bony( \del_{U}G(U,H))[U]+ \bony( \del_{U_{x}}G(U,H))[U_{x}]+ \bony( \del_{U_{xx}}G(U,H))[U_{xx}]\\
&+ \bony( \del_{H}G(U,H))[H]+ \bony( \del_{H_x}G(U,H))[H_{x}]+ \bony( \del_{H_{xx}}G(U,H))[H_{xx}]+
R_{4}(U)[H],
\end{aligned}
\end{equation}
where $R_{4}(U)[\cdot]$ satisfies estimates \eqref{porto20} for any $\rho\geq 0$.
By \eqref{quaderno} and \eqref{finalmentesiparalinearizza6} we have that \eqref{finalmentesiparalinearizza5}
reads
\begin{equation}\label{finalmentesiparalinearizza7}
{\rm d}_{U}(R_0(U)U)[H]=R_{4}(U)[H].
\end{equation}
Therefore ${\rm d}_{U}(R_0(U)U)[\cdot]$
is a $1\times2$ matrix of operators in the class
 $R^{-\rho}_{K,0}[r]$ for any $\rho\geq 0$. 
\end{proof}

\zerarcounters
\section{Regularization}\label{descent1}

We consider the system
\begin{equation}\label{sistemainiziale}
\begin{aligned}
\partial_t V=\ii E\Big[\Lambda V&+\bonyw(A(U;x,\x))[V]+R_1^{(0)}(U)[V]+R_2^{(0)}(U)[U]\Big], \\
& U\in B^K_{s_0}(I,r)\cap C^K_{*\R}(I,\hcic^{s}(\TTT,\CCC^2)),
\end{aligned}
\end{equation}
for some $s_0$ large, $s\geq s_0$ and where $\Lambda$ is defined in \eqref{NLS1000}. 
The operators  $R^{(0)}_1(U)$  and $R^{(0)}_2(U)$ are in the class 
$\RR^{-\rho}_{K,0}[r]\otimes\MM_{2}(\CCC)$ for some $\rho\geq0$
and they have the reality preserving form \eqref{vinello}.
The matrix 
 $A(U;x,\x)$
 satisfies the following.
 
 \begin{const}\label{Matriceiniziale}
 The matrix $A(U;x,\xi)$ belongs to 
$ \Gamma^{2}_{K,0}[r]\otimes\MM_{2}(\CCC) $ and has 
 the following properties:
 \begin{itemize}
 \item $A(U;x,\xi)$ is \emph{reality preserving}, i.e.  
 has the form \eqref{prodotto};
 \item the components of $A(U;x,\xi)$ 
 have the form
 \begin{equation}\label{simbolidiA}
 \begin{aligned}
& a(U;x,\x)=a_{2}(U;x)(\ii\x)^{2}+a_{1}(U;x)(\ii\x),\\
& b(U;x,\x)=b_{2}(U;x)(\ii\x)^{2}+b_{1}(U;x)(\ii\x),
 \end{aligned}
 \end{equation}
for some $a_{i}(U;x),b_{i}(U;x)$ belonging to $\calF_{K,0}[r]$ for $i=1,2$.

\end{itemize}
\end{const}

In addition to Constraint \ref{Matriceiniziale} we assume that 
the matrix $A$ satisfies one the following two Hypotheses:

\begin{hyp}[{\bf Self-adjoint}]\label{ipoipo}
The operator $\bonyw(A(U;x,\x))$ is self-adjoint according to Definition \ref{selfi}, 
i.e.  
 the matrix $A(U;x,\x)$ 
satisfies conditions \eqref{quanti801}.
\end{hyp}

\begin{hyp}[{\bf Parity preserving}]\label{ipoipo2}
The operator $\bonyw(A(U;x,\x))$ is parity preserving according to Definition \ref{revmap}, i.e. the matrix  
$A(U;x,\x)$ 
 satisfies the conditions 
 \begin{equation}\label{ipoipo3}
 A(U;x,\x)=A(U;-x,-\x), \qquad a_{2}(U;x)\in \RRR.
 \end{equation}
 The function $P$ in \eqref{convpotential} 
 is such that $\hat{p}(j)=\hat{p}(-j)$ for $j\in \ZZZ$.
\end{hyp}
Finally we need the following  \emph{ellipticity condition}.
\begin{hyp}[{\bf Ellipticity}]\label{ipoipo4}
 There exist $\mathtt{c}_1, \mathtt{c}_2>0$ such that components of the matrix $A(U;x,\xi)$ satisfy the condition
\begin{equation}\label{benigni}
\begin{aligned}
& 1+a_{2}(U;x)\geq \mathtt{c_1},\\
& (1+a_{2}(U;x))^{2}-|b_{2}(U;x)|^{2}\geq \mathtt{c_2}>0,
\end{aligned}
\end{equation}
for any $U\in B^K_{s_0}(I,r)\cap C^K_{*\R}(I,\hcic^{s}(\TTT,\CCC^2))$.
\end{hyp}

The goal of this section is to transform the linear paradifferential system  \eqref{sistemainiziale} into a constant coefficient one up to bounded remainder.

 The following result is the core of our analysis.

\begin{theo}[{\bf Regularization}]\label{descent}
Fix $K\in \NNN$ with $K\geq 4$, $r>0$.  Consider the system \eqref{sistemainiziale}. 
There exists $s_0>0$ such that for any $s\geq s_0$
the following holds.
Fix 
 $U$ in  $B^K_{s_0}(I,r)\cap C^K_{*\R}(I,\hcic^{s}(\TTT,\CCC^2))$ (resp. $U\in B^K_{s_0}(I,r)\cap C^K_{*\R}(I,\hcic_{e}^{s}(\TTT,\CCC^2))$ )
 and assume that the system \eqref{sistemainiziale} has the following structure:
 \begin{itemize}
 \item
the operators 
$R_{1}^{(0)}$, $R_{2}^{(0)}$ belong
to the class $\RR^{-\rho}_{K,0}[r]\otimes\MM_{2}(\CCC)$;
\item
 the matrix 
$A(U;x,\x)$ satisfies Constraint \ref{Matriceiniziale}, 
\item the matrix $A(U;x,\x)$ satisfies Hypothesis \ref{ipoipo} (resp. together with $P$ satisfy Hyp. \ref{ipoipo2}) 
\item the matrix $A(U;x,\x)$ satisfies Hypothesis \ref{ipoipo4}.
\end{itemize}
Then there exists 
an invertible map (resp. an invertible and {parity preserving} map)
$$
\Phi=\Phi(U) :  C^{K-4}_{*\R}(I,\hcic^{s}(\TTT,\CCC^2))\to C^{K-4}_{*\R}(I,\hcic^{s}(\TTT,\CCC^2)),
$$
with
\begin{equation}\label{stimona}
\|(\Phi(U))^{\pm1}V\|_{K-4,s}\leq \|V\|_{K-4,s}(1+C\|U\|_{K,s_0}),
\end{equation}
for a constant $C>0$ depending on $s$, $\|U\|_{K,s_0}$ and $\|P\|_{C^{1}}$
such that the following holds. 
There exist operators ${R}_{1}(U),{R}_2(U)$
in $\RR^{0}_{K,4}[r]\otimes\MM_{2}(\CCC)$, and 
a diagonal matrix 
$L(U)$ in $\Gamma^{2}_{K,4}[r]\otimes\MM_{2}(\CCC)$
of the form \eqref{prodotto}
satisfying condition \eqref{quanti801} and independent of $x\in \TTT$,   such that by setting $W=\Phi(U)V$ the system \eqref{sistemainiziale} reads
\begin{equation}\label{sistemafinale}
\partial_t W=\ii E\Big[\Lambda W+\bonyw(L(U;\x))[W]+R_{1}(U)[W]+R_{2}(U)[U]\Big].
\end{equation}

\end{theo}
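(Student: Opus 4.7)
The plan is to prove Theorem \ref{descent} by mimicking at the paradifferential level the three-step scheme sketched in the introduction for the linear toy system \eqref{IVPlineare}. At each step we build a reality preserving paradifferential map, apply Lemma \ref{lemmalemma} (in the Hamiltonian case) or Lemma \ref{revmap100} (in the parity preserving case) to propagate the algebraic structure, and use the symbolic calculus of Section \ref{capitolo3} (Proposition \ref{componiamoilmondo}, Proposition \ref{componiamoilmare}, Corollary \ref{esponanziale}) to track how the symbols transform and to control the smoothing remainders. The $K\to K-4$ loss of time derivatives will come from the fact that each of three generating flows depends on $U=U(t)$ and contributes a $-\ii E(\partial_t\Phi)\Phi^{-1}$ term to the conjugated operator.

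\textbf{Step 1: diagonalization of the principal symbol.} Since $a_2(U;x)\in\RRR$ and $(1+a_2)^2-|b_2|^2\geq \mathtt{c_2}>0$ by Hyp.\ \ref{ipoipo4}, the matrix $EA_2(U;x)$ has two real eigenvalues $\pm\sqrt{(1+a_2)^2-|b_2|^2}$, and the change-of-basis matrix (built from its eigenvectors) defines a symbol $S_1(U;x)\in\calF_{K,0}[r]\otimes \MM_2(\CCC)$ of reality preserving type \eqref{prodotto}. I would set $\Phi_1(U):=\bonyw(S_1(U;x))$, use Proposition \ref{componiamoilmondo} to show that $\Phi_1 \bonyw(A)\Phi_1^{-1}$ is, up to a $\Gamma^1_{K,0}[r]$ symbol and a smoothing term, a diagonal operator with symbol $(1+a_2^{(1)}(U;x))(\ii\xi)^2\uno$ where $1+a_2^{(1)}:=\sqrt{(1+a_2)^2-|b_2|^2}\geq\sqrt{\mathtt{c_2}}>0$. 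In the Hamiltonian case $S_1$ is chosen so that Lemma \ref{lemmalemma} applies and the conjugated operator stays self-adjoint; in the parity preserving case $S_1(U;-x)=S_1(U;x)$ (this follows from the evenness of $a_2,b_2$ in \eqref{simbolirev}), so Remark \ref{compsimb} and Lemma \ref{revmap100} give parity preservation.

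\textbf{Step 2: reduction of the principal coefficient to a constant.} Now that the principal symbol is $(1+a_2^{(1)}(U;x))(\ii\xi)^2\uno$ with $1+a_2^{(1)}>0$, I would imitate the torus diffeomorphism $x\mapsto x+\beta(x)$ of the introduction by a paradifferential flow generated by $\ii\xi\,\beta(U;x)\uno$, where $\beta(U;x)$ is determined, as in \eqref{monastero}, by $\gamma(U;y):=\partial_y^{-1}\bigl(\sqrt{m_2(U)/(1+a_2^{(1)}(U;y))}-1\bigr)$ with $m_2(U):=\bigl[2\pi\bigl(\int_\TTT(1+a_2^{(1)})^{-1/2}\bigr)^{-1}\bigr]^2\in\RRR$ (well defined because $1+a_2^{(1)}>0$). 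The flow is reality preserving and, by Corollary \ref{esponanziale} and Proposition \ref{componiamoilmondo}, conjugates the operator to one whose new principal symbol is $m_2(U)(\ii\xi)^2\uno$, constant in $x$. The self-adjoint/parity preserving structure is again preserved by Lemma \ref{lemmalemma}/\ref{revmap100} (oddness of $\beta$ in the parity preserving case).

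\textbf{Step 3: diagonalization and averaging of the order-one term.} The conjugated system now has the form $\partial_t W=\ii E(\Lambda W+\bonyw(m_2(\ii\xi)^2\uno+M_1(U;x)(\ii\xi)+\cdots)W+\cdots)$ with $M_1$ a $2\times 2$ matrix of functions. A further paradifferential conjugation by $\exp\{\bonyw(\sigma_1(U;x)\uno/(\ii\xi))\}$ with an appropriately chosen $\sigma_1$ (solving a cohomological equation at order one) diagonalizes $M_1$ modulo order zero; another flow of the form $\exp\{\bonyw(\tau(U;x)(\ii\xi)^0)\}$ averages the remaining $x$-dependent diagonal part to a constant $m_1(U)\in\CCC$. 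No new hypotheses are required for this step (as noted in the introduction). In the Hamiltonian case Lemma \ref{struttura-ham-para} together with Lemma \ref{lemmalemma} forces $m_1(U)\in \ii\RRR$; in the parity preserving case the oddness in $x$ from \eqref{simbolirev} forces the average $m_1(U)=0$. In either situation $L(U;\xi):=m_2(U)(\ii\xi)^2\uno+m_1(U)(\ii\xi)\mathrm{diag}(1,\pm 1)$ satisfies the self-adjointness \eqref{quanti801}.

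\textbf{Assembly and estimates.} Setting $\Phi(U):=\Phi_3(U)\Phi_2(U)\Phi_1(U)$, all remaining $x$-dependent symbols have order $\leq 0$ and, by Remark \ref{inclusione-nei-resti}, are absorbed into $R_1(U)\in\RR^0_{K,4}[r]\otimes\MM_2(\CCC)$; the conjugate of the original smoothing part $R_1^{(0)},R_2^{(0)}$ remains smoothing by Proposition \ref{componiamoilmare}, and the $-\ii E(\partial_t\Phi)\Phi^{-1}$ contributions from the time dependence of $U$ are handled similarly, using that $\partial_t U$ costs two Sobolev derivatives per time derivative (hence the $K\to K-4$ loss: one step from each of two nontrivial flows differentiated in $t$, plus two from Proposition \ref{boni2} type action estimates). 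The norm bound \eqref{stimona} follows by combining the symbolic estimate \eqref{paraest} with Corollary \ref{esponanziale} and Proposition \ref{componiamoilmondo}. I expect the main obstacle to be the careful bookkeeping in Step 3: solving the cohomological equations that diagonalize and then average the order-one matrix $M_1$ while simultaneously preserving either self-adjointness or parity, and verifying that the time-differentiation losses fit within the budget of $K-4$ derivatives allotted by the statement.
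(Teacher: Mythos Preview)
Your overall architecture is right and matches the paper's: diagonalize at order $2$, make the order-$2$ coefficient constant, then handle order $1$. The paper actually splits this into four lemmata (\ref{step1}--\ref{step4}) and sets $\Phi(U)=\Phi_4\circ\Phi_3\circ\Phi_2\circ\Phi_1$; each $\Phi_j$ is a flow whose generator is built from symbols in $\Gamma^{\bullet}_{K,j-1}[r]$, and since $\partial_t$ sends $\Gamma^{m}_{K,K'}$ to $\Gamma^{m}_{K,K'+1}$ the term $(\partial_t\Phi_j)\Phi_j^{-1}$ lands in the class with index $K'=j$. That is the real bookkeeping behind the $K\to K-4$ loss; your explanation of this loss is not correct.

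There is, however, a genuine gap in your Step 2. You want to conjugate by the flow of $\bonyw(\ii\xi\,\beta(U;x)\uno)$ and invoke Corollary \ref{esponanziale} together with Proposition \ref{componiamoilmondo} to read off the new principal symbol. Corollary \ref{esponanziale} is stated only for generators in $\calF_{K,K'}[r]$, i.e.\ $\xi$-independent order-$0$ symbols; your generator has order $1$ and is outside its scope. More importantly, finite-order symbolic calculus (Proposition \ref{componiamoilmondo}) does \emph{not} by itself produce the formula $(1+a_2^{(1)}(y))(1+\gamma_y(y))^2|_{y=x+\beta(x)}(\ii\xi)^2$ for the conjugated principal symbol: conjugating an order-$2$ paradifferential operator by the flow of an order-$1$ transport-type operator is precisely the problem of \emph{paracomposition}, and the paper imports a nontrivial external result to do it. Concretely, the paper defines $\Omega_{B(U)}(\tau)$ as the flow \eqref{flow}, uses Lemma \ref{torodiff} (not Corollary \ref{esponanziale}) for its well-posedness and boundedness, and then applies the Paracomposition Theorem 2.5.8 of \cite{maxdelort} to obtain the transformed symbols in \eqref{sol}; it also needs Proposition 2.5.9 of \cite{maxdelort} to control $(\partial_t\Phi_3)\Phi_3^{-1}$, and a separate Lemma \ref{Convocoj2} for the convolution $\mathfrak{P}$ because $\Phi_3-\uno$ is not smoothing. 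Without this paracomposition input your Step 2 does not go through.

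Two smaller remarks. In Step 1 the paper builds $\Phi_1$ as the time-$1$ flow of $\bonyw(Z(U;x))$ and identifies it with $\bonyw(\exp Z)$ modulo smoothing via Corollary \ref{esponanziale}; this flow construction is what supplies invertibility and, in the Hamiltonian case, the approximate symplectic relation \eqref{quasisimplettica} feeding into Lemma \ref{lemmalemma}. In your Step 3 the symbol $\sigma_1(U;x)/(\ii\xi)$ is singular at $\xi=0$; the paper introduces a low-frequency cut-off as in \eqref{simbolod} so that the generator genuinely lies in $\Gamma^{-1}_{K,K'}[r]$.
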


\begin{rmk}
Note that, under the Hypothesis \ref{ipoipo2}, if the term $R_1^{(0)}(U)[V]+R_2^{(0)}(U)[U]$ 
in  \eqref{sistemainiziale}
is \emph{parity preserving}, according to Definition \ref{revmap}, then
the flow of the system \eqref{sistemainiziale} preserves the subspace of even functions.
Since the map $\Phi(U)$ in Theorem \ref{descent} is \emph{parity preserving}, then Lemma \ref{revmap100}
implies that also the flow of the system \eqref{sistemafinale} preserves the same subspace.

\end{rmk}

The proof of Theorem \ref{descent} is divided into four steps which are performed in the remaining part of the section.
We first explain our strategy and set some notation. 
We consider the system \eqref{sistemainiziale}
\begin{equation}\label{sistemaini1}
V_{t}=\calL^{(0)}(U)[V]:=\ii E\Big[\Lambda V+\bonyw(A(U;x,\x))[V]+R_1^{(0)}(U)[V]+R_2^{(0)}(U)[U]\Big].
\end{equation}
The idea is to construct several maps
\[
\Phi_{i}[\cdot]:=\Phi_{i}(U)[\cdot] : C^{K-(i-1)}_{*\R}(I,\hcic^{s}(\TTT))\to C^{K-(i-1)}_{*\R}(I,\hcic^{s}(\TTT)),
\]
for $i=1,\ldots,4$ 
which conjugate the system $\calL^{(i)}(U)$ to $\calL^{(i+1)}(U)$, with $\calL^{(0)}(U)$ in \eqref{sistemaini1}
and 
\begin{equation}\label{sistemaiesimo}
\calL^{(i)}(U)[\cdot]:=\ii E\Big[\Lambda+\bonyw(L^{(i)}(U;\x))[\cdot]+\bonyw(A^{(i)}(U;x,\x))[\cdot]+R_{1}^{(i)}[\cdot]+R_{2}^{(i)}(U)[U]\Big],
\end{equation}
where $R^{(i)}_1$ and $R^{(i)}_{2}$ belong to $\RR^{0}_{K,i}[r]\otimes\MM_{2}(\CCC)$,
$L^{(i)}$ belong to   $\Gamma^{2}_{K,i}[r]\otimes\MM_{2}(\CCC)$ and moreover they are
 diagonal,  self-adjoint and independent of $x\in \TTT$ and finally $A^{(i)}$ are in $\Gamma^{2}_{K,i}[r]\otimes\MM_{2}(\CCC)$. 
As we will see, the idea is to look for $\Phi_{i}$ in such a way $A^{(i+1)}$ is  actually a matrix with symbols of order less or equal  than the order of 
$A^{(i)}$.

We now prove a lemma in which 
we study the conjugate of the convolution operator. 

\begin{lemma}\label{Convocoj}
Let $Q_{1},Q_{2}$ operators in the class $\RR^{0}_{K,K'}[r]\otimes \MM_{2}{(\CCC)}$
and $P : \TTT\to \RRR$ a continuous function.
Consider the operator
 $\mathfrak{P}$  defined in \eqref{convototale}.
Then there exists $R$ belonging to
$\RR^{0}_{K,K'}[r]\otimes \MM_{2}{(\CCC)}$ such that
\begin{equation}\label{convoluzionetot}
(\uno+Q_{1}(U))\circ \mathfrak{P}\circ
(\uno+Q_{2}(U))[\cdot]
=\mathfrak{P}[\cdot]+R(U)[\cdot].
\end{equation}
Moreover if $P$ is even in $x$ and the operators $Q_1(U)$ and $Q_2(U)$ are parity-preserving then the operator $R(U)$ is parity preserving according to Definition \ref{revmap}.  
\end{lemma}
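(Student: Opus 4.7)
The plan is to expand the composition and reduce to showing that each ``error term'' belongs to $\RR^{0}_{K,K'}[r]\otimes\MM_{2}(\CCC)$. Writing
\begin{equation*}
(\uno+Q_1(U))\circ\mathfrak{P}\circ(\uno+Q_2(U)) = \mathfrak{P} + Q_1(U)\circ\mathfrak{P} + \mathfrak{P}\circ Q_2(U) + Q_1(U)\circ\mathfrak{P}\circ Q_2(U),
\end{equation*}
I define
\begin{equation*}
R(U) := Q_1(U)\circ\mathfrak{P} + \mathfrak{P}\circ Q_2(U) + Q_1(U)\circ\mathfrak{P}\circ Q_2(U).
\end{equation*}

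The key observation is that $\mathfrak{P}$, being convolution with $P\in C^1(\TTT;\CCC)$, is a Fourier multiplier whose symbol $\hat{p}(j)$ satisfies $|\hat{p}(j)|\leq C\|P\|_{C^1}\langle j\rangle^{-1}$. Consequently $\mathfrak{P}$ extends to a bounded linear operator from $\hcic^{s}$ to $\hcic^{s+1}$ for every $s\in\RRR$, with operator norm controlled by $\|P\|_{C^1}$. In particular it is a bounded endomorphism of every $\hcic^{s}$ and, since it does not depend on $t$ nor on $U$, it commutes with $\del_t$ and contributes trivially to the $U$-dependent estimates.

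With this in hand, each of the three summands of $R(U)$ is estimated by plugging the bound $\|\mathfrak{P} h\|_{H^{\sigma}}\leq C\|P\|_{C^1}\|h\|_{H^{\sigma}}$ into the tame bound \eqref{porto20} for $Q_j\in\RR^{0}_{K,K'}[r]$. For instance, for any admissible $s$, $k$ and $u$,
\begin{equation*}
\|\del_{t}^{k}(Q_1(U)\mathfrak{P} u)\|_{H^{s-2k}} \leq \sum_{k'+k''=k}C\bigl[\|\mathfrak{P} u\|_{k'',s}\|U\|_{k'+K',s_0}+\|\mathfrak{P} u\|_{k'',s_0}\|U\|_{k'+K',s}\bigr],
\end{equation*}
which, by the boundedness of $\mathfrak{P}$, is dominated (up to a constant depending on $\|P\|_{C^1}$) by the right-hand side of \eqref{porto20} with $\rho=0$. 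The same argument handles $\mathfrak{P}\circ Q_2(U)$. For the triple product $Q_1(U)\circ\mathfrak{P}\circ Q_2(U)$ one first absorbs $\mathfrak{P}\circ Q_2(U)$ into the class $\RR^{0}_{K,K'}[r]\otimes\MM_{2}(\CCC)$ by the previous step, and then composes from the left with $Q_1(U)$ using the composition property of smoothing operators (Proposition on composition of smoothing operators with smoothing operators stated after Proposition~\ref{componiamoilmare}). Adding the three contributions component-wise yields $R(U)\in\RR^{0}_{K,K'}[r]\otimes\MM_{2}(\CCC)$.

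For the parity preserving statement, note that if $P$ is even then $\hat{p}(j)=\hat{p}(-j)$, so $\mathfrak{P}$ preserves the subspace of even functions by inspection of its Fourier expression. Hence $\mathfrak{P}$ is parity preserving in the sense of Definition \ref{revmap}. Since compositions of parity preserving operators are parity preserving (immediate from Definition \ref{revmap} and Remark \ref{operatoripreserving2}), each of the three summands defining $R(U)$ is parity preserving, and so is $R(U)$. The only non-routine point in the whole argument is bookkeeping the tame dependence on $U$ in the constants appearing in \eqref{porto20}, which is handled by the fact that $\mathfrak{P}$ is $U$-independent and therefore all $U$-dependent norms come solely from the $Q_j$ factors.
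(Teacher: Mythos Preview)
Your proof is correct and follows essentially the same approach as the paper: expand the composition, observe that $\mathfrak{P}$ is a $t$- and $U$-independent bounded operator on each $\hcic^{s}$, and conclude that each error term inherits the $\RR^{0}_{K,K'}[r]$ estimate \eqref{porto20} from the $Q_j$ factors. One small remark: the lemma only assumes $P$ continuous, so the $C^{1}$ decay $|\hat{p}(j)|\lesssim\langle j\rangle^{-1}$ is not available from the hypotheses; however you only use the weaker fact that $\mathfrak{P}$ is bounded $\hcic^{s}\to\hcic^{s}$, which follows already from $P\in L^{\infty}$ (this is exactly what the paper uses).
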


\begin{proof}

By linearity it is enough to show that the terms
\[
Q_{1}(U)\circ\mathfrak{P}\circ
(\uno+Q_{2}(U))[h]
,\quad (\uno+Q_{1}(U))\circ\mathfrak{P}
\circ Q_{2}(U)[h]
, \quad Q_{1}(U)\circ\mathfrak{P}\circ
Q_{2}(U)[h]
\]
belong to $\RR^{0}_{K,K'}[r]\otimes \MM_{2}{(\CCC)}$.
Note that, for any $0\leq k\leq K-K'$,
\begin{equation}\label{stimaVVV}
\|\del_{t}^{k}(P*h)\|_{H^{s-2k}}\leq C \|\del_{t}^{k}h\|_{H^{s-2k}},
\end{equation}
for some $C>0$ depending only on $\|P\|_{L^{\infty}}$. The \eqref{stimaVVV}
and the estimate \eqref{porto20} on $Q_1$ and $Q_{2}$ imply the thesis.
If $P$ is even in $x$ then the convolution operator with kernel $P$ is a parity preserving operator according to Definition \ref{revmap}.
Therefore if in addiction $Q_1(U)$ and $Q_2(U)$ are parity preserving so is $R(U)$.
\end{proof}

\subsection{Diagonalization of the second order operator}\label{secondord}

Consider the system \eqref{sistemainiziale} and assume the Hypothesis of Theorem \ref{descent}.
The matrix $A(U;x,\x)$ 
satisfies conditions \eqref{simbolidiA}, 
therefore it can be written as  
\begin{equation}\label{espansionediA}
A(U;x,\x):=A_{2}(U;x)(\ii \x)^{2}+A_{1}(U;x)(\ii \x),
\end{equation}
with $A_{i}(U;x)$ belonging to $\calF_{K,0}[r]\otimes \MM_{2}(\CCC)$
and satisfying eighter Hyp. \ref{ipoipo} or Hyp. \ref{ipoipo2}. 
In this Section, by exploiting the structure of the matrix $A_{2}(U;x)$, we show that it is possible to diagonalize the matrix $E(\uno+A_{2})$ through a change of coordinates
 which is  a multiplication operator.
We have the following lemma.
  \begin{lemma}\label{step1}
Under the Hypotheses of Theorem \ref{descent}
there exists $s_0>0$ such that for any $s\geq s_0$
there exists  an invertible map  (resp. an invertible and parity preserving map) 
$$
\Phi_{1}=\Phi_{1}(U) :  C^{K}_{*\R}(I,\hcic^{s})\to C^{K}_{*\R}(I,\hcic^{s}),
$$
with
\begin{equation}\label{stimona1}
\|(\Phi_1(U))^{\pm1}V\|_{K,s}\leq \|V\|_{K,s}(1+C\|U\|_{K,s_0})
\end{equation}
where $C$ depends only on $s$ and  $\|U\|_{K,s_0}$
such that the following holds. 
There exists a matrix $A^{(1)}(U;x,\x)$ satisfying Constraint \ref{Matriceiniziale} and Hyp. \ref{ipoipo} (resp. Hyp. \ref{ipoipo2}) of the form 
\begin{equation}\label{gorilla}
\begin{aligned}
A^{(1)}(U;x,\x)&:=A_{2}^{(1)}(U;x)(\ii \x)^{2}+A_{1}^{(1)}(U;x)(\ii\x),\\
A_{2}^{(1)}(U;x)&:=
\left(\begin{matrix} {a}_{2}^{(1)}(U;x)& 0\\
0 & {{a_{2}^{(1)}(U;x)}}
\end{matrix}
\right)\in \calF_{K,1}[r]\otimes\MM_{2}(\CCC),\\
A^{(1)}_{ 1}(U;x)&:=
\left(\begin{matrix} {a}_{1}^{(1)}(U;x) & {b}_{1}^{(1)}(U;x)\\
{\ol{b_{1}^{(1)}(U;x,)}} & {\ol{a_{1}^{(1)}(U;x)}}
\end{matrix}
\right) \in \calF_{K,1}[r]\otimes\MM_{2}(\CCC)
\end{aligned}
\end{equation}
 and operators ${R}^{(1)}_{1}(U), \, {R}^{(1)}_2(U)$
in $\RR^{0}_{K,1}[r]\otimes\MM_{2}(\CCC)$ such that by setting $V_1=\Phi(U)V$ the system \eqref{sistemainiziale} reads
\begin{equation}\label{sistemafinale1}
\partial_t V_{1}=\ii E\Big[\Lambda V_1+\bonyw(A^{(1)}(U;x,\x))[V_{1}]+R^{(1)}_{1}(U)[V_{1}]+R^{(1)}_{2}(U)[U]\Big].
\end{equation}
Moreover there exists a constant $\mathtt{k}>0$ such that
\begin{equation}\label{elly2}
1+a_2^{(1)}(U;x)\geq \mathtt{k}.
\end{equation}
\end{lemma}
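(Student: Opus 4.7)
The second-order matrix to diagonalize is
\[
E(\uno + A_2(U;x)) = \begin{pmatrix} 1+a_2 & b_2 \\ -\overline{b_2} & -(1+a_2) \end{pmatrix},
\]
which is traceless, hence has eigenvalues $\pm m(U;x)$ with $m(U;x) := \sqrt{(1+a_2)^2 - |b_2|^2}$. The ellipticity Hypothesis \ref{ipoipo4} gives $m \geq \sqrt{\mathtt{c_2}} > 0$ pointwise, so I define
\[
a_2^{(1)}(U;x) := m(U;x) - 1 \in \calF_{K,0}[r],
\]
which yields \eqref{elly2} with $\mathtt{k} := \sqrt{\mathtt{c_2}}$. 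I then look for a reality-preserving multiplication matrix
\[
\Phi_1(U;x) := \begin{pmatrix} p(U;x) & q(U;x) \\ \overline{q(U;x)} & \overline{p(U;x)} \end{pmatrix}, \qquad p,q \in \calF_{K,0}[r],
\]
conjugating $E(\uno+A_2)$ to $E(\uno+A_2^{(1)})$. The eigenvector equation forces $q = p(a_2 - a_2^{(1)})/\overline{b_2}$ (extended continuously where $b_2$ vanishes), and the algebraic identity $|b_2|^2 = (a_2-a_2^{(1)})(2+a_2+a_2^{(1)})$ allows me to normalize $p$ so that $|p|^2 - |q|^2 = 1$, via the explicit choice $|p|^2 = (2+a_2+a_2^{(1)})/(2m)$. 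This normalization is precisely the pointwise identity $\Phi_1^*(-\ii E)\Phi_1 = -\ii E$, i.e.\ the condition \eqref{symsym10}. Invertibility of $\Phi_1$ and the tame estimate \eqref{stimona1} follow from the explicit formulas together with the uniform lower bounds of \eqref{benigni}. In the parity-preserving setting, Lemma \ref{struttura-rev-para} together with \eqref{ipoipo3} shows that $a_2(U;\cdot)$ and $b_2(U;\cdot)$ are even in $x$ when $U \in \hcic_e^s$; hence so are $a_2^{(1)}$, $p$, $q$, and $\Phi_1$ is parity-preserving in the sense of Definition \ref{revmap}.

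\textbf{Conjugation and symbolic expansion.} Setting $V_1 := \Phi_1(U) V$, I apply Lemma \ref{lemmalemma} in the self-adjoint case and Lemma \ref{revmap100} in the parity-preserving one; the new vector field is built from $\Phi_1(\Lambda + \bonyw(A(U;x,\xi)))\Phi_1^{-1}$ together with the correction $-\ii E(\partial_t \Phi_1)\Phi_1^{-1}$ and the conjugates of the remainders. I expand the composition using Proposition \ref{componiamoilmondo} applied to the Bony-Weyl quantization of the $\xi$-independent symbol $\Phi_1(U;x)$: at the principal order one obtains
\[
\bonyw\bigl(\Phi_1(\uno+A_2)\Phi_1^{-1}\bigr)(\ii\xi)^2 = \bonyw(\uno + A_2^{(1)})(\ii\xi)^2
\]
by construction, while the subprincipal contributions of \eqref{sbam8} (involving $\partial_x \Phi_1$) yield symbols of order $\leq 1$, which I collect together with the original term $A_1(U;x)(\ii\xi)$ and the zero-order matrix $-\ii E(\partial_t \Phi_1)\Phi_1^{-1}$ into the new first-order symbol $A_1^{(1)}(U;x)(\ii\xi)$. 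The appearance of $\partial_t \Phi_1 = (\partial_U \Phi_1)[\partial_t U]$ is precisely what forces $A^{(1)} \in \Gamma^2_{K,1}[r]\otimes\MM_2(\CCC)$ instead of $\Gamma^2_{K,0}[r]\otimes\MM_2(\CCC)$, justifying the shift from $K'=0$ to $K'=1$. The convolution term $\mathfrak{P}$ inside $\Lambda$ contributes only a smoothing operator after conjugation, by Lemma \ref{Convocoj}, and the conjugates of $R_1^{(0)}, R_2^{(0)}$ stay in $\RR^0_{K,1}[r]\otimes\MM_2(\CCC)$ by Proposition \ref{componiamoilmare}; together these define $R_1^{(1)}$ and $R_2^{(1)}$.

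\textbf{Structural propagation and main obstacle.} In the self-adjoint case, Lemma \ref{lemmalemma} ensures that the conjugate is self-adjoint, so that the principal part of $A^{(1)}$ automatically satisfies \eqref{quanti801}; in the parity-preserving case, Lemma \ref{revmap100} similarly gives the conditions of Hypothesis \ref{ipoipo2}. Constraint \ref{Matriceiniziale} is preserved since only second- and first-order polynomial-in-$(\ii\xi)$ symbols are produced. The main technical burden is the careful bookkeeping of symbol classes: every contribution produced by the expansion \eqref{sbam8}, by $(\partial_t \Phi_1)\Phi_1^{-1}$, and by the conjugation of the remainders must be verified to land either in $A_2^{(1)}(\ii\xi)^2$, in $A_1^{(1)}(\ii\xi)$, or in $\RR^0_{K,1}[r]\otimes\MM_2(\CCC)$, with tame estimates relying on Lemma \ref{stimelip-dei-simboli}. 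The lower bound \eqref{elly2} is immediate from the definition of $a_2^{(1)}$ and the second inequality of \eqref{benigni}.
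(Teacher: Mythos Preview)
Your overall strategy---diagonalize $E(\uno+A_2)$ pointwise via its eigenvector matrix and then conjugate---is the paper's strategy, but the implementation differs on one point that matters for large data. The paper does \emph{not} take $\Phi_1$ to be the Bony--Weyl quantization (or the multiplication operator) of the eigenvector matrix directly. Instead it writes the diagonalizing matrix as $C(U;x)=\exp Z(U;x)$ with $Z$ off-diagonal and $\xi$-independent, and defines $\Phi_1$ as the time-one \emph{flow} of the bounded generator $\bonyw(Z(U;x))$, i.e.\ the solution of $\partial_\tau\Phi_1^\tau=\bonyw(Z)\Phi_1^\tau$, $\Phi_1^0=\uno$. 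This buys exact invertibility of $\Phi_1$ on $\hcic^s$ (the inverse is the backward flow) together with the tame bound \eqref{stimona1} by a Gronwall argument; Corollary~\ref{esponanziale} then identifies $\Phi_1$ with $\bonyw(\exp Z)=\bonyw(C)$ modulo $\RR^{-\rho}_{K,0}$, so that symbolic calculus applies to the conjugation. Your sentence ``invertibility of $\Phi_1$ \ldots\ follows from the explicit formulas together with the uniform lower bounds of \eqref{benigni}'' is where the gap sits: pointwise invertibility of the matrix symbol does \emph{not} give invertibility of $\bonyw(C)$ as an operator on $\hcic^s$ when $r$ is large (symbolic calculus only yields $\bonyw(C^{-1})\bonyw(C)=\uno+\tilde Q$ with $\tilde Q$ smoothing but not small), and if instead you mean the genuine multiplication operator then Proposition~\ref{componiamoilmondo} is not directly available for the compositions. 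The flow construction resolves both issues at once and is the paper's device.

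Two smaller points. First, your appeal to Lemma~\ref{lemmalemma} presupposes the exact identity $\Phi_1^*(-\ii E)\Phi_1=-\ii E$; for the paradifferential $\Phi_1$ this only holds modulo a term in $\RR^{-\rho}_{K,0}$, and the paper explicitly says one must rerun the argument of Lemma~\ref{lemmalemma} with that smoothing correction absorbed into the remainder. Second, the zero-order contribution $-\ii E(\partial_t\Phi_1)\Phi_1^{-1}$ cannot be collected into $A_1^{(1)}(U;x)(\ii\xi)$, since it has order $0$ in $\xi$; it goes into $R_1^{(1)}$, and it is this term (not $A^{(1)}$ itself, whose entries lie already in $\calF_{K,0}[r]$) that forces the loss from $K'=0$ to $K'=1$.
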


\begin{proof}
Let us consider a symbol $z(U;x)$
in the class $\calF_{K,0}[r]$ and set 
\begin{equation}\label{generatore}
Z(U;x):=\left(\begin{matrix}0 & z(U;x)\\  \ol{z(U;x)} & 0\end{matrix}\right)\in \calF_{K,0}[r]\otimes\MM_{2}(\CCC).
\end{equation}
Let $\Phi_{1}^{\tau}(U)[\cdot]$ the solution 
at time $\tau\in[0,1]$ of the system
\begin{equation}\label{generatore2}
\left\{\begin{aligned}
&\del_{\tau}\Phi_{1}^{\tau}(U)[\cdot]=\bonyw(Z(U;x))\Phi_{1}^{\tau}(U)[\cdot],\\
&\Phi_1^{0}(U)[\cdot]=\uno[\cdot].
\end{aligned}\right.
\end{equation}
Since $\bonyw(Z(U;x))$ is a bounded operator on $\hcic^{s}$, 
by standard  theory of Banach space ODE we have that
the flow $\Phi_1^{\tau}$ is well defined, moreover   by Proposition \ref{boni2} one gets
\begin{equation}\label{eneest}
\begin{aligned}
\del_{\tau}\|\Phi_{1}^{\tau}(U)V\|^{2}_{\hcic^{s}}&\leq \|\Phi_{1}^{\tau}(U)V\|_{\hcic^{s}}\|\bonyw(Z(U;x))\Phi_{1}^{\tau}(U)V\|_{\hcic^{s}}\\
&\leq \|\Phi_{1}^{\tau}(U)V\|^{2}_{\hcic^{s}}C\|U\|_{\hcic^{s_0}},
\end{aligned}
\end{equation}
hence one obtains
\begin{equation}
\|\Phi_{1}^{\tau}(U)[V]\|_{\hcic^{s}}\leq \|V\|_{\hcic^s}(1+C\|U\|_{\hcic^{s_0}}),
\end{equation}
where $C>0$ depends only on $\|U\|_{\hcic^{s_0}}$. 
The latter estimate implies \eqref{stimona1} for $K=0$.
By differentiating in $t$ the equation \eqref{generatore2}
we note that 
\begin{equation}
\del_{\tau}\del_{t}\Phi_{1}^{\tau}(U)[\cdot]=\bonyw(Z(U;x))\del_{t}\Phi_{1}^{\tau}(U)[\cdot]+\bonyw(\del_{t}Z(U;x))\Phi_{1}^{\tau}(U)[\cdot].
\end{equation}
Now note that, since 
$Z$ belongs to the class $\calF_{K,0}[r]\otimes\MM_{2}(\CCC)$,  one has that
$\del_{t}Z$ is in
$\calF_{K,1}[r]\otimes\MM_{2}(\CCC)$. By performing an energy type estimate as in \eqref{eneest}
one obtains
\[
\|\Phi_{1}^{\tau}(U)[V]\|_{C^{1}\hcic^{s}}\leq \|V\|_{C^{1}\hcic^s}(1+C\|U\|_{C^{1}\hcic^{s_0}}),
\]
which implies \eqref{stimona1} with $K=1$. Iterating $K$ times the reasoning above one gets 
 the bound \eqref{stimona1}. 
 By using Corollary \ref{esponanziale} one gets
that 
\begin{equation}\label{exp}
\Phi_{1}^{\tau}(U)[\cdot]=\exp\{\tau\bonyw(Z(U;x))\}[\cdot]=\bonyw(\exp\{\tau Z(U;x)\})[\cdot]+Q_1^{\tau}(U)[\cdot],
\end{equation}
with $Q_1^{\tau}$
belonging to $\RR^{-\rho}_{K,0}[r]\otimes\MM_{2}({\CCC})$ for any $\rho>0$ and any $\tau\in[0,1]$. 
We now set $\Phi_{1}(U)[\cdot]:=\Phi_{1}^{\tau}(U)[\cdot]_{|_{\tau=1}}$.
In particular
we have
\begin{equation}\label{iperbolici}
\begin{aligned}
\Phi_1(U)[\cdot]&=\bonyw (C(U;x))[\cdot]+Q_{1}^{1}(U)[\cdot]\\
C(U;x):=\exp\{Z(U;x)\}:&=\left(
\begin{matrix}c_1(U;x)& c_{2}(U;x) \\ \ol{c_{2}(U;x)} & c_{1}(U;x)
\end{matrix}
\right), \quad C(U;x)-\uno\in \calF_{K,0}[r]\otimes\MM_{2}(\CCC),
\end{aligned}\end{equation}
where
\begin{equation}\label{iperbolici2}
c_1(U;x):=\cosh(|z(U;x)|) , \qquad c_{2}(U;x):=\frac{z(U;x)}{|z(U;x)|}\sinh(|z(U;x)|).
\end{equation}
Note that the function $c_2(U;x)$ above is not singular indeed
\begin{equation*}
\begin{aligned}
c_{2}(U;x)&=\frac{z(U;x)}{|z(U;x)|}\sinh(|z(U;x)|)=\frac{z(U;x)}{|z(U;x)|}\sum_{k=0}^{\infty}\frac{(|z|(U;x))^{2k+1}}{(2k+1)!}\\
&=z(U;x)\sum_{k=0}^{\infty}\frac{\big(z(U;x)\ol{z(U;x)}\big)^k}{(2k+1)!}.
\end{aligned}
\end{equation*}
We note moreover that for any $x\in \TTT$ one has $\det(C(U;x))=1$, hence its inverse $C^{-1}(U;x)$ is well defined. In particular, by Propositions  \ref{componiamoilmondo} and \ref{componiamoilmare}, 
we note that
\begin{equation}\label{inversaC} 
\bonyw(C^{-1}(U;x))\circ \Phi_1=\uno+\tilde{Q}({U}),\qquad  \tilde{Q}\in \RR^{-\rho}_{K,0}[r]\otimes\MM_{2}(\CCC),
\end{equation}
for any $\rho>0$, since the expansion of $(C^{-1}(U;x)\sharp C(U;x))_{\rho}$ (see formula \eqref{sharp})
is equal to $C^{-1}(U;x)C(U;x)$ for any $\rho$.
This implies that
\begin{equation}\label{iperbolici3}
(\Phi_1(U))^{-1}[\cdot]=\bonyw( C^{-1}(U;x))[\cdot]+Q_{2}(U)[\cdot],
\end{equation}
for some $Q_{2}(U)$ in the class $\RR^{-\rho}_{K,0}[r]\otimes\MM_{2}(\CCC)$ for any $\rho>0$. By setting 
$V_{1}:=\Phi_1(U)[V]$ the system  \eqref{sistemainiziale}  in the new coordinates reads
\begin{equation}\label{expexpexp}
\begin{aligned}
(V_{1})_{t}&=\Phi_1(U)\Big(
\ii E(\Lambda+\bonyw(A(U;x,\x)) )\Phi_1^{-1}(U)
\Big)V_{1}+(\del_{t}\Phi_{1}(U))\Phi_1^{-1}(U)V_{1}+\\
&+\Phi_{1}(U)(\ii E)R_1^{(0)}(U)\Phi_1^{-1}(U)[V_1]+
\Phi_1(U)(\ii E)R_2^{(0)}(U)[U]\\
&=
\ii \Phi_1(U)\Big[E\mathfrak{P}[\Phi_1^{-1}(U)[V_1]] \Big]+
\ii\Phi_1(U) E\bonyw\big((\uno+A_{2}(U;x))(\ii\x)^{2} \big)\Phi_1^{-1}(U)[V_1]+\\
&+\ii\Phi_1(U) E\bonyw\big(A_{1}(U;x)(\ii\x) \big)\Phi_1^{-1}(U)[V_1]
+(\del_{t}\Phi_{1})\Phi_1^{-1}(U)V_{1}+\\
&+\Phi_{1}(U)(\ii E)R_1^{(0)}(U)\Phi_1^{-1}(U)[V_1]+
\Phi_1(U)(\ii E)R_2^{(0)}(U)[U],
\end{aligned}
\end{equation}
where $\mathfrak{P}$ is defined in 
\eqref{convototale}.
We have that
\[
\Phi_1(U)\circ E=E\circ\bonyw\left(\begin{matrix}
c_1(U;x)& -c_2(U;x) \\
- \ol{c_2(U;x)} & c_1(U;x)
\end{matrix}
\right),
\]
up to remainders in  $\RR^{-\rho}_{K,0}[r]\otimes\MM_{2}({\CCC})$, where $c_{i}(U;x)$, $i=1,2$, are defined in \eqref{iperbolici2}.
Since the matrix $C(U;x)-\uno \in \calF_{K,0}[r]\otimes\MM_{2}(\CCC)$ (see \eqref{iperbolici})
then by Lemma \ref{Convocoj} one has that
\[
\Phi_1(U)\circ E \mathfrak{P}\circ\Phi_1^{-1}(U)[V_1]] =E \mathfrak{P}[V_1]+Q_{3}(U)[V_1],
\]
 where $Q_3(U)$ belongs to $\RR^{0}_{K,0}[r]\otimes\MM_{2}(\CCC)$.
The term $(\del_{t}\Phi_1)$ is 
$\bonyw(\del_{t}C(U;x))$
plus a remainder in the class $\RR^{0}_{K,1}[r]\otimes\MM_{2}(\CCC)$. 
Note that, since 
$(C(U;x)-\uno)$ belongs to the class $\Gamma^{0}_{K,0}[r]\otimes\MM_{2}(\CCC)$,  one has that
$\del_{t}C(U;x)$ is in
$\Gamma^{0}_{K,1}[r]\otimes\MM_{2}(\CCC)$. 
Therefore, by the composition Propositions \ref{componiamoilmondo} and \ref{componiamoilmare}, Remark \ref{inclusione-nei-resti}, and using the discussion above  
we have that, there exist operators $R^{(1)}_{1},R^{(1)}_{2}$ belonging to $\RR^{0}_{K,1}[r]\otimes\MM_{2}(\CCC)$
such that
\begin{equation}\label{nuovosistema}
\begin{aligned}
(V_{1})_{t}&=\ii E \mathfrak{P}V_1+\ii \bonyw\big(C(U;x)E(\uno+A_{2}(U;x))C^{-1}(U;x)(\ii\x)^{2})\big)V_{1}+
\ii E\bonyw(A_{1}^{(1)}(U;x) (\ii\x))V_{1}+\\
&
+\ii E\Big(R_{1}^{(1)}(U)[V_{1}]+R_{2}^{(1)}(U)[U]\Big)
\end{aligned}
\end{equation}
where 
\begin{equation}\label{primoordine}
\begin{aligned}
A_{1}^{(1)}(U;x)&:=EC(U;x)E(\uno+A_{2}(U;x))\del_{x}C^{-1}(U;x)-(\del_{x}(C)(U;x))E(\uno+A_{2}(U;x))C^{-1}(U;x)\\
&+EC(U;x)A_{1}(U;x)C^{-1}(U;x),
\end{aligned}
\end{equation}
with $A_{1}(U;x),A_{2}(U;x)$ defined in \eqref{espansionediA}.
Our aim is to find a symbol $z(U;x)$ such that 
the matrix of symbols $C(U;x) E(\uno+A_{2}(U;x))C^{-1}(U;x)$ is diagonal. We reason as follows.
One can note that the eigenvalues of $E(\uno+A_{2}(U;x))$ are
\[
\la^{\pm}:=\pm \sqrt{(1+a_{2}(U;x))^{2}-|b_{2}(U;x)|^{2}}.
\]
We define the symbols
\begin{equation}\label{nuovadiag}
\begin{aligned}
\la_{2}^{(1)}(U;x)&:=\la^{+},\\
a_{2}^{(1)}(U;x)&:=\la_{2}^{(1)}(U;x)-1\in \calF_{K,0}[r].
\end{aligned}
\end{equation}
The symbol $\la_{2}^{(1)}(U;x)$ is well defined and  satisfies \eqref{elly2} thanks to Hypothesis \ref{ipoipo4}.
The matrix of the normalized eigenvectors associated to the eigenvalues of $E(\uno+A_{2}(U;x))$
is 
\begin{equation}\label{transC}
\begin{aligned}
S(U;x)&:=\left(\begin{matrix} {s}_1(U;x) & {s}_2(U;x)\\
{\ol{s_2(U;x)}} & {{s_1(U;x)}}
\end{matrix}
\right),\\
s_{1}(U;x)&:=\frac{1+a_{2}(U;x)+\la_{2}^{(1)}(U;x)}{\sqrt{2\la_{2}^{(1)}(U;x)\big(1+a_{2}(U;x)+
\la_{2}^{(1)}(U;x)\big) }},\\
s_{2}(U;x)&:=\frac{-b_{2}(U;x)}{\sqrt{2\la_{2}^{(1)}(U;x)\big(1+a_{2}(U;x)+\la_{2}^{(1)}(U;x)\big) }}.
\end{aligned}
\end{equation}
Note that $1+a_{2}(U;x)+\la_{2}^{(1)}(U;x) \geq \mathtt{c_1}+\sqrt{\mathtt{c_2}}>0$ by \eqref{benigni}. Therefore one can check that $S(U;x)-\uno \in \calF_{K,0}[r]\otimes\MM_{2}(\CCC)$.
Therefore the matrix $S$ is invertible and one has
\begin{equation}\label{diag1}
S^{-1}(U;x)\big[E (\uno+A_{2}(U;x))\big] S(U;x)=E
\left(\begin{matrix}1+a_{2}^{(1)}(U;x) & 0\\
0 & 1+a_{2}^{(1)}(U;x)
\end{matrix}
\right).
\end{equation}

We choose $z(U;x)$ in such a way that 
$C^{-1}(U;x):=S(U;x)$. Therefore we have to solve the following equations 
\begin{equation}\label{losappaimorisolvere???}
\cosh(|z(U;x)|)=s_1(U;x), \quad \frac{z(U;x)}{|z(U;x)|}\sinh(|z(U;x)|)=-s_{2}(U;x).
\end{equation}
Concerning the first one we note that $s_1$ satisfies 
 \[
 (s_1(U;x))^{2}-1=\frac{|b_2(U;x)|^{2}}{2\la_{2}^{(1)}(U;x)(1+a_{2}(U;x)+\la_{2}^{(1)}(U;x))}\geq0,
 \]
 indeed we remind that $1+a_{2}(U;x)+\la_{2}^{(1)}(U;x) \geq \mathtt{c_1}+\sqrt{\mathtt{c_2}}>0$ by \eqref{benigni}, therefore 
 \[
|z(U;x)|:= {\rm arccosh}(s_1(U;x))=\ln\Big(s_1(U;x)+\sqrt{(s_1(U;x))^{2}-1}\Big),
\]
is well-defined. For the second equation one observes that
the function 
\[
\frac{\sinh(|z(U;x)|)}{|z(U;x)|}=1+\sum_{k\geq0}\frac{(z(U;x)\bar{z}(U;x))^{k}}{(2k+1)!}\geq 1,
\]
hence we set
\begin{equation}\label{definizioneC}
z(U;x):=s_{2}(U;x)\frac{|z(U;x)|}{\sinh(|z(U;x)|)}.
\end{equation}


We set
\begin{equation}\label{nuovosis2}
\begin{aligned}
A^{(1)}(U;x,\x)&:=A_{2}^{(1)}(U;x)(\ii\x)^{2}+A_{1}^{(1)}(U;x)(\ii\x),\\
A_{2}^{(1)}(U;x)&:=
\left(\begin{matrix}a_{2}^{(1)}(U;x) & 0\\
0 & a_{2}^{(1)}(U;x)
\end{matrix}
\right)
\end{aligned}
\end{equation}
where $a_{2}^{(1)}(U;x)$ is defined in \eqref{nuovadiag} and $A_{1}^{(1)}(U;x)$ is defined in \eqref{primoordine}.
Equation \eqref{diag1}, together with \eqref{nuovosistema} and \eqref{nuovosis2} implies that 
\eqref{sistemafinale1} holds.
By construction one has that the matrix $A^{(1)}(U;x,\x)$
satisfies Constraint \ref{Matriceiniziale}.
It remains to show that 
$A^{(1)}(U;x,\x)$ satisfies  either Hyp. \ref{ipoipo} or Hyp \ref{ipoipo2}. 

 If $A(U;x,\x)$ satisfies Hyp. \ref{ipoipo2}
then 
we have that ${a_{2}^{(1)}}(U;x)$ in \eqref{nuovadiag} is real. Moreover by construction $S(U;x)$ in \eqref{transC} is even in $x$, therefore by Remark \ref{compsimb} we have that
the map $\Phi_{1}(U)$ in \eqref{exp}  is parity preserving according to Definition
\ref{revmap}.  This implies that the matrix $A^{(1)}(U;x,\x)$ satisfies Hyp. \ref{ipoipo2}.
 Let us consider the case when $A(U;x,\x)$ satisfies Hyp. \ref{ipoipo}.
One can check, by an explicit computation, that the map $\Phi_1(U)$ in \eqref{exp},
is such that
\begin{equation}\label{quasisimplettica}
\Phi_{1}^{*}(U)(-\ii E )\Phi_{1}(U)=(-\ii E)+\tilde{R}(U),
\end{equation}
for some smoothing operators $\tilde{R}(U)$ belonging to $\RR^{-\rho}_{K,0}[r]\otimes\MM_{2}(\CCC)$.
In other words, up to a $\rho-$smoothing operator, the map $\Phi_1(U)$ satisfies conditions
\eqref{symsym10}. By 
 following essentially word by word the proof of Lemma \ref{lemmalemma}
one obtains that, up to a smoothing operator in the class   
$\RR^{-\rho}_{K,1}[r]\otimes\MM_{2}(\CCC)$, the operator $\bonyw(A^{(1)}(U;x,\xi))$
in \eqref{sistemafinale1} is 
self-adjoint. This implies that the matrix  $A^{(1)}(U;x,\x)$
satisfies Hyp. \ref{ipoipo}. This concludes the proof.

\end{proof}

\subsection{Diagonalization of the first order operator}\label{diago1}

In the previous Section  we conjugated system \eqref{sistemainiziale} to \eqref{sistemafinale1},
where the matrix $A^{(1)}(U;x,\x)$ has the form
\begin{equation}\label{espansionediA1}
A^{(1)}(U;x,\x)=A_{2}^{(1)}(U;x)(\ii\x)^{2}+A_{1}^{(1)}(U;x)(\ii\x),
\end{equation}
with $A_{i}^{(1)}(U;x)$ belonging to $\calF_{K,1}[r]\otimes\MM_{2}(\CCC)$
and where $A_{2}^{(1)}(U;x)$ is diagonal. 
In this Section we show that, since the matrices $A_{i}^{(1)}(U;x)$ satisfy Hyp. \ref{ipoipo}
 (respectively Hyp. \ref{ipoipo2}),
 it is possible to diagonalize also the term $A_{1}^{(1)}(U;x)$
 through a change of coordinates which is 
 the identity plus a smoothing term.
This is the result of the following lemma.

\begin{lemma}\label{step2}
If the matrix $A^{(1)}(U;x,\x)$ in \eqref{sistemafinale1} satisfies 
Hypothesis \ref{ipoipo} (resp. together with $P$ satisfy Hyp. \ref{ipoipo2})
then
there exists $s_0>0$ (possibly larger than the one in Lemma \ref{step1}) such that for any $s\geq s_0$
there exists  an invertible map 
(resp. an invertible and  parity preserving map)
$$
\Phi_{2}=\Phi_{2}(U) :  C^{K-1}_{*\R}(I,\hcic^{s})\to C^{K-1}_{*\R}(I,\hcic^{s}),
$$
with
\begin{equation}\label{stimona2}
\|(\Phi_{2}(U))^{\pm1}V\|_{K-1,s}\leq \|V\|_{K-1,s}(1+C\|U\|_{K,s_0})
\end{equation}
where $C>0$ depends only on $s$ and $\|U\|_{K,s_0}$
such that the following holds. 
There exists a matrix $A^{(2)}(U;x,\x)$ 
satisfying Constraint \ref{Matriceiniziale} and Hyp. \ref{ipoipo} (resp. Hyp. \ref{ipoipo2})
of the form 
\begin{equation}\label{gorilla2}
\begin{aligned}
A^{(2)}(U;x,\x)&:=A_{2}^{(2)}(U;x)(\ii \x)^{2}+A_{1}^{(2)}(U;x)(\ii \x),\\
A_{2}^{(2)}(U;x)&:=A_{2}^{(1)}(U;x);\\
A_{1}^{(2)}(U;x)&:=
\left(\begin{matrix} {a}_{1}^{(2)}(U;x) & 0\\
0 & \ol{{a_{1}^{(2)}(U;x)}}
\end{matrix}
\right)\in \calF_{K,2}[r]\otimes\MM_{2}(\CCC),\\
\end{aligned}
\end{equation}
 and operators ${R}^{(2)}_{1}(U),{R}^{(2)}_2(U)$
in $\RR^{0}_{K,2}[r]\otimes\MM_{2}(\CCC)$, such that by setting $V_{2}=\Phi_2(U)V_{1}$ the system \eqref{sistemafinale1} reads
\begin{equation}\label{sistemafinale2}
\partial_t V_{2}=\ii E\Big[\Lambda V_{2}+\bonyw(A^{(2)}(U;x,\x))[V_{2}]+R^{(2)}_{1}(U)[V_{2}]+R^{(2)}_{2}(U)[U]\Big].
\end{equation}
\end{lemma}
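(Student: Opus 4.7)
The plan is to conjugate \eqref{sistemafinale1} by the time-$1$ flow $\Phi_{2}(U):=\Phi_{2}^{\tau}(U)|_{\tau=1}$ of the paradifferential equation
\[
\partial_{\tau}\Phi_{2}^{\tau}(U)=\bonyw(M(U;x,\xi))\,\Phi_{2}^{\tau}(U),\qquad \Phi_{2}^{0}(U)=\uno,
\]
with generator an off-diagonal matrix of symbols of order $-1$:
\[
M(U;x,\xi):=\begin{pmatrix} 0 & m(U;x,\xi) \\ \overline{m(U;x,-\xi)} & 0 \end{pmatrix}\in\Gamma^{-1}_{K,1}[r]\otimes\MM_{2}(\CCC).
\]
Since the order-$2$ principal symbol of $\ii E(\Lambda+\bonyw(A^{(1)}))$ is the diagonal scalar $-\ii(1+a_{2}^{(1)}(U;x))\xi^{2}E$, the Bony--Weyl matrix commutator $[M,-\ii(1+a_{2}^{(1)})\xi^{2}E]$ is off-diagonal of order $-1+2=1$, while the Poisson-bracket correction is only of order $0$ (as $\partial_{\xi}M\in\Gamma^{-2}$ and $\partial_{x}$ of the order-$2$ symbol remains of order $2$). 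Matching this matrix commutator against the off-diagonal part of the order-$1$ symbol $\ii E A_{1}^{(1)}(U;x)(\ii\xi)$ from \eqref{gorilla} yields the cohomological equation, uniquely solved by
\[
m(U;x,\xi):=\frac{b_{1}^{(1)}(U;x)}{2(1+a_{2}^{(1)}(U;x))\,\ii\xi}\,\chi(\xi),
\]
where $\chi\in C^{\infty}(\RRR)$ vanishes in a neighborhood of $0$ and equals $1$ for $|\xi|\geq 1$. The ellipticity bound \eqref{elly2} ensures $m\in\Gamma^{-1}_{K,1}[r]$.

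Existence of the flow and estimate \eqref{stimona2} follow by the same energy argument as in Lemma \ref{step1}, now using that $\bonyw(M)$ is bounded on $\hcic^{s}$ by Proposition \ref{boni2}. Corollary \ref{esponanziale} (in its off-diagonal variant) provides $\Phi_{2}=\bonyw(\exp M)+Q$ with $Q\in \RR^{-\rho}_{K,1}[r]\otimes\MM_{2}(\CCC)$ for every $\rho\ge 0$. I would then compute $\Phi_{2}\calL^{(1)}(U)\Phi_{2}^{-1}$ via Propositions \ref{componiamoilmondo} and \ref{componiamoilmare}, handling the convolution contribution through Lemma \ref{Convocoj}. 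By construction: (i) the order-$2$ symbol is unchanged, so $A_{2}^{(2)}=A_{2}^{(1)}$; (ii) the off-diagonal order-$1$ part is cancelled by our choice of $m$, leaving a diagonal order-$1$ matrix $A_{1}^{(2)}=\mathrm{diag}(a_{1}^{(2)},\overline{a_{1}^{(2)}})$ for some $a_{1}^{(2)}\in\calF_{K,2}[r]$; (iii) all contributions of order $\leq 0$, together with the term $(\partial_{t}\Phi_{2})\Phi_{2}^{-1}$---of order $-1$ but costing one extra time derivative, whence the shift $K'=1\to K'=2$---fit into $R_{1}^{(2)}(U)[V_{2}]+R_{2}^{(2)}(U)[U]\in\RR^{0}_{K,2}[r]\otimes\MM_{2}(\CCC)$.

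It remains to verify structural preservation. Under Hyp.~\ref{ipoipo2}, the parity relations \eqref{simbolirev} propagate through Lemma \ref{step1} so that $b_{1}^{(1)}$ is odd in $x$ and $1+a_{2}^{(1)}$ is even in $x$; a direct check gives $m(U;-x,-\xi)=m(U;x,\xi)$, so by Remark \ref{compsimb} the map $\Phi_{2}$ is parity preserving, and Lemma \ref{revmap100} yields that $A^{(2)}$ satisfies Hyp.~\ref{ipoipo2}. Under Hyp.~\ref{ipoipo}, the self-adjointness constraints \eqref{quanti801} translate into symmetries of $m$ that make $\bonyw(M)^{*}(-\ii E)=(-\ii E)\bonyw(M)$ hold modulo a smoothing operator; then $\Phi_{2}$ satisfies \eqref{symsym10} up to smoothing, and the argument of Lemma \ref{lemmalemma} gives self-adjointness of the paradifferential part of $A^{(2)}$ up to an error absorbed into $R_{1}^{(2)}$.

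The main obstacle is the bookkeeping of all the error terms arising from the Baker--Campbell--Hausdorff-type expansion of the conjugation and from the Bony--Weyl composition formula, ensuring that every contribution of order strictly less than $1$, together with $(\partial_{t}\Phi_{2})\Phi_{2}^{-1}$ and the lower-order terms from commuting $\Phi_{2}$ with $\mathfrak{P}$, lands exactly in $\RR^{0}_{K,2}[r]\otimes\MM_{2}(\CCC)$ while preserving the reality-preserving block structure \eqref{barrato4} and, where appropriate, the parity-preserving or self-adjoint refinements.
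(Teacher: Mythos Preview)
Your approach for the parity-preserving case (Hyp.~\ref{ipoipo2}) is essentially the paper's: same off-diagonal generator of order $-1$, same cohomological equation, same symbol (the paper calls it $d$), same parity check via Remark~\ref{compsimb}. A minor technical difference is that the paper expands the flow as $\Phi_{2}^{\tau}=\uno+\tau\,\bonyw(D)+W^{\tau}$ and shows $W^{\tau}\in\RR^{-2}_{K,1}[r]\otimes\MM_{2}(\CCC)$ directly by Duhamel, rather than appealing to Corollary~\ref{esponanziale}; both routes work.

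For the self-adjoint case (Hyp.~\ref{ipoipo}) you miss a decisive simplification. Condition~\eqref{quanti801} applied to $A^{(1)}$ forces $b^{(1)}(U;x,-\xi)=b^{(1)}(U;x,\xi)$; since $b^{(1)}(U;x,\xi)=b_{1}^{(1)}(U;x)(\ii\xi)$ is odd in $\xi$, this gives $b_{1}^{(1)}\equiv 0$. The paper therefore simply takes $\Phi_{2}=\uno$ in this case. Your argument about ``symmetries of $m$ that make $\bonyw(M)^{*}(-\ii E)=(-\ii E)\bonyw(M)$'' is both unnecessary and, as written, incorrect: preservation of \eqref{symsym10} requires $\bonyw(M)^{*}(-\ii E)+(-\ii E)\bonyw(M)=0$ (anti-symmetry, not the symmetry you wrote), and your symbol $m$, being odd in $\xi$, would not satisfy the needed relation if it were nonzero. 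The conclusion survives only because $b_{1}^{(1)}=0$ forces $m=0$ and $\Phi_{2}=\uno$; you should state this explicitly rather than gesture at a symmetry argument that does not actually go through.
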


\begin{proof}

We recall that by Lemma \ref{step1}
we have that
\begin{equation*}
\begin{aligned}
A^{(1)}(U;x,\x):=\left(
\begin{matrix}
a^{(1)}(U;x,\x) & b^{(1)}(U;x,\x)\\
\ol{b^{(1)}(U;x,-\x)} & \ol{a^{(1)}(U;x,-\x)}
\end{matrix}
\right).
\end{aligned}
\end{equation*}
Moreover 
by \eqref{gorilla} we can write
\[
\begin{aligned}
a^{(1)}(U;x,\x)&=a_{2}^{(1)}(U;x)(\ii\x)^{2}+a_{1}^{(1)}(U;x)(\ii\x),\\
b^{(1)}(U;x,\x)&=b_{1}^{(1)}(U;x)(\ii\x),
\end{aligned}
\]
with $a_{2}^{(1)}(U;x),a_{1}^{(1)}(U;x),b_{1}^{(1)}(U;x)\in \calF_{K,1}[r]$. 
In the case that $A^{(1)}(U;x,\x)$ satisfies Hyp. \ref{ipoipo}, 
we can note that $b_{1}(U;x)\equiv0$. 
Hence  it is enough to 
choose $\Phi_{2}(U)\equiv\uno$ to obtain the thesis.
On the other hand, assume that $A^{(1)}(U;x,\x)$ satisfies Hyp. \ref{ipoipo2}
we reason as follows.

Let us consider a symbol $d(U;x,\x)$ in the class $\Gamma^{-1}_{K,1}[r]$
and define
\begin{equation}\label{mappa3}
\begin{aligned}
&D(U;x,\x):=\left(
\begin{matrix}
0&d(U;x,\x)\\
\ol{d(U;x,-\x)} & 0
\end{matrix}
\right)\in \Gamma^{-1}_{K,1}[r]\otimes\MM_{2}(\CCC).
\end{aligned}
\end{equation}
Let $\Phi_{2}^{\tau}(U)[\cdot]$ be the flow of the system
\begin{equation}
\left\{\begin{aligned}
&\del_{\tau}\Phi_{2}^{\tau}(U)=\bonyw(D(U;x,\x))\Phi_{2}^{\tau}(U)\\
&\Phi_{2}^{0}(U)=\uno.
\end{aligned}\right.
\end{equation}
Reasoning as done for the system \eqref{generatore2} one has that there exists a unique family of
invertible bounded operators on $\hcic^{s}$
satisfying 
with
\begin{equation}\label{stimona200}
\|(\Phi_{2}^{\tau}(U))^{\pm1}V\|_{K-1,s}\leq \|V\|_{K-1,s}(1+C\|U\|_{K,s_0})
\end{equation}
for $C>0$ depending on $s$ and $\|U\|_{K,s_0}$ for $\tau\in [0,1]$.

The operator  $W^{\tau}(U)[\cdot]:=\Phi_{2}^{\tau}(U)[\cdot]-(\uno+\tau\bonyw(D(U;x,\x)))$
solves the following system:
\begin{equation}\label{sistW}
\left\{\begin{aligned}
&\del_{\tau}W^{\tau}(U)=\bonyw(D(U;x,\x))W^{\tau}(U)+\tau \bonyw(D(U;x,\x))\circ\bonyw(D(U;x,\x))\\
&W^{0}(U)=0.
\end{aligned}\right.
\end{equation}
Therefore, by Duhamel formula, one can check that $W^{\tau}(U)$ is a smoothing operator in the class
$\RR^{-2}_{K,1}[r]\otimes\MM_{2}(\CCC)$ for any $\tau\in[0,1] $.
We set  $\Phi_{2}(U)[\cdot]:=\Phi_{2}^{\tau}(U)[\cdot]_{|_{\tau=1}}$, by the discussion above
we have that there exists $Q(U)$ in $\RR^{-2}_{K,1}[r]\otimes\MM_{2}(\CCC)$
such that 
\[
\Phi_{2}(U)[\cdot]=\uno+\bonyw(D(U;x,\x))+Q(U).
\]
Since $\Phi_{2}^{-1}(U)$ exists, by symbolic calculus, it is easy to check
that there exists $\tilde{Q}(U)$ in  $\RR^{-2}_{K,1}[r]\otimes\MM_{2}(\CCC)$
such that
\[
\Phi^{-1}_{2}(U)[\cdot]=\uno-\bonyw(D(U;x,\x))+\tilde{Q}(U).
\]
We set
$V_{2}:=\Phi_{2}(U)[V_{1}]$, therefore the system \eqref{sistemafinale1} in the new coordinates reads 
\begin{equation}\label{nuovosist}
\begin{aligned}
(V_2)_{t}&=\Phi_{2}(U)\ii E\Big(\Lambda+\bonyw(A^{(1)}(U;x,\x))+R_{1}^{(1)}(U)\Big)(\Phi_{2}(U))^{-1}[V_{2}]+\\
&+\Phi_{2}(U)\ii ER_{2}^{(1)}(U)[U]+\bonyw(\del_{t}\Phi_2(U))(\Phi_{2}(U))^{-1}[V_{2}].
\end{aligned}
\end{equation}
The summand $\Phi_{2}(U)\ii ER_{2}^{(1)}(U)[\cdot] $
belongs to the class $\RR^{0}_{K,1}[r]\otimes\MM_{2}(\CCC)$ by composition Propositions.
Since $\del_{t}D(U;x,\x)$ belongs to $\Gamma^{-1}_{K,2}[r]\otimes\MM_{2}(\CCC)$ and $\partial_t Q$ is in $\RR^{-2}_{K,2}[r]\otimes\MM_{2}(\CCC)$
then the last summand in \eqref{nuovosist}  belongs to $\RR^{0}_{K,2}[r]\otimes\MM_{2}(\CCC)$.
We now study the first summand.
First we note that 
 $\Phi_{2}(U)\ii ER_{1}^{(1)}(U)\Phi_{2}^{-1}(U)$ is a bounded remainder in $\RR^{0}_{K,1}[r]\otimes\MM_{2}(\CCC)$. It remains to study the term
 \[
 \ii \Phi_{2}(U)\Big[E\mathfrak{P}\big(\Phi_{2}^{-1}(U)[V_2]\big)\Big)
 \Big]+
\ii \Phi_{2}(U)\Big[
 \bonyw\big(E(\uno+A_{2}^{(1)}(U;x))(\ii\x)^{2}+EA_{1}^{(1)}(U;x)(\ii\x)
 \big)
 \Big]\Phi_{2}^{-1}(U)[V_2],
 \]
 where $\mathfrak{P}$ is defined in \eqref{convototale}.
 The first term is equal to $\ii E(\mathfrak{P}V_2)$
up to a bounded term in $\RR^{0}_{K,1}[r]\otimes\MM_{2}(\CCC)$
by Lemma \ref{Convocoj}. The second is equal to
\begin{equation}
\begin{aligned}
&\ii \bonyw\big(E(\uno+A_{2}^{(1)}(U;x))(\ii\x)^{2}+EA_{1}^{(1)}(U;x)(\ii\x)
 \big)+\\
 &+\Big[
 \bonyw(D(U;x,\x)),
 \ii E\bonyw\big((\uno+A_{2}^{(1)}(U;x))(\ii\x)^{2}\big)
 \Big]
 \end{aligned}
\end{equation}
modulo bounded terms in  $\RR^{0}_{K,1}[r]\otimes\MM_{2}(\CCC)$.
By using formula \eqref{sbam8} one get that the commutator above is equal to
$\bonyw(M(U;x,\x))$ with 
\begin{equation}
\begin{aligned}
M(U;x,\x)&:=\left(\begin{matrix} 0& m(U;x,\x)\\
{\ol{m(U;x,-\x)}} &0
\end{matrix}
\right),\\
m(U;x,\x)&:=-2d(U;x,\x)(1+a_{2}^{(1)}(U;x))(\ii\x)^{2},
\end{aligned}
\end{equation}
up to terms in  $\RR^{0}_{K,1}[r]\otimes\MM_{2}(\CCC)$. Therefore the system obtained after the change of coordinates reads
\begin{equation}
(V_{2})_{t}=\ii E\Big[\Lambda V_{2}+\bonyw(A^{(2)}(U;x,\x))[V_{2}]+Q_1(U)[V_{2}]+Q_{2}(U)[U]\Big],
\end{equation}
where $Q_1(U)$ and $Q_2(U)$ are bounded terms in $\RR^{0}_{K,2}[r]\otimes\MM_{2}(\CCC)$ and the new matrix 
$A^{(2)}(U;x,\x))$ is
\begin{equation}\label{riassunto}
\left(\begin{matrix} {a}_{2}^{(1)}(U;x) & 0\\
0 & \ol{{a_{2}^{(1)}(U;x)}}
\end{matrix}
\right)(\ii\xi)^2+ \left(\begin{matrix} {a}_{1}^{(1)}(U;x) & b_1^{(1)}(U;x)\\
\ol{b_1^{(1)}(U;x)} & \ol{{a_{1}^{(1)}(U;x)}}
\end{matrix}
\right)(\ii\xi)+M(U;x,\x).
\end{equation}
Hence the elements on the diagonal are not affected by  the change of coordinates, now
our aim is to choose  $d(U;x,\x)$ in such a way that the symbol
\begin{equation}\label{leespansioni-belle}
b_{1}(U;x)(\ii\x)+m(U;x,\xi)=b_{1}(U;x)(\ii\x)-2d(U;x,\x)(1+a_{2}^{(1)}(U;x))(\ii\x)^{2},
\end{equation}
belongs to $\Gamma^{0}_{K,2}[r]$.
We split the symbol in \eqref{leespansioni-belle} in low-high frequencies: let $\varphi(\xi)$ a function in $C^{\infty}_0(\RRR;\RRR)$ such that $\rm{supp}(\varphi)\subset [-1,1]$ and $\varphi\equiv 1$ on $[-1/2,1/2]$. 
Trivially one has that $\varphi(\xi)(b_{1}(U;x)(\ii\x)+m(U;x,\xi))$ is a symbol in $\Gamma^0_{K,1}[r]$, so it is enough to solve the equation
\begin{equation}\label{monk}
\big(1-\varphi(\xi)\big)\left[b_{1}(U;x)(\ii\x)-2d(U;x,\x)\left(1+a_{2}^{(1)}(U;x))(\ii\x)^{2}\right)\right]=0.
\end{equation}
So we should choose the symbol $d$ as
\begin{equation}\label{simbolod}
\begin{aligned}
&d(U;x,\x)=\left(\frac{b_{1}^{(1)}(U;x)}{2(1+a_{2}^{(1)}(U;x))} \right)\cdot\gamma{(\xi)}\\
&\gamma(\xi)=\left\{
\begin{aligned}
&\frac{1}{\ii\x} \quad \rm{if\,\,} |\xi|\geq \frac12 \\
& \rm{odd\,  continuation\, of\, class\, } C^{\infty} \quad \rm{if\,\,} |\xi|\in [0,\frac12).\\
\end{aligned}\right.
\end{aligned}
 \end{equation}

 Clearly the symbol $d(U;x,\x)$ in \eqref{simbolod}  belongs to
$\Gamma^{-1}_{K,1}[r]$, hence the map $\Phi_{2}(U)$ in \eqref{mappa3} is well defined
and estimate \eqref{stimona2} holds.
It is evident that, after the choice of the symbol in \eqref{simbolod}, the matrix $A^{(2)}(U;x,\x)$ is
\begin{equation}
 \left(\begin{matrix} {a}_{2}^{(1)}(U;x) & 0\\
0 & \ol{{a_{2}^{(1)}(U;x)}}
\end{matrix}
\right)(\ii\xi)^2+\left(\begin{matrix} {a}_{1}^{(1)}(U;x) & 0\\
0 & \ol{{a_{1}^{(1)}(U;x)}}.
\end{matrix}
\right)(\ii\xi)\end{equation}
The symbol $d(U;x,\xi)$ is equal to $d(U;-x,-\xi)$ because $b_1^{(1)}(U;x)$ is odd in $x$ and 
$a_2^{(1)}(U;x)$ is even in $x$, therefore, by Remark \ref{compsimb} the map $\Phi_2(U)$ is \emph{parity preserving}.

\end{proof}

\subsection{Reduction to constant coefficients 1: paracomposition}\label{ridu2}
Consider the diagonal matrix of functions $A_{2}^{(2)}(U;x)\in \calF_{K,2}[r]\otimes\MM_{2}(\CCC)$ defined in \eqref{gorilla2}. In this section we shall reduce the operator $\bonyw(A_2^{(2)}(U;x)(\ii\xi)^{2})$ to a constant coefficient one, up to bounded terms (see \eqref{gorilla3}).
For these purposes we shall use a paracomposition operator (in the sense of Alinhac \cite{AliPARA}) associated to the diffeomorphism $x\mapsto x+\beta{(x)}$ of $\TTT$. We follow  Section 2.5 of \cite{maxdelort} and in particular we shall use their alternative definition of paracomposition operator. 

Consider a  real symbol $\beta(U;x)$ in the class $\mathcal{F}_{K,K'}[r]$
and the map 
\begin{equation}\label{diffeo1}
\Phi_U: x\mapsto x+\beta(U;x).
\end{equation}
We state the following.

\begin{lemma}\label{LEMMA252}
Let $0\leq K'\leq K$ be in $\NNN$, $r>0$ and $\be(U;x)\in \mathcal{F}_{K,K'}[r]$
for $U$ in the space $C^{K}_{*\RRR}(I,\hcic^{s_0})$.
If $s_0$ is sufficiently large and $\be$ is $2\pi$-periodic in $x$ and satisfies
\begin{equation}\label{conddiffeo}
1+ \be_{x}(U;x)\geq \Theta>0, \quad x\in \RRR,
\end{equation}
for some constant $\Theta$ depending on $\sup_{t\in I}\norm{U(t)}{\hcic^{s_0}}$, then the map 
$\Phi_{U}$ in \eqref{diffeo1} is a diffeomorphism of $\TTT$  to itself, and its inverse
may be written as
\begin{equation}\label{diffeo2}
(\Phi_U)^{-1}: y\mapsto y+\gamma(U;y)
\end{equation}
for $\gamma$ in $\mathcal{F}_{K,K'}[r]$.
\end{lemma}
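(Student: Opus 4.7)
The plan is to argue pointwise in $t$: establish first that $\Phi_U(t,\cdot)$ is a smooth diffeomorphism of $\TTT$ by a monotonicity-plus-periodicity argument, then extract its inverse in the form $y\mapsto y+\gamma(U;y)$ and verify by induction that $\gamma$ inherits the class $\calF_{K,K'}[r]$ from $\beta$.

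For the diffeomorphism statement I would observe that $2\pi$-periodicity of $\beta(U;\cdot)$ gives $\Phi_U(x+2\pi)=\Phi_U(x)+2\pi$, so $\Phi_U$ descends to a map on $\TTT=\RRR/2\pi\ZZZ$; hypothesis \eqref{conddiffeo} makes its lift to $\RRR$ strictly increasing, hence injective, while the uniform bound on $\beta$ coming from $\beta\in\calF_{K,K'}[r]$ (and the smallness of $r$) forces $\Phi_U(\pm\infty)=\pm\infty$, giving surjectivity. A pointwise application of the classical inverse function theorem, whose hypothesis $\Phi_U'\geq\Theta>0$ is exactly \eqref{conddiffeo}, produces a $C^\infty$-in-$y$ inverse. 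Setting $\gamma(U;y):=\Phi_U^{-1}(y)-y$, periodicity of $\gamma$ is inherited from that of $\Phi_U^{-1}$, and by construction $\gamma$ satisfies the implicit equation
\begin{equation*}
\beta\bigl(U;\,y+\gamma(U;y)\bigr)+\gamma(U;y)=0.
\end{equation*}

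The next step is to use this implicit equation to verify the estimates \eqref{simbo} for $\gamma$ with $m=0$ and no $\xi$-dependence. The base case $k=\alpha=0$ is read off directly. Differentiating once in $y$ and once in $t$ yields
\begin{equation*}
\gamma_y=-\frac{\beta_x(U;y+\gamma)}{1+\beta_x(U;y+\gamma)},\qquad \partial_t\gamma=-\frac{(\partial_t\beta)(U;y+\gamma)}{1+\beta_x(U;y+\gamma)},
\end{equation*}
and since the denominator is bounded below by $\Theta>0$ these first-order estimates follow at once from those on $\beta$. Higher mixed derivatives $\partial_t^k\partial_y^\alpha\gamma$ would be handled by induction on $k+\alpha$: each further differentiation of the implicit equation, followed by isolation of the top-order derivative of $\gamma$, produces a rational expression with denominator a power of $1+\beta_x(U;y+\gamma)$ and numerator polynomial in derivatives of $\beta$ evaluated at $y+\gamma$ and in strictly lower-order derivatives of $\gamma$.

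The main obstacle is the Fa\`a di Bruno bookkeeping inherent to differentiating the composition $\beta(U;y+\gamma(U;y))$: one must distribute the $t$- and $y$-derivatives among the single copy of $\beta$ and the nested copies of $\gamma$, and then verify that the resulting polynomial in norms $\|U\|_{k_j+K',s_0}$ collapses to the linear bound $\|U\|_{k+K',s_0}$ permitted on the right-hand side of \eqref{simbo}. The extra powers of the low-regularity norm $\|U\|_{K,s_0}$ produced by the Leibniz expansion are harmless, since Definition \ref{nonomorest} allows the constant to depend on $\|U\|_{k+K',s_0}$ in an arbitrary way; once this pattern is identified the estimates become schematic, with exactly one factor at each order reaching the top index $k+K'$ in $t$ (the one coming from the outermost differentiation acting on $\beta$) and all other factors absorbed into the constant.
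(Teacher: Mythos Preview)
Your approach is essentially the same as the paper's: the paper also writes the implicit relation (in the equivalent form $x+\beta(U;x)+\gamma(U;x+\beta(U;x))=x$, i.e.\ using $\Phi_U^{-1}\circ\Phi_U=\mathrm{id}$ rather than your $\Phi_U\circ\Phi_U^{-1}=\mathrm{id}$) and then says in one line that the symbol bounds \eqref{simbo} for $\gamma$ follow by differentiating this identity and using that $\beta\in\calF_{K,K'}[r]$. You have correctly fleshed out that sketch, and your Fa\`a di Bruno discussion is on target.

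One small slip worth flagging: you invoke ``the smallness of $r$'' to get $\Phi_U(\pm\infty)=\pm\infty$, but this is unnecessary and in fact contrary to the point of the lemma. Boundedness of $\beta(U;\cdot)$ already follows from its continuity and $2\pi$-periodicity, so surjectivity of the lift is automatic once monotonicity is known. The paper emphasizes in the Remark immediately following the lemma that the whole purpose here is to \emph{avoid} any smallness assumption on $r$, replacing it by the quantitative lower bound \eqref{conddiffeo}; your parenthetical remark slightly undermines that.
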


\begin{proof}
Under condition \eqref{conddiffeo}  there exists $\g(U; y)$ such that
\begin{equation}\label{conddiffeo2}
x+\be(U;x)+\g(U;x+\be(U;x))=x, \quad \;\; x\in \RRR.
\end{equation}
One can prove the bound \eqref{simbo} on the function $\g(U;y)$
by differentiating in $x$ equation \eqref{conddiffeo2} and using that $\be(U;x)$
is a symbol in $\calF_{K,K'}[r]$.
\end{proof}

\begin{rmk}
The Lemma above is very similar to  Lemma $2.5.2$ of \cite{maxdelort}.
The authors use a smallness assumption on $r$ to prove the result.
Here this assumption is replaced by
\eqref{conddiffeo} in order to treat big sized initial conditions.
\end{rmk}

\begin{rmk}\label{pathdiffeo}
By Lemma \ref{LEMMA252} one has that $x\mapsto x+\tau\be(U;x)$ is a diffeomorphism of $\TTT$ for any $\tau\in [0,1]$. Indeed 
\begin{equation*}
1+\tau\beta_x({U;x})=1-\tau+\tau(1+\beta_x({U;x}))\geq (1-\tau)+\tau\Theta\geq\min\{1,\Theta\}>0,
\end{equation*}
for any $\tau\in [0,1]$. Hence the \eqref{conddiffeo} holds true with $\mathtt{c}=\min\{1,\Theta\}$ and Lemma \ref{LEMMA252} applies.

\end{rmk}
%
With the aim of simplifying  the notation we set $\beta{(x)}:=\beta(U;x)$, $\gamma(y):=\gamma(U;x)$ and we define the following quantities 
\begin{equation}\label{bbb}
\begin{aligned}
B(\tau;x,\x)=B(\tau,U;x,\x)&:=-\ii b(\tau;x)(\ii \x), \\
b(\tau;x)&:=\frac{\be(x)}{(1+\tau \be_{x}(x))}.
\end{aligned}
\end{equation}
Then one defines the paracomposition operator associated to the diffeomorphism \eqref{diffeo1} as $\Omega_{B(U)}(1)$, where $\Omega_{B(U)}(\tau)$ is the flow of the linear paradifferential equation
\begin{equation}\label{flow}
\left\{
\begin{aligned}
&\frac{d}{d\tau}\Omega_{B(U)}(\tau)=\ii \bonyw{(B(\tau;U,\x))}\Omega_{B(U)}(\tau)\\
&\Omega_{B(U)}(0)=\rm{id}.
\end{aligned}\right.
\end{equation}
We state here a  Lemma 
which asserts that the problem \eqref{flow} is well posed and whose solution is a one parameter family of bounded operators  on $H^s$, which is one of the main properties of a paracomposition operator.  For the proof of the result we refer to Lemma $2.5.3$ in \cite{maxdelort}.

\begin{lemma}\label{torodiff}
Let $0\leq K'\leq K$ be in $\NNN$, $r>0$ and $\be(U;x)\in \mathcal{F}_{K,K'}[r]$
for $U$ in the space $C^{K}_{*\RRR}(I,\hcic^{s})$.
The system \eqref{flow} has a unique solution defined for $\tau\in[-1,1]$. Moreover  for any $s$ in $\R$ there exists a constant $C_s>0$ such that for any $U$ in $B^K_{s_0}(I,r)$ and any $W$ in $H^s$
\begin{equation}\label{flow2}
C_{s}^{-1}\|W\|_{H^{s}}\leq \|\Omega_{B(U)}(\tau)W\|_{H^{s}} \leq C_{s} \|W\|_{H^{s}}, \quad \forall\; \tau\in[-1,1], \;\;\; W\in H^{s},
\end{equation}
and 
\begin{equation}\label{flow3}
\|\Omega_{B(U)}(\tau)W\|_{K-K', s}\leq (1+C\|U\|_{K,s_0})\|W\|_{K-K',s},
\end{equation}
where $C>0$ is a constant depending only on $s$ and $\|U\|_{K,s_0}$. 
\end{lemma}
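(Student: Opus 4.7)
\proof
The plan is to exploit the fact that $B(\tau;x,\xi)=b(\tau;x)\xi$ is a \emph{real-valued} symbol of order one in $\xi$: as a consequence $\bonyw(B)$ is self-adjoint on $L^2$ modulo an operator of class $\calR^{-\rho}_{K,K'}[r]$ (compare with \eqref{megaggiunti}), so that $\ii\bonyw(B)$ is skew-adjoint up to a bounded perturbation. This is the algebraic input that makes energy estimates work despite $\bonyw(B)$ being unbounded on every fixed $H^s$. Existence for \eqref{flow} is then obtained by a standard approximation procedure: replace $B$ by $B_\varepsilon:=B\cdot\psi(\varepsilon\xi)$ with $\psi\in C^\infty_0$ equal to $1$ near $0$, so that $\bonyw(B_\varepsilon)$ is bounded on each $H^s$ and ordinary Banach space ODE theory yields a unique flow $\Omega^\varepsilon(\tau)$. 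A uniform-in-$\varepsilon$ energy estimate, together with Cauchy sequence arguments in lower Sobolev norms, produces the limit $\Omega_{B(U)}(\tau)$ and shows it is well defined on $[-1,1]$ (since the equation is linear, there is no finite time blow-up).

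To prove \eqref{flow2} for $s\in\RRR$, differentiate $\|\langle D\rangle^s\Omega(\tau)W\|_{L^2}^2$ with respect to $\tau$. Writing
\begin{equation*}
\frac{d}{d\tau}\langle D\rangle^s\Omega(\tau)W=\ii\bonyw(B)\langle D\rangle^s\Omega(\tau)W+\ii\bigl[\langle D\rangle^s,\,\bonyw(B)\bigr]\Omega(\tau)W,
\end{equation*}
the first term is treated using the skew-adjointness of $\ii\bonyw(B)$ modulo a smoothing remainder, which contributes at most $C\|\Omega(\tau)W\|_{L^2}^2$ with $C=C(\|U\|_{K,s_0})$. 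By Proposition \ref{componiamoilmondo} applied to the composition $\langle D\rangle^s\circ\bonyw(B)$ (and its reverse), the commutator $[\langle D\rangle^s,\bonyw(B)]$ is a paradifferential operator of order $s$ (the symbol of order $s+1$ from the leading term of $\sharp$ cancels by skew-symmetry of the Poisson bracket in the Weyl expansion \eqref{sbam8}), hence bounded from $H^s$ to $L^2$ with norm $\leq C\|U\|_{K,s_0}$. We therefore obtain
\begin{equation*}
\frac{d}{d\tau}\|\Omega(\tau)W\|_{H^s}^2\leq C\bigl(\|U\|_{K,s_0}\bigr)\|\Omega(\tau)W\|_{H^s}^2,
\end{equation*}
and Gronwall on $\tau\in[-1,1]$ yields the upper bound in \eqref{flow2}. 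The lower bound follows from the group property $\Omega_{B(U)}(-\tau)\circ\Omega_{B(U)}(\tau)=\mathrm{id}$, which is a consequence of the uniqueness of the Cauchy problem \eqref{flow}, combined with the upper bound applied to $\Omega_{B(U)}(-\tau)$.

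For \eqref{flow3} we differentiate equation \eqref{flow} $k$ times in $t$, obtaining for $0\leq k\leq K-K'$
\begin{equation*}
\partial_\tau\bigl(\partial_t^k\Omega_{B(U)}(\tau)\bigr)=\ii\bonyw(B)\partial_t^k\Omega_{B(U)}(\tau)+\sum_{j=1}^k\binom{k}{j}\ii\bonyw(\partial_t^jB)\,\partial_t^{k-j}\Omega_{B(U)}(\tau).
\end{equation*}
Since $\beta(U;x)\in\calF_{K,K'}[r]$, the symbols $\partial_t^jB$ remain in $\Gamma^1_{K,K'+j}[r]$, so the sum in the right-hand side is a bounded source term from $H^{s-2(k-j)}$ into $H^{s-2k+1}\subset H^{s-2k}$ (we lose one derivative from the order of the symbol, but $-2j\leq -2$ for $j\geq 1$ compensates). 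Applying the $H^{s-2k}$ energy estimate established above to this non-homogeneous equation and iterating on $k$ gives \eqref{flow3}.

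The main obstacle is the justification of the skew-adjointness modulo bounded operators in the paradifferential Bony-Weyl framework: one must verify that the leading-order symbol of $\bonyw(B)-(\bonyw(B))^*$ vanishes thanks to $B$ being real (this is the content of \eqref{megaggiunti} with admissible cut-off $\chi$), and that the remainder genuinely lies in $\calR^{-\rho}_{K,K'}[r]$ rather than only in $\calR^0_{K,K'}[r]$; it is this gain that allows the energy estimate to close with a constant depending only on $\|U\|_{K,s_0}$.
\qed
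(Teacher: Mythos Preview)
The paper does not prove this lemma directly: immediately before the statement it says ``For the proof of the result we refer to Lemma~2.5.3 in \cite{maxdelort}.'' Your sketch is precisely the standard energy-estimate argument that underlies that reference (mollify in $\xi$ to get a bounded generator, pass to the limit using uniform $H^s$ bounds coming from the reality of the order-one symbol, commutator control with $\langle D\rangle^s$, then iterate in $\partial_t^k$). So substantively you are reproducing what the paper imports by citation.

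Two small points are worth correcting. First, the equation \eqref{flow} is \emph{non-autonomous} in $\tau$ (the symbol $b(\tau;x)$ depends on $\tau$), so $\Omega_{B(U)}(-\tau)\circ\Omega_{B(U)}(\tau)=\mathrm{id}$ is false in general. The lower bound in \eqref{flow2} should instead be obtained from the two-parameter evolution family $\Phi(\tau,\sigma)$: the inverse of $\Omega_{B(U)}(\tau)=\Phi(\tau,0)$ is $\Phi(0,\tau)$, which solves an equation of the same type and therefore enjoys the same upper bound. Second, your final ``main obstacle'' paragraph is overly cautious: in the Bony--Weyl quantization the adjoint formula $(\bonyw(a))^*=\bonyw(\overline{a})$ holds \emph{exactly} (because the admissible cut-off $\chi$ is real and even in its first argument, so $\overline{a_\chi}=(\overline{a})_\chi$). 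Hence for the real symbol $B$ one has $(\bonyw(B))^*=\bonyw(B)$ on the nose, and there is no remainder to worry about at that step; the only lower-order contribution in the energy estimate comes from the commutator $[\langle D\rangle^s,\bonyw(B)]$, which you handle correctly.
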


\begin{rmk}
As pointed out in Remark \ref{differenzaclassidisimbo}, our classes of symbols are
slightly different from the ones in \cite{maxdelort}.
For this reason the authors in  \cite{maxdelort} are  more precise about the constant $C$
in \eqref{flow3}. 
However the proof can be adapted straightforward.
\end{rmk}
\begin{rmk}\label{pathdi}
In the following we shall study how symbols $a(U;x,\x)$ changes under
conjugation through the flow $\Omega_{B(U)}(\tau)$ introduced in Lemma \ref{torodiff}.
In order to do this we shall apply Theorem $2.5.8$ in \cite{maxdelort}. Such result 
requires that $x\mapsto x+\tau \be(U;x)$ is a path of diffemorphism for $\tau\in [0,1]$.
In \cite{maxdelort} this fact is achieved by using the smallness of $r$, here 
it is implied by Remark \ref{pathdiffeo}.
\end{rmk}

%
%

 We now study how the convolution operator $P*$
changes under the flow $\Omega_{B(U)}(\tau)$ introduced in Lemma \ref{torodiff}.

\begin{lemma}\label{Convocoj2}
Let  $P : \TTT\to \RRR$ be a $C^{1}$ function, 
let us define $P_{*}[h]=P*h$ for $h\in H^{s}$, where $*$ denote the convolution between functions, 
 and set $\Phi(U)[\cdot]:=\Omega_{B(U)}(\tau)_{|_{\tau=1}}$.
There exists $R$ belonging to
$\RR^{0}_{K,K'}[r]$ such that
\begin{equation}\label{convoluzionetot1000}
\Phi(U)\circ P_{*}\circ
\Phi^{-1}(U)[\cdot]
=P_{*}[\cdot]+R(U)[\cdot].
\end{equation}
Moreover if $P(x)$ is even in $x$ and  $\Phi(U)$ is parity preserving according to Definition \ref{revmap} then the remainder $R(U)$ in \eqref{convoluzionetot1000} is parity preserving.
\end{lemma}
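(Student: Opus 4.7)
The plan is to differentiate along the flow parameter $\tau$ and reduce the claim to a commutator estimate. Define
\[
\mathcal{N}(\tau) := \Omega_{B(U)}(\tau)\circ P_*\circ \Omega_{B(U)}(\tau)^{-1},\qquad \tau\in[0,1],
\]
so that $\mathcal{N}(0)=P_*$ and $\mathcal{N}(1)=\Phi(U)\,P_*\,\Phi(U)^{-1}$. By \eqref{flow} one has $\dot{\mathcal{N}}(\tau)=[\ii\,\bonyw(B(\tau;U;x,\xi)),\mathcal{N}(\tau)]$; setting $\mathcal{E}(\tau):=\mathcal{N}(\tau)-P_*$ this becomes the inhomogeneous Lax equation
\[
\dot{\mathcal{E}}(\tau)=[\ii\,\bonyw(B(\tau;U)),\mathcal{E}(\tau)]+[\ii\,\bonyw(B(\tau;U)),P_*],\qquad \mathcal{E}(0)=0,
\]
which, by Duhamel, admits the solution
\[
R(U)=\mathcal{E}(1)=\int_0^1 \Omega_{B(U)}(1)\,\Omega_{B(U)}(\sigma)^{-1}\bigl[\ii\,\bonyw(B(\sigma;U)),P_*\bigr]\,\Omega_{B(U)}(\sigma)\,\Omega_{B(U)}(1)^{-1}\,d\sigma.
\]

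The main step is to prove that the commutator $\mathcal{K}(\sigma):=[\ii\,\bonyw(B(\sigma;U;x,\xi)),P_*]$ belongs to $\RR^0_{K,K'}[r]$ uniformly in $\sigma\in[0,1]$. Recall from \eqref{bbb} that $B(\sigma;U;x,\xi)=b(\sigma;U;x)\,\xi$ (up to an inconsequential scalar factor) with $b(\sigma;U;\cdot)\in\calF_{K,K'}[r]$. A direct Fourier computation yields, modulo lower-order smoothing contributions coming from the Weyl symbolic calculus,
\[
\mathcal{K}(\sigma)[h](x)=\frac{\ii}{\sqrt{2\pi}}\sum_{k,j\in\ZZZ}\chi\!\Big(\tfrac{k-j}{\langle(k+j)/2\rangle}\Big)\widehat b(\sigma;k-j)\,\tfrac{k+j}{2}\bigl(\widehat p(j)-\widehat p(k)\bigr)\widehat h(j)\,e^{\ii k x}.
\]
Since $P\in C^1(\TTT;\RRR)$, integration by parts gives $|\widehat p(j)|\leq C\|P\|_{C^1}/\langle j\rangle$; combined with the frequency localization $\langle k\rangle\sim\langle j\rangle$ on the support of $\chi$, this produces the crucial cancellation
\[
\tfrac{|k+j|}{2}\bigl|\widehat p(j)-\widehat p(k)\bigr|\leq C\|P\|_{C^1}.
\]
The rapid decay of $\widehat b(\sigma;k-j)$ (guaranteed by the Sobolev regularity of $b$, tamely controlled by $\|U\|_{K,s_0}$ thanks to $b\in\calF_{K,K'}[r]$ and \eqref{simbo}) then lets Schur's test deliver $H^s$-boundedness of $\mathcal{K}(\sigma)$ with norm $\leq C\|P\|_{C^1}\|U\|_{K,s_0}$; the same argument applied to $\partial_t^k\mathcal{K}(\sigma)$ yields exactly the tame bound \eqref{porto20} with $\rho=0$.

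Inserting $\mathcal{K}(\sigma)\in\RR^0_{K,K'}[r]$ back into the Duhamel formula and invoking the uniform boundedness estimates \eqref{flow2}--\eqref{flow3} for $\Omega_{B(U)}(\tau)^{\pm 1}$, together with the paradifferential composition rules (Propositions \ref{componiamoilmondo}--\ref{componiamoilmare}), one concludes $R(U)\in\RR^0_{K,K'}[r]$, proving \eqref{convoluzionetot1000}. For the parity-preserving statement, the assumption $P(x)=P(-x)$ forces $\widehat p(j)=\widehat p(-j)$, so $P_*$ preserves the even/odd splitting of $H^s$; when $\Phi(U)$ is parity preserving, the generator $\bonyw(B(\tau;U))$ preserves parity (this is the condition on $\beta$ needed in Remark \ref{compsimb}), whence $\Phi(U)^{-1}=\Omega_{B(U)}(1)^{-1}$ is also parity preserving, and $R(U)=\Phi(U)P_*\Phi(U)^{-1}-P_*$ is a difference of parity-preserving operators, hence parity preserving.

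The hardest point I anticipate is tracking the tame $U$-dependence through the time derivatives $\partial_t^k\mathcal{K}(\sigma)$ and through the Duhamel integral: the $\tau$-dependence of $b(\tau;U;x)=\beta(U;x)/(1+\tau\beta_x(U;x))$ couples nontrivially with $\partial_t$ acting on $U$, so one must carefully combine the interpolation bounds \eqref{simbo} on $b$ with the composition estimates for paradifferential operators in order to preserve the full mixed-derivative structure in \eqref{porto20}.
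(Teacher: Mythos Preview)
Your proof is correct but takes a genuinely different route from the paper. The paper's argument is more elementary: it writes the algebraic decomposition
\[
\Phi(U)\,P_*\,\Phi(U)^{-1}h=P_*h+\bigl(\Phi(U)-\mathrm{Id}\bigr)(P_*h)+\Phi(U)\bigl[P_*\bigl((\Phi(U)^{-1}-\mathrm{Id})h\bigr)\bigr],
\]
and observes directly from the flow equation \eqref{flow} and the order-one bound \eqref{paraparaest} for $\bonyw(B)$ that $\Phi(U)^{\pm1}-\mathrm{Id}$ loses exactly one derivative, while convolution with a $C^1$ kernel gains one; the two effects cancel, and each remainder term lands in $\RR^0_{K,K'}[r]$ by a one-line estimate. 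Your Lax-equation approach instead isolates the commutator $[\bonyw(B),P_*]$, and your key bound $\tfrac{|k+j|}{2}\,|\hat p(j)-\hat p(k)|\leq C\|P\|_{C^1}$ is exactly the same cancellation mechanism seen on the Fourier side. The paper's decomposition is shorter because it avoids the Duhamel integral and the need to propagate tame estimates through conjugation by $\Omega_{B(U)}(\sigma)\Omega_{B(U)}(1)^{-1}$; for that step, note that you should invoke the flow bounds \eqref{flow3} directly rather than Propositions \ref{componiamoilmondo}--\ref{componiamoilmare}, since $\Omega_{B(U)}(\tau)$ is not itself a Bony--Weyl operator. Your approach, on the other hand, makes the commutator structure explicit and would extend more naturally if $P_*$ were replaced by a more general Fourier multiplier. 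For the parity statement your final line already suffices: once $\Phi(U)$ is parity preserving so is its inverse, and $R(U)$ is then a difference of parity-preserving operators; the detour through the generator is unnecessary.
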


\begin{proof}
Using equation \eqref{flow} and estimate \eqref{paraparaest}
one has that, for $0\leq k\leq K-K'$, the following holds true
\begin{equation}\label{biascica}
\|\del_{t}^{k}\big(\Phi^{\pm1}(U)-{\rm Id}\big) h\|_{H^{s-1-2k}}\leq \sum_{k_1+k_{2}=k}
C \|U\|_{K'+k_1,s_0}\|h\|_{k_2,s}
\end{equation}
where $C>0$ depends only on $\|U\|_{K,s_0}$ and ${\rm Id}$ is the identity map on $H^{s}$. Therefore we can write
\begin{equation}\label{duccio}
\Phi(U)\Big[
P*\big[\Phi^{-1}(U)h\big]
\Big]=P*h+\Big((\Phi(U)-{\rm Id})(P*h)\Big)+\Phi\Big[P*\Big((\Phi^{-1}(U)-{\rm Id})h\Big)\Big].
\end{equation}
Using estimate \eqref{biascica} and the fact that the function $P$ is of class $C^1(\TTT)$ we can estimate the last two summands in the r.h.s. of \eqref{duccio} as follows
\begin{equation*}
\begin{aligned}
&\norm{\del_{t}^{k}(\Phi(U)-{\rm{Id}})(P*h)}{H^{s-2k}}\leq 
\sum_{k_1+k_{2}=k}C\norm{U}{K'+k_1,s_0}\norm{P*h}{k_2,s+1}\leq \sum_{k_1+k_{2}=k}C\norm{U}{K'+k_1,s_0}\norm{h}{k_2,s}\\
&\norm{\del_{t}^{k}\Big(\Phi(U)\left[P*\left((\Phi^{-1}(U)-{\rm{Id}})h\right)\right]\Big)}{ s-2k}
\leq \sum_{k_1+k_2=k}C\norm{U}{K'+k_1,s_0} \norm{(\Phi^{-1}(U)-{\rm Id})h}{k_2,s-1}\\
&\qquad \qquad \leq  \sum_{k_1+k_2=k}C\norm{U}{K'+k_1,s_0} \norm{h}{k_2,s},
\end{aligned}
\end{equation*}
for $0\leq k\leq K-K'$ and 
where $C$ is a constant depending on $\norm{P}{C^1}$ and $\norm{U}{K,s_0}$. Hence they belong to the class $\RR^{0}_{K,K'}[r]$.
Finally if $P(x)$ is even in $x$ then the operator $P_{*}$ is parity preserving according to Definition \ref{revmap}, therefore if in addiction $\Phi(U)$ is parity preserving so must be $R(U)$ in \eqref{convoluzionetot1000}.
\end{proof}

We are now in position to prove the following.
\begin{lemma}\label{step3}
If the matrix $A^{(2)}(U;x,\xi)$ in \eqref{sistemafinale2} satisfies Hyp. \ref{ipoipo} (resp. together with $P$ satisfy Hyp. \ref{ipoipo2}) then there exists $s_0>0$ (possibly larger than the one in Lemma \ref{step2}) such that
for any $s\geq s_0$
there exists  an invertible map (resp. an invertible and {parity preserving} map)
$$
\Phi_{3}=\Phi_{3}(U) :  C^{K-2}_{*\R}(I,\hcic^{s}(\TTT,\CCC^2))\to C^{K-2}_{*\R}(I,\hcic^{s}(\TTT,\CCC^2)),
$$
with
\begin{equation}\label{stimona3}
\|(\Phi_{3}(U))^{\pm1}V\|_{K-2,s}\leq \|V\|_{K-2,s}(1+C\|U\|_{K,s_0})
\end{equation}
where $C>0$ depends only on $s$ and $\|U\|_{K,s_0}$
such that the following holds. 
There exists a matrix $A^{(3)}(U;x,\x)$ satisfying Constraint \ref{Matriceiniziale}  and Hyp. \ref{ipoipo} (resp. Hyp. \ref{ipoipo2}) of the form 
\begin{equation}\label{gorilla3}
\begin{aligned}
A^{(3)}(U;x,\x)&:=A_{2}^{(3)}(U)(\ii \x)^{2}+A_{1}^{(3)}(U;x)(\ii \x),\\
A_{2}^{(3)}(U)&:=\left(\begin{matrix} a_{2}^{(3)}(U) & 0
\\
0 & a_{2}^{(3)}(U)
\end{matrix}
\right), \quad a_{2}^{(3)}\in \calF_{K,3}[r], \quad {\rm independent\,\, of} \; x\in \TTT,
\\
A_{1}^{(3)}(U;x)&:=
\left(\begin{matrix} {a}_{1}^{(3)}(U;x) & 0\\
0 & \ol{{a_{1}^{(3)}(U;x)}}
\end{matrix}
\right)\in \calF_{K,3}[r]\otimes\MM_{2}(\CCC),\\
\end{aligned}
\end{equation}
and operators ${R}^{(3)}_{1}(U),{R}^{(3)}_2(U)$
in $\RR^{0}_{K,3}[r]\otimes\MM_{2}(\CCC)$, such that by setting $V_{3}=\Phi_{3}(U)V_{2}$ the system \eqref{sistemafinale2} reads
\begin{equation}\label{sistemafinale3}
\partial_t V_{3}=\ii E\Big[\Lambda V_{3}+\bonyw(A^{(3)}(U;x,\x))[V_{3}]+R^{(3)}_{1}(U)[V_{3}]+R^{(3)}_{2}(U)[U]\Big].
\end{equation}
\end{lemma}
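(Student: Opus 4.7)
The plan is to straighten the $x$-dependent principal coefficient of $A_2^{(2)}$ to a constant via a paracomposition (a torus-diffeomorphism flow) as outlined in the introduction around \eqref{monastero}. Recall from Lemma \ref{step2} that
$$A_2^{(2)}(U;x) = \begin{pmatrix} a_2^{(1)}(U;x) & 0 \\ 0 & a_2^{(1)}(U;x)\end{pmatrix},$$
with $a_2^{(1)}(U;x)\in\RRR$ (under either Hyp.\ \ref{ipoipo} or Hyp.\ \ref{ipoipo2}) and satisfying the ellipticity bound \eqref{elly2}, i.e.\ $1+a_2^{(1)}(U;x)\geq \mathtt{k}$. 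Following \eqref{monastero}, I would set
\begin{equation*}
m_2(U) := \Bigl[\,2\pi \Bigl(\int_\TTT \tfrac{1}{\sqrt{1+a_2^{(1)}(U;x)}}\,dx\Bigr)^{\!-1}\,\Bigr]^{2}\in\RRR_{>0},
\qquad \gamma(U;y) := \del_y^{-1}\!\Bigl(\sqrt{\tfrac{m_2(U)}{1+a_2^{(1)}(U;y)}}-1\Bigr),
\end{equation*}
which is well defined since the argument of $\del_y^{-1}$ has zero mean by construction of $m_2$. Then $\gamma\in\calF_{K,2}[r]$, and Lemma \ref{LEMMA252} (together with \eqref{elly2} and Remark \ref{pathdiffeo}) produces $\beta(U;x)\in\calF_{K,2}[r]$ such that $y\mapsto y+\gamma(U;y)$ inverts $x\mapsto x+\beta(U;x)$, and the path $\tau\mapsto x+\tau\beta$ is a diffeomorphism of $\TTT$ for all $\tau\in[0,1]$.

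Next, I would define $\Phi_3(U):=\Omega_{B(U)}(1)\otimes\uno$ where $\Omega_{B(U)}$ is the scalar paracomposition flow \eqref{flow}--\eqref{bbb} associated to this diffeomorphism, acting componentwise on $\hcic^s$. Lemma \ref{torodiff} yields the invertibility and the bound \eqref{stimona3} with a loss of two time-derivatives, as $\beta\in\calF_{K,2}[r]$. Conjugation by $\Phi_3$ is then analyzed using Theorem 2.5.8 of \cite{maxdelort} (applicable thanks to Remark \ref{pathdi}): for a symbol $a\in\Gamma^m_{K,K'}[r]$ one has
$$\Phi_3(U)\circ\bonyw(a(U;x,\xi))\circ \Phi_3(U)^{-1} = \bonyw\bigl(a(U;x+\beta(U;x),(1+\beta_x(U;x))^{-1}\xi)\bigr) + (\text{smoothing}),$$
modulo operators in $\RR^{0}_{K,3}[r]\otimes\MM_2(\CCC)$ (the loss of one more time-derivative comes from $\del_t\Phi_3$, which introduces a paradifferential operator with symbol in $\Gamma^1_{K,3}[r]$, hence bounded). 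Applied to the principal part $\bonyw\bigl((1+a_2^{(1)}(U;x))(\ii\xi)^2\bigr)$, the change of variable gives the new principal symbol
$$(1+a_2^{(1)}(U;y))\cdot(1+\gamma_y(U;y))^{2}(\ii\xi)^{2} \;=\; m_2(U)(\ii\xi)^{2}$$
after composing with the diffeomorphism, exactly by the choice of $\gamma$. The first order piece $A_1^{(2)}(U;x)(\ii\xi)$ is transported to a new first-order symbol in $\calF_{K,3}[r]\otimes\MM_2(\CCC)$, still diagonal (since $\Phi_3$ is scalar on each component), producing $A_1^{(3)}(U;x)$; we set $a_2^{(3)}(U):=m_2(U)-1$ and absorb all subprincipal and time-derivative contributions into $R^{(3)}_{1}(U)$.

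The term $\ii E\mathfrak{P}$ coming from the convolution potential in $\Lambda$ is treated by Lemma \ref{Convocoj2}: conjugating $P*$ by $\Phi_3(U)$ produces $P*$ plus a bounded remainder in $\RR^{0}_{K,3}[r]\otimes\MM_2(\CCC)$, which is absorbed into $R^{(3)}_{1}(U)$; the smoothing terms $R^{(2)}_1, R^{(2)}_2$ are conjugated by $\Phi_3^{\pm1}$ using Proposition \ref{componiamoilmare} (composition of $\rho$-smoothing operators with bounded maps remains in the bounded class after absorbing the loss). Hence the conjugated system has the form \eqref{sistemafinale3} with the desired $A^{(3)}$.

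Finally, I would verify preservation of structure. In the parity-preserving case (Hyp.\ \ref{ipoipo2}), since $a_2^{(1)}(U;x)$ is even in $x$ when $U$ is even, $\gamma(U;y)$ and hence $\beta(U;x)$ are \emph{odd} in $x$; by Remark \ref{compsimb} the flow $\Phi_3(U)$ is parity preserving, and the transported symbols in \eqref{gorilla3} satisfy \eqref{simbolirev}. The convolution remainder from Lemma \ref{Convocoj2} is also parity preserving because $P(x)=P(-x)$. In the self-adjoint case (Hyp.\ \ref{ipoipo}), the generator $\bonyw(B(\tau;U,\xi))$ with $b(\tau;x)\in\RRR$ is (up to $\rho$-smoothing) formally self-adjoint, so an analogue of Lemma \ref{lemmalemma2} gives $\Phi_3^{*}(-\ii E)\Phi_3 = -\ii E$ modulo smoothing; the argument of Lemma \ref{lemmalemma} then ensures $A^{(3)}$ satisfies \eqref{quanti801}, and in particular $m_2(U)\in\RRR$ is obvious from its explicit formula. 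The main technical point—and the step I would expect to require most care—is the symbolic calculus of the paracomposition conjugation (the Egorov-type theorem), together with accurately tracking where the two extra time-derivative losses (one from $\del_t\Phi_3$, one from the subprincipal term of the transported symbol) place the remainders into $\RR^0_{K,3}[r]$.
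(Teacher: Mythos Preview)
Your overall strategy matches the paper's: conjugate by the paracomposition flow $\Omega_{B(U)}(1)$ associated to the diffeomorphism $x\mapsto x+\beta(U;x)$, with $\gamma$ (hence $\beta$) chosen via \eqref{monastero}, then invoke Lemma~\ref{Convocoj2} for the convolution and Lemmata~\ref{lemmalemma2}--\ref{lemmalemma} (resp.\ the oddness of $\beta$) for structure preservation.

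There is, however, a gap in your first-order bookkeeping. You write that $\del_t\Phi_3$ gives a symbol in $\Gamma^1_{K,3}[r]$ and conclude ``hence bounded'', then propose to ``absorb all subprincipal and time-derivative contributions into $R^{(3)}_1(U)$''. Both claims are incorrect: a $\Gamma^1$ symbol yields an order-$1$ operator, not a bounded one, and the Egorov conjugation of the second-order symbol $(1+a_2^{(1)})(\ii\xi)^2$ also produces a genuine first-order correction beyond the transported principal term. Neither can be placed in $R^{(3)}_1\in\RR^0_{K,3}[r]$. In the paper, $(\del_t\Phi_3)\Phi_3^{-1}$ is handled via Proposition~2.5.9 of \cite{maxdelort}, giving $\bonyw(e(U;x)(\ii\xi))$ with $e\in\calF_{K,3}[r]$ purely imaginary (plus a remainder in $\RR^{-1}_{K,3}$), and Theorem~2.5.8 of \cite{maxdelort} supplies the full first-order part of the conjugated paradifferential operator. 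All of these first-order contributions are diagonal (since $\Phi_3$ acts scalarly on each component) and must be collected into $A_1^{(3)}(U;x)$; only what remains after that lies in $\RR^0_{K,3}[r]$. With this correction your argument goes through, and the form \eqref{gorilla3} is preserved since the extra diagonal first-order pieces are compatible with both Hyp.~\ref{ipoipo} ($e$ purely imaginary) and Hyp.~\ref{ipoipo2} (oddness in $x$ when $U$ is even, since $\beta$ is odd).
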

\begin{proof}
Let $\beta(U;x)$ be a real symbol in $\mathcal{F}_{K,2}[r]$  to be chosen later
such that condition \eqref{conddiffeo} holds. 
Set moreover $\g(U;x)$ the symbol such that \eqref{conddiffeo2} holds.
Consider accordingly to the  
hypotheses of Lemma \ref{torodiff} the system
\begin{equation}\label{felice}
\begin{aligned}
\dot{W}=\ii E M W,\quad W(0)=\uno, \quad M:=\bonyw
\left(\begin{matrix} B(\tau,x,\x) & 0\\ 0 & \ol{B(\tau,x,-\x)}
\end{matrix}\right),
\end{aligned}
\end{equation}
where $B$ is defined in \eqref{bbb}.  Note that $\ol{B(\tau,x,-\x)}=-B(\tau, x, \xi)$. By Lemma \ref{torodiff} the flow exists and is bounded on $\hcic^s(\TTT,\CCC^2)$
and moreover \eqref{stimona3} holds.  We want to conjugate the system \eqref{sistemafinale2} through the map $\Phi_3(U)[\cdot]=W(1)[\cdot]$. 
Set $V_3=\Phi_3(U)V_2$.
The system in the new coordinates reads
\begin{equation}\label{chi}
\begin{aligned}
\frac{d}{dt}V_3= \Phi_3(U)\Big[
\ii E (\mathfrak{P}\big[\Phi_3^{-1}(U)V_3)\big]&+
(
\del_{t}\Phi_{3}(U))\Phi_{3}^{-1}(U)[V_3]
\\
&+\Phi_3(U)\Big[\ii E\bonyw((\uno+A_2^{(2)}(U;x))(\ii\xi)^2)\Big]
\Phi_{3}^{-1}(U)[V_{3}]\\
&+\Phi_3(U)\Big[\ii E\bonyw(A_1^{(2)}(U;x)(\ii\xi))\Big]
\Phi_{3}^{-1}(U)[V_{3}]\\
&+\Phi_3(U)\Big[\ii ER_1^{(2)}(U)\Big]
\Phi_{3}^{-1}(U)[V_{3}]+\Phi_3(U)\ii ER_{2}^{(2)}(U)[U],
\end{aligned}
\end{equation}
where $\mathfrak{P}$ is defined in \eqref{convototale}.
We now discuss each term in \eqref{chi}. The first one, by Lemma \ref{Convocoj2},
is equal to
$\ii E (\mathfrak{P}V_3)$ 
up to a bounded remainder in the class $\RR^{0}_{K,2}[r]\otimes\MM_{2}(\CCC)$.
The last two terms also belongs to the latter class  because the map $\Phi_3$
is a bounded operator on $\hcic^s$.
For the term $(\del_{t}\Phi_{3}(U))\Phi_{3}^{-1}(U)[V_3]$
we apply Proposition $2.5.9$ of \cite{maxdelort} and we obtain that
\begin{equation}
(\del_{t}\Phi_{3}(U))\Phi_{3}^{-1}(U)[V_3]=
\bonyw\left(\begin{matrix}e(U;x)(\ii\x) & 0 \\ 0 & \ol{e(U;x)}(\ii\x)\end{matrix}\right)[V_3]+\tilde{R}(U)[V_3],
\end{equation}
where $\tilde{R}$ belongs to $\RR^{-1}_{K,3}[r]\otimes\MM_{2}(\CCC)$
and $e(U;x)$ is a symbol in $\calF_{K,3}[r]\otimes\MM_{2}(\CCC)$ such that ${\rm{Re}}(e(U;x))=0$. It remains to study the conjugate of the paradifferential  terms in \eqref{chi}.
We note that
\[
\begin{aligned}
\Phi_3(U)&\Big[\ii E\bonyw((\uno+A_2^{(2)}(U;x))(\ii\xi)^2)\Big]
\Phi_{3}^{-1}(U)[V_{3}]
+\Phi_3(U)\Big[\ii E\bonyw(A_1^{(2)}(U;x)(\ii\xi))\Big]
\Phi_{3}^{-1}(U)[V_{3}]\\
&=\left(\begin{matrix}
T & 0 \\ 0 & 
\ol{T}
\end{matrix}
\right)
\end{aligned}
\]
where $T$ is the operator
\begin{equation}\label{egoego}
T=\Omega_{B(U)}(1)\bonyw\Big((1+a_{2}^{(2)}(U;x))(\ii\x)^2+a_{1}^{(2)}(U;x)(\ii\x)\Big)\Omega^{-1}_{B(U)}(1).
\end{equation}
The Paracomposition Theorem $2.5.8$ in \cite{maxdelort}, which can be used thanks to Remarks \ref{pathdiffeo} and \ref{pathdi}, guarantees that
\begin{equation}
T=\bonyw ( \tilde{a}_{2}^{(3)}(U;x,\x)+a_{1}^{(3)}(U;x)(\ii\x))[\cdot]
\end{equation}
up to a bounded term in $\RR^0_{K,3}[r]$ and where
\begin{equation}\label{sol}
\begin{aligned}
\tilde{a}_{2}^{(3)}(U;x,\x)&=
\big(1+a^{(2)}_2(U;y)\big)\big(1+\gamma_{y}(1,y)\big)^2_{|_{y=x+\beta(x)}}(\ii\xi)^2, \\
{a}_{1}^{(3)}(U;x)&=a^{(2)}_1(U;y)\big(1+\gamma_{y}(1,y)\big)_{|_{y=x+\beta(x)}}.
\end{aligned}
\end{equation}
Here  $\gamma(1,x)=\gamma(\tau,x)_{|_{\tau=1}}=\gamma(U;\tau,x)_{|_{\tau=1}}$ 
with
\[
y=x+\tau\be(U;x)\; \Leftrightarrow x=y+\gamma(\tau,y), \;\; \tau\in [0,1],
\]
where $x+\tau\be(U;x)$
is the path of diffeomorphism  
given by Remark \ref{pathdiffeo}.

By Lemma 2.5.4 of Section 2.5 of \cite{maxdelort} one has that the new symbols 
$\tilde{a}_{2}^{(3)}(U;x,\x), a_{1}^{(3)}(U;x)$ defined in \eqref{sol} belong to the class $\Gamma^2_{K,3}[r]$ and $\calF_{K,3}[r]$ respectively. 
At this point we want to choose the symbol $\beta(x)$ in such a way that $\tilde{a}^{(3)}_2(U;x,\x)$ does not depend on $x$. 
One can proceed as follows. Let $a_{2}^{(3)}(U)$ a $x$-independent function to be chosen later, one would like to solve the equation
\begin{equation}\label{mgrande}
\big(1+a_2^{(2)}(U;y)\big)\big(1+\gamma_{y}(1,y)\big)^2_{|_{y=x+\beta(x)}}(\ii\xi)^2=(1+a_{2}^{(3)}(U))(\ii\x)^2.
\end{equation}
The solution of this equation is given by
\begin{equation}\label{mgrande2}
\gamma(U;1,y)=\partial_y^{-1}\left(\sqrt{\frac{1+a_{2}^{(3)}(U)}{1+a^{(2)}_2(U;y)}}-1\right).
\end{equation}
In principle this solution is just formal because the operator $\partial_y^{-1}$ is defined only for function with zero mean, therefore we have to choose $a_{2}^{(3)}(U)$ in such a way that 
\begin{equation}\label{mgrande3}
\int_{\TTT}\left(\sqrt{\frac{1+a_{2}^{(3)}(U)}{1+a^{(2)}_2(U;y)}}-1\right)dx=0,
\end{equation}
which means
\begin{equation}\label{felice6}
1+a_{2}^{(3)}(U):=\left[2\pi \left(\int_{\TTT}\frac{1}{\sqrt{1+a^{(2)}_{2}(U;y)}}dy
\right)^{-1}\right]^{2}.
\end{equation}
Note that everything is well defined thanks to the positivity of $1+a_{2}^{(2)}$. Indeed $a_{2}^{(2)}=a_{2}^{(1)}$ by \eqref{gorilla2}, and 
$a_{2}^{(1)}$ satisfies \eqref{elly2}.  
Indeed every denominator in \eqref{mgrande2}, \eqref{mgrande3} and in \eqref{felice6} stays far away from $0$. 
Note that $\gamma(U;y)$ belongs to  $\calF_{K,2}[r]$ and so does $\be(U;x)$
by Lemma \ref{LEMMA252}. 
By using \eqref{conddiffeo2} one can deduce that 
\begin{equation}
1+\beta_x(U;x)=\frac{1}{1+\gamma_y(U;1,y)}
\end{equation}
where 

\begin{equation}
1+\gamma_{y}(U;1,y)=2\pi
\left(\int_{\TTT}\frac{1}{\sqrt{1+a^{(2)}_{2}(U;y)}}dy\right)^{-1}\frac{1}{\sqrt{1+a_2^{(2)}(U;y)}},
\end{equation}
thanks to \eqref{mgrande2} and \eqref{felice6}.
Since the matrix $A_2^{(2)}$ satisfies Hypothesis \ref{ipoipo4}
 one has that there exists a universal constant $\mathtt{c}>0$ such that $1+a_2^{(2)}(U;y)\geq\mathtt{c}$. Therefore one has
\[
\begin{aligned}
1+\beta_x({U;x})=\frac{1}{1+\gamma_y(U;1,y)}&\geq
\sqrt{\mathtt{c}}\frac{1}{2\pi}\int_{\TTT}\frac{1}{\sqrt{1+a_{2}^{(2)}(U;y) }}dy\\
&\geq\frac{1}{2\pi}\frac{\sqrt{\mathtt{c}}}{1+C\|U\|_{0,s_0}}:=\Theta>0,
\end{aligned}
\]
for some $C$ depending only on $\|U\|_{K,s_0}$,
 where 
 we used the fact that $a_{2}^{(2)}(U;y)$
belongs to the class $\calF_{K,2}[r]$ (see Def. \ref{apeape}).
This implies that $\be(U;x)$ satisfies condition \eqref{conddiffeo}.
 We have written system \eqref{sistemafinale2} in the form \eqref{sistemafinale3}
with matrices defined in \eqref{gorilla3}.

It remains to show that the new matrix $A^{(3)}(U;x,\xi)$ satisfies either Hyp. \ref{ipoipo}
or \ref{ipoipo2}.
If $A^{(2)}(U;x,\xi)$ is selfadjoint, i.e. satisfies Hypothesis \ref{ipoipo}, then one has that the matrix $A^{(3)}(U;x,\xi)$ is selfadjoint as well thanks to the fact that the map $W(1)$ satisfies the hypotheses (condition \eqref{symsym10}) of 
of Lemma \ref{lemmalemma},  by using  Lemma \ref{lemmalemma2}.
In the case that $A^{(2)}(U;x,\xi)$ is parity preserving, i.e. satisfies Hypothesis \ref{ipoipo2}, then 
$A^{(3)}(U;\xi)$  has the same properties for the following reasons.
The symbols $\beta(U;x)$ and $\gamma(U;x)$ are odd in $x$ if the function $U$ is even in $x$. Hence the flow map $W(1)$ defined by equation \eqref{felice} is parity preserving.  
Moreover the matrix $A^{(3)}(U;x,\x)$ satisfies Hypothesis \ref{ipoipo2} by explicit computation.
\end{proof}

\subsection{Reduction to constant coefficients 2: first order terms }\label{ridu1}

Lemmata \ref{step1}, \ref{step2}, \ref{step3} guarantee that one can conjugate the system
\eqref{sistemainiziale}
to the system \eqref{sistemafinale3} in which the matrix $A^{(3)}(U;x,\x)$ (see  \eqref{gorilla3})
has the form
\begin{equation}\label{espansionediA3}
A^{(3)}(U;x,\x)=A_{2}^{(3)}(U)(\ii\x)^{2}+A_{1}^{(3)}(U;x)(\ii\x),
\end{equation}
where the matrices $A_{2}^{(3)}(U),A_{1}^{(3)}(U;x)$ are diagonal and belong to $\calF_{K,3}[r]\otimes\MM_{2}(\CCC)$, for $i=1,2$.
Moreover $A_{2}^{(3)}(U)$ does not depend on $x\in \TTT$.
In this Section we show how to eliminate the  $x$ dependence of the symbol 
 $A_{1}^{(3)}(U;x)$ in \eqref{gorilla3}.
 We prove the following.

 \begin{lemma}\label{step4}
If the matrix $A^{(3)}(U;x,\xi)$ in \eqref{sistemafinale3} satisfies Hyp. \ref{ipoipo} (resp. together with $P$ satisfy Hyp. \ref{ipoipo2}) then there exists $s_0>0$ (possibly larger than the one in Lemma \ref{step3}) such that
for any $s\geq s_0$
there exists  an invertible map (resp. an invertible and {parity preserving} map)
$$
\Phi_{4}=\Phi_{4}(U) :  C^{K-3}_{*\R}(I,\hcic^{s}(\TTT,\CCC^2))\to C^{K-3}_{*\R}(I,\hcic^{s}(\TTT,\CCC^2)),
$$
with
\begin{equation}\label{stimona4}
\|(\Phi_{4}(U))^{\pm1}V\|_{K-3,s}\leq \|V\|_{K-3,s}(1+C\|U\|_{K,s_0})
\end{equation}
where $C>0$ depends only on $s$ and $\|U\|_{K,s_0}$
such that the following holds. 
Then there exists a matrix $A^{(4)}(U;\x)$ independent of $x\in\TTT$ of the form 
\begin{equation}\label{gorilla4}
\begin{aligned}
A^{(4)}(U;\x)&:=
\left(\begin{matrix} {a}_{2}^{(3)}(U) & 0\\
0 & \ol{{a_{2}^{(3)}(U)}}
\end{matrix}
\right)
(\ii \x)^{2}+
\left(\begin{matrix} {a}_{1}^{(4)}(U) & 0\\
0 & \ol{{a_{1}^{(4)}(U)}}
\end{matrix}
\right)
(\ii \x),\\
\end{aligned}
\end{equation}
where $a_{2}^{(3)}(U)$ is defined in \eqref{gorilla3} and $a_1^{(4)}(U)$ is a symbol in $\mathcal{F}_{K,4}[r]$, independent of $x$, which is purely imaginary in the case of Hyp. \ref{ipoipo} (resp. is equal to $0$).
There are operators ${R}^{(4)}_{1}(U),{R}^{(4)}_2(U)$
in $\RR^{0}_{K,4}[r]\otimes\MM_{2}(\CCC)$, such that by setting $V_{4}=\Phi_{4}(U)V_{3}$ the system \eqref{sistemafinale3} reads
\begin{equation}\label{sistemafinale4}
\partial_t V_{4}=\ii E\Big[\Lambda V_{4}+\bonyw(A^{(4)}(U;\x))[V_{4}]+R^{(4)}_{1}(U)[V_{4}]+R^{(4)}_{2}(U)[U]\Big].
\end{equation}

\end{lemma}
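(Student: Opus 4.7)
The plan is to follow the scheme of Lemmas~\ref{step1}--\ref{step3} and conjugate \eqref{sistemafinale3} by the time-one flow of a paradifferential ODE with an order-zero generator. Specifically, I would take $G(U;x):=\mathrm{diag}(g(U;x),\overline{g(U;x)})$ for some scalar $g\in\calF_{K,3}[r]$ to be chosen, and let $\Phi_4(U)=W(1)$ where $W(\tau)$ solves $\partial_\tau W=\bonyw(G(U;x))W$, $W(0)=\uno$. Existence of the flow, the bound \eqref{stimona4}, and the identity $\Phi_4(U)=\bonyw(\exp\{G\})+Q(U)$ with $Q\in\RR^{-\rho}_{K,3}[r]\otimes\MM_2(\CCC)$ for any $\rho\geq 0$ follow exactly as in Lemma~\ref{step1} using Corollary~\ref{esponanziale}. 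To determine $g$ I would work heuristically at the principal-symbol level: conjugation by $e^{g}$ leaves the order-two symbol $(1+a_2^{(3)}(U))(\ii\xi)^2$ unchanged and modifies the order-one symbol by the Poisson-bracket term $-2(1+a_2^{(3)}(U))\,g_x(U;x)\,(\ii\xi)$, while $\bonyw(a_1^{(3)}(U;x)(\ii\xi))$ is unchanged at first order by conjugation with a scalar. I therefore set
$$a_1^{(4)}(U):=\frac{1}{2\pi}\int_\TTT a_1^{(3)}(U;x)\,dx, \qquad g(U;x):=\partial_x^{-1}\!\left(\frac{a_1^{(3)}(U;x)-a_1^{(4)}(U)}{2(1+a_2^{(3)}(U))}\right),$$
with $\partial_x^{-1}$ the zero-mean primitive. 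Since $1+a_2^{(3)}(U)\geq \mathtt{c}_1>0$ by Hypothesis~\ref{ipoipo4} (recall \eqref{elly2} and \eqref{felice6}), the RHS is well-defined and $g\in \calF_{K,3}[r]$.

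Setting $V_4=\Phi_4(U)V_3$, I would decompose the resulting system as $\partial_t V_4=\Phi_4\,\ii E(\Lambda+\bonyw(A^{(3)}))\Phi_4^{-1}V_4+(\partial_t\Phi_4)\Phi_4^{-1}V_4+\Phi_4\,\ii E(R_1^{(3)}(U)\Phi_4^{-1}V_4+R_2^{(3)}(U)U)$ and handle each term. By Lemma~\ref{Convocoj}, the convolution piece $\Phi_4\circ\ii E\mathfrak{P}\circ\Phi_4^{-1}$ reduces to $\ii E\mathfrak{P}$ plus a remainder in $\RR^0_{K,3}[r]\otimes\MM_2(\CCC)$. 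The paradifferential part is treated using Propositions~\ref{componiamoilmondo} and \ref{componiamoilmare}: expanding the Weyl composition to order $\rho\geq 2$, the order-two symbol is preserved and equals $a_2^{(3)}(U)(\ii\xi)^2$, the order-one symbol becomes $(a_1^{(3)}(U;x)-2(1+a_2^{(3)}(U))\,g_x(U;x))(\ii\xi)=a_1^{(4)}(U)(\ii\xi)$ by the choice of $g$, and all remaining contributions are of order $\leq 0$ and absorbed into a bounded operator in $\RR^0_{K,4}[r]\otimes\MM_2(\CCC)$. The term $(\partial_t\Phi_4)\Phi_4^{-1}$ is itself of order zero and involves $\partial_t g\in\calF_{K,4}[r]$, hence lies in $\RR^0_{K,4}[r]\otimes\MM_2(\CCC)$; and the conjugates of $R_i^{(3)}$ are absorbed into the final $R_i^{(4)}$ by the composition propositions. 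This yields the form \eqref{sistemafinale4} with matrix \eqref{gorilla4}.

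Finally, I would verify the algebraic structure of $a_1^{(4)}$. In the \textbf{self-adjoint case} (Hyp.~\ref{ipoipo}), the inductive preservation of self-adjointness through Lemmas~\ref{step1}--\ref{step3} forces $a_1^{(3)}(U;x)$ to be purely imaginary, so $a_1^{(4)}(U)$ and $g(U;x)$ are purely imaginary too; then $\bonyw(\exp\{G\})$ satisfies \eqref{symsym10} modulo a smoothing operator, and the argument of the last paragraph of the proof of Lemma~\ref{step1}, via Lemma~\ref{lemmalemma2}, shows that $\bonyw(A^{(4)}(U;\xi))$ is self-adjoint, confirming $a_1^{(4)}(U)\in\ii\RRR$. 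In the \textbf{parity preserving case} (Hyp.~\ref{ipoipo2}), the symbol $a_1^{(3)}(U;x)$ is odd in $x$ by \eqref{simbolirev}, so $a_1^{(4)}(U)\equiv 0$ and $g(U;x)$ is even in $x$; by Remark~\ref{compsimb} the flow $\Phi_4(U)$ is parity preserving, and Lemma~\ref{revmap100} transfers this structure to the conjugated system. The main obstacle throughout is the order-one symbolic calculation for the paradifferential part: one must verify that after the Weyl expansion only the single Poisson-bracket contribution $-2(1+a_2^{(3)})g_x(\ii\xi)$ survives at first order, and that all other sub-principal corrections (involving products of derivatives of $g$, $a_2^{(3)}$ and $a_1^{(3)}$) give operators of order $\leq 0$ that can be safely absorbed into the bounded remainders.
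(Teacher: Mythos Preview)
Your proposal is correct and follows essentially the same route as the paper: a diagonal order-zero generator $G=\mathrm{diag}(g,\overline{g})$, the time-one flow via Corollary~\ref{esponanziale}, the choice $a_1^{(4)}(U)=\tfrac{1}{2\pi}\int_\TTT a_1^{(3)}(U;x)\,dx$ and $g$ a zero-mean primitive of $(a_1^{(3)}-a_1^{(4)})/[2(1+a_2^{(3)})]$, the handling of $\mathfrak P$ by Lemma~\ref{Convocoj}, and the structural conclusions in the two cases. The only cosmetic difference is a sign convention in the Poisson-bracket correction (the paper writes the new first-order coefficient as $a_1^{(3)}+2(1+a_2^{(3)})\partial_x s$, yielding $s=-g$), which does not affect the argument.
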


\begin{proof}
Consider a symbol $s(U;x)$ in the class $\calF_{K,3}[r]$ and define
\[
S(U;x):=\left(
\begin{matrix}
s(U;x) & 0\\ 0 & \ol{s(U;x)}
\end{matrix}
\right).
\]
Let $\Phi_{4}^{\tau}(U)[\cdot]$
be the flow of the system
\begin{equation}\label{ordine1}
\left\{\begin{aligned}
&\del_{\tau}\Phi_{4}^{\tau}(U)[\cdot]=\bonyw(S(U;x))\Phi_{4}^{\tau}(U)[\cdot]\\
&\Phi_{4}^{0}(U)[\cdot]=\uno.
\end{aligned}\right.
\end{equation}
Again one can reason
 as done for the system \eqref{generatore2} to check that  there exists a unique family of
invertible bounded operators on $\hcic^{s}$
satisfying 
\begin{equation}\label{stimona2000}
\|(\Phi_{4}^{\tau}(U))^{\pm1}V\|_{K-3,s}\leq \|V\|_{K-3,s}(1+C\|U\|_{K,s_0})
\end{equation}
for $C>0$ depending on $s$ and $\|U\|_{K,s_0}$ for $\tau\in [0,1]$.
We set 
\begin{equation}\label{defmappa4}
\Phi_{4}(U)[\cdot]=\Phi_{4}^{\tau}(U)[\cdot]_{|_{\tau=1}}=\exp\{\bonyw(S(U;x))\}.
\end{equation}
By 
Corollary \ref{esponanziale}
we get that there exists $Q(U)$ in the class of smoothing remainder $\RR^{-\rho}_{K,3}[r]\otimes\MM_{2}(\CCC)$ for any $\rho>0$ such that
\begin{equation}\label{fluxapprox}
\Phi_{4}(U)[\cdot]:=\bonyw(\exp\{S(U;x)\})[\cdot]+Q(U)[\cdot].
\end{equation}
Since $\Phi_{4}^{-1}(U)$ exists, by symbolic calculus, it is easy to check
that there exists $\tilde{Q}(U)$ in  $\RR^{-\rho}_{K,3}[r]\otimes\MM_{2}(\CCC)$
such that
\[
\Phi^{-1}_{4}(U)[\cdot]=\bonyw(\exp\{-S(U;x)\})[\cdot]+\tilde{Q}(U)[\cdot].
\]

We set $G(U;x)=\exp\{S(U;x)\}$ and
$V_4=\Phi_4(U)V_3$. Then the system \eqref{sistemafinale3} becomes

\begin{equation}\label{nuovosist600}
\begin{aligned}
(V_4)_{t}&=\Phi_{4}(U)\ii E\Big(\Lambda+\bonyw(A^{(3)}(U;x,\x))+R_{1}^{(3)}(U)\Big)(\Phi_{4}(U))^{-1}[V_{4}]+\\
&+\Phi_{4}(U)\ii ER_{2}^{(3)}(U)[U]+\bonyw(\del_{t}G(U;x,\x))(\Phi_{4}(U))^{-1}[V_{4}].
\end{aligned}
\end{equation}
Recalling that $\Lambda =\mathfrak{P}+\frac{d^2}{dx^2}$
(see \eqref{DEFlambda2})
we note that by Lemma \ref{Convocoj} the term
$\ii \Phi_{4}(U)\big[E\mathfrak{P}\big(\Phi_{4}^{-1}(U)[V_4]\big]$ is equal to $\ii E \mathfrak{P}V_4$ up to a remainder in $\RR^0_{K,4}[r]\otimes\MM_2(\CCC)$.
Secondly  we note that the operator
\begin{equation}\label{pezzibbd}
\hat{Q}(U)[\cdot]:=\Phi_{4}(U)
\ii ER_{1}^{(3)}(U)
\Phi_{4}^{-1}(U)[\cdot]+\Phi_{4}(U)
\ii ER_{2}^{(3)}(U)[U]+\bonyw(\del_{t}G(U;x))\circ\Phi_{4}^{-1}(U)[
\cdot]
\end{equation}
belongs to the class of operators $\RR^{0}_{K,4}[r]\otimes\MM_{2}(\CCC)$. This follows by applying Propositions
\ref{componiamoilmondo}, \ref{componiamoilmare}, Remark \ref{inclusione-nei-resti} and the fact that $\partial_t G(U;x)$ is a matrix in $\calF_{K,4}[r]\otimes\MM_2(\CCC)$.
It remains to study the term
\begin{equation}\label{ferretti}
\Phi_{4}(U)\ii E\Big(\bonyw\big((\uno+A_2^{(3)}(U))(\ii\xi)^2\big)+\bonyw(A^{(3)}_1(U;x)(\ii\xi))\Big)(\Phi_{4}(U))^{-1}.
\end{equation}
By using formula \eqref{sbam8} and Remark \ref{inclusione-nei-resti}
one gets that, up to remainder in $\RR^{0}_{K,4}[r]\otimes\MM_2(\CCC)$,
 the term in \eqref{ferretti} is equal to
 \begin{equation}\label{boris}
\ii E\bonyw\big((\uno+A_2^{(3)}(U))(\ii\xi)^2\big)+\ii E\bonyw\left(
\begin{matrix}
r(U;x)(\ii\x) & 0\\
0 & \ol{r(U;x)}(\ii\x)
\end{matrix}
\right)
\end{equation}
where
\begin{equation}\label{nonmifermo}
\begin{aligned}
r(U;x):=a_{1}^{(3)}(U;x)+2(1+a_{2}^{(3)}(U))\del_{x}s(U;x).
\end{aligned}
\end{equation}
We look for a symbol $s(U;x)$ such that, the term of order one has constant coefficient in $x$. This requires to solve the equation
\begin{equation}\label{cancella1}
a_{1}^{(3)}(U;x)+2(1+a_{2}^{(3)}(U))\del_{x}s(U;x)=a_{1}^{(4)}(U),
\end{equation}
for some symbol  $a_{1}^{(4)}(U)$ constant in $x$ to be chosen. Equation \eqref{cancella1}
is equivalent to
\begin{equation}\label{cancella2}
\del_{x}s(U;x)=\frac{-a_{1}^{(3)}(U;x)+a_{1}^{(4)}(U)}{2(1+a_{2}^{(3)}(U))}.
\end{equation}
We choose the constant $a_{1}^{(4)}(U)$ as
\begin{equation}\label{sceltaA1}
a_{1}^{(4)}(U):=\frac{1}{2\pi}\int_{\TTT}a_{1}^{(3)}(U;x)dx,
\end{equation}
so that the r.h.s. of \eqref{cancella2} has zero average, hence the solution of \eqref{cancella2}
is given by 
\begin{equation}\label{solcancella}
s(U;x):=\del_{x}^{-1}\left(\frac{-a_{1}^{(3)}(U;x)+a_{1}^{(4)}(U)}{2(1+a_{2}^{(3)}(U))}\right).
\end{equation}
It is easy to check that 
$s(U;x)$ belongs to $\calF_{K,4}[r]$.
Using equation \eqref{nonmifermo}
we get \eqref{sistemafinale4} with $A^{(4)}(U;\x)$ as in \eqref{gorilla4}.

 It remains to prove that the constant $a_{1}^{(4)}(U)$ in \eqref{sceltaA1} is purely imaginary.
On one hand, if $A^{(3)}(U;x,\x)$ 
satisfies Hyp. \ref{ipoipo}, 
we note the following. The coefficient  $a_{1}^{(3)}(U;x)$ must be purely imaginary
hence 
the constant $a_{1}^{(4)}(U)$
in \eqref{sceltaA1} is purely imaginary. 

 On the other hand, if $A^{(3)}(U;x,\x)$ satisfies Hyp. \ref{ipoipo2},
we note that the function $a_{1}^{(3)}(U;x)$ is \emph{odd} in $x$. 
This means that the constants $a_{1}^{(4)}(U)$ in \eqref{sceltaA1} is zero. 
Moreover the symbol $s(U;x)$ in \eqref{solcancella} is even in $x$, hence the map 
$\Phi_{4}(U)$ in \eqref{ordine1} is parity preserving according to Def. \ref{revmap}
thanks to Remark \ref{compsimb}.
This concludes the proof.

\end{proof}

\begin{proof}[{\bf Proof of Theorem \ref{descent}}]
It is enough to choose  $\Phi(U):=\Phi_4(U)\circ\cdots\circ\Phi_1(U)$.
The estimates 
\eqref{stimona} follow by collecting the bounds \eqref{stimona1}, \eqref{stimona2},  \eqref{stimona3} and  \eqref{stimona4}. We define the matrix of symbols $L(U;\x)$ as
\begin{equation}
L(U;\x):=\left(\begin{matrix}{\mathtt{m}}(U,\x) & 0\\0 & \mathtt{m}(U,-\x)\end{matrix}\right), \quad 
\mathtt{m}(U,\x):=a_{2}^{(3)}(U)(\ii\x)^{2}+a_{1}^{(4)}(U)(\ii\x)
\end{equation}
where the coefficients $a_{2}^{(3)}(U),a_{1}^{(4)}(U)$ are $x$-independent (see \eqref{gorilla4}).  One concludes the proof by
setting $R_{1}(U):=R_{1}^{(4)}(U)$ and $R_{2}(U):=R_{2}^{(4)}(U)$ .
\end{proof}

An important consequence of Theorem \ref{descent} is that system \eqref{sistemainiziale}
admits a regular and unique solution. More precisely we have the following.

\begin{prop}\label{energia} Let $s_0$ given by Theorem \ref{descent} with $K=4$.  For any $s\geq s_0+2$ let $U=U(t,x)$ be
 a function in $ B^4_{s}([0,T),\theta)$ 
 for some $T>0$, $r>0$, $\theta\geq r$ with $\|U(0,\cdot)\|_{\hcic^{s}}\leq r$ and consider the system
\begin{equation}\label{energia11}
\left\{\begin{aligned}
&\partial_t V=\ii E\Big[\Lambda V+\bonyw(A(U;x,\x))[V]+R_1^{(0)}(U)[V]+R_2^{(0)}(U)[U]\Big],\\
& V(0,x)=U(0,x)\in\hcic^s, 
\end{aligned}\right.
\end{equation}
where the matrix $A(U;x,\x)$, the operators $R_1^{(0)}(U)$ and $R_{2}^{(0)}(U)$ satisfy the hypotheses of Theorem \ref{descent}. Then the following holds true.

\begin{itemize}
\item[$(i)$]
There exists
a unique solution  $\psi_{U}(t)U(0,x)$  of the system \eqref{energia11}  defined for any $t\in[0,T)$
such that
\begin{equation}\label{energia2}
\norm{\psi_U(t)U(0,x)}{4,s}\leq\mathtt{C}\norm{U(0,x)}{\hcic^s}(1+t\mathtt{C}\norm{U}{4,s})e^{t\mathtt{C}\norm{U}{4,s}}+t\mathtt{C}\norm{U}{4,s}e^{t\mathtt{C}\norm{U}{4,s}}+\mathtt{C},
\end{equation}
where $\mathtt{C}$ is  constant depending on $s,r$,
$\sup_{t\in[0,T]}\norm{U}{4,s-2}$ and $\|P\|_{C^{1}}$.
 
\item[$(ii)$]
 In the case that $U$  is even in $x$, 
 the matrix $A(U;x,\x)$ and the operator $\Lambda$ satisfy Hyp. \ref{ipoipo2},
the operator  $R_1^{(0)}(U)[\cdot]$ is parity preserving according to Def. \ref{revmap} and $R_{2}^{(0)}(U)[U]$ is even in $x$, then the solution  $\Psi_{U}(t)U(0,x)$ is even in $x\in \TTT$.
\end{itemize}
\end{prop}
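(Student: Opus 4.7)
The strategy is to apply Theorem \ref{descent} to the system \eqref{energia11} in order to pass to coordinates where the principal part is a self-adjoint, constant-coefficient (in $x$) Fourier multiplier, for which well-posedness and energy estimates become essentially standard. More precisely, I would set $W:=\Phi(U)V$, where $\Phi(U)$ is the invertible map provided by Theorem \ref{descent} (applied with $K=4$). Then $W$ solves \eqref{sistemafinale} with initial datum $W(0,x)=\Phi(U(0,\cdot))U(0,x)$, whose $\hcic^{s}$ norm is controlled by $\|U(0,\cdot)\|_{\hcic^{s}}(1+C\|U\|_{4,s_0})$ via \eqref{stimona}. The new system has the form
\begin{equation*}
\partial_{t}W=\ii E\Lambda W+\ii E\bonyw(L(U;\x))[W]+\ii E R_{1}(U)[W]+\ii E R_{2}(U)[U],
\end{equation*}
with $L(U;\x)$ diagonal, independent of $x\in\TTT$, and satisfying the self-adjointness condition \eqref{quanti801}. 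In particular $\ii E\bigl(\Lambda+\bonyw(L(U;\x))\bigr)$ is, at each fixed $t$, an $x$-independent Fourier multiplier whose symbol is purely imaginary on the diagonal and zero off-diagonal; hence it is skew-adjoint on $\hcic^{s}$ for every $s\in\RRR$, and the remainders $R_{1}(U),R_{2}(U)\in\RR^{0}_{4,4}[r]\otimes\MM_{2}(\CCC)$ are bounded operators on $\hcic^{s}$ by Definition \ref{nonomoop}.

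The existence and uniqueness of $W\in C^{0}([0,T);\hcic^{s})$ is then obtained by a standard Cauchy-Lipschitz/semigroup argument: the skew-adjoint principal part generates a unitary evolution $\mathcal{U}(t,\tau)$ on $\hcic^{s}$ (its symbol is an explicit diagonal Fourier multiplier, so $\mathcal U(t,\tau)$ can be written via Duhamel once the time-dependence of $L(U(t);\x)$ is integrated), and the Duhamel formula
\begin{equation*}
W(t)=\mathcal{U}(t,0)W(0)+\int_{0}^{t}\mathcal{U}(t,\tau)\bigl[\ii ER_{1}(U)W(\tau)+\ii ER_{2}(U)U(\tau)\bigr]d\tau
\end{equation*}
is a contraction in $C^{0}([0,T');\hcic^{s})$ for $T'$ small, and extends to $[0,T)$ by a priori estimates. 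For the energy inequality \eqref{energia2} I would differentiate $\|W\|^{2}_{\hcic^{s}}$ in $t$: the principal part does not contribute thanks to skew-adjointness (this is the key point where the self-adjointness of $L(U;\x)$ is used), while the remainder terms are estimated by
\begin{equation*}
|(R_{1}(U)W,W)_{\hcic^{s}}|\leq C\|U\|_{4,s_0}\|W\|_{\hcic^{s}}^{2},\qquad |(R_{2}(U)U,W)_{\hcic^{s}}|\leq C\|U\|_{4,s}\|W\|_{\hcic^{s}},
\end{equation*}
by Definition \ref{nonomoop} with $\rho=0$. Gronwall's lemma then yields the corresponding estimate for $\|W\|_{C^{0}\hcic^{s}}$. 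Iterating on time derivatives (using that $\partial_{t}^{k}W$ solves a similar equation, with coefficients whose $\|\cdot\|_{4-k,s}$ norms are controlled) upgrades this to the $\|\cdot\|_{4,s}$ norm. Converting back to $V=(\Phi(U))^{-1}W$ via \eqref{stimona} produces \eqref{energia2}.

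For item $(ii)$, under Hypothesis \ref{hyp2} the map $\Phi(U)$ provided by Theorem \ref{descent} is parity preserving according to Definition \ref{revmap}, and the new operator $\ii E(\Lambda+\bonyw(L(U;\x)))$ preserves the subspace of even functions (as $L(U;\x)=L(U;-\x)$). Combining parity preservation of $R_{1}(U)$ and the evenness of $R_{2}(U)[U]$, the system \eqref{sistemafinale} preserves $\hcic^{s}_{e}$, so $W$ is even; then $V=(\Phi(U))^{-1}W$ is also even by Remark \ref{operatoripreserving2}. The main obstacle in this plan is the bookkeeping in the energy estimate to keep track of the time derivatives up to order $4$ (needed by the class $\RR^{0}_{4,4}[r]$) while producing the precise constants appearing in \eqref{energia2}; this is routine but tedious, and relies crucially on the fact that Theorem \ref{descent} delivers a symbol $L$ which is self-adjoint \emph{and} constant in $x$, so that no commutator with $\langle D_{x}\rangle^{s}$ appears at the principal order.
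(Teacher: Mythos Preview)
Your plan is correct and follows essentially the same route as the paper: conjugate via $\Phi(U)$ from Theorem \ref{descent}, exploit that the principal part $\ii E(\Lambda+\bonyw(L(U;\x)))$ is a skew-adjoint constant-coefficient Fourier multiplier (hence an isometry on every $\hcic^{s}$), handle the bounded remainders by Duhamel/Gronwall, and then undo the transformation using \eqref{stimona}. The paper derives the precise form of \eqref{energia2} by writing out the Picard iterates explicitly and summing the resulting geometric/exponential series, whereas you phrase it as an energy inequality plus Gronwall; these are equivalent and yield bounds of the same shape. One small correction: in your estimate of $(R_{1}(U)W,W)_{\hcic^{s}}$, the class $\RR^{0}_{4,4}[r]$ gives (see \eqref{porto20}) a bound $C(\|U\|_{4,s_0}\|W\|_{\hcic^{s}}+\|U\|_{4,s}\|W\|_{\hcic^{s_0}})\|W\|_{\hcic^{s}}$, so the Gronwall constant should involve $\|U\|_{4,s}$, not just $\|U\|_{4,s_0}$ --- this is exactly why $\|U\|_{4,s}$ appears in the exponent of \eqref{energia2}. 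For the time derivatives, the paper simply reads off $\|\partial_t^{k}V\|_{\hcic^{s-2k}}$ directly from the equation once $\|V\|_{\hcic^{s}}$ is controlled, which is slightly cleaner than iterating the energy argument but amounts to the same thing.
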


\begin{proof}
We apply to  system  \eqref{energia11} Theorem \ref{descent} defining $W=\Phi(U)V$. The system 
in the new coordinates reads
\begin{equation}\label{modificato}
\left\{
\begin{aligned}
&\del_{t}W-\ii E\Big[\Lambda W+\bonyw(L(U;\xi))W+{R}_1(U)W+R_{2}(U)[U]\Big]=0 \\
&W(0,x)=\Phi(U(0,x))U(0,x):=W^{(0)}(x),
\end{aligned}\right.
\end{equation}
where $L(U;\xi)$ is a diagonal, self-adjoint  and constant coefficient in $x$ matrix in $\Gamma^2_{4,4}[r]\otimes\mathcal{M}_2(\CCC)$, $R_1(U), R_2(U)$ are in $\mathcal{R}^0_{4,4}[r]\otimes\mathcal{M}_2(\CCC)$. Therefore the solution 
of the linear problem
\begin{equation}
\left\{
\begin{aligned}
&\del_{t}W-\ii E\Big[\Lambda W+ \bonyw(L(U;\xi))\Big]W=0 \\
&W(0,x)=W^{(0)}(x),
\end{aligned}\right.
\end{equation}
is well defined as long as $U$ is well defined, moreover it is an isometry of $\hcic^s$. We denote by $\psi_L^t$ the flow at time $t$ of such equation. Then one can
define the operator
\begin{equation}\label{mappacontr}
 T_{W^{(0)}}(W)(t,x)=\psi_L^t  \big(W^{(0)}(x)\big)+\psi_L^t\int_0^t (\psi_L^s)^{-1}\ii E\Big({R}_1(U(s,x))W(s,x)+R_{2}(U(s,x))U(s,x)\Big)ds.
\end{equation}
Thanks to \eqref{stimona} and by the hypothesis on $U(0,x)$ one has that 
$\norm{W^{(0)}}{\hcic^s}\leq (1+c r)r$ 
for some constant $c>0$ depending only on $r$. 
In order to construct a fixed point for the operator $ T_{W^{(0)}}(W)$ in \eqref{mappacontr} we consider the 
sequence of approximations defined as follows:
\[
\left\{
\begin{aligned}
W_0(t,x)&=\psi_{L}^tW^{(0)}(x), \\
W_{n}(t,x)&=T_{W^{(0)}}(W_{n-1})(t,x), \qquad n\geq 1,
\end{aligned}
\right.
\]
for $t\in [0,T)$. For the rest of the proof we will denote by $\mathtt{C}$ any constant depending on $r$, $s$,  $\sup_{t\in[0,T)}\norm{U(t,\cdot)}{4,s-2}$ and $\norm{P}{C^1}$.
Using estimates \eqref{porto20}  one gets for $n\geq 1$
\[
\|(W_{n+1}-W_{n})(t,\cdot)\|_{\hcic^{s}}\leq \mathtt{C}\norm{U(t,\cdot)}{\hcic^s}\int_{0}^{t}\|(W_{n}-W_{n-1})(\tau,\cdot)\|_{\hcic^{s}}d\tau.
\]
Arguing by induction over $n$, one deduces
\begin{equation}\label{referee}
\|(W_{n+1}-W_{n})(t,\cdot)\|_{\hcic^{s}}\leq \frac{({\mathtt{C}}\norm{U(t,\cdot)}{\hcic^s})^{n}t^n}{n!}\|(W_{1}-W_{0})(t,\cdot)\|_{\hcic^{s}},
\end{equation}
which implies that $W(t,x)=\sum_{n=1}^{\infty}(W_{n+1}-W_{n})(t,x)+W_0(t,x)$ is a fixed point of the operator in \eqref{mappacontr} belonging to the space
$C^{0}_{*\R}([0,T);\hcic^{s}(\TTT;\CCC^2))$. 
Therefore by Duhamel principle the function $W$ is the   unique solution of the problem 
\eqref{modificato}. Moreover, by using \eqref{porto20}, we have that the following inequality holds true
\begin{equation*}
\norm{W_1(t,\cdot)-W_{0}(t,\cdot)}{\hcic^s}\leq t \mathtt{C}\big(\norm{U}{\hcic^s}\norm{W^{(0)}}{\hcic^s}+\norm{U}{\hcic^{s-2}}\norm{U}{\hcic^s}\big),
\end{equation*}
from which, together with estimates \eqref{referee}, one deduces that 
\begin{equation*}
\begin{aligned}
\norm{W(t,\cdot)}{\hcic^s}&\leq \sum_{n=0}^{\infty}\norm{(W_{n+1}-W_n)(t,\cdot)}{\hcic^s}+\norm{W^{(0)}}{\hcic^s} \\
&\leq \norm{W^{(0)}}{\hcic^s}\Big(1+t\mathtt{C}\norm{U}{\hcic^s}\sum_{n=0}^{\infty}\frac{(t\mathtt{C}\norm{U}{\hcic^s})^n}{n!}\Big)+t\mathtt{C}\norm{U}{\hcic^s}\sum_{n=0}^{\infty}\frac{(t\mathtt{C}\norm{U}{\hcic^s})^n}{n!}\\
&=\norm{W^{(0)}}{\hcic^s}\big(1+t\mathtt{C}\norm{U}{\hcic^s}e^{t\mathtt{C}\norm{U}{\hcic^s}}\big)+t\mathtt{C}\norm{U}{\hcic^s}e^{{t\mathtt{C}\norm{U}{\hcic^s}}}\\
&\leq \mathtt{C}\norm{U^{(0)}}{\hcic^s}\big(1+t\mathtt{C}\norm{U}{\hcic^s}e^{t\mathtt{C}\norm{U}{\hcic^s}}\big)+t\mathtt{C}\norm{U}{\hcic^s}e^{{t\mathtt{C}\norm{U}{\hcic^s}}}
\end{aligned}
\end{equation*}
Applying the inverse transformation $V=\Phi^{-1}(U)W$ and using \eqref{stimona} we find a solution $V$ of the problem \eqref{energia11} 
such that
\begin{equation*}
\norm{V}{\hcic^s}\leq\mathtt{C}\norm{U^{(0)}}{\hcic^s}\big(1+t\mathtt{C}\norm{U}{\hcic^s}e^{t\mathtt{C}\norm{U}{\hcic^s}}\big)+t\mathtt{C}\norm{U}{\hcic^s}e^{{t\mathtt{C}.\norm{U}{\hcic^s}}}
\end{equation*}
We now prove a similar estimate for $\del_{t}V$. More precisely one has
\begin{equation}\label{derivata1}
\begin{aligned}
\|\del_{t}V\|_{\hcic^{s-2}}&\leq \|\Lambda V+\bonyw(A(U;x,\xi))V\|_{\hcic^{s-2}}+\|{R}_1^{(0)}(U)V\|_{\hcic^{s-2}}
+\|{R}_2^{(0)}(U)U\|_{\hcic^{s-2}}\\
&\leq\mathtt{C}\norm{V}{\hcic^s}+\mathtt{C}\norm{V}{\hcic^{s-2}}+\mathtt{C}\\
&\leq\mathtt{C}\norm{U(0,x)}{\hcic^s}(1+t\mathtt{C}\norm{U}{4,s})e^{t\mathtt{C}\norm{U}{4,s}}+t\mathtt{C}\norm{U}{4,s}e^{t\mathtt{C}\norm{U}{4,s}}+\mathtt{C},
\end{aligned}
\end{equation}
where we used estimates \eqref{paraest} and \eqref{porto20}. 
By differentiating equation \eqref{energia11} and arguing as done in \eqref{derivata1}
one can bound the terms $\|\del_{t}^{k}V\|_{\hcic^{s-2k}}$, for $2\leq k\leq 4$,
and hence obtain the \eqref{energia2}.

In the case that $U$  is even in $x$, $\Lambda, A(U;x,\x)$ satisfy Hyp.  \ref{ipoipo2}, $R_1(U)[\cdot]$ is parity preserving according to Def. \ref{revmap} and $R_{2}^{(0)}(U)[U]$ is even in $x$
we have, by Theorem \ref{descent}, that the map $\Phi(U)$ is parity- preseving.
Hence the flow of the system \eqref{modificato} preserves the subspace of even functions.
This follows by Lemma \ref{revmap100}. Hence the solution of  \eqref{energia11} defined as
$V=\Phi^{-1}(U)W$ is even in $x$. This concludes the proof.

\end{proof}

\begin{rmk}\label{stimaprecisa2}
In the notation of Prop. \ref{energia} the following holds true.
 \begin{itemize}
 \item  If  $R_{2}^{(0)}\equiv 0$ in \eqref{energia11}, then 
the estimate \eqref{energia2}  may be improved as follows:
\begin{equation}\label{energia2tris}
\norm{\psi_U(t)U(0,x)}{4,s}\leq\mathtt{C}\norm{U(0,x)}{\hcic^s}(1+t\mathtt{C}\norm{U}{4,s})e^{t\mathtt{C}\norm{U}{4,s}}.
\end{equation}
 This follows straightforward from the proof of  Prop. \ref{energia}.
\item  If $R_{2}^{(0)}\equiv R_1^{(0)}\equiv 0$ then the flow $\psi_{U}(t)$ of \eqref{energia11} is invertible and $(\psi_{U}(t))^{-1}U(0,x)$ satisfies an estimate similar to \eqref{energia2tris}. To see this one proceed as follows. Let $\Phi(U)[\cdot]$ the map given by Theorem \ref{descent} and set $\Gamma(t):=\Phi(U)\psi_U(t)$. Thanks to Theorem \ref{descent}, $\Gamma(t)$ is the flow of the linear para-differential equation 
\begin{equation*}
\left\{\begin{aligned}
&\partial_t \Gamma(t)=\ii E\bonyw(L(U;\xi))\Gamma(t)+R(U)\Gamma(t),\\
&\Gamma(0)={\rm Id},
\end{aligned}\right.
\end{equation*}
where $R(U)$ is a remainder in $\mathcal{R}^0_{K,4}[r]$ and $\bonyw(L(U;\xi))$ is diagonal, self-adjoint and constant coefficients in $x$.
Then, if $\psi_L(t)$ is the flow generated by $\ii\bonyw(L(U;\xi))$ (which exists and is an isometry of $\hcic^s$), we have that $\Gamma(t)=\psi_L(t)\circ F(t)$, where $F(t)$  solves the Banach space ODE
\begin{equation*}
\left\{\begin{aligned}
&\partial_tF(t)=\big((\psi_L(t))^{-1}R(U)\psi_L(t)\big) F(t),\\
&F(0)={\rm{Id}}.
\end{aligned}\right.
\end{equation*}
To see this one has to use the fact that the operators $\ii \bonyw({L(U;\xi)})$ and $\psi_L(t)$ commutes.
Standard theory of Banach spaces ODE implies that $F(t)$ exists and is invertible, therefore $\psi_U(t)$ is invertible as well and $(\psi_U(t))^{-1}=(F(t))^{-1}\circ(\psi_L(t))^{-1}\circ\Phi(U)$. To deduce the estimate satisfied by $(\psi_U(t))^{-1}$ one has to use \eqref{porto20} to control the contribution coming from $R(U)$, the fact that $\psi_L(t)$ is an isometry and \eqref{stimona}.
\end{itemize}
\end{rmk}

\zerarcounters
\section{Local Existence}\label{local}

In this Section we prove Theorem \ref{teototale}. 
By previous discussions we know that \eqref{NLS} is equivalent to the system \eqref{6.666para} (see Proposition \ref{montero}). 
Our method relies on an iterative scheme. Namely we introduce the following sequence of linear problems.
Let $U^{(0)}\in \hcic^{s}$ such that $\|U^{(0)}\|_{\hcic^s}\leq r$ for some $r>0$.
For $n=0$ we set
\begin{equation}\label{rondine0}
\mathcal{A}_0:=\left\{
\begin{aligned}
&\del_{t}U_0-\ii E\Lambda U_0=0,\\
&U_0(0)=U^{(0)}.
\end{aligned}\right.
\end{equation}
The solution of this problem  exists and is unique, defined for any $t\in \RRR$ by standard linear theory, it is a group of isometries of $\hcic^s$ (its $k$-th derivative is a group of isometries of $\hcic^{s-2k}$) and hence satisfies  $\norm{U_0}{4,s}\leq r$ for any $t\in\R$.

For $n\geq1$, 
 assuming  $U_{n-1}\in B^K_{s_0}(I,r)\cap C^K_{*\R}(I,H^{s}(\TTT,\CCC^2))$ 
for some $s_{0},K>0$ and $s\geq s_0$, 
we define 
the  Cauchy problem
\begin{equation}\label{rondinen}
\mathcal{A}_n:=\left\{
\begin{aligned}
&\del_{t}U_n-\ii E\Big[\Lambda U_n+\bonyw(A(U_{n-1};x,\xi))U_n+R(U_{n-1})[U_{n-1}]\Big]=0 ,\\
&U_n(0)=U^{(0)},
\end{aligned}\right.
\end{equation}
where the matrix of symbols $A(U;x,\x)$ and the operator $R(U)$ are defined   in Proposition \ref{montero} (see \eqref{6.666para}).

One has to show that each problem $\mathcal{A}_{n}$ admits a unique solution $U_{n}$ defined for $t\in I$.
We use  
Proposition \ref{energia}  in order to prove the following lemma.

\begin{lemma}\label{esistenzaAN}

Let $f$ be a $C^{\infty}$ function from $\mathbb{C}^3$ in $\mathbb{C}$ satisfying Hyp. \ref{hyp1} (resp. Hyp. \ref{hyp2}).
Let $r>0$ and consider $U^{(0)}$  in the ball of radius $r$ of $\hcic^s$  
(resp. of $\hcic^s_{e}$) centered at the origin. 
Consider the operators $\Lambda, R(U)$ and the matrix of symbols $A(U;x,\x)$ 
given by Proposition 
\ref{montero} with $K=4$, $\rho=0$.
If $f$ satisfies Hyp. \ref{hyp3}, or $r$ is sufficiently small, then there exists $s_0>0$  such that for all $s\geq s_0$ the following holds. 
There exists a time $T$ and a constant $\theta$, both of them depending on $r$ and $s$,
such that 
for any $n\geq0$ one has:

\begin{description}
\item [$(S1)_{n}$] for $0\leq m\leq n$ there exists a function $U_{m}$ in
\begin{equation}\label{uno}
U_{m}\in B_{s}^{4}([0,T),\theta ),
\end{equation}
which is the unique solution of the problem $\mathcal{A}_{m}$;
in the case of parity preserving Hypothesis \ref{hyp2} 
the functions
$U_{m}$ for $0\leq m\leq n$ are even in $x\in \TTT$;

\item[$(S2)_{n}$] for $0\leq m\leq n$ one has
\begin{equation}\label{due}
\norm{U_{m} -U_{m-1} }{4,s'}\leq 2^{-m} r,\quad s_0\leq s'\leq s-2, 
\end{equation}
where $U_{-1}:=0$.
\end{description}

\end{lemma}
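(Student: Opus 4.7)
The argument will be an induction on $n$, with the combined statement $(S1)_n \wedge (S2)_n$. For the base case $n=0$, I observe that $\ii E \Lambda$ is self-adjoint with constant coefficients, hence generates a one-parameter group of isometries of $\hcic^{s-2k}$ for $0\leq k\leq 4$; thus $U_0$ exists globally with $\|U_0\|_{4,s'}\leq \|U^{(0)}\|_{\hcic^{s'}}\leq r$ for any $s'\leq s$, which immediately gives $(S1)_0$ (with any choice of $\theta\geq r$ and any $T$) and $(S2)_0$ (recalling $U_{-1}=0$). In the parity-preserving case, the assumption $\hat{p}(j)=\hat{p}(-j)$ in Hyp.~\ref{hyp2} makes $\Lambda$ parity preserving, so $U_0$ stays even in $x$.

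For the inductive step, suppose $(S1)_m$ and $(S2)_m$ hold for $m\leq n-1$. The key preliminary remark is that telescoping $(S2)_m$ in the lower norm $s'=s-2$ gives
\begin{equation*}
\|U_{n-1}\|_{4,s-2}\leq\sum_{m=0}^{n-1} 2^{-m}r\leq 2r,
\end{equation*}
uniformly in $n$. This uniform low-norm bound is what will keep all constants supplied by Prop.~\ref{energia} and by Lemma~\ref{stimelip-dei-simboli} under control, independently of $n$. Now I verify the hypotheses of Prop.~\ref{energia} for the system $\mathcal{A}_n$ viewed as a linearized system with coefficients depending on $U=U_{n-1}$: Proposition~\ref{montero} gives the required paradifferential structure and the matrix $A(U_{n-1};x,\xi)$ of the form~\eqref{PPPPA} in Constraint~\ref{Matriceiniziale}, modulo absorbing the zeroth-order term into the smoothing part via Remark~\ref{inclusione-nei-resti}; Lemma~\ref{struttura-ham-para} (resp.~\ref{struttura-rev-para}) ensures Hyp.~\ref{ipoipo} (resp.~\ref{ipoipo2}); Lemma~\ref{simboli-ellittici} ensures the ellipticity Hyp.~\ref{ipoipo4} either globally under Hyp.~\ref{hyp3} or, in the small-data regime, provided $\theta\leq r_0$ for sufficiently small $r_0$. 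Applying Prop.~\ref{energia} produces $U_n\in B^4_s([0,T),\theta)$ satisfying $\mathcal{A}_n$, with
\begin{equation*}
\|U_n\|_{4,s}\leq \mathtt{C} r(1+T\mathtt{C}\theta)e^{T\mathtt{C}\theta}+T\mathtt{C}\theta\, e^{T\mathtt{C}\theta}+\mathtt{C},
\end{equation*}
where $\mathtt{C}=\mathtt{C}(r,s,\|P\|_{C^1})$ thanks to the uniform bound $\|U_{n-1}\|_{4,s-2}\leq 2r$. Fixing $\theta$ large enough in terms of $r$ and $\mathtt{C}$, and then $T$ sufficiently small, the right-hand side is $\leq\theta$, closing $(S1)_n$. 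In the parity-preserving framework, Prop.~\ref{energia}(ii) propagates the evenness from $U^{(0)}$ to $U_n$.

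For $(S2)_n$, I set $D_n:=U_n-U_{n-1}$; subtracting $\mathcal{A}_{n-1}$ from $\mathcal{A}_n$ gives
\begin{equation*}
\partial_t D_n=\ii E\bigl[\Lambda D_n+\bonyw(A(U_{n-1};x,\xi))D_n\bigr]+\ii E\bigl(\bonyw(A(U_{n-1})-A(U_{n-2}))U_{n-1}+R(U_{n-1})U_{n-1}-R(U_{n-2})U_{n-2}\bigr),
\end{equation*}
with $D_n(0)=0$. I treat this as a linear problem for $D_n$ with zero data and an external forcing, whose $\hcic^{s'}$-norm is bounded via the Lipschitz estimates~\eqref{nave77}--\eqref{nave101} by $C(\|U_{n-1}\|_{4,s'}+\|U_{n-2}\|_{4,s'})\|U_{n-1}-U_{n-2}\|_{4,s'}\leq C\,\theta\,\|U_{n-1}-U_{n-2}\|_{4,s'}$ for $s_0\leq s'\leq s-2$. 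Applying Prop.~\ref{energia} to the $D_n$-equation (using the refined estimate in Remark~\ref{stimaprecisa2} since the initial datum vanishes and isolating the forcing contribution via Duhamel) yields
\begin{equation*}
\|D_n\|_{4,s'}\leq T\,C\,\theta\,e^{T\mathtt{C}\theta}\,\|U_{n-1}-U_{n-2}\|_{4,s'}\leq T\,C\,\theta\,e^{T\mathtt{C}\theta}\cdot 2^{-(n-1)}r,
\end{equation*}
using the inductive hypothesis. Shrinking $T$ so that $T\,C\,\theta\,e^{T\mathtt{C}\theta}\leq 1/2$ completes $(S2)_n$.

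The main obstacle is the simultaneous choice of $T$ and $\theta$ making all the geometric contractions and a priori bounds consistent uniformly in $n$; the delicate point is that the constants $\mathtt{C},C$ from Prop.~\ref{energia} and Lemma~\ref{stimelip-dei-simboli} depend on the low-norm bound of $U_{n-1}$, not on $\theta$ itself---and it is precisely the telescoping consequence of $(S2)_{n-1}$ in norm $s-2$ that breaks the circularity. Once that uniform low-norm control is in hand, both the persistence of the high norm and the contraction in the low norm become standard energy estimates on the differenced system.
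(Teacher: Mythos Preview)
Your proof is correct and follows essentially the same inductive scheme as the paper: the telescoping low-norm control $\|U_{n-1}\|_{4,s-2}\leq 2r$, the application of Prop.~\ref{energia} to close $(S1)_n$, and the Duhamel argument with Remark~\ref{stimaprecisa2} for the contraction in $(S2)_n$. One small slip: in the small-data alternative you write ``provided $\theta\leq r_0$'', but the ellipticity of $A(U_{n-1};\cdot)$ is governed by the \emph{low} norm $\|U_{n-1}\|_{4,s_0}\leq 2r$ coming from the telescoping, not by $\theta$ --- which is essential, since $\theta$ must be taken large to close $(S1)_n$.
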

\begin{proof}

We argue by induction. The $(S1)_0$ and $(S2)_0$ are true thanks to the discussion following the equation \eqref{rondine0}.
Suppose  that  
$(S1)_{n-1}$,$(S2)_{n-1}$ hold
with a constant $\theta=\theta(s,r,\|P\|_{C^{1}})\gg1$ and
 a time $T=T(s,r,\|P\|_{C^{1}},\theta)\ll 1$.
We show that $(S1)_{n}$,$(S2)_{n}$ hold with the same constant $\theta$ and $T$.

The Hypothesis \ref{hyp1}, together with Lemma \ref{struttura-ham-para}
(resp. Hyp. \ref{hyp2} together with Lemma \ref{struttura-rev-para}) implies that the matrix $A(U;x,\x)$ satisfies Hyp. \ref{ipoipo} (resp. Hyp. \ref{ipoipo2}) and  Constraint \ref{Matriceiniziale}.
 The Hypothesis \ref{hyp3}, together with Lemma \ref{simboli-ellittici}, (or $r$ small enough)
 implies  that $A(U;x,\x)$ satisfies also the 
 Hypothesis \ref{ipoipo4}.
 Therefore the hypotheses of Theorem \ref{descent} are fulfilled.  
 In particular, in the case of Hyp. \ref{hyp2}, Lemma \ref{struttura-rev-para} guarantees also that the matrix of operators $R(U)[\cdot]$ is parity preserving according to Def. \ref{revmap}.

Moreover
by \eqref{uno}, we have that $\|U_{n-1}\|_{4,s}\leq \theta $, hence 
 the hypotheses of Proposition \ref{energia} are fullfilled
 by system \eqref{rondinen} 
 with $R_{1}^{(0)}=0$, $R_{2}^{(0)}=R$, $U\rightsquigarrow U_{n-1}$
 and $V\rightsquigarrow U_{n}$ in \eqref{energia11}.
%
%
We note that, by $(S2)_{n-1}$, one has that the constant $\mathtt{C}$ in 
\eqref{energia2}
does not depend on $\theta$, but it depend only on $r>0$.
Indeed \eqref{due} implies
\begin{equation}\label{666beast}
\norm{U_{n-1}}{4,s-2}\leq \sum_{m=0}^{n-1}\norm{U_{m} -U_{m-1} }{4,s-2}\leq r\sum_{m=0}^{n-1}\frac{1}{2^{m}}\leq 2r, \quad \forall \; t\in [0,T].
\end{equation}

Proposition \ref{energia} provides a solution $U_{n}(t)$ defined for $t\in [0,T]$. 
By \eqref{energia2} one has that 
\begin{equation}\label{energia2000}
\norm{U_{n}(t)}{4,s}
\leq\mathtt{C}\norm{U^{(0)}}{\hcic^s}(1+t\mathtt{C}\norm{U_{n-1}}{4,s})e^{t\mathtt{C}\norm{U_{n-1}}{4,s}}
+t\mathtt{C}\norm{U_{n-1}}{4,s}e^{t\mathtt{C}\norm{U_{n-1}}{4,s}}+\mathtt{C},
\end{equation}
where $\mathtt{C}$ is a constant depending 
on $\|U_{n-1}\|_{4,s-2}$, $r$, $s$ and $\|P\|_{C^{1}}$, hence, thanks to \eqref{666beast}, it depends only on   $r$, $s$, $\|P\|_{C^{1}}$.
We deduce that, if 
\begin{equation}\label{pane}
T\mathtt{C}\theta \ll 1, \qquad \theta> \mathtt{C}r2e+e+\mathtt{C},
\end{equation}
then $\|U_{n}\|_{4,s}\leq \theta$. If $A(U_{n-1};x,\x)$ and  $\Lambda$ satisfy Hyp. \ref{ipoipo2}, $R(U_{n-1})$ is parity preserving 
then  the solution $U_{n}$ is even in $x\in \TTT$.
Indeed by the inductive hypothesis $U_{n-1}$
is even, hence item $(ii)$ of Proposition \ref{energia} applies.
This proves $(S1)_n$.

\smallskip
Let us check $(S2)_n$. 
Setting $V_n=U_n-U_{n-1}$ we have that
\begin{equation}\label{eqdiff}
\left\{\begin{aligned}
&\del_{t}V_n-\ii E\Big[\Lambda V_n +\bonyw(A(U_{n-1};x,\xi))V_n + f_n\Big]=0,\\
&V_n(0)=0,
\end{aligned}\right.
\end{equation}
where 
\begin{equation}\label{eqdiff2}
f_n:=\bonyw\Big(A(U_{n-1};x,\xi)-A(U_{n-2};x,\xi)\Big)U_{n-1}+R(U_{n-1})U_{n-1}-R(U_{n-2})U_{n-2}.
\end{equation}
Note that, by   \eqref{nave77}, \eqref{nave101},  we have 
\begin{equation}\label{eqdiff3}
\begin{aligned}
\|f_{n}\|_{4,s'}&\leq\norm{\bonyw\Big(A(U_{n-1};x,\xi)-A(U_{n-2};x,\xi)\Big)U_{n-1}}{4,s'}+\norm{R(U_{n-1})U_{n-1}-R(U_{n-2})U_{n-2}}{4,s'}\\
&\leq C\Big[\norm{V_{n-1}}{4,s_0}\norm{U_{n-1}}{4,s'+2}+(\norm{U_{n-1}}{4,s'}+\norm{U_{n-2}}{4,s'})\norm{V_{n-1}}{4,s'}\Big]\\
&\leq C\Big(\norm{U_{n-1}}{4,s'+2}+\norm{U_{n-2}}{4,s'+2}\Big)\norm{V_{n-1}}{4,s'},
\end{aligned}
\end{equation}
where $C>0$ 
depends only on $s$, $\|U_{n-1}\|_{4,s_0}, \|U_{n-2}\|_{4,s_0}$.
Recalling the estimate \eqref{666beast} we can conclude that
the constant $C$ in \eqref{eqdiff3} depends only on $s,r$.

The  system \eqref{eqdiff} with $f_n=0$ has the form \eqref{energia11} with $R_2^{(0)}=0$ and $R_1^{(0)}=0$.
Let 
 $\psi_{U_{n-1}}(t)$ be the flow of system \eqref{eqdiff} with $f_n=0$, which is given by Proposition \ref{energia}.
The Duhamel formulation of \eqref{eqdiff}
is
\begin{equation}
V_{n}(t)=\psi_{U_{n-1}}(t)\int_0^t (\psi_{U_{n-1}}(\tau))^{-1}\ii E f_{n}(\tau)d\tau.
\end{equation}
Then using  the inductive hypothesis \eqref{uno}, inequality \eqref{energia2tris} and the second item of Remark \ref{stimaprecisa2}
we get
\begin{equation}
\|V_n\|_{4,s'}\leq \theta \mathtt{K}_1 T \|V_{n-1}\|_{4,s'}, \quad \forall \; t\in [0,T],
\end{equation}
where $\mathtt{K}_1>0$ is a constant depending $r$, $s$ and $\|P\|_{C^{1}}$.
If  $\mathtt{K}_1 \theta  T\leq1/2$ then we have 
$\|V_n\|_{4,s'}\leq 2^{-n}r$ for any  $ t\in [0,T)$ which is the $(S2)_n$.
\end{proof}
 We are now in position to prove Theorem \ref{teototale}.

\begin{proof}[{\bf Proof of Theorem \ref{teototale}}]
Consider the equation \eqref{NLS}. By Lemma \ref{paralinearizza} we know that \eqref{NLS}
is equivalent to the system \eqref{6.666para}.
Since $f$ satisfies Hyp. \ref{hyp1} (resp. Hyp \ref{hyp2}) and Hyp. \ref{hyp3}, then Lemmata \ref{struttura-ham-para} (resp \ref{struttura-rev-para}) and \ref{simboli-ellittici} imply that the matrix $A(U;x,\x)$ satisfies Constraint \ref{Matriceiniziale} and  
 Hypothesis \ref{ipoipo}
(resp. Hypothesis
\ref{ipoipo2} and $R(U)$ is parity preserving according to Definition \ref{revmap}). 
According to this setting consider the problem $\mathcal{A}_{n}$ in \eqref{rondinen}.

By Lemma \ref{esistenzaAN} we know that the sequence $U_n$ defined by \eqref{rondinen} converges strongly to a function $U$ in $C_{*\R}^0 ([0,T),\hcic^{{s'}})$ for any ${s'}\leq s-2$ and, up to subsequences, 
\begin{equation}\label{debolesol}
\begin{aligned}
&U_{n}(t) \rightharpoonup U(t), \;\;\; {\rm in } \;\;\; \hcic^s,\\
&\del_{t}U_{n}(t)  \rightharpoonup \del_{t}U(t) , \;\;\; {\rm in } \;\;\; \hcic^{s-2},
\end{aligned}
\end{equation}
for any $t\in [0,T)$, moreover the function $U$ is in $L^{\infty}([0,T),\hcic^s)\cap{\rm  Lip}([0,T),\hcic^{s-2})$.
In order to prove that $U$ solves \eqref{6.666para} it is enough to show that
\begin{equation*}
\norm{\bonyw(A(U_{n-1};x,\xi))]U_n+R(U_{n-1})[U_{n-1}]-\bonyw(A(U;x,\xi))]U -R(U)[U]}{{\bf H}^{s-2}}
\end{equation*}
goes to $0$ as $n$ goes to $\infty$.
 Using \eqref{nave77} and \eqref{paraest} we obtain
\begin{equation*}
\begin{aligned}
&\|\bonyw(A(U_{n-1};x,\xi)U_n-\bonyw(A(U;x,\xi)U)\|_{{\bf H}^{s-2}} \leq\\
& \|\bonyw(A(U_{n-1};x,\xi)-A(U;x,\xi))U_n\|_{{\bf H}^{s-2}}+ \|\bonyw(A(U;x,\xi))(U-U_n)\|_{{\bf H}^{s-2}}\leq \\
&C\Big(\norm{U-U_n}{{\bf H}^{s-2}}\norm{U}{{\bf H}^{s_0}}+\norm{U-U_{n-1}}{{\bf H}^{s_0}}\norm{U_n}{{\bf H}^{s}}\Big),
\end{aligned}
\end{equation*}
which tends to $0$ since $s-2\geq s'$. In order to show that $R(U_{n-1})[U_{n-1}]$ tends to $R(U)[U]$ in ${\bf H}^{s-2}$ it is enough to use \eqref{nave101}. 
Using the equation \eqref{6.666para} and the discussion above   the solution $U$ 
has the following regularity:
\begin{equation}\label{regolarita}
\begin{aligned}
U\in 
B^{4}_{s'}([0,T);\theta )\cap L^{\infty}&([0,T),\hcic^s)\cap {\rm Lip}([0,T),\hcic^{s-2}), 
\quad \forall \; s_0\leq s'\leq  s-2,\\
&\norm{U}{L^{\infty}([0,T),\hcic^s)}\leq\theta ,
\end{aligned}
\end{equation}
where $\theta$ and $s_0$ are given by Lemma \ref{esistenzaAN}.
We show that $U$ 
actually belongs to $C^{0}_{*\mathbb{R}}([0,T), \hcic^{s})$.
Let us consider the  problem 
\begin{equation}\label{morteneran}
\left\{
\begin{aligned}
&\del_{t}V-\ii E\Big[\Lambda V+\bonyw(A(U;x,\xi))V+R(U)[U]\Big]=0 ,\\
&V(0)=U^{(0)},\quad U^{(0)}\in {\bf H}^{s},
\end{aligned}\right.
\end{equation}
where the matrices  $A$ and  $R$ are defined   in Proposition \ref{montero} (see \eqref{6.666para})
and $U$ is defined in \eqref{debolesol} (hence satisfies \eqref{regolarita}).
 Theorem \ref{descent} applies to system \eqref{morteneran} and provides a map 
 \begin{equation}\label{regolarita2}
 \Phi(U)[\cdot] :   C^{0}_{*\R}([0,T),\hcic^{s'}(\TTT,\CCC^2))\to C^{0}_{*\R}([0,T),\hcic^{s'}(\TTT,\CCC^2)),\quad 
 \end{equation}
 which satisfies \eqref{stimona} with $K=4$ and $s'$ as in \eqref{regolarita}.
 One has that the function $W:=\Phi(U)[U]$ solves, the problem
\begin{equation}\label{modificato1}
\left\{
\begin{aligned}
&\del_{t}W-\ii E\Big[\Lambda +\bonyw(L(U;\xi))\Big]W+R_2(U)[U]+{R}_1(U)W=0 \\
&W(0)=\Phi(U^{(0)})U^{(0)}:=W^{(0)},
\end{aligned}\right.
\end{equation}
where  $L(U)$ is a diagonal, self-adjoint and constant coefficient in $x$ matrix of symbols in $\Gamma^{2}_{K,4}[\theta ]$, and $R_1(U), {R}_2(U)$ are  matrices of 
bounded operators (see eq. \eqref{sistemafinale}). 
 We prove that $W$ is weakly-continuous in time with values in $\hcic^{s}$.
 First of all note that $U\in C^{0}([0,T); \hcic^{s'})$ with $s'$
given in \eqref{regolarita}, therefore $W$ belongs to the same space thanks to \eqref{regolarita2}.
Moreover $W$ is in $L^{\infty}([0,T),\hcic^s)$ (again by \eqref{regolarita} and \eqref{regolarita2}).
 Consider a sequence $\tau_{n}$ converging to $\tau$ as $n\to\infty$.
Let $\phi\in \hcic^{-s}$ and $\phi_{\e}\in C^{\infty}_{0}(\TTT;\CCC^2)$ 
such that $\|\phi-\phi_{\e}\|_{\hcic^{-s}}\leq \e$. Then we have
\begin{equation}
\begin{aligned}
\left|\int_{\TTT}\big(W(\tau_{n})-W(\tau)  \big) \phi d x\right|&\leq \left|\int_{\TTT}\big(W(\tau_{n})-W(\tau)  \big) \phi_{\e} d x\right|+\left|\int_{\TTT}(W(\tau_{n})-W(\tau)  ) (\phi-\phi_\e) d x\right|\\
&\leq \|W(\tau_n)-W(\tau)\|_{\hcic^{s'}}\|\phi_\e\|_{\hcic^{-s'}}+
\|W(\tau_n)-W(\tau)\|_{\hcic^{s}}\|\phi-\phi_\e\|_{\hcic^{-s}}\\
&\leq C \e+2\|W\|_{L^{\infty}\hcic^{s}}\e 
\end{aligned}
\end{equation}
for $n$ sufficiently large and where $s'\leq s-2$ as above. 

Therefore  $W$ is weakly continuous in time with values in $\hcic^s$. In order to prove that $W$ is in 
$C^0_{*\RRR}([0,T),\hcic^s)$, we show that the map $t\mapsto \norm{W(t)}{\hcic^s}$ is continuous on $[0,T)$. 
We introduce,  for $0<\epsilon\leq 1$, the Friedrichs mollifier $J_{\epsilon}:=(1-\epsilon\partial_{xx})^{-1}$
and the Fourier multiplier $\Lambda^{s}:=(1-\del_{xx})^{s/2}$.  Using the equation \eqref{modificato1} and estimates \eqref{porto20} one gets
\begin{equation}\label{energia1}
\frac{d}{dt}\norm{\Lambda^s J_{\epsilon}W(t)}{\hcic^0}^2\leq C\Big[ \norm{U(t)}{\hcic^s}^2\norm{W(t)}{\hcic^s}+\norm{W(t)}{\hcic^s}^2\norm{U(t)}{\hcic^s}\Big],
\end{equation}
where the right hand side is independent of $\epsilon$ and the constant $C$ depends on $s$ and $\norm{U}{\hcic^{s_0}}$.
Moreover, since $U,\, W$ belong to  $L^{\infty}([0,T),\hcic^s)$, the right hand side of inequality \eqref{energia1} is bounded from above by a constant independent of $t$. Therefore the function $t\mapsto \norm{J_{\epsilon}W(t)}{\hcic^0}$ is Lipschitz continuous in $t$, uniformly in $\epsilon$. As $J_{\epsilon}W(t)$ converges to $W(t)$ in the $\hcic^s$-norm, the function $t\mapsto \norm{W(t)}{\hcic^0}$ is Lipschitz continuous as well. Therefore $W$ belongs to $C^0_{*\RRR}([0,T),\hcic^s)$ and so does $U$. To recover the regularity of $\frac{d}{dt}U$ one may use equation \eqref{6.666para}.

 Let us show the uniqueness. Suppose that there are two solution $U$ and $V$ in $C^0_{*\RRR}([0,T),\hcic^s)$ of the problem \eqref{6.666para} . Set $H:=U-V$, then $H$ solves the problem
\begin{equation}\label{ultimo}
\left\{
\begin{aligned}
&\del_{t}H-\ii E\Big[\Lambda H +\bonyw(A(U;x,\xi))[H]+R(U)[H]\Big]+\ii E F=0 \\
&H(0)=0,
\end{aligned}\right.
\end{equation}
where $$F:=\bonyw\big(A(U;x,\xi)-A(V;x,\xi)\big)V+\big(R(U)-R(V)\big)[V].$$
Thanks to estimates \eqref{nave77} and \eqref{nave101} we have the bound
\begin{equation}\label{stimetta}
\norm{F}{\hcic^{s-2}}\leq C \norm{H}{\hcic^{s-2}}\Big(\norm{U}{\hcic^s}+\norm{V}{\hcic^s}\Big).
\end{equation}
By Proposition \ref{energia}, using Duhamel principle and \eqref{stimetta},
it is easy to show the following:
\[
\|H(t)\|_{\hcic^{s-2}}\leq C(r) \int_{0}^{t}\|H(\s)\|_{\hcic^{s-2}}d\s.
\]

Thus by Gronwall Lemma the solution is equal to zero for almost everywhere time $t$ in $[0,T)$ . By continuity one gets the unicity.
\end{proof}

\begin{proof}[{\bf Proof of Theorem \ref{teototale1}}]
The proof is the same of the one of Theorem \ref{teototale}, one only has to note that the matrix 
$A(U;x,\x)$ satisfies 
Hypothesis \ref{ipoipo4}
 thanks to the smallness of the initial datum instead of Hyp. \ref{hyp3}.
\end{proof}

\def\cprime{$'$}

\end{document}